\newtheorem{theorem}{Theorem}[section]
\newtheorem{proposition}[theorem]{Proposition}
\newtheorem{lemma}[theorem]{Lemma}
\newtheorem{corollary}[theorem]{Corollary}
\newtheorem{remark}[theorem]{Remark}
\newtheorem{example}[theorem]{Example}
\newtheorem{definition}[theorem]{Definition}
 \def\Ex{\mathfrak E}
\def\mcB{\mathcal{B}}
\def\mcE{\mathcal{E}}
\def\mcF{\mathcal{F}}
\def\mcQ{\mathcal{Q}}
\def\sE{\mathcal{E}}
\def\sF{\mathcal{F}}
\def\sG{\mathcal{G}}
\def\sX{\mathcal{X}}
\def\R{{\mathbbm R}}
\def\N{{\mathbbm N}}
\def\Z{{\mathbbm Z}}
\def\D{{\mathbbm D}}
\def\bP{{\mathbbm P}}
\def\bE{{\mathbbm E}}
\def\eps{\varepsilon}
\def\wt{\widetilde}
\def\<{\langle}
\def\>{\rangle}
\numberwithin{equation}{section}
\begin{document}
\title[Convergence of resistances on generalized {S}ierpi\'{n}ski carpets]{Convergence of resistances on generalized {S}ierpi\'{n}ski carpets}

\author{Shiping Cao}
\address{Department of Mathematics, University of Washington, Seattle, WA 98195, USA}
\email{spcao@uw.edu}
\thanks{}

\author{Zhen-Qing Chen}
\address{Department of Mathematics, University of Washington, Seattle, WA 98195, USA}\email{zqchen@uw.edu}
\thanks{}

\subjclass[2020]{Primary 31E05,   60F17, 60J46;  Secondary   31C25, 60G18, 46E36 }

\date{May 7, 2023}

\keywords{{S}ierpi\'{n}ski carpets, Dirichlet form, Mosco convergence, weak convergence, energy measure, conductance, harmonic function, extension operator}

\maketitle

\begin{abstract}
	We positively answer the open question of Barlow and Bass about the convergence of renormalized effective resistance between opposite faces of Euclidean domains approximating a generalized {S}ierpi\'{n}ski carpet.
\end{abstract}

\section{Introduction}\label{sec1}

Denote by $\N$, $\Z$ and $\R$  the set of all natural numbers, integers and real numbers, respectively. 
Let $(S_n)_{n\in \{ 0\} \cup \mathbb{N}}$ be the simple random walk on $\mathbb{Z}^d$ with $d\geq 1$, and $\D ( [0,\infty); \R^d)$ the space of right continuous functions
having left limits on $[0, \infty)$ taking values in $\R^d$ equipped with the Skorohod topology. 
The well-known Donsker's invariance principle states that
$\{n^{-1/2} S_{[nt]}; t\geq 0\} $ converges weakly as $n\to\infty$ in  the Skorohod space  $\D ( [0,\infty); \R^d)$  to a  Brownian motion on $\R^d$. 

On generalized Sierpi\'nski carpets ({\it GSC}\,s), an interesting question is whether an analogy to the Donsker's invariance principle   holds, where instead of studying the scaling limit of random walks, a more natural choice is to consider the scaling limit of reflected Brownian motions on their approximation domains. The problem is difficult, but the picture becomes clearer over the times, with important contributions from \cite{BB1,BB2,BB3,BB4,BB5,BBKT,KZ}.

To describe the setting of this paper, we first recall the definition of {\it GSC}\,s and their 
approximation domains (also called pre-carpets in some literatures) from \cite{BB5,BBKT}. In this paper, we use := as a way of definition. Let  $d\geq 2$ and $L_F\geq 3$ be integers. 
Let $F_0:=[0,1]^d$ be the unit cube in $\R^d$ and set
$\mathcal{Q}_0:=\{F_0\}$. For each integer $n\geq 1$,
we divide $F_0$ into $L_F^{nd}$ sub-cubes of length $L_F^{-n}$: 
\begin{equation}\label{e:1.1a}
\mathcal{Q}_n:=\Big\{\prod_{i=1}^d[(l_i-1)/L_F^n,l_i/L_F^n]:1\leq l_i\leq L_F^n,i=1,2,\cdots,d \Big\}.
\end{equation}
For each set $A\subset \R^d$ and $n\geq 0$, we denote 
\begin{equation}\label{e:1.2a}
\mcQ_n(A):=\{Q\in \mcQ_n: \operatorname{int}(Q)\cap A\neq \emptyset\},
\end{equation}
where $\operatorname{int}(Q)$ stands for
 the interior of $Q$ in $\R^d$. Let $F_1\subsetneq F_0$ be a union of cubes in $\mcQ_1$, and iteratively, we define
$F_n:=\bigcup_{Q\in \mcQ_1(F_1)} \Psi_Q(F_{n-1})$ for $n\geq 2$, where for each $Q\in \bigcup_{n=0}^\infty\mcQ_n$,   $\Psi_Q$ is an orientation preserving affine map of $F_0$ onto $Q$. We call $F:=\bigcap_{n=0}^\infty F_n$ a generalized Sierpi\'nski carpet ({\it GSC} ) if the following conditions (SC1)--(SC4) hold.
\begin{enumerate}[label={(SC\arabic*)}]
	\item (Symmetry) $F_1$ is preserved by all the isometries of the unit cube $F_0$.
	
	\item (Connectedness) The interior ${\rm int}(F_1)$ of $F_1$ is connected.
	
	\item (Non-diagonality) Let $n\geq 1$ and $B\subset F_0$ be a cube of side length $2L_F^{-n}$, which is the union of $2^d$ distinct elements of $\mathcal{Q}_n$. Then if $\text{int}(F_1\cap B)$ is non-empty, it is connected.
	
	\item (Borders included) $F_1$ contains the line segments $\{x=(x_1,x_2,\cdots,x_d):0\leq x_1\leq 1,x_2=x_3=\cdots=x_d=0\}$.
\end{enumerate}
Let $m_F:=\#\mcQ_1(F)$. Then  $d_f:=\frac{\log m_F}{\log L_F}$ is the Hausdorff dimension of $F$. In words, $F_1$ is obtained from the unit cube $F_0$ by removing a symmetric pattern of $L_F^d-m_F$ number of sub-cubes of length $L_F^{-1}$, and we require $F_1$ to satisfy condiditions (SC1)-(SC4). Then we repeat the procedure of removing a same pattern from surviving small cubes infinitely many times to get a compact set $F$. 
The standard {S}ierpi\'{n}ski carpet in $\R^2$ corresponds to the case of $L_F=3$, $m_F=8$ and $F_1$ being obtained from the unit square $F_0$ by removing the central square of length $1/3$; see Figure \ref{fig1} for a picture of the standard $SC$ in $\R^2$ and  Figure \ref{fig2} for its approximation domains $F_0,F_1,F_2$. Figure \ref{SC3d} shows the {S}ierpi\'{n}ski sponge, which is an example of {\it GSC} in $\R^3$.

\begin{figure}[htp]
	\includegraphics[width=3.5cm]{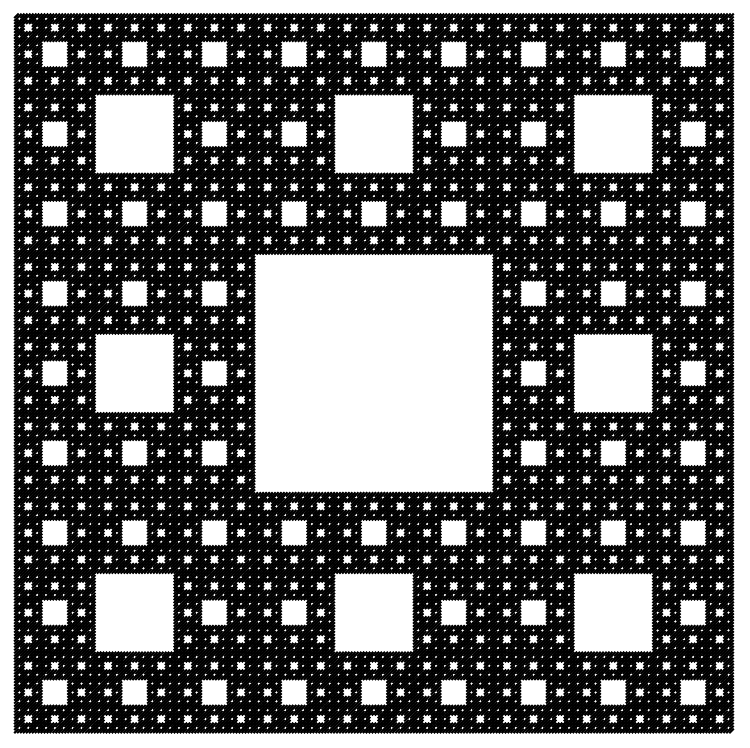}
	\caption{The standard {S}ierpi\'{n}ski carpet in $\R^2$}\label{fig1}
\end{figure}

\begin{figure}[htp]
	\includegraphics[width=3.5cm]{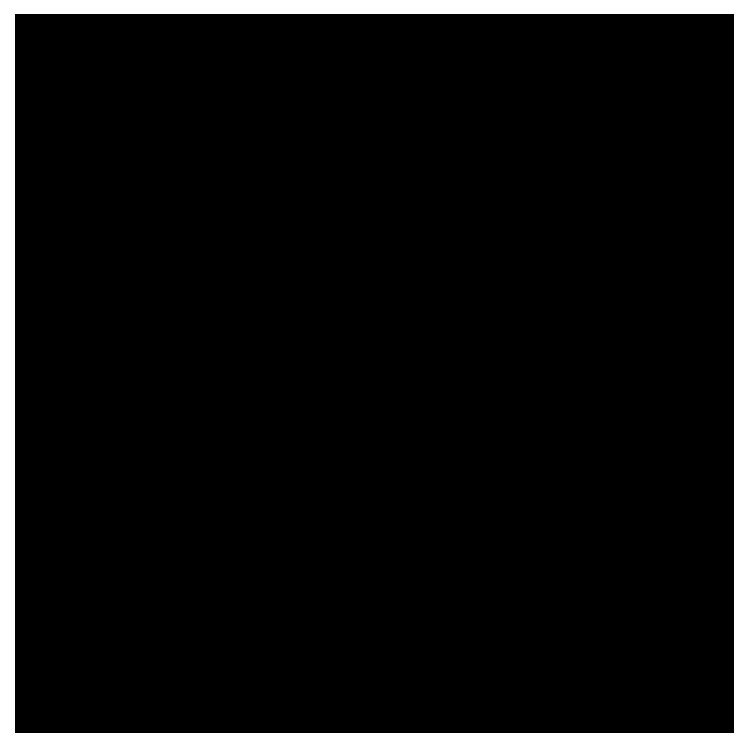}\qquad 
	\includegraphics[width=3.5cm]{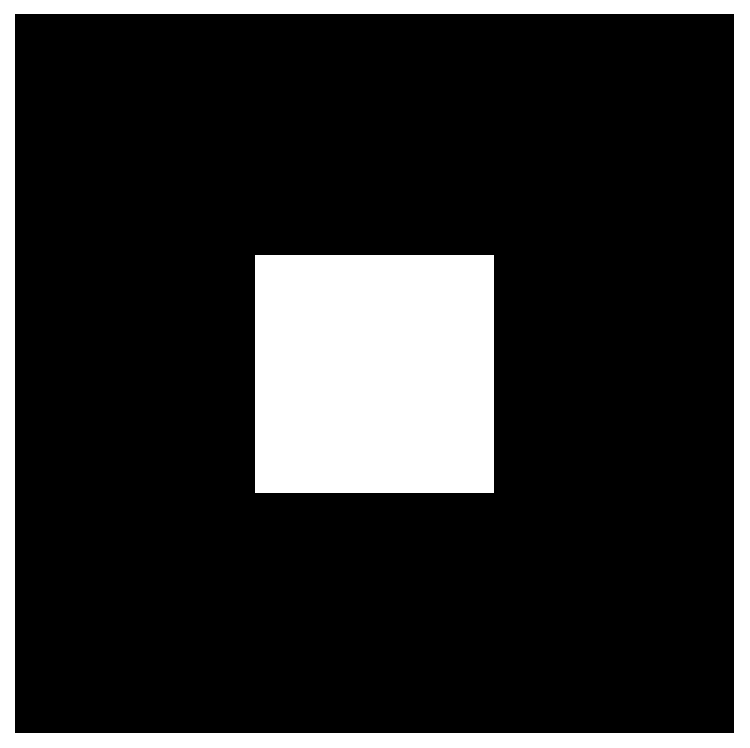}\qquad
	\includegraphics[width=3.5cm]{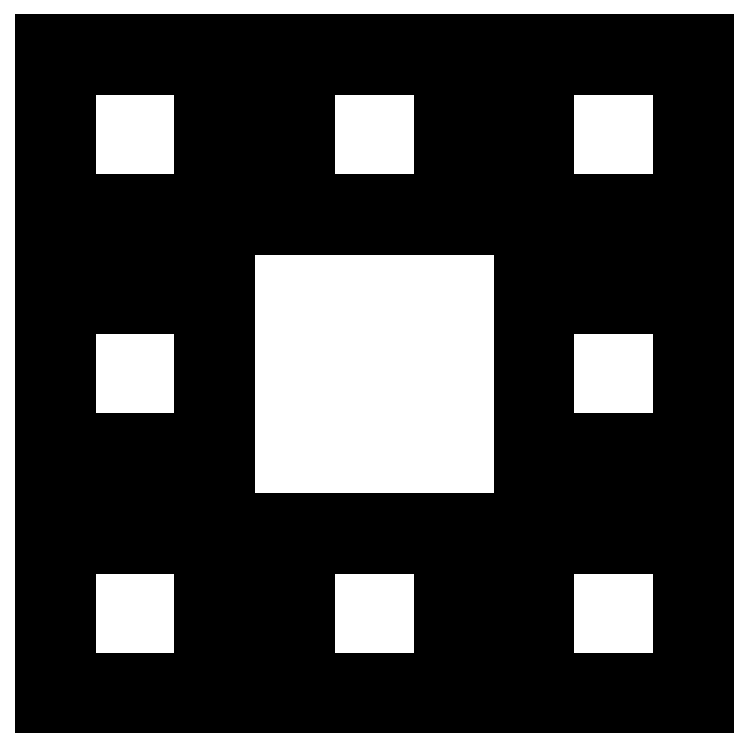}
	\caption{Approximating domains $F_0$, $F_1$ and $F_2$ of the standard {S}ierpi\'{n}ski carpet }\label{fig2}
\end{figure}\vspace{-0.6cm}
 
 \begin{figure}[htp]
 	\includegraphics[width=5cm]{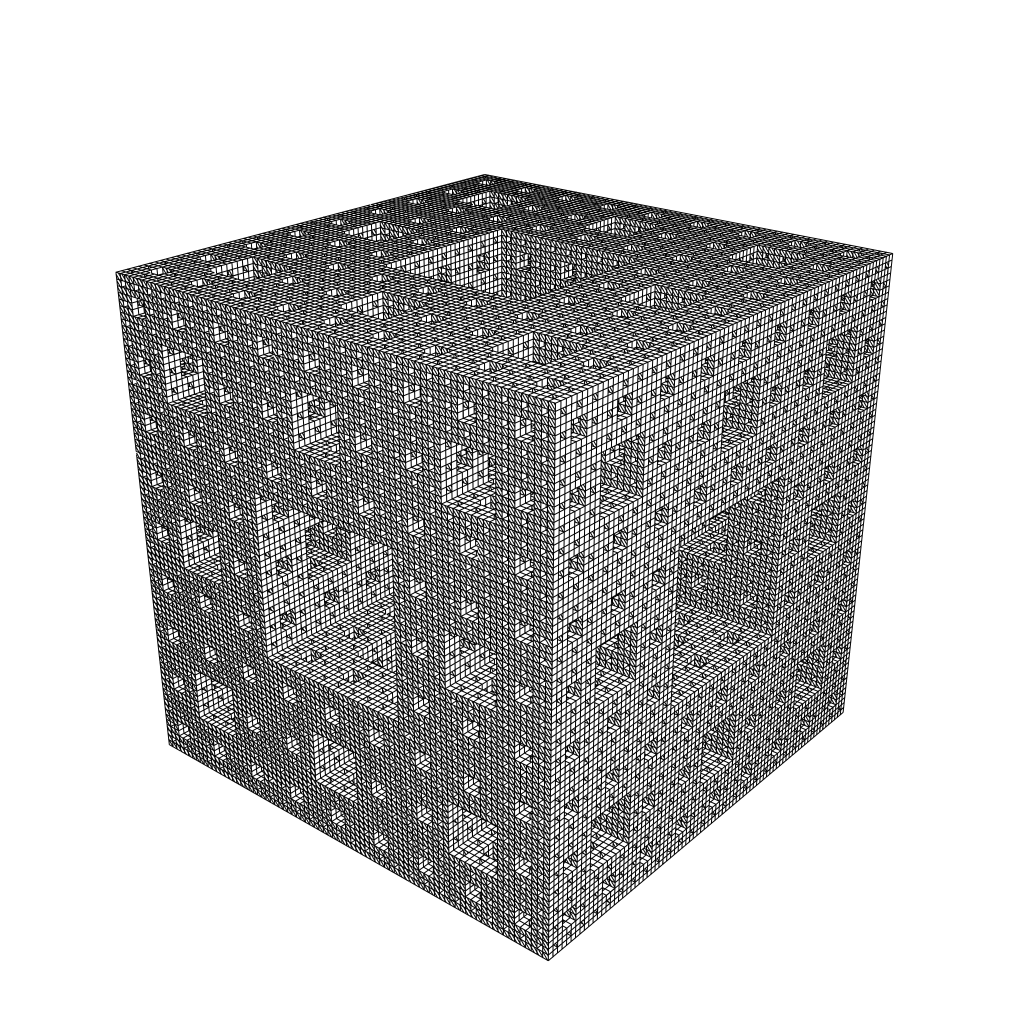}\vspace{-0.3cm}
 	\caption{The {S}ierpi\'{n}ski sponge in $\R^3$}\label{SC3d}
 \end{figure}
 
 \bigskip
 
{\it GSC}s are infinitely ramified fractals. The study of Brownian motions on {\it GSC}\,s was initiated by Barlow and Bass \cite{BB1} in 1989, where they constructed  Brownian motions (also known as locally symmetric diffusions) on a planar {\it GSC} using a probabilistic approach as the scaling subsequential limits of reflected Brownian motions $W_t^{(F_n)}$ on $F_n$ (more precisely, as the weak subsequential limits of $W_{\lambda_nt}^{(F_n)}$ for some constant time scaling factors $\lambda_n\in (0,\infty),n\geq 0$). The same construction extends to higher dimensional cases in \cite{BB5}. The scaling factor $\lambda_n$ is given by
\begin{equation}\label{e:1.1}
\lambda_n=(\rho_F m_F /L_F^2)^n \quad \hbox{for } n\geq 0,
\end{equation} 
where $\rho_F >0$ is the resistance scaling factor for $F$. It is known \cite[Proposition 5.1]{BB5} that 
\begin{equation}\label{e:1.4a}
\bar \rho_F := \rho_F m_F /L_F^2 \geq 1  \quad \hbox{and} \quad   \rho_F \leq 2^{1-d}L_F.
\end{equation}
Thus  $  \rho_F>1$ for any {\it GSC} in $\R^2$ and $0< \rho_F<1$ for any {\it GSC} in $\R^d$ with $d\geq 3$ and $L_F<2^{d-1}$.  
 But there is a {\it GSC} in $\R^3$ having $ \rho_F>1$; see \cite[Section 9]{BB5}.
  Using the resistance estimates \cite{BB3,Mc} and the elliptic Harnack inequalities \cite{BB1,BB5}, Barlow and Bass \cite{BB4,BB5}
	established the sub-Gaussian heat kernel estimates for a Brownian motion on {\it GSC}\,s:
\begin{equation}\label{e:1.2}
\begin{aligned}
 & \frac{c_1}{t^{d_f/d_w}}\exp \left({-c_2 \left(\frac{\rho(x,y)^{d_w}  }{t}\right)^{{1}/{(d_w-1)}}} \right) \leq p(t,x,y)\\ 
 & \hskip 0.2truein \leq \frac{c_3}{t^{d_f/d_w}}\exp \left({-c_4 \left(\frac{\rho(x,y)^{d_w}  }{t}\right)^{{1}/{(d_w-1)}}} \right) 
 \quad \hbox{for }  (t, x, y) \in (0, 1] \times F\times F,
\end{aligned}
\end{equation}
where 
$$
d_w:= \log (\rho_F m_F )/\log L_F >2 
$$
is called the walk dimension of $F$; see \cite[Theorem 1.3 and Remark 5.4]{BB5} or \cite[Remark 4.33]{BBKT}. In literature, $d_s:=\frac{2d_f}{d_w}= \frac{2\log m_F}{\log (\rho_F m_F)}$ is called the spectrum dimension of $F$. Note that $d_s<2$ if and only if  $\rho_F>1$. 
Observe also that $ \bar\rho_F$ in \eqref{e:1.4a} can be expressed as 
\begin{equation}\label{e:1.6a}
\bar \rho_F= L_F^{d_w-2}  
\quad \hbox{and so} \quad 
\lambda_n = \bar \rho_F^n = L_F^{(d_w-2)n}.
\end{equation}

In \cite{KZ}, Kusuoka and Zhou used Dirichlet forms for random walks on fractal-like finite graphs to establish the existence of scale invariant (self-similar) diffusion processes on two-dimensional {\it GSC}\,s, which have the same type of heat kernel estimates. Recently, Grigoryan and Yang \cite{GY} gave a purely analytic construction of a self-similar local regular Dirichlet form on the two-dimensional standard {S}ierpi\'{n}ski carpet $F$ using approximation of stable-like non-local closed forms on $F$.
 
Almost twenty years after \cite{BB4}, Barlow, Bass, Kumagai and Teplyaev \cite[Theorem 1.2]{BBKT} established that 
strongly local, regular, irreducible, locally symmetric Dirichlet forms on $F$ are unique up to a constant multiple. 
These strongly local, regular, irreducible, locally symmetric Dirichlet forms on $F$ are exactly the Dirichlet forms associated with the locally symmetric diffusion processes constructed on GSCs in \cite{BB1, BB5, KZ} up to to a  constant time change. Since $\lambda_n = \bar \rho_F^n$ by  \eqref{e:1.6a}, we define $X_t^{(F_n)}:= W^{(F_n)}_{\bar \rho_F^n t}$ the normally reflected Brownian motion on $F_n$ running with speed $\bar \rho_F^n = L_F^{(d_w-2)n}$.  As mentioned earlier, it is proved in 
\cite{BB1, BB5} that for any subsequence of $\{ X_t^{(F_n)}; n\geq 0\}$, there is a sub-subsequence that converges weakly 
in the space $C([0, \infty); \R^d)$ to a Brownian motion on $F$. 
However, even with the unique result from \cite{BBKT}, it remained open till now whether the sequence  $ \{ X^{(F_n)}; n\geq 0\}$ itself converges; 
 see \cite[Remark 2.13]{Ba2013}. The first main result of this paper is to show that the process $X^{(F_n)} $ converges weakly in the space $C([0, \infty); \R^d)$ equipped with local uniform topology as $n\to \infty$.

Throughout this paper, we use $X^{(F)}$ to denote the locally symmetric diffusion on a {\it GSC} $F$ so that mean time of $X^{(F)}$ starting from ${\bf 0}:=(0,\ldots, 0) \in \R^d$ to hit   the faces of $\partial F_0$ not containing  ${\bf 0} $ is 1. 
We call $X^{(F)}$ a Brownian motion on $F$.
The symmetric strongly local regular Dirichlet form on $L^2(F; \mu)$ associated with $X^{(F)}$ will be denoted as $(\sE^{(F)}, \sF^{(F)})$, which is irreducible and locally symmetric in the sense of \cite[Definition 2.15]{BBKT}. Here $\mu$ is the $d_f$-dimensional
Hausdorff measure on $F$ normalized so that $\mu(F)=1$.

\begin{theorem}\label{T:1.1}
There is a constant $c_0>0$ so that for each $x\in F$ and $x_n\in F_n,n\geq 0$ such that $x_n\to x$ as $n\to\infty$,  the law of $ (X^{(F_n)}_t)_{t\geq 0}$ starting from $x_n$ converges weakly to some conservative continuous Markov process 
	$ \big(X^{(F)}_{t/c_0 }\big)_{t\geq 0}$  starting from $x$ as $n\to \infty$ in the  space $C([0,\infty); \R^d)$  equipped with local uniform topology,
	and $X^{(F)}$ is a locally symmetric diffusion on the {\it GSC} $F$.
\end{theorem}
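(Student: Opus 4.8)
\emph{Proof strategy.}
The plan is to reduce Theorem~\ref{T:1.1} to two statements and then to prove the harder of the two. The first is that the laws of $\{(X^{(F_n)}_t)_{t\ge0}\}_{n\ge0}$, started from $x_n$, are tight in $C([0,\infty);\R^d)$: the $X^{(F_n)}$ are conservative normally reflected diffusions on the compact sets $F_n$, and the resistance estimates behind \eqref{e:1.4a}--\eqref{e:1.2} (\cite{BB3,Mc}) hold on the pre-carpets $F_n$ with constants uniform in $n$, which yields an oscillation bound $\bP_{x_n}\big[\sup_{s\le t}|X^{(F_n)}_s-x_n|\ge\eps\big]\le C\,(t/\eps^{d_w})^{\gamma}$ uniformly in $n$ and in $x_n\in F_n$; together with $x_n\to x$, this gives tightness via a Kolmogorov--Chentsov/Aldous criterion. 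The second statement is that every subsequential weak limit equals $(X^{(F)}_{t/c_0})_{t\ge0}$ with \emph{one and the same} $c_0$: by \cite{BB1,BB5} each subsequential limit is a strongly local, regular, irreducible, locally symmetric Dirichlet process on $F$, so by the uniqueness theorem \cite{BBKT} it is $X^{(F)}$ run at some constant speed $1/c$ with $c\in(0,\infty)$ a priori depending on the subsequence. Hence the whole theorem comes down to showing that a single renormalized scalar converges along the full sequence $n\to\infty$: the effective resistance of $F_n$ between two fixed opposite faces of $\partial F_0$, renormalized (cf. \eqref{e:1.4a}) so that it stays bounded above and below.

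\emph{The heart: convergence of the renormalized resistances.}
Let $R_n$ be this renormalized effective resistance, realized as $R_n^{-1}\asymp\sE^{(F_n)}(h_n,h_n)$ where $\sE^{(F_n)}$ is the correspondingly renormalized energy form of $X^{(F_n)}$ on $F_n$ and $h_n\in H^1(F_n)$ is the harmonic crossing function (Neumann data on the lateral faces, boundary values $0$ and $1$ on the two chosen faces); let $(\sE^{(F)},\sF^{(F)})$ be the unique form on $F$, $\mathfrak R$ its effective resistance between the same faces, and $h\in\sF^{(F)}$ its crossing function with $\mathfrak R^{-1}=\sE^{(F)}(h,h)$. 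I would prove $R_n\to R_\infty\in(0,\infty)$ by a matched pair of bounds coming from two comparison operators between the varying spaces. The upper bound rests on an \emph{extension operator} $\mcT_n:\sF^{(F)}\to H^1(F_n)$, preserving boundary values on the two opposite faces and built cell by cell along the self-similar decomposition $F_n=\bigcup_{Q\in\mcQ_1(F)}\Psi_Q(F_{n-1})$, for which one must show that the renormalized energy of $\mcT_n u$ is not merely bounded but asymptotically sharp: $\limsup_n\sE^{(F_n)}(\mcT_n u,\mcT_n u)\le c_0^{-1}\sE^{(F)}(u,u)$ for all $u\in\sF^{(F)}$. The delicate point is to estimate the energy of $\mcT_n u$ across the faces shared by adjacent sub-cells; this is exactly where non-diagonality (SC3), connectedness (SC2) and symmetry (SC1) are used, and where the weak convergence of the energy measures of $u$ (here $u=h$) is the driving mechanism. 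Since $h_n$ minimizes energy subject to the boundary constraint, taking $\mcT_n h$ as competitor gives $\limsup_n R_n^{-1}\le c_0^{-1}\mathfrak R^{-1}$. The lower bound rests on a companion compactness/lower-semicontinuity statement: any $u_n\in H^1(F_n)$ with $\sup_n\sE^{(F_n)}(u_n,u_n)<\infty$ and $\sup_n\|u_n\|_\infty<\infty$ has a subsequence along which $u_n\,d\mu_n$ and the (renormalized) energy measures of $u_n$ converge weakly, the limit $u$ lies in $\sF^{(F)}$ with boundary values on the two faces passing to the limit, and $\liminf_n\sE^{(F_n)}(u_n,u_n)\ge c_0^{-1}\sE^{(F)}(u,u)$; applying this with $u_n=h_n$ (whose energies are bounded by the upper bound already obtained) gives $\liminf_n R_n^{-1}\ge c_0^{-1}\mathfrak R^{-1}$. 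The two bounds force $R_n^{-1}\to c_0^{-1}\mathfrak R^{-1}$, and the constant $c_0$ is then pinned down by converting the normalization of $\sE^{(F)}$ (mean time for $X^{(F)}$ from ${\bf 0}$ to hit the far faces equal to $1$) into the corresponding crossing time of $X^{(F_n)}_t=W^{(F_n)}_{\bar\rho_F^n t}$, using \eqref{e:1.6a} and the masses of $F_n$ and $F$.

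\emph{Assembling, and the main obstacle.}
With the resistance limit in hand, the extension operator and the lower-semicontinuity statement upgrade to the assertion that the renormalized Dirichlet forms of $X^{(F_n)}$ on $L^2(F_n;\mu_n)$ Mosco-converge, in the generalized sense of Kuwae and Shioya for varying Hilbert spaces (using $\mu_n\Rightarrow\mu$), to $c_0^{-1}(\sE^{(F)},\sF^{(F)})$ on $L^2(F;\mu)$. Mosco convergence along the varying spaces gives strong convergence of the associated resolvents and semigroups, and this, combined with the tightness above and with $x_n\to x$, yields weak convergence of $(X^{(F_n)}_t)_{t\ge0}$ started from $x_n$ in $C([0,\infty);\R^d)$ to the process generated by $c_0^{-1}(\sE^{(F)},\sF^{(F)})$ started from $x$, namely $(X^{(F)}_{t/c_0})_{t\ge0}$, which is a constant time change of $X^{(F)}$ and hence a locally symmetric diffusion on the {\it GSC} $F$. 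I expect the main obstacle to be the resistance convergence itself, and within it the construction of $\mcT_n$ with energy that is asymptotically \emph{sharp} rather than merely bounded --- where the non-diagonality condition is indispensable --- together with the weak convergence of the energy measures underlying the compactness step. This is precisely the open question of Barlow and Bass that the paper answers; granted it, the remainder is a combination of their subsequential convergence \cite{BB1,BB5}, the uniqueness theorem \cite{BBKT}, and standard Mosco-convergence machinery.
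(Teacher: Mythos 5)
Your outline reproduces the architecture of the paper's argument: tightness plus the subsequential limits of \cite{BB1,BB5} and the uniqueness theorem of \cite{BBKT} reduce everything to the convergence of a single renormalized scalar (Theorem \ref{T:3.11}); a recovery-type upper bound matched against a Mosco/lower-semicontinuity lower bound pins that scalar down; and the Kuwae--Shioya machinery (Theorem \ref{thm211}, Theorem \ref{T:3.12}) then converts form convergence into weak convergence of the processes. The resistance convergence appearing as the equivalent scalar statement is also how the paper packages Theorem \ref{T:1.4}. So the skeleton is right.

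The genuine gap is in what you call the heart. The asymptotically sharp extension operator $\mcT_n$ is asserted, and the mechanism you name for it (``weak convergence of the energy measures of $u$'') is not one that can carry the estimate; this step is exactly the open problem restated. Concretely, four ingredients are missing. First, you cannot match the full boundary trace of the crossing function on $\partial_o F_n$ (the traces live on different sets carrying different measures $\nu$ and $\nu_n$), so any construction can only match finitely many data --- level-$n$ sub-face averages --- and the mismatch must be controlled by trace and extension theorems comparing the Besov-type energies $\Lambda_n$ and $\Lambda^{(m)}_n$ on $\partial_o F$ and $\partial_o F_m$ with energy measures in the boundary shells $F_{\mcB_n}$ (Theorems \ref{T:A.3} and \ref{T:A.4}); these in turn rest on the inequality $d_I>d_f-d_w$ (Lemma \ref{lemma38}), which is itself a nontrivial fact you never address. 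Second, getting closeness of energies of harmonic functions on $F_m$ and on $F$ with matched sub-face averages, uniformly over all such boundary data and with a single constant, requires the finite-dimensional compactness argument through the core $\mathscr{C}_n$ (Proposition \ref{prop41} via Lemmas \ref{lemma44}--\ref{lemma47}); nothing in your sketch plays this role. Third, the per-cell gluing errors across shared faces only vanish because the energy measure of an $\sE^{(F)}$-harmonic function in the $\eps$-shell of a cell boundary decays at a definite rate (Theorem \ref{thmB1}); without such a decay estimate the cross-face contributions in your cell-by-cell construction need not be negligible. Fourth, there is a circularity in your matched bounds: the constant $c_0$ in $\limsup_n \sE^{(F_n)}(\mcT_n u)\leq c_0^{-1}\sE^{(F)}(u)$ is undefined until the very limit you want exists. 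The paper breaks this by setting $\alpha=\limsup_m\alpha_m$, performing the sharp two-scale construction of Lemma \ref{lemma51} and the functions $g_l$ along a subsequence realizing $\alpha$, proving $\alpha\,\mcE^{(F_l)}(g_l)\to\mcE^{(F)}(h)$ for the full sequence, and only then invoking Mosco (M1) to exclude any smaller subsequential value of $\alpha_m$. Without these four ingredients your proposal identifies where the difficulty sits but does not bridge it.
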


\medskip

 \begin{remark}\label{R:1.2} \rm 
Let $\tau_n$ denote the time that  $X^{(F_n)}$ starting from ${\bf 0}$ to hit the faces of $\partial F_0$ not containing  ${\bf 0} $.
Then by Theorem \ref{T:3.11},  \eqref{e:3.11a}  and the proof of Theorem \ref{T:1.1} of this paper, 
$\lim_{n\to \infty} \bE^{(F_n)}_{\bf 0} \left[ \tau_n \right]$ exists as a positive number and $c_0$ is this limit. 
\end{remark}

Our proof for Theorem \ref{T:1.1} uses Mosco convergence of Dirichlet forms on varying spaces as developed in Kuwae and Shioya \cite{KS}. Denote by $m$ the Lebesgue measure on $\R^d$. For $n\geq 0$, let $\mu_n$ be the normalized Lebesgue measure on
$F_n$ so that $\mu_n(F_n)=1$; that is 
$$
\mu_n = m_F^{-n} L_F^{nd} \, m|_{F_n} = L_F^{(d-d_f)n} \, m|_{F_n}.
$$
Denote by $W^{1,2}(F_n)$ the Sobolev space of order $(1, 2)$ on $F_n$:  
$$
W^{1,2}(F_n):=\{f\in L^2(F_n; m): \nabla f \in L^2(F_n; m)\}.
$$
The Dirichlet form of $X^{(F_n)}$ on $L^2(F_n; \mu_n)$ is $(\mcE^{(F_n)},W^{1,2}(F_n))$, where 
$$
\mcE^{(F_n)}(f,g)=\frac{1}{2} L_F^{(d_w-d_f+d-2)n} \int_{F_n}  \nabla f(x) \,\nabla g(x) m(dx) 
= \frac{1}{2} L_F^{(d_w- 2)n} \int_{F_n}  \nabla f(x) \,\nabla g(x) \mu_n(dx)
$$
for 
$f,g\in W^{1,2}(F_n)$.  It is a  strongly local regular Dirichlet form  on $L^2(F_n; \mu_n)$. 

\medskip

 As mentioned in \cite[Remark 5.4]{BBKT}, which we will present as Theorem  \ref{T:3.11} and give a proof,
 there is a sequence of constants $\{\alpha_n; n\geq 0\}$ that  are bounded between two positive constants such that  
$\big\{ X^{(F_n)}_{\alpha_n t}; t\geq 0 \big\} $ converges weakly to $X^{(F)}$ in the space $C([0, \infty); \R^d)$  equipped with local uniform topology. Consequently, we show in Theorem \ref{T:3.12} 
 that $\alpha_n\mcE^{(F_n)}$ is Mosco convergent to $\mcE^{(F)}$. There is a close relationship between the Mosco convergence of the Dirichlet forms and the convergence of finite-dimension distributions of the associated processes;  see Theorem \ref{thm211} below.  So the crux work is to show the convergence of $\alpha_n$.
 
 \begin{theorem}\label{T:1.3}
The limit $\lim_{n\to \infty} \alpha_n$ exists and equals the constant $c_0$ in Theorem \ref{T:1.1}.
Moreover, the Dirichlet form $(\sE^{(F_n)}, W^{1,2}(F_n))$ on $L^2(F_n; \mu_n)$ is Mosco convergent to 
$(c_0^{-1} \sE^{(F)}, \sF^{(F)})$ on $L^2(F;\mu)$ as $n\to \infty$ in the sense of Definition \ref{def28}. 
\end{theorem}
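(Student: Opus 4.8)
\medskip
\noindent\emph{Proof strategy.} The plan is to reduce the whole theorem to one statement — that the renormalized effective resistances of the pre-carpets $F_n$ between opposite faces converge — and then to prove that statement by an approximate multiplicativity across scales. Fix the opposite faces $A:=\{x_1=0\}$ and $B:=\{x_1=1\}$ of $F_0$, and for $n\ge 0$ set
\[
a_n^{-1}\ :=\ \inf\big\{\mcE^{(F_n)}(f,f):\ f\in W^{1,2}(F_n),\ f|_{A\cap F_n}=0,\ f|_{B\cap F_n}=1\big\},
\]
the effective resistance of $F_n$ across $A$ and $B$; the traces make sense by (SC4) and trace theory on $F_n$, and the Barlow--Bass resistance estimates recalled in Section \ref{sec1} give $c_1\le a_n\le c_2$ for positive constants $c_1,c_2$. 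I claim it is enough to prove that $(a_n)_{n\ge0}$ converges. Indeed, by Theorem \ref{T:3.12} the forms $\alpha_n\mcE^{(F_n)}$ are Mosco convergent to $\mcE^{(F)}$, and rescaling a Mosco convergent sequence of forms by scalars $\beta_n\to\beta\in(0,\infty)$ preserves Mosco convergence in the sense of Definition \ref{def28} (both clauses pass to the limit, using $\liminf_n\beta_ns_n=\beta\liminf_ns_n$ for $s_n\ge0$). Hence, along any subsequence on which the bounded sequence $\{\alpha_n\}$ (bounded by Theorem \ref{T:3.11}) converges to some $\gamma$, the forms $\mcE^{(F_n)}=\alpha_n^{-1}\big(\alpha_n\mcE^{(F_n)}\big)$ are Mosco convergent to $\gamma^{-1}\mcE^{(F)}$; by the continuity of effective resistances of regular sets under Mosco convergence (again via (SC4), cf.\ \cite{KS}) the cross resistance of $\gamma^{-1}\mcE^{(F)}$ equals $\lim_na_n=:a_\infty$, so $\gamma=a_\infty/R^{(F)}$ with $R^{(F)}\in(0,\infty)$ the cross resistance of $\mcE^{(F)}$ itself. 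As this value does not depend on the subsequence, $\alpha_n\to a_\infty/R^{(F)}$; this limit is the constant $c_0$ of Theorem \ref{T:1.1} (both Theorem \ref{T:3.11} and the proof of Theorem \ref{T:1.1} exhibit $X^{(F)}$ as the weak limit of $X^{(F_n)}$ after the time changes $\alpha_n\,\cdot$ and $c_0\,\cdot$ respectively, forcing $\lim_n\alpha_n=c_0$), and the Mosco assertion of Theorem \ref{T:1.3} is then the displayed convergence with the full sequence and $\gamma=c_0$.

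So the crux is the convergence of $(a_n)$ — precisely the open question of Barlow and Bass. The heart of the argument is the estimate that there is a sequence $\eta_k\downarrow0$ with
\[
\big|\log a_{n+m}-\log a_n-\log a_m\big|\ \le\ \eta_{n\wedge m}\qquad\text{for all }n,m\ge1,\qquad n\wedge m:=\min(n,m).
\]
The structural input is the exact self-similar decomposition $F_{n+m}=\bigcup_{|w|=n}\Psi_w(F_m)$ over level-$n$ words $w$ (with $\Psi_w$ the associated $L_F^{-n}$-contraction onto a cell), together with the companion identity
\[
\mcE^{(F_{n+m})}(f,f)\ =\ \rho_F^{\,n}\sum_{|w|=n}\mcE^{(F_m)}\big(f\circ\Psi_w,\,f\circ\Psi_w\big),\qquad f\in W^{1,2}(F_{n+m}),
\]
which follows from the chain rule, the normalizations of $\mcE^{(F_n)}$ and $\mu_n$, and $\rho_F=L_F^{\,d_w-d_f}$, and the analogous self-similarity of $\mcE^{(F)}$. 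Both inequalities then come from Rayleigh monotonicity (restricting, respectively extending, test functions) combined with the elliptic Harnack inequality for $\mcE^{(F_{n+m})}$ — uniform in the level, from \cite{BB1,BB5} — iterated down $n$ scales: this forces the relevant equilibrium potential, restricted to a level-$n$ cell, to be $C\theta^n$-close (for some $\theta\in(0,1)$) to a solution of the level-$m$ problem with affine boundary data, which is exactly the decoupling needed to read off $a_{n+m}=a_na_m(1+O(\eta_n))$ from the displayed identity and the additivity of the energy measures. Condition (SC3) enters in order to glue $W^{1,2}$-functions across cell faces, and (SC1) in order to work with a single pair of faces. \emph{Proving this self-similar comparison with a genuinely vanishing error is the main obstacle}, and it is where the energy-measure and extension-operator machinery of the paper is used.

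Granting the estimate, the convergence of $(a_n)$ is elementary: if $(t_n)$ is a bounded real sequence with $|t_{n+m}-t_n-t_m|\le\eta_{n\wedge m}$ and $\eta_k\to0$, then $(t_n)$ converges. Indeed, its set $S$ of subsequential limits is nonempty, closed and bounded; passing to the limit in the inequality along index pairs with $n\wedge m\to\infty$ gives $S+S\subseteq S$, and then $2\inf S\ge\inf S$ and $2\sup S\le\sup S$ force $S$ to be a single point. Applying this with $t_n=\log a_n$ gives $a_n\to a_\infty\in(0,\infty)$, and the reduction of the first paragraph then yields $\lim_n\alpha_n=c_0$ together with the Mosco convergence of $(\mcE^{(F_n)},W^{1,2}(F_n))$ to $(c_0^{-1}\mcE^{(F)},\sF^{(F)})$, which completes the proof.
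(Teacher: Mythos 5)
Your proposal leaves the actual crux unproved, and the sketch offered for it would not work. The estimate $|\log a_{n+m}-\log a_n-\log a_m|\le\eta_{n\wedge m}$ with $\eta_k\downarrow 0$ \emph{is} the open problem: Rayleigh monotonicity plus the uniform EHI of \cite{BB1,BB5}, iterated over $n$ scales, only yields two-sided comparability of resistances with uniform (not asymptotically sharp) constants --- exactly the classical bounds of \cite{BB3,Mc} already quoted in Lemma \ref{lemma36}. Your key assertion, that the equilibrium potential of $F_{n+m}$ restricted to a level-$n$ cell is $C\theta^n$-close to a solution of the level-$m$ problem with prescribed boundary data, does not follow from oscillation decay: sup-norm closeness of boundary data gives no control of the \emph{energy} of the difference unless one has a quantitative trace/extension theory on $\partial_o F_m$ and a decay estimate for the energy measure of harmonic functions near cell boundaries. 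That is precisely the machinery the paper has to build (Theorems \ref{T:A.3}, \ref{T:A.4}, \ref{thmB1}, Proposition \ref{prop41} and the gluing construction of Lemma \ref{lemma51}), and without it your ``decoupling with vanishing error'' is an unsupported claim, not a proof step. There is also an internal warning sign: since $\log a_n$ is bounded, your own elementary lemma applied to the claimed estimate forces $\log a_n\to 0$, i.e.\ $R_n\to 1$; nothing in the normalization by $\rho_F$ justifies this, and the paper's proof identifies the limit as $\alpha/\mcE^{(F)}(h)$ with no reason for it to equal $1$. So the proposed estimate is very likely false as stated, not merely unproved.

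The reduction step also contains an unjustified claim: effective resistances between prescribed faces are \emph{not} continuous under Mosco convergence in the sense of Definition \ref{def28}, and \cite{KS} provides no such statement. Condition (M2) supplies recovery sequences with no boundary constraints, so the constrained infimum defining $R_n^{-1}$ can drop in the limit; only one inequality (via (M1) together with compactness of the harmonic minimizers) comes for free. This is exactly why the paper cannot deduce Theorem \ref{T:1.4} from Theorem \ref{T:3.12} alone, and instead constructs admissible functions $g_l$ with matched boundary averages (Lemma \ref{lemma51}) whose energies are shown to converge to $\mcE^{(F)}(h)$ via Proposition \ref{prop41} and the appendices; in the paper the convergence of $\alpha_n$ and of $R_n$ is the \emph{output} of that construction, whereas in your outline it is fed in as an input through the unproved multiplicativity estimate. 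In short, both the reduction and the key lemma rest on steps that fail or are missing, so the argument does not establish Theorem \ref{T:1.3}.
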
\medskip

This   in particular  answers a question raised by Barlow  \cite[p.9 and Remark 2.13]{Ba2013} on the convergence of $\sE^{(F_n)}$.
The above two results are  
closely related to an open question of Barlow and Bass \cite{BB3}  concerning the asymptotic behavior of effective resistance.  
  The effective resistance $R_n$ between two opposite faces of $F_n$ with respect to 
the Dirichlet form $(\mcE^{(F_n)},W^{1,2}(F_n))$ is defined by 
\begin{equation}\label{e:1.4}
R_n^{-1} := \inf\Big\{\mcE^{(F_n)}(f,f):   f\in W^{1,2}(F_n)\cap C(F_n),   \, f|_{  F_n  \cap \{x_1=0\} }=0, \, f|_{  F_n \cap \{ x_1= 1\}  } =1 \Big\} , 
\end{equation}  
where  for $i\in \{1,2,\cdots,d\}$, $x_i$ stands for the $i$th coordinate of $x\in \R^d$. It is shown  in  \cite[Theorem 5.1]{BB3}  that for the standard Sierpi\'nski carpet in $\R^2$,
  $$
  1/4\leq R_n\leq 4  \quad \hbox{for every } n\geq 1.
  $$
 Note that our definition of the resistance $R_n$ is the normalized version of the resistance defined in \cite{BB3}; that is, our $R_n$ is 
 $R_n/\rho^n$ in \cite{BB3} with   $\rho:= \rho_F$. The open question posed as Problem 1 in Barlow and Bass \cite{BB3} is whether the limit of $R_n$ exists. 
The second main result of this paper answers this question affirmatively.

\begin{theorem}\label{T:1.4}
	The limit $\lim\limits_{n\to\infty} R_n$ exists as  a positive real number. 
\end{theorem}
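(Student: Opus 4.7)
The plan is to derive Theorem~\ref{T:1.4} from the Mosco convergence established in Theorem~\ref{T:1.3}. By definition $R_n^{-1}$ is the infimum of $\mcE^{(F_n)}(f,f)$ over $f\in W^{1,2}(F_n)\cap C(F_n)$ with $f=0$ on $D_1^n:=F_n\cap\{x_1=0\}$ and $f=1$ on $D_2^n:=F_n\cap\{x_1=1\}$. Since $\mcE^{(F_n)}$ Mosco-converges to $c_0^{-1}\sE^{(F)}$ on $L^2(F;\mu)$, the standard link between Mosco/$\Gamma$-convergence and convergence of constrained minima suggests
\begin{equation*}
\lim_{n\to\infty} R_n \;=\; c_0\,\widetilde{R}, \qquad
\widetilde{R}^{-1}:=\inf\bigl\{\sE^{(F)}(u,u):\, u\in\sF^{(F)}\cap C(F),\ u|_{D_1}=0,\ u|_{D_2}=1\bigr\},
\end{equation*}
where $D_i:=F\cap\{x_1=i-1\}$; functions in $\sF^{(F)}$ admit continuous representatives by the sub-Gaussian heat kernel estimates \eqref{e:1.2}.

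\textbf{Key steps.} First I would verify $\widetilde{R}\in(0,\infty)$. Finiteness follows from an explicit admissible competitor (such as a Lipschitz interpolation between the two faces, which lies in $\sF^{(F)}$). Positivity uses (SC4) to deduce that $D_1,D_2$ contain line segments of positive $\sE^{(F)}$-capacity, and (SC1)--(SC2) to conclude that the unique harmonic minimizer is non-constant. Next, for the $\liminf$ inequality $c_0\liminf_n R_n^{-1}\geq \widetilde{R}^{-1}$, I would take near-minimizers $f_n$ for $R_n^{-1}$, use the a priori bound $0\leq f_n\leq 1$ together with the Mosco $\liminf$ (in the Kuwae--Shioya sense) to extract a weak subsequential limit $f\in\sF^{(F)}$ with $\sE^{(F)}(f,f)\leq c_0\liminf R_n^{-1}$, and then verify that $f$ inherits the boundary values $f|_{D_1}=0$ and $f|_{D_2}=1$. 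Finally, for the $\limsup$ inequality $c_0\limsup_n R_n^{-1}\leq \widetilde{R}^{-1}$, I would start with the minimizer $h$ of the limit problem and construct a Mosco recovery sequence $h_n\in W^{1,2}(F_n)\cap C(F_n)$ respecting the boundary data on $D_i^n$ and satisfying $\mcE^{(F_n)}(h_n,h_n)\to c_0^{-1}\sE^{(F)}(h,h)$; self-similarity and classical trace theory for the Lipschitz pre-carpets $F_n$ should let one modify a generic recovery sequence near $D_1^n\cup D_2^n$ without inflating its energy.

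\textbf{Main obstacle.} The delicate point is preserving the boundary values on $D_1,D_2$ throughout both directions of the convergence: these faces have $\mu$-measure zero, so the traces of $L^2(F;\mu)$-functions are not canonically defined, and the Mosco framework a priori only tracks $L^2$-convergence across the varying spaces. A natural route is to reformulate $R_n^{-1}$ probabilistically: the minimizer $h_n$ is the hitting probability $h_n(x)=\bP^{X^{(F_n)}}_x[T_{D_2^n}<T_{D_1^n}]$, and Theorem~\ref{T:1.1} provides the weak convergence of $X^{(F_n)}$ to a time-change of $X^{(F)}$, which suggests that $h_n$ converges pointwise (and in fact locally uniformly, by the uniform H\"older estimates implicit in \eqref{e:1.2}) to the corresponding hitting probability on $F$. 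The main technical work is to combine this process-level convergence with the energy $\liminf$ from Mosco convergence: one needs the limit Brownian motion $X^{(F)}$ to hit each of $D_1,D_2$ almost surely from interior points (so that the limit hitting probability is well defined and non-trivial), and the convergence of $h_n$ to be strong enough to identify the limit of the boundary conditions and to pass to the limit in $\mcE^{(F_n)}(h_n,h_n)=R_n^{-1}$.
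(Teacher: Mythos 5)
The half of your argument bounding $\limsup_n R_n$ is essentially the paper's: the paper takes the exact minimizers $h_l$ (the $\mcE^{(F_l)}$-harmonic functions equal to $0$ and $1$ on the two opposite faces), uses the EHI-based equicontinuity together with the process convergence of Theorem \ref{T:3.11} (the argument of Lemma \ref{lemma41}) to extract a subsequential limit $h'\in C(F)$ that keeps the boundary values, and then applies the Mosco liminf (M1). Your probabilistic reformulation via hitting probabilities is the right fix for the boundary-value issue in that direction (work with the exact harmonic minimizers rather than near-minimizers, so that EHI is available), so this part is sound and matches the paper.

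The genuine gap is the opposite inequality, $\liminf_n R_n\ge c_0\,\widetilde R$. For it you must exhibit admissible competitors $f_n\in W^{1,2}(F_n)\cap C(F_n)$ that are exactly $0$ and $1$ on the two faces of $F_n$ and satisfy $\limsup_n\mcE^{(F_n)}(f_n)\le c_0^{-1}\sE^{(F)}(h)$. A Mosco (M2) recovery sequence carries no boundary information, and your proposed repair -- ``classical trace theory for the Lipschitz pre-carpets $F_n$'' plus self-similarity to correct the sequence near $D_1^n\cup D_2^n$ without inflating its energy -- does not work: under the renormalized forms $\mcE^{(F_n)}=L_F^{(d_w-2)n}\int_{F_n}|\nabla f|^2\,d\mu_n$ the Euclidean trace/extension constants of $F_n$ are not uniform in $n$, and controlling the energy cost of imposing boundary data uniformly in $n$ is precisely the main technical content of the paper. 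The paper achieves it through the uniform trace and extension theorems for the weighted Besov energies $\Lambda_n$ and $\Lambda^{(m)}_n$ on $\partial_o F$ and $\partial_o F_m$ (Theorems \ref{T:A.3} and \ref{T:A.4}), the decay of energy measures of harmonic functions near cell boundaries (Theorem \ref{thmB1}), the comparison of energies of harmonic functions with matched sub-face averages (Proposition \ref{prop41}), and the two-scale construction of Lemma \ref{lemma51}, which together produce the competitors $g_l$ with exact boundary values and $\lim_l\alpha\,\mcE^{(F_l)}(g_l)=\sE^{(F)}(h)$. Without an argument of this type your lower bound on $\liminf_n R_n$ is unsupported. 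Note also that Theorem \ref{T:1.3}, which you invoke as a black box, is itself proved in the paper by the same construction (it is stated before Theorem \ref{T:1.4} but established simultaneously with it), so citing it does not let you bypass this work.
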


Theorems \ref{T:1.1}. \ref{T:1.3} and \ref{T:1.4} play a fundamental role in our study of stochastic homogenization on unbounded generalized {S}ierpi\'{n}ski carpets in our forthcoming paper \cite{CC}. 
 
 \medskip
 
A major step in our proof of the convergence of $\alpha_n$ and $R_n$ is to construct a sequence of functions $h_n\in C(F_n)\cap W^{1,2}(  F_n)$ that takes value 0 and 1 on a pair of opposite faces of $F_n$ and is strongly convergent  to $h$ in $L^2$ with  $\lim_{n\to \infty} \mcE^{(F_n)}(h_n)=c_0^{-1}\mcE^{(F)}(h)$, 
where $h$ is the  continuous function on $F$ that takes $0$ or  $1$ on a pair of opposite faces of $F$ and  is $\mcE^{(F)}$-harmonic elsewhere. 
To construct such functions, we establish   trace theorems on $F$ and $F_n$, respectively, 
which extend an earlier result  of Hino and Kumagai \cite{HK}, and a property that the energy measures of $\sE^{(F)}$-harmonic functions within the $\eps$-neighborhood of the boundary of a cell decays at a polynomial rate in $\eps>0$; see Theorems \ref{T:A.3}, \ref{T:A.4} and \ref{thmB1} in the  two Appendices. We also establish several properties for GSCs that were previously either put as assumptions or were proved under some additional conditions in the literature. They include, for instance, Lemma \ref{lemma38}, Lemma \ref{lemmaHK} and Corollary \ref{coroA5}.  These results shied new lights on Brownian motions on the GSC $F$.

\medskip

We remark that there are other ways to approximate   a {\it GSC}, for example the cell graphs \cite{KZ} and the graphical $SC$'s \cite{BB6}. The approach of  this paper can be modified to establish the corresponding convergence results 
on these approximating spaces to $F$. 

\medskip 

The rest of the paper is organized as follows. In Section \ref{sec2}, we recall the definition of Mosco convergence of Dirichlet forms defined on varying spaces from \cite{KS}. We also carefully define various concepts of convergence of functions and present some of their properties that 
will be used in establishing Mosco convergence in this paper. In Section \ref{sec3}, we first recall the uniform elliptic Harnack inequalites from \cite{BB5}.
We then present lower bound estimates of effective resistances between a cell and the complement of its neighborhood and establish 
the Mosco convergence of $\alpha_m \sE^{(F_m)}$ to $\sE^{(F)}$. Sections \ref{sec4} and \ref{sec5} are the most important parts of the paper. In Section \ref{sec4}, we establish an uniform estimate of $ \big|\sqrt{\mcE^{(F_n)}(g_n)}-\sqrt{\mcE^{(F)}(g)}\big|$ 
in terms of  the Besov norms on the boundary and energy measure near the boundary, where $g_n\in W^{1,2}(F_n),n\geq 0$ and $g\in \mcF^{(F)}$ are $\sE^{(F_n)}$- and $\sE^{(F)}$-harmonic functions having
 the same average values on level-$m$ sub-faces of the boundary. We state the results in slightly more generality for future applications. In particular, Corollary \ref{coro48} will not be used in this paper but will be needed in a forthcoming paper 
\cite{CC}. In Section \ref{sec5}, we construct the desired  approximating functions $h_n$ mentioned above and establish the energy estimates of $h_n$. The proofs for Theorems \ref{T:1.1}, \ref{T:1.3} and \ref{T:1.4} 
are given at the end of Section \ref{sec5}, which also use trace theorems and energy measure boundary decay property of $\sE^{(F)}$-harmonic functions. The proof of these results are given in two Apppedixes.  In Appendix \ref{secA}, we prove a trace theorem that relates energy measures near the boundary of $F$ (see Figure \ref{fig3} in Section \ref{sec4}) to some weighted Besov energies on the boundary of $F$. In Appendix \ref{secB}, 
for any continuous function  in $\sF^{(F)}$  that is $\sE^{(F)}$-harmonic  in a cell of $F$, we show that its   energy measure   in the $\eps$-neighborhood of  the boundary of the  cell in $F$ decays  at a polynomial rate in $\eps$. This result extends a result of similar type in Hino and Kumagai \cite[Proposition 3.8]{HK}. Our proof is different from theirs.  
 
\bigskip

For the reader's convenience, the following is a list of notations used in this paper.

\medskip

$F$: generalized {S}ierpi\'{n}ski carpet in $[0, 1]^d$; Section \ref{sec1}

$F_n$: stage $n$ construction of the  {\it GSC} $F$; Section \ref{sec1}

$\partial_o F:= F \cap \partial F_0$ and $\partial_o F_m:= F_m \cap \partial F_0$;  Section \ref{sec2}

$L_F\geq 3$: length scale of $F$; Section \ref{sec1}

$\mathcal{Q}_n$: sub-cubes of length $L_F^{-n}$ defined in \eqref{e:1.1a}

$\mathcal{Q}_n(A):=\{Q\in\mcQ_n:\operatorname{int}(Q)\cap A\neq\emptyset\}\}$: defined in \eqref{e:1.2a} 

$\Psi_Q$: orientation preserving affine map of $F_0=[0, 1]^d$ onto $Q \in \cup_{n=0}^\infty \mathcal{Q}_n $; Section \ref{sec1}

$m_F:= \#  \mcQ_1(F)$:  mass scale of $F$;  Section \ref{sec1}

$m_I:=\#\{Q\in \mcQ_1(F_1):Q\cap \{x_1=0\} \neq \emptyset\}$; Section \ref{sec2}

$\rho_F$: resistance scaling factor for $F$; \eqref{e:1.1}

$d_f:=\frac{\log m_F}{\log L_F}$: Hausdorff dimension of $F$; Section \ref{sec1}

$d_w:= \log (\rho_F m_F )/\log L_F \geq 2$: walk dimension of $F$; Section \ref{sec1}

$d_I:=\frac{\log m_I}{ \log L_F}$: Hausdorff dimension of the outer boundary $\partial_o F:=F\cap \partial F_0$ of $F$; Section \ref{sec2}

$\mu_n$:     normalized Lebesgue measure on $F_n$ so that $\mu_n(F_n)=1$; Section \ref{sec1}

$\mu$: normalized $d_f$-dimensional Hausdorff measure on $F$ so that $\mu(F)=1$; Section \ref{sec1}

$l(A)$: space of real-valued measurable functions on a measurable space $A$;  Section \ref{sec2}

$F_Q:= F\cap Q$  and $F_{m, Q}:=F_m\cap Q$ for $Q\in\mcQ_n$;  Section \ref{sec2}

$F_{\mathcal A}:=\cup_{Q\in \mathcal A} F_Q$ and $F_{m, \mathcal A}:=\cup_{Q\in \mathcal A} F_{m,Q}$ for $\mathcal A\subset\mcQ_n$; Section \ref{sec2}

$\partial_{i,s}F_{Q}$ and $\partial_{i,s}F_{m,Q}$: faces of cells.  
$\partial_{i,s}F := \partial_{i,s}F_{F_0}$ and  $\partial_{i,s}F_{m}:=\partial_{i,s}F_{m,F_0}$; Section \ref{sec2}

 $\partial_oF_{Q}:=F_Q\cap \partial Q$ and $\partial_oF_{m,Q}:=F_{m,Q}\cap \partial Q$: outer boundary of cells; Section \ref{sec2} 

$\nu$: normalized $d_I$-dimensional Hausdorff measure on $\bigcup_{n=0}^\infty\bigcup_{Q\in \mcQ_n(F)}\partial_o F_Q$ so that

 \qquad   $\nu(\partial_o F)=1$; Section \ref{sec2}

$\nu_n$: normalized $(d-1)$-dimensional Hausdorff measure on  $\bigcup_{n=0}^\infty\bigcup_{Q\in \mcQ_n(F_m)}\partial_o F_{m,Q}$
 so that

\qquad  $\nu_m(\partial_O F_n)=1$; Section \ref{sec2}

$[f]_{\mu|_A}:= \frac{1}{\mu(A)}\int_A f(x)\mu(dx)$;  Section \ref{sec2} 

$f_n\rightarrowtail f$: Definition \ref{def27}

$\mcE^{(F_{m,Q})} $: Dirichlet form on $F_{m, Q}$;  \eqref{e:3.1}

$(\mcE^{(F_Q)}, \mcF^{(F_Q)})$: Dirichlet form on $F_Q$; \eqref{e:3.2}-\eqref{e:3.3}

$w_Q$: a function in  $C(F)\cap \mcF^{(F)}$ appeared in Lemma \ref{lemma37}

 $\mathcal{S}_Q:=\{Q'\in \mcQ_n(F):Q'\cap Q\neq \emptyset\}$ for $Q\in \mcQ_n(F),n\geq 1$; above Lemma \ref{lemma37} 

$\mu_{\<f\>}$: energy measure of $f\in \sF$; Section \ref{sec34}

$\eth_nF$ and $\eth_nF_m$: collection of level $n$ sub-faces of $\partial_o F$ and $\partial_o F_m$; Section \ref{sec4}

$A\sim A'$ for $A\not=A'\in \eth_k F$: $A\cap A'\neq \emptyset$  or   $A,A'\subset B $  for some $ B \in \eth_{k-1}F$;  \eqref{e:4.2a}  of Section \ref{sec4}

$A\sim A'$  for $A\not= A'\in \eth_kF_m$:    $A\cap A'\neq \emptyset$ or $A,A'\subset B$ for some $B\in \eth_{k-1}F_m$; Section \ref{sec4}
	
$I_k[f]$: discrete energy defined with the average of $f\in L^2(\partial_oF;\nu)$ on $\eth_kF$; Section \ref{sec4}
	
$I_k^{(m)}[f]$: discrete energy defined with the average of $f\in L^2(\partial_oF_m;\nu_m)$ on $\eth_kF_m$; Section \ref{sec4}

$\Lambda_n[f]:=\sum_{k=n}^\infty L_F^{k(d_w-d_f)}I_k[f]$; Section \ref{sec4}

$\Lambda_k^{(m)}[f]:=\sum_{k=n}^\infty \varphi_m(L_F^{-k})I_k^{(m)}[f]$; Section \ref{sec4}

$\Lambda(\partial_o F)$ and $\Lambda^{(m)}(\partial_o F_m)$: Besov space on $\partial_oF$ and $\partial_oF_m$; Section \ref{sec4}

$\mcB_n:=\{Q\in \mcQ_n:Q\cap \partial F_0\neq \emptyset\}$, boundary shells; Section \ref{sec4}

$\mcB_n(A):=\{Q\in \mcB_n:\operatorname{int}(Q)\cap A\neq \emptyset\}$; Section \ref{sec4}  
 
$\mathscr{C}_n$: $(\# \eth_n F)$-dimensional linear subspace of  $C(\partial_o F)$; Lemma \ref{lemma44}
 
$\mathscr{C}_{m,n}$: $(\# \eth_n F)$-dimensional linear subspace of $C(\partial_o F_m)$; Lemma \ref{lemma44}

${\mathcal H} f$: harmonic extension of $f\in C(\partial_o F)$ to $F$; Definition \ref {def45}

$\Theta^{(m)}_n$:  linear maps from $\mathscr{C}_n$ to $W^{1,2}(F_m)\cap C(F_m)$; Lemma \ref{lemma46}

\section{Convergence of functions and forms}\label{sec2}
In this section, we introduce several notions   of convergence of functions (based on \cite{KS} and \cite{Cao}). We also review the definition of Mosco convergence on varying state spaces from  Kuwae and Shioya \cite{KS} adapted to our setting as a {\it GSC} and its approximation domains are embedded in $\R^d$, while \cite[Section 2.2-2.6]{KS} are about general Hilbert spaces.
  The reader is referred to  
 \cite{Mosco} for   Mosco convergence  of symmetric closed bilinear forms
on a common Hilbert space.

Throughout  this paper,   $(\mathcal{X},\rho)$ is  a locally compact separable
 metric space. We use $\operatorname{int}(A)$ to denote the interior of $A\subset \mathcal{X}$ and $\bar A$ its closure. For two subsets $A_1,A_2\subset \mathcal{X}$,  we denote by  $\rho (A_1,A_2):=\inf_{x\in A_1,y\in A_2}\rho (x,y)$ the distance between them. We will simply abbreviate $\rho(\{x\},A)$ to $\rho (x,A)$. The open ball (relative to $A\subset \sX$) of radius $r$ centered at $x$ is denoted by $B_A(x,r)=\{y\in A:\rho (x,y)<r\}$, and we often omit $A$ in the notation when there is no confusion about the underlying space. 

 For a measurable subset $A\subset \sX$, 
we use the notation $l(A)$  to denote the space of real-valued measurable functions on  $A$, and $C(A)=C(A,\rho)$ to denote the space of real-valued continuous functions on $A$ equipped
with the  supremum norm $\|f\|_\infty:=\sup_{x\in A}|f(x)|$.
Denote by  $C_c(A)$  the subspace of $C(A)$ consisting of continuous functions on $A$ with compact supports in the metric space $(A, \rho )$, where the support of $f\in C(A)$ is defined to be the closure of $\{x\in A:f(x)\neq 0\}$. 

If $\mu$ is a Radon measure on $\mathcal{X}$, we denote by $\mu|_A$ the restriction of $\mu$ on  $A$, i.e.  $\mu|_A(B)=\mu(B \cap A)$ for every Borel measurable $ B\subset \sX$. We can identify  $L^2(\sX;\mu|_A)$ with $L^2(A;\mu|_A)$, and abbreviate them to $L^2(A; \mu)$ from time to time.

If $\nu$ is a Radon measure on $A$ and $f\in l(\mathcal{X})$ such that $f|_A\in L^1(A; \nu)$, we use the notation
\[
[f]_{\nu}:=\fint_A f(x)\nu(dx):=\frac{1}{\nu(A)}\int_A f(x)\nu(dx)
\] 
for the $\nu$-weighted average of $f$ on $A$. In particular, if $\mu$ is a Radon measure on $\mathcal{X}$ and $\mu|_A\neq 0$, 
then   $[f]_{\mu|_A}:= \frac{1}{\nu(A)}\int_A f(x)\mu(dx)$. 

\smallskip

We assume the following setting throughout this section.

\smallskip

\noindent \textbf{\emph{Basic setting}}:
 \emph{$(\mathcal{X},\rho)$ is a locally compact separable metric space. Let $\{A_n; n\geq 1\}$ and $A$ be closed subsets of $(\mathcal{X},\rho)$ so  that 
\[
\delta_H(A_n,A)<\infty \hbox{ for each } n\geq 1 \quad \hbox{and} \quad  \lim\limits_{n\to\infty}\delta_H(A_n,A)=0,
\]
where $\delta_{H}(B,B'):=\max \Big\{\sup\limits_{x\in B'}\rho (x,B),\sup\limits_{x\in B} \rho (x,B') \Big\}$
is the Hausdorff metric between $B,B'\subset \mathcal{X}$.  Let $\mu$ be a Radon measure on $A$, and let $\mu_n$ be a Radon measure 
on $A_n$ for $n\geq 1$ 
 so that  $\mu_n$ converges weakly to $\mu$ on $(\mathcal{X},\rho )$ (viewing them as measures on $\mathcal{X}$) as $n\to \infty$,  that is,  
\[
\lim\limits_{n\to\infty}\int_{A_n}f(x)\mu_n(dx)=\int_A f(x)\mu(dx) \quad\hbox{for every }  f\in C_c(\mathcal{X}).
\]}\vspace{0.1cm}

To apply the definition of strong and weak convergence in $L^2$-spaces from \cite{KS}, we introduce a sequence of auxiliary maps 
$\mathcal{I}_n$. However, we will show that the definition of strong and weak convergence in $L^2$ is in fact independent of the choice of $\mathcal{I}_n$. 

\begin{lemma}\label{lemma21}
There is a sequence of Borel measurable maps $\mathcal{I}_n:A_n\to A$ such that 
$$
\lim_{n\to\infty}\sup_{x\in A_n}\rho(\mathcal{I}_n(x),x)=0.
$$
\end{lemma}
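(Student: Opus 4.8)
The plan is to construct the maps $\mathcal{I}_n$ directly by nearest-point projection onto $A$, then check measurability and the uniform estimate. First I would recall that for a fixed closed subset $A \subset \mathcal{X}$ and any $x \in \mathcal{X}$, the distance function $\rho(x, A)$ is finite since $\delta_H(A_n, A) < \infty$ forces $\sup_{x \in A_n}\rho(x, A) < \infty$; in particular each $x \in A_n$ lies within finite distance of $A$. However, a genuine nearest point need not exist in a general locally compact separable metric space (it does if $A$ is boundedly compact, but I would not assume that). So instead of an exact minimizer I would use an approximate selection: for each $n$, choose $\mathcal{I}_n(x) \in A$ with $\rho(\mathcal{I}_n(x), x) \leq \rho(x, A) + 1/n$ for every $x \in A_n$. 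The existence of such a map requires a measurable selection argument.

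The key steps, in order, are as follows. (1) For each $n$, set $\eta_n := \sup_{x \in A_n}\rho(x, A) \leq \delta_H(A_n, A) < \infty$, so $\eta_n \to 0$ by hypothesis. (2) Produce a Borel measurable map $\mathcal{I}_n : A_n \to A$ with $\rho(\mathcal{I}_n(x), x) \leq \rho(x, A) + 1/n$ for all $x \in A_n$. To do this, I would invoke a standard measurable selection theorem: the set-valued map $x \mapsto \Phi_n(x) := \{ y \in A : \rho(x,y) \leq \rho(x,A) + 1/n \}$ has nonempty closed values (nonempty by definition of the infimum, closed since $\rho(x,\cdot)$ is continuous and $A$ is closed), and it is measurable (for each open $U \subset \mathcal{X}$, the set $\{x : \Phi_n(x) \cap U \neq \emptyset\}$ is Borel because $(x,y) \mapsto \rho(x,y) - \rho(x,A)$ is Borel — indeed $x \mapsto \rho(x,A)$ is continuous and $(x,y)\mapsto \rho(x,y)$ is continuous). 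Since $\mathcal{X}$, hence $A$, is separable and metrizable, one can apply the Kuratowski–Ryll-Nardzewski measurable selection theorem (working with a complete metric — or passing to the completion and noting the selected values still lie in $A$) to obtain a Borel measurable selector $\mathcal{I}_n$ of $\Phi_n$. Alternatively, and more elementarily, I would fix a countable dense set $\{a_j\}_{j \geq 1}$ in $A$ and define $\mathcal{I}_n(x) := a_{j(x)}$ where $j(x)$ is the smallest index $j$ with $\rho(x, a_j) \leq \rho(x,A) + 1/n$; measurability of $x \mapsto j(x)$ follows because $\{x : j(x) = j\}$ is a Boolean combination of the Borel sets $\{x : \rho(x, a_i) \leq \rho(x,A) + 1/n\}$, $i = 1, \dots, j$. (3) Finally, estimate: for every $x \in A_n$,
$$
\rho(\mathcal{I}_n(x), x) \leq \rho(x, A) + \tfrac1n \leq \eta_n + \tfrac1n,
$$
so $\sup_{x \in A_n}\rho(\mathcal{I}_n(x), x) \leq \eta_n + 1/n \to 0$ as $n \to \infty$, which is the claim.

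The main obstacle is step (2): guaranteeing the existence of a \emph{Borel measurable} selector, since in a general locally compact separable metric space an exact nearest-point projection may fail to exist or fail to be measurable when chosen carelessly. The clean way around this is the countable-dense-set construction, which sidesteps any appeal to selection theorems and makes measurability transparent; the price is only the harmless $1/n$ slack, which vanishes in the limit anyway. Everything else — finiteness and decay of $\eta_n$, and the final supremum estimate — is immediate from the Basic Setting hypothesis $\lim_{n\to\infty}\delta_H(A_n, A) = 0$.
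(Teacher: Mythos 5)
Your proof is correct. The paper's own argument is in the same spirit --- both replace an exact (and possibly nonexistent or hard-to-make-measurable) nearest-point projection by a countably-valued Borel map --- but the discretization happens on the other side: the paper fixes a $\frac1n$-net of $\mathcal{X}$, disjointifies the net balls into a Borel partition $\{U_{n,m}\}_{m\geq 1}$ of $\mathcal{X}$, and sends every point of $U_{n,m}\cap A_n$ to one fixed point $a_{n,m}\in A$ chosen within $\delta_H(A_n,A)+\frac1n$ of that piece, which yields the bound $\delta_H(A_n,A)+\frac3n$. You instead discretize the target: a countable dense sequence in $A$ together with a least-index rule gives an approximate metric projection with the slightly sharper bound $\rho(x,A)+\frac1n\leq \delta_H(A_n,A)+\frac1n$, and measurability is equally transparent since $\mathcal{I}_n$ is constant on the Borel level sets of the index function $j(\cdot)$. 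Your preliminary detour through the Kuratowski--Ryll-Nardzewski selection theorem is unnecessary (as you yourself note), and your observation that exact minimizers may fail to exist is precisely the issue both constructions are designed to sidestep; either route is fully adequate, and yours is, if anything, marginally more economical in the final estimate.
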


\begin{proof}
For each $n$, let $\{x_{n,m}\}_{m=1}^\infty$ be a countable $\frac1n$-net of $(\mathcal{X},\rho )$, that is, 
 $ \bigcup_{m=1}^\infty B_{\mathcal{X}}(x_{n,m}, 1/n) = \sX$. Set 
\[
U_{n,m}=B_{\mathcal{X}}(x_{n,m},  1/n)\setminus  \bigcup_{k=1}^{m-1} B_{\mathcal{X}}(x_{n,k},  1/n) \quad  \hbox{for }  n,m\geq 1. 
\]
Then, $\mathcal{X}=\bigsqcup_{m=1}^\infty U_{n,m}$ for every  $n\geq 1$, where `$\bigsqcup$' means disjoint union. 
Fix $n\geq 1$. For each $m\geq 1$ for which $U_{n,m}\cap A_n\neq\emptyset$, let  $a_{n,m}\in A$ be such that 
$\rho(a_{n, m}, U_{n,m}\cap A_n) < \delta_H(A_n,A)+\frac1n$. 
Define $\mathcal{I}_n(x)=a_{n,m}$ for every $ x\in U_{n,m}\cap A_n  \not=  \emptyset $.  
Since $\sup_{x\in  U_{n,m}\cap A_n  \not=  \emptyset}  \rho(\mathcal{I}_n(x), x) 
 <\delta_H(A_n,A)+\frac3n$, the maps $\{\mathcal{I}_n; n\geq 1\}$ have the desired property.
   \end{proof}

\subsection{Objects related to {\it GSC} }\label{sec21}
In this paper, we focus on {\it GSC}\,s and their approximation domains, which are embedded in $\R^d$. We denote the Euclidean metric on $\R^d$ by $\rho$. We use the notation $x=(x_1,x_2,\cdots,x_d)$ to denote a point in $\R^d$. From time to time, we use the notation $tx$ to denote the point $tx=(tx_1,tx_2,\cdots,tx_d)$, where $t\in \R$ and $x=(x_1,x_2,\cdots,x_d)\in \R^d$. 

In the following, we introduce some notations about the cell structures and the measures on a {\it GSC} $F$ and its approximation domains.\medskip

 \noindent(\textbf{\emph{Cells}}). 
 (a)   For   $Q\in \cup_{n=0}^\infty \mcQ_n$,    define $F_Q:=Q\cap F$.    For $\mathcal{A}\subset\mcQ_n$, from time to time we also write $F_{\mathcal{A}}:=\bigcup_{Q\in \mathcal{A}}F\cap Q$. \emph{We call $F_Q$ an $n$-cell of $F$ if $Q\in \mcQ_n(F)$. }

(b)   For $m\geq 0 $ and $Q\in  \cup_{n=0}^\infty  \mcQ_n$, define  $F_{m,Q}:=Q\cap F_m$.
 For $\mathcal{A}\subset \mcQ_n$,   from time to time we write $F_{m,\mathcal{A}}:=\bigcup_{Q\in \mathcal{A}}F_m\cap Q$. 
 \emph{We call $F_{m,Q}$ an $n$-cell of $F_m$ if $Q\in \mcQ_n(F_m)$.} 

\begin{remark} 
\rm  $\mcQ_n(F_m)=\mcQ_n(F)$ for each $m\geq n\geq 0$. 
\end{remark}

\smallskip

\begin{example}\label{example22} \rm 
\begin{enumerate}
\item[\rm (a)] Let $m\geq 0$ and $\mathcal{A}\subset\mcQ_m(F)$. For $n\geq 0$, set $A_n= F_{n,\mathcal{A}}$ and $A=F_{\mathcal{A}}$. Clearly, $A_n$ converges to $A$ in the Hausdorff metric in $\R^n$. Also, $\mu_n|_{A_n}$ converges weakly to $\mu|_A$ on $\R^d$.  Hence $\{A_n; \mu_n|_{A_n};  n\geq  0\}$ and $A; \mu|_A$ satisfy the {\bf Basic Setting} laid out above Lemma \ref{lemma21}, so does any subsequence $\{n_k; k\geq 1\}$.
	
	\smallskip

 \item[\rm (b)] Throughout this paper, we use  $\partial_o F$ and $\partial_o F_m$ to denote the outer faces of $F$ and $F_m$; that is, 
$\partial_o F := F\cap \partial F_0$ and $\partial_o F_m:= F_m \cap \partial F_0$.
 We can also view $F_n\setminus \partial_o F_n,n\geq 0$ and $F\setminus \partial_o F$ as subsets in the metric space $(F_0\setminus \partial F_0,\rho )$. 
\end{enumerate} 
\end{example}

\medskip

We introduce some more notations about the faces of cells. 
 
\smallskip

\noindent(\textbf{\emph{Faces of cells}}). (a). Let $n\geq 0$, $Q\in\mcQ_n(F)$, $i\in \{1,2,\cdots,d\}$ and $s\in \{0,1\}$. We call $\partial_{i,s}F_Q:=\Psi_Q\big(F\cap \{x_i=s\}\big)$ a face of $F_Q$. When $Q=F_0$, we simply write  $\partial_{i,s}F$ for $\partial_{i,s}F_Q$. In addition, we define $\partial_o F_Q :=\Psi_Q (\partial_o F)= F_Q \cap \partial Q$. 

\smallskip

(b). Let $n\geq 0$, $m\geq 0$, $Q\in \mcQ_n(F_m)$, $i\in \{1,2,\cdots,d\}$ and $s\in \{0,1\}$. 
We call $\partial_{i,s}F_{m,Q} :=\Psi_Q\big(F_{(m-n)\vee 0}\cap\{x_i=s\}\big)$ a face of $F_{m,Q}$. 
When $Q=F_0$, we simply write  $\partial_{i,s}F_m $ for $\partial_{i,s}F_{m,Q}$. 
In addition, 
we define $\partial_o F_{m,Q}:= \Psi_Q (\partial_o F_{(m-n)\vee 0})=F_{m, Q}\cap \partial Q$. 

\medskip

\noindent(\textbf{\emph{Dimension $d_I$}}).
Let $m_I:=\#\{Q\in \mcQ_1(F):Q\cap\{x_1=0\}\neq \emptyset\}$ and define $d_I:=\frac{\log m_I}{ \log L_F}$, which is 
the Hausdorff dimension of $\partial_o F$. Note that $d_I=1$ when $d=2$.\\  

\noindent(\textbf{Measure on faces}). 
Let $\nu$ be the normalized $d_I$-dimensional Hausdorff measure on \hfill \break 
$\bigcup_{n=0}^\infty\bigcup_{Q\in \mcQ_n(F)}\partial_o F_Q$ 
so that $\nu(\partial_oF)=1$. For $m\geq 0$, let $\nu_m$ be the normalized $(d-1)$-dimensional Hausdorff measure on $\bigcup_{n=0}^\infty\bigcup_{Q\in \mcQ_n(F_m)}\partial_o F_{m,Q}$ so that $\nu_m(\partial_o F_m)=1$.

\begin{example}\label{example23}
For each $Q\in \mcQ_n(F)$,  the {\bf Basic setting} is satisfied for $\partial_o F_{m,Q}$, $\nu_m|_{\partial_o F_{m,Q}}$ for $m\geq 0$ and $\partial_o  F_Q$, $\nu|_{\partial_o F}$;  the {\bf Basic setting} is satisfied for each $\partial_{k,s} F_{m,Q}$, $\nu_m|_{\partial_{k,s} F_{m,Q}}$ for $ m\geq 0$ and $\partial_{k,s} F_Q$, $\nu|_{\partial_{k,s} F}$ with $i\in \{1,\cdots,d\}$ and $k\in \{0,1\}$ as well. 
The same holds for each subsequence $\{m_k; k\geq 1\}$.
\end{example}

\medskip

\subsection{Convergence of functions}
 
In this subsection, we fix a sequence $\mathcal{I}_n$ as in Lemma \ref{lemma21}.

\begin{lemma}\label{lemma24}
	$\mu_n\circ \mathcal{I}_n^{-1}$ converges weakly to $\mu$ on $(A,\rho )$. As a consequence, 
	\[\lim\limits_{n\to\infty}\|f\circ\mathcal{I}_n\|_{L^2(A_n; \mu_n)}=\|f\|_{L^2(A; \mu)}  \quad\hbox{for }  f\in C_c(A).\] 
\end{lemma}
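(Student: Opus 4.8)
The plan is to first establish the weak convergence of $\mu_n\circ\mathcal I_n^{-1}$ to $\mu$ on $(A,\rho)$, and then deduce the convergence of $L^2$-norms as a standard consequence. For the weak convergence, I would take an arbitrary $g\in C_c(A)$ and aim to show $\int_A g\,d(\mu_n\circ\mathcal I_n^{-1})=\int_{A_n} g\circ\mathcal I_n\,d\mu_n\to\int_A g\,d\mu$. The natural strategy is to compare $g\circ\mathcal I_n$ with an honest restriction to $A_n$ of a continuous extension of $g$ to all of $\mathcal X$. Concretely, since $A$ is closed in the locally compact separable metric space $(\mathcal X,\rho)$ and $g\in C_c(A)$, by the Tietze extension theorem I can pick $\widetilde g\in C_c(\mathcal X)$ with $\widetilde g|_A=g$ (enlarging the support slightly to a compact neighborhood). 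Then $\int_{A_n}\widetilde g\,d\mu_n\to\int_A g\,d\mu$ directly by the weak convergence $\mu_n\to\mu$ assumed in the Basic Setting. It remains to control the error $\big|\int_{A_n}(g\circ\mathcal I_n-\widetilde g)\,d\mu_n\big|$.

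The error term is where the map $\mathcal I_n$ does its work. Since $\widetilde g=g$ on $A$ and $\mathcal I_n(x)\in A$ for $x\in A_n$, we have $g\circ\mathcal I_n(x)-\widetilde g(x)=\widetilde g(\mathcal I_n(x))-\widetilde g(x)$, and $\rho(\mathcal I_n(x),x)\le\sup_{y\in A_n}\rho(\mathcal I_n(y),y)=:\eta_n\to 0$ by Lemma \ref{lemma21}. By uniform continuity of $\widetilde g$ (it has compact support), $\sup_{x\in A_n}|\widetilde g(\mathcal I_n(x))-\widetilde g(x)|\to 0$. The masses $\mu_n(A_n)$ are uniformly bounded: indeed, picking a fixed $\phi\in C_c(\mathcal X)$ with $\phi\equiv 1$ on a large enough compact set (one containing the eventually-bounded region where all $A_n$ live, using $\delta_H(A_n,A)\to 0$ and local compactness), $\mu_n(A_n\cap K)\le\int\phi\,d\mu_n\to\int\phi\,d\mu<\infty$; one must be slightly careful that $\mu_n$ could have mass far away, but since we only integrate against the compactly supported $g\circ\mathcal I_n-\widetilde g$ — which vanishes outside a fixed compact set for $n$ large, because $\mathcal I_n$ moves points by at most $\eta_n$ and $\widetilde g,g$ have compact support — the relevant total mass is $\int\phi\,d\mu_n$ for suitable $\phi$, hence bounded. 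Multiplying the uniform smallness of the integrand by this bounded mass gives the error $\to 0$, completing the proof of weak convergence.

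For the stated consequence, given $f\in C_c(A)$ we apply the weak convergence just proved to the function $f^2\in C_c(A)$: $\|f\circ\mathcal I_n\|_{L^2(A_n;\mu_n)}^2=\int_{A_n}(f\circ\mathcal I_n)^2\,d\mu_n=\int_{A_n}(f^2)\circ\mathcal I_n\,d\mu_n\to\int_A f^2\,d\mu=\|f\|_{L^2(A;\mu)}^2$, and taking square roots finishes it.

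The main obstacle I anticipate is the bookkeeping around uniform tightness/mass bounds for the $\mu_n$: weak convergence of $\mu_n$ to $\mu$ against $C_c(\mathcal X)$ (rather than $C_b$) does not by itself bound $\mu_n(A_n)$, so one has to exploit that the integrand $g\circ\mathcal I_n-\widetilde g$ is supported in a single fixed compact set for all large $n$ (which follows from $\eta_n\to 0$ together with the compact supports of $g$ and $\widetilde g$), and only then invoke weak convergence against a cutoff function $\phi\equiv 1$ on that set. Everything else — the Tietze extension, uniform continuity, and the passage from norms to the squared function — is routine.
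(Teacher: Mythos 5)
Your proposal is correct and follows essentially the same route as the paper: extend $f\in C_c(A)$ to a compactly supported continuous function on $\mathcal X$ via Tietze, use the assumed weak convergence $\mu_n\Rightarrow\mu$ for the main term, and kill the error term by combining uniform continuity (so $\sup_{x\in A_n}|f\circ\mathcal I_n-\widetilde g|\to 0$) with a uniform mass bound for $\mu_n$ on a fixed compact neighborhood of the supports, obtained from local compactness. Your explicit treatment of the $L^2$-norm consequence (applying the weak convergence to $f^2$) and the cutoff-function mass bound are just slightly more detailed versions of what the paper leaves implicit.
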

\begin{proof}
	Let $f\in C_c(A)$. Then by using the Tietze extension theorem and local compactedness, we can find $g\in C_c(\mathcal{X})$ such that $g|_{A}=f$. By the assumption $\mu_n\Rightarrow \mu$ on $(\mathcal{X},\rho )$, we have 
	\[
	\lim\limits_{n\to\infty}\int_{A_n}g(x)\mu_n(dx)=\int_Ag(x)\mu(dx)=\int_A f(x)\mu(dx).
	\] 
	In addition, by local compactness, we can find a compact neighborhood $D$ of the support of $g$ in $\mathcal{X}$ such that $g(x)-f\circ \mathcal{I}_n(x)=0$ for any large enough $n$ and $x\in A_n\setminus D$. Hence, 
	\[
	\lim_{n\to\infty} \Big| \int_{A_n}\big(g(x)-f\circ \mathcal{I}_n(x)\big)\mu_n(dx) \Big|\leq \lim\limits_{n\to\infty}\mu_n(D)\cdot\|g|_{A_n}-f\circ \mathcal{I}_n\|_\infty=0,
	\]
	where we use the facts that $\limsup\limits_{n\to\infty}\mu_n(D)\leq \mu(D)$ and that $\lim\limits_{n\to\infty}\|g|_{A_n}-f\circ \mathcal{I}_n\|_\infty=0$ by the uniform continuity of $g$. Combining the above equalities, we see that 
	\[
	\lim\limits_{n\to\infty}\int_A f(x)\mu_n\circ \mathcal{I}_n^{-1}(dx)=\lim\limits_{n\to\infty}\int_{A_n} f\circ \mathcal{I}_n(x)\mu_n(dx)=\lim\limits_{n\to\infty}\int_{A_n}g(x)\mu_n(dx)=\int_Af(x)\mu(dx).
	\]
	This finishes the proof of the lemma.
\end{proof}

The following definition of  strong and weak convergence is adapted from \cite[Section 2.2]{KS}, where these notions are defined 
for general Hilbert spaces.

\begin{definition}\label{def25}
	Let $f_n\in L^2(A_n; \mu_n),n\geq 1$ and let $f\in L^2(A; \mu)$. 
	\begin{enumerate}
	\item[\rm (a)] We say $f_n\to f$ strongly in $L^2$ if and only if there is a sequence $\{u_j; j\geq 1\}\subset C_c(A)$ such that $\lim\limits_{j\to\infty}\|u_j-f\|_{L^2(A; \mu)}=0$ and
	\begin{equation}\label{eqn22}
		\lim\limits_{j\to\infty}\limsup\limits_{n\to\infty}\|f_n-u_j\circ \mathcal{I}_n\|_{L^2(A_n; \mu_n)}=0.
	\end{equation}
	
	\item[\rm (b)]  We say $f_n\to f$ weakly in $L^2$ if and only if
	\[
	\lim\limits_{n\to\infty}(f_n,g_n)_{L^2(A_n; \mu_n)}=(f,g)_{L^2(A; \mu)}.
	\]
	for any $g_n\in L^2(A_n; \mu_n)$ that converges strongly in $L^2$ to $g\in L^2(A; \mu)$ as $n\to \infty$.
	\end{enumerate} 
\end{definition}

 \medskip
 
In the following lemma, we see that the above definition of strong convergence reflects the fact that $A_n,n\geq 1$ and $A$ are embedded in a same space. We will show in Lemma \ref{lemma26} that the definition of strong convergence in Definition \ref{def25} 
 is independent of the choice of the maps $\{\mathcal{I}_n; n\geq 1\}$.

\begin{lemma}\label{lemma26}
	Let $g\in C_c(\mathcal{X})$, then $g|_{A_n}\to g|_A$ strongly in $L^2$. Moreover, for  $f_n\in L^2(A_n; \mu_n),n\geq 1$ and $f\in L^2(A; \mu)$, $f_n\to f$ strongly in $L^2$ if and only if there is a sequence $g_j\in C_c(\mathcal{X}),j\geq 1$ such that $\lim\limits_{j\to\infty}\|g_j|_A-f\|_{L^2(A; \mu)}=0$ and
	\begin{equation}\label{eqn23}
		\lim\limits_{j\to\infty}\limsup\limits_{n\to\infty}\|f_n-g_j|_{A_n}\|_{L^2(A_n; \mu_n)}=0.
	\end{equation}
\end{lemma}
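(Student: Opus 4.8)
The plan is to isolate one elementary fact and then deduce both parts by triangle inequalities. The fact is: for every $g\in C_c(\mathcal{X})$,
\begin{equation*}
\lim_{n\to\infty}\big\|\,g|_{A_n}-(g|_A)\circ\mathcal{I}_n\,\big\|_{L^2(A_n;\mu_n)}=0.
\end{equation*}
Indeed, $g$ is uniformly continuous and $\sup_{x\in A_n}\rho(\mathcal{I}_n(x),x)\to 0$, so $\sup_{x\in A_n}|g(x)-g(\mathcal{I}_n(x))|\to 0$; moreover, fixing a compact neighborhood $D$ of $\operatorname{supp}(g)$ in $\mathcal{X}$, for all large $n$ one has $g(x)=g(\mathcal{I}_n(x))=0$ for $x\in A_n\setminus D$, whence the left-hand norm is at most $\mu_n(D)^{1/2}\sup_{x\in A_n}|g(x)-g(\mathcal{I}_n(x))|$, which tends to $0$ since $\limsup_n\mu_n(D)\le\mu(D)<\infty$ by the weak convergence $\mu_n\Rightarrow\mu$. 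This is exactly the argument already used in the proof of Lemma \ref{lemma24}.

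Granting this, the first assertion is immediate: since $\operatorname{supp}(g|_A)\subset\operatorname{supp}(g)\cap A$ is compact, $g|_A\in C_c(A)$, and taking the constant sequence $u_j:=g|_A$ in Definition \ref{def25}(a) shows $g|_{A_n}\to g|_A$ strongly in $L^2$.

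For the equivalence I would treat the two directions separately. Assume first there are $g_j\in C_c(\mathcal{X})$ with $\|g_j|_A-f\|_{L^2(A;\mu)}\to0$ and $\lim_j\limsup_n\|f_n-g_j|_{A_n}\|_{L^2(A_n;\mu_n)}=0$. Put $u_j:=g_j|_A\in C_c(A)$; then on $A_n$ one has $g_j|_{A_n}-u_j\circ\mathcal{I}_n=g_j-g_j\circ\mathcal{I}_n$ (using $\mathcal{I}_n(A_n)\subset A$ and $g_j|_A=u_j$), so by the triangle inequality together with the fact above, $\limsup_n\|f_n-u_j\circ\mathcal{I}_n\|_{L^2(A_n;\mu_n)}\le\limsup_n\|f_n-g_j|_{A_n}\|_{L^2(A_n;\mu_n)}$; letting $j\to\infty$ verifies \eqref{eqn22}, so $f_n\to f$ strongly. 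Conversely, if $f_n\to f$ strongly, take $u_j\in C_c(A)$ as in Definition \ref{def25}(a); by the Tietze extension theorem together with local compactness (as in the proof of Lemma \ref{lemma24}) choose $g_j\in C_c(\mathcal{X})$ with $g_j|_A=u_j$; then $\|g_j|_A-f\|_{L^2(A;\mu)}=\|u_j-f\|_{L^2(A;\mu)}\to0$, and the same triangle-inequality estimate, again invoking the fact above, gives $\limsup_n\|f_n-g_j|_{A_n}\|_{L^2(A_n;\mu_n)}\le\limsup_n\|f_n-u_j\circ\mathcal{I}_n\|_{L^2(A_n;\mu_n)}$, so these $g_j$ have the required properties.

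There is no real obstacle here; the only point requiring a little care is the passage from the uniform bound $\sup_{x\in A_n}|g(x)-g(\mathcal{I}_n(x))|\to0$ to the $L^2(A_n;\mu_n)$ bound, since $\mu_n$ is merely a Radon measure — this is precisely why one localizes to a fixed compact set $D$ and uses $\limsup_n\mu_n(D)\le\mu(D)$. Once the fact above is in hand, both statements reduce to routine triangle-inequality arguments, and the resulting restriction-based characterization makes transparent that strong $L^2$-convergence does not depend on the auxiliary maps $\{\mathcal{I}_n\}$.
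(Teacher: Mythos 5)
Your proposal is correct and follows essentially the same route as the paper: the key estimate $\|g|_{A_n}-(g|_A)\circ\mathcal{I}_n\|_{L^2(A_n;\mu_n)}\to 0$ (the paper's \eqref{eqn24}, proved by the same localization-to-a-compact-set argument), taking $u_j=g|_A$ for the first claim, and Tietze extension plus the triangle inequality to pass between \eqref{eqn22} and \eqref{eqn23}. The only cosmetic difference is that you run the two directions of the equivalence separately, whereas the paper combines them into a single equality of limsups.
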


\begin{proof}
	We first prove the first statenent of the lemma. Let $f=g|_A$, then one can see that 
	\begin{equation}\label{eqn24}
		\lim\limits_{n\to\infty}\|g|_{A_n}-f\circ\mathcal{I}_n\|_{L^2(A_n;\nu_n)}\leq \lim\limits_{n\to\infty} \sqrt{\mu_n(D)}\cdot\|g|_{A_n}-f\circ \mathcal{I}_n\|_\infty=0
	\end{equation} 
	for some compact set $D\subset \mathcal{X}$ just as in the proof of Lemma \ref{lemma24}. Hence, by taking $u_j=f$ for each $j\geq 1$ in \eqref{eqn22}, we can see that $g|_{A_n}\to g|_A$ strongly in $L^2$. 
	
	\smallskip
	
	For the second statement of the lemma, given $u_j\in C_c(A)$, by  the Tietze extension theorem there is $g_j\in C_c(\mathcal{X})$ so that  $g_j|_A=u_j$; conversely, given $g_j\in C_c(\mathcal{X})$,   $u_j:=g_j|_A \in C_c(A)$. Applying (\ref{eqn24}) to $g_j$ and $u_j$, we have  $\lim\limits_{n\to\infty}\|g_j|_{A_n}-u_j\circ\mathcal{I}_n\|_{L^2(A_n; \mu_n)}=0$. Thus 
	\[
	\limsup_{n\to\infty}\|f_n-g_j|_{A_n}\|_{L^2(A_n;\nu_n)}=\limsup_{n\to\infty}\|f_n-u_j\circ \mathcal{I}_n\|_{L^2(A_n; \mu_n)},
	\]
	which shows the equivalence of \eqref{eqn22} and \eqref{eqn23}.
\end{proof}

The following is a natural analog of locally uniform convergence of functions. There is also a version of Arzel\`a–Ascoli theorem for it.

\begin{definition}\label{def27}
Let $f_n\in l(A_n),n\geq 1$ and $f\in l(A)$. We say that $f_n\rightarrowtail f$ if and only if $\lim_{n\to \infty} f_n(x_n)= f(x)$ holds for any sequence $x_n\in A_n,n\geq 1$ and $x$ such that $x_n\to x$. 
\end{definition}

\begin{remark}\label{R:2.9}
 \rm If $A_n=A$ for  all $ n\geq 1$ and $\{f_n, n\geq 1\} \cup \{f\} \subset   C(A)$,   then $f_n \rightarrowtail f$ if and only if 
$f_n$ converges to $f$    locally uniformly   $A$. 
\end{remark}

\begin{lemma}\label{lemma28}
	Let $f_n\in C(A_n),n\geq 1$. 
	
	\begin{enumerate}
	\item[\rm (a)]  If $\{f_n\}_{n\geq 1}$ is locally uniformly bounded and equicontinuous, that is,  if 
	$\sup\limits_{n\geq 1}\|f_n|_K\|_\infty<\infty $  and 
 	$	\lim\limits_{\delta\to 0}\sup\{|f_n(x)-f_n(y)|:n\geq 1,\ x,y\in A_n\cap K,\rho(x,y)<\delta\}=0$
		for every  compact $K\subset \mathcal{X}$, then there are $f\in C(A)$ and a subsequence $\{f_{n_k},k\geq 1\}$ so that $f_{n_k}\rightarrowtail f$.  
	
	\smallskip
	
	\item[\rm (b)] If $f_n\rightarrowtail f$ for some $f\in l(A)$, then $f\in C(A)$. Let $g\in C(\mathcal{X})$ such that $g|_A=f$. Then, we can find $g_n\in C(\mathcal{X}),n\geq 1$ so that $g_n|_{A_n}=f_n$ for every $n\geq 1$  and $g_n\to g$ locally uniformly on $\mathcal{X}$ (i.e. $\|g_n|_K-g|_K\|_\infty\to 0$ for each compact $K\subset \mathcal{X}$) as $n\to \infty$.
	
	\smallskip

	\item[\rm (c)] If there is $f\in C(A)$ such that for each subsequence $f_{n_k}$ there is a further subsequence $f_{n_{k(l)}}$ such that $f_{n_{k(l)}}\rightarrowtail f$ as $l\to\infty$, then $f_n\rightarrowtail f$. 
	\end{enumerate}
\end{lemma}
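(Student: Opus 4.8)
All three parts rest on one elementary consequence of the \emph{Basic setting}: since $\delta_H(A_n,A)\to 0$, every $x\in A$ is a limit of points $x^{(n)}\in A_n$ (choose $x^{(n)}\in A_n$ with $\rho(x^{(n)},x)\le\delta_H(A_n,A)+1/n$), and conversely any sequence $y_n\in A_n$ can cluster only at points of $A$ (because $\rho(y_n,A)\le\delta_H(A_n,A)\to 0$ and $A$ is closed). A companion remark, used repeatedly, is that $f_n\rightarrowtail f$ forces $f_{n_k}(x_{n_k})\to f(x)$ along \emph{any} subsequence whenever $x_{n_k}\in A_{n_k}$ and $x_{n_k}\to x\in A$: fill in the missing indices by near-nearest points of $A_n$ to $x$ and apply Definition~\ref{def27} to the full sequence. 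Part (c) is then immediate by contradiction: if $f_n\not\rightarrowtail f$, there are $\varepsilon>0$, a subsequence $(n_k)$ and $x_{n_k}\in A_{n_k}$ with $x_{n_k}\to x\in A$ and $|f_{n_k}(x_{n_k})-f(x)|\ge\varepsilon$; passing to the further subsequence $(n_{k(l)})$ with $f_{n_{k(l)}}\rightarrowtail f$ and applying the definition of $\rightarrowtail$ to $x_{n_{k(l)}}\to x$ yields $f_{n_{k(l)}}(x_{n_{k(l)}})\to f(x)$, a contradiction.

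For part (a) I would run the Arzel\`a--Ascoli diagonal argument adapted to the moving spaces. Choose a countable dense set $\{z_j\}_{j\ge1}\subset A$ (separability of $\mathcal X$) and, for each $j$, points $z_j^{(n)}\in A_n$ with $z_j^{(n)}\to z_j$. Local uniform boundedness makes each $\{f_n(z_j^{(n)})\}_n$ bounded, so a diagonal subsequence $(n_k)$ gives $\lim_k f_{n_k}(z_j^{(n_k)})=:\widetilde f(z_j)$ for every $j$. Equicontinuity together with $\rho(z_j^{(n_k)},z_{j'}^{(n_k)})\to\rho(z_j,z_{j'})$ shows $\widetilde f$ is uniformly continuous on compacta along $\{z_j\}$, hence extends uniquely to some $f\in C(A)$. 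To see $f_{n_k}\rightarrowtail f$, given $x_{n_k}\in A_{n_k}$ with $x_{n_k}\to x$ and $\varepsilon>0$, pick $z_j$ with $\rho(z_j,x)$ small and bound $|f_{n_k}(x_{n_k})-f(x)|$ by the triangle inequality through $f_{n_k}(z_j^{(n_k)})$ and $\widetilde f(z_j)$: the first difference is controlled by equicontinuity (since $\limsup_k\rho(x_{n_k},z_j^{(n_k)})\le\rho(x,z_j)$), the second tends to $0$ by the choice of $(n_k)$, the third is small by continuity of $f$.

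For part (b), that $f\in C(A)$ follows from a short argument: given $x_m\to x$ in $A$, choose $n_m\uparrow\infty$ and $y_m\in A_{n_m}$ with $\rho(y_m,x_m)<1/m$ and $|f_{n_m}(y_m)-f(x_m)|<1/m$ (possible since, for each fixed $m$, $f_n(y)\to f(x_m)$ as $n\to\infty$ along any $y\in A_n$ tending to $x_m$); then $y_m\to x$, so $f_{n_m}(y_m)\to f(x)$ by the companion remark, whence $f(x_m)\to f(x)$. For the extension statement, put $h_n:=f_n-g|_{A_n}\in C(A_n)$; since $f_n(x_n)\to f(x)=g(x)$ and $g(x_n)\to g(x)$ whenever $x_n\to x$, we get $h_n\rightarrowtail 0$, and a contradiction argument (bad sequences cluster in $A$, where $h_n\rightarrowtail 0$ forces convergence to $0$) upgrades this to $\sup_{A_n\cap K}|h_n|\to0$ for every compact $K\subset\mathcal X$. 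Using $\sigma$-compactness, write $\mathcal X=\bigcup_jK_j$ with $K_j\subset\operatorname{int}K_{j+1}$ and build a continuous $\omega_n:\mathcal X\to(0,\infty)$ with $\omega_n\ge|h_n|$ on $A_n$ and $\sup_K\omega_n\to0$ for every compact $K$, by patching the numbers $\sup_{A_n\cap K_j}|h_n|+1/n$ with a partition of unity subordinate to the exhaustion. Applying Tietze's theorem to $h_n/\omega_n|_{A_n}\in C(A_n;[-1,1])$ and multiplying back by $\omega_n$ produces $\widehat h_n\in C(\mathcal X)$ with $\widehat h_n|_{A_n}=h_n$ and $|\widehat h_n|\le\omega_n$; then $g_n:=g+\widehat h_n$ satisfies $g_n|_{A_n}=f_n$ and $\|g_n-g\|_{\infty,K}\le\sup_K\omega_n\to0$.

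The routine parts are the diagonal extraction in (a) and the contradiction arguments; the genuinely delicate step is the last one in (b): the extensions $g_n$ must agree with $f_n$ \emph{exactly} on $A_n$ while converging to $g$ locally uniformly, which forces the use of the variable-bound form of Tietze's theorem and a choice of the dominating functions $\omega_n$ that records that $f_n-g|_{A_n}$ is small on each fixed compact set but need not be small globally.
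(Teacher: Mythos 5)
Your proposal is correct, and part (c) coincides with the paper's own contradiction argument; the interesting divergence is in how you obtain the extensions in part (b), and in the fact that you supply self-contained arguments for (a) and for the continuity claim in (b), where the paper simply cites \cite[Lemma 2.2]{Cao}. For the extension step, the paper works on the product space $\wt{\mathcal X}=[0,1]\times\mathcal X$: it forms the closed set $\wt A=\bigl(\bigcup_n\{1/n\}\times A_n\bigr)\cup(\{0\}\times\mathcal X)$, observes that the hypothesis $f_n\rightarrowtail f$ is exactly what makes the glued function $\wt f$ (equal to $f_n$ on the slice $t=1/n$ and to $g$ on $t=0$) continuous on $\wt A$, applies Tietze once to get $\wt g\in C(\wt{\mathcal X})$, and sets $g_n=\wt g(1/n,\cdot)$; local uniform convergence then falls out of uniform continuity of $\wt g$ on $[0,1]\times K$. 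Your route instead extends each $h_n=f_n-g|_{A_n}$ separately: you first upgrade $h_n\rightarrowtail 0$ to $\sup_{A_n\cap K}|h_n|\to 0$ by a compactness/clustering argument, then build dominating functions $\omega_n>0$ from a compact exhaustion (using $\sigma$-compactness of a locally compact separable metric space) and invoke the bounded form of Tietze to get $\widehat h_n$ with $|\widehat h_n|\le\omega_n$. Both are sound; the paper's single-extension trick is shorter and hides all the bookkeeping in the continuity of $\wt f$, while your construction is more hands-on, gives the explicit pointwise bound $|g_n-g|\le\omega_n$, and makes visible exactly which quantitative input (smallness of $f_n-g$ on each fixed compact, not globally) drives the locally uniform convergence. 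The only places where you are sketchy — the patching of $\omega_n$ via a partition of unity and the compact-neighborhood bookkeeping in the Arzel\`a--Ascoli diagonal argument of (a) — are routine and do not affect correctness.
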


\begin{proof}
	(a) follows from the same proof of \cite[Lemma 2.2]{Cao}, where the separability of $\sX$ is used. 
	
	(b). The first claim follows from \cite[Lemma 2.2(b)]{Cao}. The second claim also follows by a similar argument of \cite[Proposition 2.3]{Cao}. Let 
	\[
	\wt {A}:=\big(\bigcup_{n=1}^\infty (\{1/n\}\times A_n)\big)\bigcup (\{0\}\times \mathcal{X}),
	\]  
	which is a closed subset of $\wt {\mathcal{X}}:=[0,1]\times \mathcal{X}$ equipped with the product topology. Define $\wt{f}\in C(\wt {A})$ by
	\[
	\wt{f}(t,x)=\begin{cases}
		f_n(x),&\text{ if }t=\frac1n,x\in A_n,\\
		g(x),&\text{ if }t=0,x\in \mathcal{X}.
	\end{cases}
	\]
    Then, by Tietze extension theorem, we can find $\wt{g}\in C(\wt{\mathcal{X}})$ such that $\wt{g}|_{\wt{A}}=\wt{f}$. It suffices to take
    $g_n(x)=\wt{g}(1/n,  x)$ for $n\geq 1$. 
    
    (c) is proven by contradiction. If $f_n\not\rightarrowtail f$, we can find $x_n\in A_n,n\geq 1$ and $x\in A$ such that $x_n\to x$ but  $f_n(x_n)\not\to f(x)$. Then, there is a subsequence $\{n_k; k\geq 1\}$ and $\varepsilon>0$ such that $|f_{n_k}(x_{n_k})-f(x)|>\varepsilon$ for every $ k\geq 1$.
  This leads to a contradiction to the assumption of (c) as  $\{f_{n_k}; k\geq 1\}$ does not have any subsequence  $f_{n_{k(l)}}\rightarrowtail f$. 
\end{proof}

 Since $(\sX, \rho )$ is a locally compact separable metric space, 
there is an increasing sequence of relatively compact open subsets $\{B_j; j\geq 1\}$ so that $\bigcup_{j=1}^\infty  B_j =\mathcal{X}$.

\begin{lemma}\label{lemma29}
 \begin{enumerate}
\item[\rm (a)]    Let  $f_{m,n}\in C(A_n)$,  $n,m\geq 1$,   $f_m\in C(A)$, $ m\geq 1$,  and  $f\in C(A)$. Suppose that 
 $f_{m,n}\rightarrowtail f_m$ as $n\to\infty$ for each $m\geq 1$, and $f_m$ converges to $f$ locally uniformly as $m\to\infty$. Then there exist 
  $\{m(n) ; n\geq 1\}\subset \N$ so that $m(n)\to\infty$ and $f_{m(n),n}\rightarrowtail f$ as $n\to\infty$. 

\smallskip

\item[\rm (b)]   Let $f_n\in C(A_n)$,  $n\geq 1$.  
If $f_n\rightarrowtail f \in C(A)$ and $\lim\limits_{j\to\infty}\limsup\limits_{n\geq 1}\|f_n\|_{L^2(A_n\setminus \bar B_j;\mu_n)}=0$, then $f\in L^2(A;\mu)$ and  $f_n\to f$ strongly in $L^2$.
\end{enumerate} 
\end{lemma}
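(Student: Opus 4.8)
\medskip

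\textbf{Part (a).} The plan is a standard diagonal extraction argument, carefully set up to respect the $\rightarrowtail$ notion. Fix the increasing exhaustion $\{B_j\}$ of $\mathcal X$ by relatively compact open sets. Since $f_m\to f$ locally uniformly, for each $j\geq 1$ there is $M_j$ so that $\sup_{x\in A\cap \bar B_j}|f_m(x)-f(x)|<1/j$ for all $m\geq M_j$; I may assume $M_j$ is increasing in $j$. For each fixed $m$, the hypothesis $f_{m,n}\rightarrowtail f_m$ together with Lemma 2.8(b) lets me realize $f_{m,n}$ and $f_m$ as restrictions of functions $g_{m,n},g_m\in C(\mathcal X)$ with $g_{m,n}\to g_m$ locally uniformly on $\mathcal X$ as $n\to\infty$. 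Then for each $m$ there is $N_m$ (which I may take increasing in $m$) so that $\sup_{x\in \bar B_m}|g_{m,n}(x)-g_m(x)|<1/m$ for all $n\geq N_m$. Now define $m(n)$ to be the largest $m$ with $m\leq j$ whenever $n\in[N_{?},\dots)$ — more precisely, for $n$ in the range $N_m\leq n<N_{m+1}$ set $m(n):=m$; this makes $m(n)\to\infty$. To check $f_{m(n),n}\rightarrowtail f$: take $x_n\in A_n$, $x\in A$ with $x_n\to x$; all $x_n$ and $x$ lie in some fixed $\bar B_j$ for large $n$, and
\[
|f_{m(n),n}(x_n)-f(x)|\leq |g_{m(n),n}(x_n)-g_{m(n)}(x_n)| + |f_{m(n)}(x_n)-f_{m(n)}(x)| + |f_{m(n)}(x)-f(x)|.
\]
The first term is $<1/m(n)\to 0$ by the choice of $N_m$ once $m(n)$ is large enough that $\bar B_j\subset \bar B_{m(n)}$; the third term is $<1/j$-type and $\to 0$ by local uniform convergence $f_m\to f$ with $m(n)\to\infty$; the middle term $\to 0$ because $f_{m(n)}=g_{m(n)}$ on $A$ and $\{g_{m(n)}\}$ — hmm, the middle term needs equicontinuity. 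Here I instead use that $g_{m(n)}\to g$ locally uniformly (as $m(n)\to\infty$, from $f_m\to f$ loc. unif. and a further Tietze-type bookkeeping), so $g_{m(n)}(x_n)\to g(x)$ by joint continuity, absorbing the middle and third terms together. This is the one place requiring care, so I will phrase the extraction in terms of the extensions $g_{m,n}, g_m, g$ on $\mathcal X$ from the outset and invoke Lemma 2.8(b) and plain local uniform convergence on $\mathcal X$, reducing everything to the elementary fact that if $h_k\to h$ locally uniformly on $\mathcal X$ and $x_k\to x$ then $h_k(x_k)\to h(x)$.

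\medskip

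\textbf{Part (b).} Suppose $f_n\rightarrowtail f$ with $f\in C(A)$ and $\lim_{j\to\infty}\limsup_{n}\|f_n\|_{L^2(A_n\setminus\bar B_j;\mu_n)}=0$. First I show $f\in L^2(A;\mu)$: for fixed $j$, $|f_n|^2\wedge k$ converges to $|f|^2\wedge k$ in the sense $\rightarrowtail$ on $A_n\cap\bar B_j$ (using continuity of $t\mapsto t^2\wedge k$ and $f_n\rightarrowtail f$), and since these are uniformly bounded continuous functions supported near a compact set, Lemma 2.4 / weak convergence of $\mu_n$ gives $\int_{A\cap B_j}(|f|^2\wedge k)\,d\mu = \lim_n \int_{A_n\cap B_j}(|f_n|^2\wedge k)\,d\mu_n \leq \limsup_n \|f_n\|_{L^2(A_n;\mu_n)}^2$, and the right side is finite because $\|f_n\|_{L^2(A_n;\mu_n)}^2\leq \|f_n\|_{L^2(A_n\cap\bar B_j;\mu_n)}^2 + \|f_n\|_{L^2(A_n\setminus\bar B_j;\mu_n)}^2$ is bounded uniformly in $n$ for $j$ large (the tail is small by hypothesis, and the main part is bounded since $f_n$ is locally uniformly bounded as it $\rightarrowtail$-converges — more precisely I bound $\sup_{A_n\cap\bar B_j}|f_n|$ uniformly in $n$ via Lemma 2.8(b), realizing $f_n=g_n|_{A_n}$ with $g_n\to g$ loc. unif.). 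Letting $k\to\infty$ and then $j\to\infty$ by monotone convergence gives $\|f\|_{L^2(A;\mu)}^2<\infty$.

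For the strong $L^2$ convergence, I use the characterization of strong convergence in Lemma 2.6 via restrictions $g_j|_{A_n}$ of functions $g_j\in C_c(\mathcal X)$. Fix a cutoff $\chi_j\in C_c(\mathcal X)$ with $0\leq\chi_j\leq1$, $\chi_j=1$ on $\bar B_{j-1}$, $\mathrm{supp}\,\chi_j\subset B_j$. With $g=$ a continuous extension of $f$ to $\mathcal X$ (Tietze), set $u_j:=\chi_j g \in C_c(\mathcal X)$; then $u_j|_A\to f$ in $L^2(A;\mu)$ by dominated convergence since $f\in L^2$. It remains to estimate $\limsup_n\|f_n - u_j|_{A_n}\|_{L^2(A_n;\mu_n)}$. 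Split the integral over $A_n\cap\bar B_{j-1}$ and its complement. On $A_n\cap\bar B_{j-1}$, $u_j=g=f$ there (up to $\chi_j=1$), and $f_n\rightarrowtail f$ plus local uniform boundedness plus weak convergence of $\mu_n$ forces $\|f_n-f\|_{L^2(A_n\cap\bar B_{j-1};\mu_n)}\to 0$ as $n\to\infty$ (apply the $\rightarrowtail$-to-strong principle on the relatively compact piece: $(f_n - g_j|_{A_n})$ with $g_j$ extending $f$ is $\rightarrowtail 0$ and bounded, so its $L^2(\mu_n)$ norm on a compact set tends to $0$ by Lemma 2.4-type reasoning). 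On the complement $A_n\setminus\bar B_{j-1}$, $\|f_n\|_{L^2}\leq\|f_n\|_{L^2(A_n\setminus\bar B_{j-1};\mu_n)}$ which has small $\limsup_n$ by hypothesis, and $\|u_j|_{A_n}\|_{L^2}$ is controlled by $\|f\|_{L^2(A\setminus B_{j-2};\mu)}$-type quantities (via $u_j|_{A_n}\rightarrowtail u_j|_A$ and the already-established $L^2$ bound) which is small for $j$ large. Hence $\lim_{j\to\infty}\limsup_{n\to\infty}\|f_n-u_j|_{A_n}\|_{L^2(A_n;\mu_n)}=0$, and Lemma 2.6 gives $f_n\to f$ strongly in $L^2$.

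\medskip

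\textbf{Main obstacle.} The genuinely delicate point is extracting, in part (a), a single sequence $m(n)$ that works simultaneously at all scales $B_j$; the cleanest route is to pass everything to ambient extensions on $\mathcal X$ via Lemma 2.8(b) and Tietze, reducing the problem to the trivial statement "$h_k\to h$ loc.\ unif.\ and $x_k\to x$ imply $h_k(x_k)\to h(x)$," so that no equicontinuity hypothesis on the $f_m$ is needed. In part (b), the analogous care is needed to upgrade $\rightarrowtail$-convergence to $L^2(\mu_n)$-convergence on relatively compact pieces — this again rests on local uniform boundedness (from Lemma 2.8(b)) together with the weak convergence $\mu_n\Rightarrow\mu$, exactly as in the proof of Lemma 2.4 — and to control the $L^2$-mass of the limit and of the cutoff approximants near infinity uniformly in $n$, which is precisely what the tail hypothesis $\lim_j\limsup_n\|f_n\|_{L^2(A_n\setminus\bar B_j;\mu_n)}=0$ is designed to provide.
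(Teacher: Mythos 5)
Your proposal is correct and, for part (a), is essentially the paper's argument: after your first attempt stalls on the equicontinuity of the middle term, you pass (exactly as the paper does) to ambient extensions on $\mathcal X$ via Lemma \ref{lemma28}(b) and reduce everything to the elementary fact that $h_k\to h$ locally uniformly and $x_k\to x$ imply $h_k(x_k)\to h(x)$; the paper packages the diagonal choice through the metric $\rho_\infty(h,h')=\sum_{j\geq 1} 2^{-j}\big(\|(h-h')|_{\bar B_j}\|_\infty\wedge 1\big)$ and $m(n):=\max\{m\le n:\rho_\infty(g_{m,n},g_m)\le 1/m\}$, which is the same extraction as your thresholds $N_m$. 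For part (b) you follow the same cutoff-plus-Lemma \ref{lemma26} strategy, with two bookkeeping differences. You prove $f\in L^2(A;\mu)$ first by truncating $|f_n|^2\wedge k$; this is fine provided you replace the sharp indicator of $B_j$ by a continuous cutoff (the boundary of $B_j$ may charge $\mu$, so the equality of integrals you write is not literally justified, though only the inequality is needed), whereas the paper gets $f\in L^2$ from the uniform bound $\sup_j\|(gu_j)|_A\|_{L^2(A;\mu)}<\infty$. In the final estimate you split the integral over $A_n\cap\bar B_{j-1}$ and its complement, which obliges you to bound $\|u_j|_{A_n}\|_{L^2(A_n\setminus\bar B_{j-1};\mu_n)}$ uniformly in $n$; this is true but requires a Portmanteau-type upper bound for the measures $|u_j|^2\,d\mu_n$ on the compact annulus $\bar B_j\setminus B_{j-1}$, which you only gesture at. The paper sidesteps this by decomposing $f_n-(gu_j)|_{A_n}=f_n\,\big(1-u_j|_{A_n}\big)+\big(f_n\,(u_j|_{A_n})-(gu_j)|_{A_n}\big)$, so the tail term is bounded directly by $\|f_n\|_{L^2(A_n\setminus\bar B_j;\mu_n)}$ from the hypothesis and no control of the cutoff approximant near infinity is needed; adopting that decomposition would make your part (b) airtight with no extra work.
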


\begin{proof}
(a).  By Tietze extension theorem and Lemma \ref{lemma28} (b),
there are $\{g;  g_m, m\geq 1\} \subset  C(\mathcal{X})$ and $\{g_{m,n}, m\geq 1\}\in C(A_n)$ so that 
   $g|_A=f$,    $g_m|_A=f_m$ for $m\geq 1$, and    $g_{m,n}|_{A_n}=f_{m,n}$ for $n,m\geq 1$.
  In addition, 
\begin{eqnarray*}
	&g_m\to g\text{ locally uniformly as }m\to\infty,\\
	&g_{m,n}\to g_m\text{ locally uniformly as }n\to\infty  \ \hbox{for every }  m\geq 1.
\end{eqnarray*} 
Note that  the metric $\rho_\infty(h,h'):=\sum_{j=1}^\infty 2^{-j}\big(\|(h-h')|_{\bar B_j}\|_\infty\wedge 1\big)$ characterizes the   locally uniform convergence on $\sX$. Define 
$m(n):=\max\{1\leq m\leq n: \rho_\infty(g_{m,n},g_m)\leq \frac{1}{m}\}$. Then $m(n)\to\infty$ and $ \lim_{n\to \infty} \rho_\infty(g_{m(n),n},g) = 0$. This establishes (a). 

(b). We apply Lemma \ref{lemma28} (b) to $f_n\rightarrowtail f$ as $n\to\infty$ to find $g_n\in C(\sX)$ and $g\in C(\sX)$ so that $f_n=g_n|_{A_n}$ for $n\geq 1$, $f=g|_A$ and $g_n\to g$ locally uniformly as $n\to \infty$. For each $j\geq 1$, we fix $u_j\in C_c(\mathcal{X})$ such that $u_j|_{\bar B_j}=1$ and $0\leq u_j\leq 1$. Denote the support of $u_j$ by $D_j$. Then, 
\begin{equation}\label{eqn23a}
\begin{aligned}
		\lim_{n\to\infty}\|f_n\,(u_j|_{A_n})-(g\,u_j)|_{A_n}\|_{L^2(A_n; \mu_n)}
		=&\lim_{n\to\infty}\|(g_n\,u_j)|_{A_n}-(g\,u_j)|_{A_n}\|_{L^2(A_n; \mu_n)}\\
		\leq &\lim_{n\to\infty}\|g_n|_{D_j}-g|_{D_j}\|_{\infty}\,\sqrt{\mu_n(D_j)}=0.
\end{aligned}
\end{equation}
This in particular implies that $\sup_{n\geq 1}\|f_n\|_{L^2(A_n;\mu_n)}<\infty$. Combining \eqref{eqn23a} with the assumption  $\lim\limits_{j\to\infty}\limsup\limits_{n\geq 1}\|f_n\|_{L^2(A_n\setminus \bar B_j;\mu_n)}=0$, we get
\begin{equation}\label{eqn24a}
\lim\limits_{j\to\infty}\limsup\limits_{n\geq 1}\|f_n-(g\,u_j)|_{A_n}\|_{L^2(A_n; \mu_n)}=\lim\limits_{j\to\infty}\limsup\limits_{n\geq 1}\|f_n-f_n\,(u_j|_{A_n})\|_{L^2(A_n; \mu_n)}=0.
\end{equation}
Next for each $j\geq 1$, $\|(g\,u_j)|_A\|_{L^2(A;\mu)}=\lim\limits_{n\to\infty}\|f_n(u_j|_{A_n})\|_{L^2(A_n;\mu_n)}\leq\sup_{n\geq 1}\|f_n\|_{L^2(A_n;\mu_n)}$  by \cite[Lemma 2.1(2),(5)]{KS}, Lemma \ref{lemma26} and \eqref{eqn23a}.
 Hence, $\sup_{j\geq 1}\|(gu_j)|_A\|_{L^2(A;\mu)}<\infty$, which implies that $f=g|_A\in L^2(A;\mu)$ and $(gu_j)|_{A}\to f$ in $L^2(A;\mu)$ norm. It follows by \eqref{eqn24a} and Lemma \ref{lemma26} that $f_n\to f$ strongly in $L^2$.
\end{proof}

When $\sX$ is compact, Lemma \ref{lemma29}(b) in particular shows that uniform convergence `$\rightarrowtail$' implies strong convergence in $L^2$. It is also known that strong convergence in $L^2$ implies weak convergence in $L^2$ 
by \cite[Lemma 2.1 (4)]{KS}.

\subsection{Convergence of quadratic forms}
Recall that the Basic Setting is in force, under which the sequence of closed sets $     \{A_n; n\geq 1\}$ in $(\sX, \rho )$  converges to $A$ 
in the Hausdorff metric and the sequence of measures $\mu_n$ on $A_n$ converges weakly to the measure $\mu$ on $A$. 
The following definition of Mosco convergence is taken from \cite[Definitions 2.8 and 2.11]{KS}.

\begin{definition}[Mosco convergence]\label{def28}
	Let $\bar{\R}=\R\cup\{+\infty\}\cup\{-\infty\}$ be the set of extended real numbers. Suppose that 
	 $\sE^{(n)}:L^2(A_n; \mu_n)\to \bar{\R}$, $n\geq 1$,  and $\sE:L^2(A; \mu)\to \bar{\R}$. 
	 	We say $\sE^{(n)}$ is Mosco convergent to $\sE$ if and only if the following hold.
	
	\begin{enumerate}
\item[\rm (M1)]   For any $f_n\in L^2(A_n; \mu_n)$ that converges weakly in $L^2$ to $f\in L^2(A; \mu)$ as $n\to \infty$, we have 
	\[\liminf_{n\to\infty} \sE^{(n)}(f_n)\geq \sE(f).\]
	
	\item[\rm (M2)]  For each $f\in L^2(A; \mu)$, there exists $f_n\in L^2(A_n; \mu_n)$ that converges strongly in $L^2$ to  $f$ as $n\to \infty$ with 
	\[\limsup_{n\to\infty} \sE^{(n)}(f_n)\leq \sE(f).\]
	\end{enumerate} 
\end{definition}

We are in particular interested in quadratic forms. It is well-known that there is a one to one correspondence between non-negative lower-semicontinuous quadratic forms (with extended real values) on a Hilbert space $H$ and closed symmetric non-negative definite bilinear forms (see \cite[Section 1 (b)]{Mosco}) described as follows. Let $\sE:H  \to \R$ be a non-negative lower-semicontinuous quadratic forms, then one can define a closed symmetric non-negative definite symmetric bilinear form $(\sE,\hbox{Dom}(\sE))$ by the parallelogram law:
set $\hbox{Dom} (\sE)=\{f\in H:\sE(f)< + \infty\}$ and 
$$
\sE(f,g):=\frac{1}{4}\Big(\sE(f+g)-\sE(f-g)\Big) \quad \hbox{for }  f,g\in \hbox{Dom} (\sE).
$$
 Conversely, given a closed symmetric non-negative definite bilinear form $(\sE,\hbox{Dom}(\sE))$, we can define a non-negative lower-semicontinuous quadratic forms by 
 $$
 \sE(f):= \begin{cases} \sE(f,f) \quad &\hbox{if } f\in \hbox{Dom}(E), \cr
  + \infty   &\hbox{if  } f\in H\setminus \hbox{Dom}(\sE). 
   \end{cases}
   $$

\medskip

Throughout this paper, we do not distinguish in notation between a non-negative symmetric closed bilinear form and its associated non-negative lower-semicontinuous quadratic form. The following result taken from \cite{KS} extends the characterization of the Mosco convergence of the non-negative symmetric closed bilinear forms in terms of  the strong  semigroup (or resolvent) convergence from on a common Hilbert space to  the setting  on varying Hilbert spaces. See also \cite[Appendix]{CKK} and \cite[Theorem 2.5]{K} for related work.

\begin{theorem}[Theorem 2.4 of \cite{KS}]  \label{thm211}
	Let $\sE$ be a densely defined non-negative lower-semicontinuous quadratic form on $L^2(A; \mu)$, let $\{T_t ;  t\geq 0\}$ be the associated strongly continuous semigroup of symmetric contraction operators on $L^2(A; \mu)$, and let $U_\lambda=\int_0^\infty e^{-\lambda t}T_{t}dt$,
	$\lambda>0$,  be the resolvent operators.
	 	For $n\geq 1$, let $\sE^{(n)}$ be a densely defined non-negative lower-semicontinuous quadratic form on $L^2(A_n; \mu_n)$, and let $\{T^{(n)}_t; t\geq 0\}$ be the associated strongly continuous semigroup of symmetric contraction operators on $L^2(A_n; \mu_n)$, and let $U^{(n)}_\lambda=\int_0^\infty e^{-\lambda t}T^{(n)}_{t}dt$,  $\lambda>0$,  be the resolvent operators.
	 	The following statements are equivalent to each other.  
	
	\begin{enumerate}
	\item[\rm (a)] $\sE^{(n)}$ is Mosco convergent to $\sE$.
	  	
	\item[\rm (b)] $T_t^{(n)}$ strongly converges in $L^2$ to $T_t$ for some $t>0$. 
	
	\item[\rm (c)]  $T_t^{(n)}$ strongly converges in $L^2$ to $T_t$ for any $t\geq 0$.
	
	\item[\rm (d)]  $U_\lambda^{(n)}$ strongly converges in $L^2$ to $U_\lambda$ for some $\lambda>0$.
	
	\item[\rm (e)]    $U_\lambda^{(n)}$ strongly converges in $L^2$ to $U_\lambda$ for any $\lambda>0$.
	\end{enumerate} 
Here, for $O_n:L^2(A_n; \mu_n)\to L^2(A_n; \mu_n),n\geq 1$ and $O:L^2(A; \mu)\to L^2(A; \mu)$, we say $O_n$ strongly converge in $L^2$ to $O$  if $O_nf_n\to Of$ strongly in $L^2$ for any $f_n\in L^2(A_n; \mu_n),n\geq 1$ and $f\in L^2(A; \mu)$ such that $f_n\to f$ strongly in $L^2$.
\end{theorem}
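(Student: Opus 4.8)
The plan is to transport the classical equivalence between Mosco convergence and strong resolvent (semigroup) convergence (Mosco \cite{Mosco}) from a fixed Hilbert space to the setting of the varying spaces $H_n:=L^2(A_n;\mu_n)$ converging to $H:=L^2(A;\mu)$. The first step is to assemble the structural properties of this convergence that drive everything else; these are instances of \cite[Lemma~2.1]{KS} together with Lemmas~\ref{lemma24}, \ref{lemma26} and \ref{lemma29}: (i) strong convergence in $L^2$ implies weak convergence in $L^2$; (ii) if $h_n\to h$ weakly then $\|h\|_H\le\liminf_n\|h_n\|_{H_n}$, and $h_n\to h$ strongly if and only if $h_n\to h$ weakly and $\|h_n\|_{H_n}\to\|h\|_H$; (iii) every $h\in H$ is a strong limit of some sequence $h_n\in H_n$; (iv) if $\sup_n\|h_n\|_{H_n}<\infty$ then a subsequence of $\{h_n\}$ converges weakly in $L^2$; (v) if $h_n\to h$ strongly and $g_n\to g$ weakly then $(h_n,g_n)_{H_n}\to(h,g)_H$; and (vi) if every subsequence of $\{h_n\}$ has a further subsequence converging strongly in $L^2$ to $h$, then $h_n\to h$ strongly in $L^2$.

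For the main implication (a)$\Rightarrow$(e) I would run a $\Gamma$-convergence argument on the resolvent functionals. Fix $\lambda>0$ and $f_n\to f$ strongly, and recall that $u^{(n)}:=U^{(n)}_\lambda f_n$ is the unique minimizer over $H_n$ of $\Phi_n(u):=\sE^{(n)}(u)+\lambda\|u\|_{H_n}^2-2(f_n,u)_{H_n}$, and $u^\ast:=U_\lambda f$ the unique minimizer of the analogous $\Phi$ on $H$. Using (M2) I pick a recovery sequence $v_n\to u^\ast$ strongly with $\limsup_n\sE^{(n)}(v_n)\le\sE(u^\ast)$; since $\|v_n\|_{H_n}\to\|u^\ast\|_H$ and $(f_n,v_n)_{H_n}\to(f,u^\ast)_H$, this gives $\limsup_n\Phi_n(u^{(n)})\le\limsup_n\Phi_n(v_n)\le\Phi(u^\ast)$, whence, by a routine coercivity estimate, $\sup_n\sE^{(n)}(u^{(n)})<\infty$ and $\sup_n\|u^{(n)}\|_{H_n}<\infty$. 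Passing to a subsequence with $u^{(n)}\to w$ weakly, (M1), weak lower semicontinuity of the norm, and the strong--weak pairing (v) yield $\Phi(w)\le\liminf_n\Phi_n(u^{(n)})\le\limsup_n\Phi_n(u^{(n)})\le\Phi(u^\ast)$, so $w=u^\ast$ by uniqueness; feeding this back forces all the inequalities to be equalities and in fact yields $\|u^{(n)}\|_{H_n}\to\|u^\ast\|_H$ along the subsequence, so by (ii) $u^{(n)}\to u^\ast$ strongly. As this holds along every subsequence with the same limit, (vi) gives $U^{(n)}_\lambda f_n\to U_\lambda f$ strongly.

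The remaining equivalences are softer. For (e)$\Leftrightarrow$(d) I would use the resolvent identity $U^{(n)}_\beta=U^{(n)}_\lambda+(\lambda-\beta)U^{(n)}_\lambda U^{(n)}_\beta$, first as a Neumann series for $\beta$ close to $\lambda$ and then by iteration for all $\beta>0$, using that strong convergence passes through uniformly bounded operators and geometrically convergent series since $\|U^{(n)}_\lambda\|\le\lambda^{-1}$. For (d)$\Leftrightarrow$(b)$\Leftrightarrow$(c) I would move between resolvents and semigroups via $U^{(n)}_\lambda=\int_0^\infty e^{-\lambda t}T^{(n)}_t\,dt$ and dominated convergence, exploiting the uniform contractivity of $T^{(n)}_t$, together with the semigroup law and strong continuity to pass between a single time $t>0$ and all $t\ge0$ and to invert the Laplace transform. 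Finally, for (b)$\Rightarrow$(a) I would recover the Mosco conditions from semigroup/resolvent convergence through the monotone Yosida-type approximations $\sE^{[\beta]}(h):=\beta\|h\|_H^2-\beta^2(U_\beta h,h)_H\uparrow\sE(h)$ as $\beta\to\infty$ (and likewise $\sE^{[\beta]}_{(n)}$ on $H_n$): for (M2), take $f_n:=\beta U^{(n)}_\beta g_n$ with $g_n\to f$ strongly (using (iii)), use $\sE^{(n)}(\beta U^{(n)}_\beta g_n)\le\sE^{[\beta]}_{(n)}(g_n)\to\sE^{[\beta]}(f)$ and $\beta U_\beta f\to f$ in $H$, and diagonalize over $\beta\to\infty$; for (M1), given $f_n\to f$ weakly, self-adjointness of the resolvents together with (v) gives $U^{(n)}_\beta f_n\to U_\beta f$ weakly, hence $\liminf_n\sE^{(n)}(f_n)\ge\liminf_n\sE^{[\beta]}_{(n)}(f_n)\ge\sE^{[\beta]}(f)$, and letting $\beta\to\infty$ gives $\ge\sE(f)$.

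The hard part is really the groundwork in the first paragraph: since the Hilbert spaces genuinely vary, one cannot simply quote weak compactness of balls or joint continuity of inner products, so the crux is to establish and then deploy the toolbox (i)--(vi) --- above all the compactness statement (iv) and the weak-plus-norm-implies-strong criterion in (ii), which together power the $\Gamma$-convergence step. A secondary technical nuisance is that every bound in the resolvent/semigroup interchange must be uniform in $n$ so that strong convergence survives the integral and Neumann-series manipulations. All of this is carried out in \cite[Section~2]{KS}, to which we refer for the complete details.
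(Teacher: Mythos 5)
This theorem is not proved in the paper at all: it is quoted verbatim from Kuwae--Shioya \cite[Theorem 2.4]{KS}, so the paper's ``proof'' is the citation itself, and your proposal, which likewise grounds the toolbox (i)--(vi) and the full details in \cite[Section 2]{KS}, is essentially the same approach. Your sketch is moreover a correct reconstruction of the underlying argument --- the variational characterization of $U_\lambda^{(n)}f_n$ as a minimizer plus weak compactness in the varying-space framework for (a)$\Rightarrow$(e), the resolvent identity and Laplace-transform/Trotter--Kato transfer between (b)--(e), and the Yosida-type approximations $\beta\|h\|^2-\beta^2(U_\beta h,h)\uparrow\sE(h)$ for (b)$\Rightarrow$(a) --- so nothing further is needed.
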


\smallskip

  We end this section with a criteria of strong convergence of operators. 

\begin{proposition}\label{prop211}
Suppose that $U:L^2(A;\mu)\to L^2(A;\mu)$ is  a bounded operator, and for each $n\geq 1$,
 $U^{(n)}:L^2(A_n;\mu_n)\to L^2(A_n;\mu_n)$ is  a bounded operator  with  $\sup_{n\geq 1}\|U^{(n)}\|_{L^2\to L^2}<\infty$, 
 where $\|U^{(n)}\|_{L^2\to L^2}:=\sup\{\|U^{(n)}f\|_{L^2(A_n;\mu_n)}:f\in L^2(A_n;\mu_n),\ \|f\|_{L^2(A_n;\mu_n)}=1\}$. 
Then, $U^{(n)}$ converges strongly in $L^2$ to $U$ if and only if $U^{(n)}(g|_{A_n})\to U(g|_A)$ strongly in $L^2$ for every  $g\in C_c(\mathcal{X})$.
\end{proposition}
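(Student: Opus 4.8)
The "only if" direction is immediate: if $U^{(n)}$ converges strongly in $L^2$ to $U$, then since $g|_{A_n}\to g|_A$ strongly in $L^2$ for every $g\in C_c(\mathcal X)$ by Lemma \ref{lemma26}, the definition of strong operator convergence applied to the sequence $f_n:=g|_{A_n}$ gives $U^{(n)}(g|_{A_n})\to U(g|_A)$ strongly in $L^2$. The content is in the "if" direction. The plan is to take an arbitrary sequence $f_n\in L^2(A_n;\mu_n)$ converging strongly in $L^2$ to $f\in L^2(A;\mu)$ and show $U^{(n)}f_n\to Uf$ strongly in $L^2$, using only the hypothesis on functions of the form $g|_{A_n}$ together with the uniform bound $M:=\sup_{n\geq 1}\|U^{(n)}\|_{L^2\to L^2}<\infty$.

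First I would unwind the definition of strong convergence $f_n\to f$: by Lemma \ref{lemma26} there is a sequence $g_j\in C_c(\mathcal X)$ with $\|g_j|_A-f\|_{L^2(A;\mu)}\to 0$ as $j\to\infty$ and
\[
\lim_{j\to\infty}\limsup_{n\to\infty}\|f_n-g_j|_{A_n}\|_{L^2(A_n;\mu_n)}=0.
\]
Set $\eps_j:=\limsup_{n\to\infty}\|f_n-g_j|_{A_n}\|_{L^2(A_n;\mu_n)}$, so $\eps_j\to 0$. Applying $U^{(n)}$ and the operator norm bound,
\[
\|U^{(n)}f_n-U^{(n)}(g_j|_{A_n})\|_{L^2(A_n;\mu_n)}\leq M\,\|f_n-g_j|_{A_n}\|_{L^2(A_n;\mu_n)},
\]
whose $\limsup$ over $n$ is at most $M\eps_j$. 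On the other hand, for each fixed $j$ the hypothesis gives $U^{(n)}(g_j|_{A_n})\to U(g_j|_A)$ strongly in $L^2$, and since $g_j|_A\to f$ in $L^2(A;\mu)$ and $U$ is bounded, $U(g_j|_A)\to Uf$ in $L^2(A;\mu)$ norm. The remaining task is to combine "strong convergence of $U^{(n)}(g_j|_{A_n})$ to $U(g_j|_A)$ for each $j$", "$U(g_j|_A)\to Uf$ in norm", and "the $L^2$-distance between $U^{(n)}f_n$ and $U^{(n)}(g_j|_{A_n})$ is $\leq M\eps_j$ asymptotically" to conclude $U^{(n)}f_n\to Uf$ strongly.

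This last combination is exactly the kind of diagonal/triangle-inequality argument packaged by the definition of strong $L^2$-convergence in Definition \ref{def25}(a) and by the general properties of strong convergence recorded in \cite[Lemma 2.1]{KS}: namely, if $h^{(n)}_j\to h_j$ strongly in $L^2$ for each $j$, $h_j\to h$ in $L^2(A;\mu)$ norm, and $\limsup_{n\to\infty}\|v_n-h^{(n)}_j\|_{L^2(A_n;\mu_n)}\to 0$ as $j\to\infty$, then $v_n\to h$ strongly in $L^2$. I would either invoke this as a known stability property of strong convergence or prove it in two lines: for each $j$ pick, from the definition of $h^{(n)}_j\to h_j$, approximating functions $u^{(j)}_i\in C_c(A)$; a standard three-$\eps$ estimate then produces the single approximating sequence in $C_c(A)$ required by Definition \ref{def25}(a) for $v_n\to h$. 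Applying this with $v_n=U^{(n)}f_n$, $h^{(n)}_j=U^{(n)}(g_j|_{A_n})$, $h_j=U(g_j|_A)$, $h=Uf$ finishes the proof. The only mild obstacle is bookkeeping the order of limits ($n\to\infty$ then $j\to\infty$) correctly; there is no analytic difficulty, since the uniform operator bound $M$ does all the heavy lifting and no compactness or equicontinuity is needed.
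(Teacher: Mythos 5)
Your proposal is correct and follows essentially the same route as the paper: decompose via Lemma \ref{lemma26} into $C_c(\mathcal{X})$-approximants $g_j$, use the uniform bound $\sup_n\|U^{(n)}\|_{L^2\to L^2}$ and boundedness of $U$ to control the errors, apply the hypothesis to each $g_j$, and conclude by a diagonal extraction of $C_c$-approximants for $U^{(n)}f_n\to Uf$ (the paper implements your ``stability property'' by choosing $h_j\in C_c(\mathcal{X})$ with errors at most $1/j$ and invoking Lemma \ref{lemma26} once more). No gaps.
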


\begin{proof}
The `only if' part follows immediately from the definition of strong convergence of operators and   Lemma \ref{lemma26}.  

It remains to prove the `if' part. Let $f\in L^2(A;\mu)$ and $f_n\in L^2(A_n;\mu_n),n\geq 1$ such that $f_n\to f$ strongly in $L^2$.
 We need to show $U^{(n)}f_n\to Uf$ strongly in $L^2$. Since $f_n\to f$ strongly in $L^2$, by Lemma \ref{lemma26}, we can find a sequence $g_j\in C_c(\mathcal{X}),j\geq 1$ such that 
\begin{eqnarray*}
&\lim\limits_{j\to\infty}\|g_j|_A-f\|_{L^2(A;\mu)}=0,\\
&\lim\limits_{j\to\infty}\limsup\limits_{n\to\infty}\|g_j|_{A_n}-f_n\|_{L^2(A_n;\mu_n)}=0.
\end{eqnarray*}
Then, since $U$ is bounded and $\sup_{n\geq 1}\|U^{(n)}\|_{L^2\to L^2}<\infty$, we have
\begin{eqnarray}
\label{e:2.4}&\lim\limits_{j\to\infty}\|U(g_j|_A)-Uf\|_{L^2(A;\mu)}=0,\\
\label{e:2.5}&\lim\limits_{j\to\infty}\limsup\limits_{n\to\infty}\|U^{(n)}(g_j|_{A_n})-U^{(n)}f_n\|_{L^2(A_n;\mu_n)}=0.
\end{eqnarray}
By the assumption (note that we are proving the `if' part), for each $j\geq 1$, we have $U^{(n)}(g_j|_{A_n})\to U(g_j|_A)$ strongly in $L^2$ as $n\to\infty$. Hence, by using Lemma \ref{lemma26} again, for each $j\geq 1$, we can find $h_j\in C_c(\mathcal{X})$ such that 
\begin{eqnarray}
\label{e:2.6}&\|h_j|_A-U(g_j|_A)\|_{L^2(A;\mu)}\leq 1/j,\\
\label{e:2.7}&\limsup\limits_{n\to\infty}\|h_j|_{A_n}-U^{(n)}(g_j|_{A_n})\|_{L^2(A_n;\mu_n)}\leq 1/j.
\end{eqnarray}
Combining \eqref{e:2.4} and \eqref{e:2.6} yields $\lim\limits_{j\to\infty}\|h_j|_A-Uf\|_{L^2(A;\mu)}=0$; while    \eqref{e:2.5} together with
 \eqref{e:2.7} gives  $\lim\limits_{j\to\infty}\limsup\limits_{n\to\infty}\|h_j|_{A_n}-U^{(n)}f_n\|_{L^2(A_n;\mu_n)}=0$. Hence, $U^{(n)}f_n\to Uf$ strongly in $L^2$ by Lemma \ref{lemma26}, noticing that $h_j\in C_c(\mathcal{X})$ for each $j\geq 1$.
\end{proof}

\section{Dirichlet forms on {\it GSC}s }\label{sec3}

In this section, we first review some well-known properties of the Dirichlet form 
of Brownian motion on $F$ and that of approximating reflected Brownian motions on  $F_n$ for $n\geq 0$, and recall the uniform elliptic Harnack inequalites from \cite{BB5}.
We then present lower bound estimates of effective resistances between a cell and the complement of its neighborhood and establish 
the Mosco convergence of $\alpha_m \sE^{(F_m)}$ to $\sE^{(F)}$ for some sequence of positive numbers 
$\{\alpha_m; m\geq 1\}$ that are bounded between two positive numbers.

\medskip

Recall the definitions of cells, and measures $\mu$ and $\mu_n$ on $F$ and $F_n$, respectively,
from Subsection \ref{sec21}.

\medskip

\noindent(\textbf{Dirichlet forms on $F_m$}). For each $n\geq 0$ and $Q\in \mcQ_n(F_m)$,   let $\big(\mcE^{(F_{m,Q})},W^{1,2}(F_{m,Q})\big)$ be the renormalized Dirichlet form on $F_{m,Q}$ on $L^2(F_{m,Q};\mu_m)$ defined by 
\begin{equation}\label{e:3.1} 
\mcE^{(F_{m,Q})}(f,g)=L_F^{(d_w-2)m}\int_{F_{m,Q}}\nabla f(x)\nabla g(x)\mu_m(dx)\quad\hbox{for }   f,g\in W^{1,2}(F_{m,Q}).
\end{equation}
In particular, when $Q=F_0$,  
\[ 
\mcE^{(F_m)}(f,g)=L_F^{(d_w-2)m}\int_{F_m}\nabla f(x)\nabla g(x)\mu_m(dx)
\quad\hbox{for }  f,g\in W^{1,2}(F_m).
\]

\medskip

\noindent(\textbf{Dirichlet forms on $F$}). Recall that $(\mcE^{(F)},\mcF^{(F)})$ is the
 strongly local, regular, irreducible locally symmetric Dirichlet form on $L^2(F; \mu)$
 associated with $X^{(F)}$ that has unit expected time of its 
 first visit to the faces of $\partial F_0$ not containing ${\bf 0}$ when starting from ${\bf 0}$. 
For each $Q\in\mcQ_n(F),n\geq 0$, we define $(\mcE^{(F_Q)},\mcF^{(F_Q)})$ by
\begin{eqnarray}
	&\mcF^{(F_Q)}=\{f\in L^2(F_Q;\mu):f\circ \Psi_Q\in \mcF^{(F)}\},  \label{e:3.2}\\
	&\mcE^{(F_Q)}(f,g)=L_F^{n(d_w-d_f)}\mcE^{(F)}(f\circ \Psi_Q,g\circ \Psi_Q)
	\quad\hbox{for } f,g\in \mcF^{(F_Q)}.  \label{e:3.3}
\end{eqnarray}

\medskip   

\begin{remark} \rm  With slight abuse of notations, for $f\in l(F)$ and $Q\in \mcQ_n(F),n\geq 0$, we abbreviate $f\circ (\Psi_Q|_F)$ to $f\circ \Psi_Q$; similarly, for $f\in l(F_m),m\geq 0$ and $Q\in \mcQ_n(F),n\geq 0$, we abbreviate $f\circ (\Psi_Q|_{F_{(m-n)\vee 0}})$ to $f\circ \Psi_Q$.
\end{remark}

\begin{lemma}\label{lemma31}
$(\mcE^{(F)},\mcF^{(F)})$ is self-similar: 
\begin{equation}\label{eqn31}
	\mcF^{(F)}\cap C(F)=\left\{f\in C(F):\  f|_{F_Q}\in\mcF^{(F_Q)}\text{ for any } Q\in \mcQ_1(F) \right\},
\end{equation}
and
\begin{equation}\label{eqn32}
	\mcE^{(F)}(f)=\sum_{Q\in\mcQ_1(F)}
	\mcE^{(F_Q)}(f |_Q)  \quad \text{for any } f\in\mcF^{(F)}.
 \end{equation}
\end{lemma}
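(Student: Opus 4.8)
\textbf{Proof strategy for Lemma \ref{lemma31}.}

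The plan is to deduce the self-similarity of $(\mcE^{(F)},\mcF^{(F)})$ from the uniqueness theorem of \cite{BBKT} together with the scaling relation \eqref{e:1.1} built into the construction. First I would observe that the right-hand side of \eqref{eqn31}, together with the quadratic form $\widetilde\sE(f):=\sum_{Q\in\mcQ_1(F)}\mcE^{(F_Q)}(f|_{F_Q})$ defined on it, is itself a strongly local, regular Dirichlet form on $L^2(F;\mu)$: strong locality and the Markov property are inherited cell-by-cell from $(\mcE^{(F)},\mcF^{(F)})$ via the affine maps $\Psi_Q$, closedness follows because a Cauchy sequence in $\widetilde\sE$-norm is Cauchy in each cell, and regularity follows from the regularity of $(\mcE^{(F)},\mcF^{(F)})$ on each $F_Q$ plus the gluing of continuous functions along the (interior) shared faces of adjacent $1$-cells, using condition (SC2) for connectedness and (SC3) for non-diagonality so that functions matching on overlaps genuinely patch together. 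One must also check irreducibility and that $\widetilde\sE$ is invariant under the symmetry group of the cube, which is immediate from (SC1) and the corresponding properties of $(\mcE^{(F)},\mcF^{(F)})$.

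Next, by the Barlow--Bass--Kumagai--Teplyaev uniqueness theorem \cite[Theorem 1.2]{BBKT}, there is a constant $c>0$ with $\widetilde\sE=c\,\mcE^{(F)}$ and $\mathrm{Dom}(\widetilde\sE)=\mcF^{(F)}$; the domain identity is exactly \eqref{eqn31}. It then remains to show $c=1$. Here I would use the normalization that defines $X^{(F)}$: the expected hitting time from $\mathbf 0$ of the faces of $\partial F_0$ not containing $\mathbf 0$ equals $1$. Running the self-similar diffusion on $F_1$ (i.e. on the union of the $m_F$ many $1$-cells) and using \eqref{e:3.3} with $n=1$, the time-scaling factor picked up in passing from a $1$-cell to the whole is precisely $L_F^{d_w-d_f}\cdot m_F^{-1}=\rho_F$ in resistance, equivalently $\bar\rho_F=\rho_Fm_F/L_F^2$ in time by \eqref{e:1.6a}; the definition \eqref{e:1.1} of $\lambda_n$ is chosen so that this scaling is consistent with the normalized mean hitting time being $1$ at every level, which forces $c=1$. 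Concretely, one writes the mean hitting time at level $1$ in terms of the level-$0$ one through the strong Markov property and the cell decomposition, and the self-consistency of the normalization across scales pins down the constant.

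The main obstacle I expect is the gluing/regularity argument for $\widetilde\sE$ on the right-hand side of \eqref{eqn31}: one must verify that a continuous function on $F$ whose restriction to each $1$-cell lies in $\mcF^{(F_Q)}$ actually lies in the domain of the glued form, and conversely that elements of $\mcF^{(F)}\cap C(F)$ restrict to each cell. The forward direction is a trace/restriction statement (restriction of $\mcF^{(F)}$ to a subcell lands in $\mcF^{(F_Q)}$), which uses strong locality and a cutoff argument; the reverse gluing direction is the delicate one, since a priori the energies across the shared $(d-1)$-dimensional interface between two adjacent $1$-cells could obstruct patching. This is where (SC3) (non-diagonality) is essential, ensuring the interface is connected and the two cell-forms are compatible there. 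Once this is in place, the identification with $c\,\mcE^{(F)}$ via \cite{BBKT} and the computation $c=1$ via the normalization are comparatively routine.
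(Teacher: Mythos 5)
Your plan differs from the paper's proof, and the difference hides a genuine gap. The paper does not glue the cell forms $\mcE^{(F_Q)}$ together; it cites an independent construction (Kusuoka--Zhou \cite{KZ}, or alternatively the method of \cite{CQ2} using the heat kernel estimates and Lemma \ref{lemma38}) that produces a strongly local, regular, irreducible, locally symmetric Dirichlet form on $F$ which is \emph{already} self-similar, and then identifies it with $(\mcE^{(F)},\mcF^{(F)})$ by the uniqueness theorem of \cite{BBKT}; the multiplicative constant from uniqueness is harmless because $\mcE^{(F_Q)}$ in \eqref{e:3.2}--\eqref{e:3.3} is defined from $\mcE^{(F)}$ itself, so \eqref{eqn31}--\eqref{eqn32} are invariant under rescaling the form. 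Your Step 1, by contrast, requires showing that the glued object $\widetilde\sE(f)=\sum_Q\mcE^{(F_Q)}(f|_{F_Q})$ is a regular, irreducible, locally symmetric Dirichlet form, and this is essentially as hard as the lemma itself. Two concrete problems: (i) with the natural $L^2$ domain $\{f: f\circ\Psi_Q\in\mcF^{(F)}\ \forall Q\in\mcQ_1(F)\}$ the form is \emph{not} irreducible, since the interfaces between $1$-cells are $\mu$-null, so a function taking distinct constant values on adjacent cells lies in the domain with zero energy; repairing this (e.g.\ by taking the closure of continuous piecewise elements and enforcing matching of quasi-continuous traces on faces of positive capacity) requires precisely the trace/extension and capacity machinery the paper develops later (Lemma \ref{lemmaHK}, Appendix \ref{secA}); (ii) regularity of the glued form needs a rich supply of continuous functions whose pullbacks under $\Psi_Q$ lie in $\mcF^{(F)}$, and your suggested "cutoff argument using strong locality" does not produce these: strong locality says nothing about how $\mcF^{(F)}$ transforms under the non-isometric scaling maps $\Psi_Q$, which is exactly the content of \eqref{eqn31}. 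Invoking (SC2)/(SC3) does not substitute for these analytic facts.

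Your Step 3 is also shaky. First, the arithmetic is off: $L_F^{d_w-d_f}=\rho_F$, not $L_F^{d_w-d_f}m_F^{-1}$. More importantly, what you actually need is that the renormalization factor of the self-similar form equals $\rho_F=L_F^{d_w-d_f}$ (equivalently, that the walk dimension governing the self-similar scaling is the $d_w$ coming from the Barlow--Bass resistance scaling factor); the unit-mean-hitting-time normalization of $X^{(F)}$ fixes the overall multiple of $\mcE^{(F)}$ within the \cite{BBKT} class but does not by itself show that the diffusion generated by your glued form has the same normalization, so "self-consistency across scales" is circular as stated. A clean way to close this, if you insist on your formulation with an undetermined $c$, is to iterate $\mcE^{(F)}=c^{-1}\sum_{Q\in\mcQ_1(F)}L_F^{d_w-d_f}\mcE^{(F)}(\cdot\circ\Psi_Q)$ to level $n$ and compare with the uniform two-sided resistance bounds of Lemma \ref{lemma36} (or the heat kernel estimates \eqref{e:1.2}): if $c\neq 1$ the normalized resistances would grow or decay geometrically, a contradiction. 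With (i) and (ii) unresolved, however, the proposal as written does not yet constitute a proof.
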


\begin{proof} The lemma follows from the construction of Kusuoka-Zhou \cite{KZ} and Corollary 1.3 of \cite{BBKT}. The second half of Kusuoka-Zhou's paper is under  the strong recurrence assumption  that $d_f<d_w$. However this condition can be dropped  since we know the elliptic Harnack inequality holds by the coupling argument \cite{BB5} on any GSC.
 One can also use the sub-Gaussian heat kernel estimate and Lemma \ref{lemma38} of this paper that $d_f-d_w>d_I$  to construct a strongly local, regular, irreducible, locally symmetric Dirichlet form on $F$  satisfying (\ref{eqn31}) and (\ref{eqn32}) using the method of \cite[Section 4]{CQ2}. Then, by the uniqueness theorem from \cite{BBKT}, we know it is the same as $(\mcE^{(F)},\mcF^{(F)})$.
 \end{proof}

\begin{remark} \rm    With some abuse of notations, we write $\mcE^{(F_Q)}(f)$ instead of $\mcE^{(F_Q)}(f|_{F_Q})$ for short. Similar notation will be used for $\mcE^{(F_{m,Q})}$ later in this paper.
  \end{remark}

In the rest of the paper,  $\big((X^{(F)}_t)_{t\geq 0},\mathbb{P}^{(F)}_x\big)$ denotes the diffusion process associated with $(\mcE^{(F)},\mcF^{(F)})$ on $L^2(F;\mu)$; for $m\geq 0$,  and  $\big((X^{(F_m)}_t)_{t\geq 0},\mathbb{P}^{(F_m)}_x\big)$ denotes the diffusion process associated with $(\mcE^{(F_m)},\mcF^{(F_m)})$ on $L^2(F_m; \mu_m)$. Since the two-sided  sub-Gaussian heat kernel estimates  \eqref{e:1.2} holds for 
 $\big((X^{(F)}_t)_{t\geq 0},\mathbb{P}^{(F)}_x\big)$  and the two-sided Gaussian heat kernel estimates holds for $\big((X^{(F_m)}_t)_{t\geq 0},\mathbb{P}^{(F_m)}_x\big)$, $m\geq 0$, the diffusion processes $X^{(F)}$ and $X^{(F_m)}$ are Feller processes having strong Feller property. In particular, these processes can start from every point in $F$ and $F_m$, respectively.

\subsection{Uniform elliptic Harnack inequality (EHI)}\label{sec31}

Barlow and Bass  \cite[Theorem 1.1]{BB5}  established a scale-invariant  uniform elliptic Harnack principle on $F_m,m\geq 0$
by a coupling argument, The same idea works on $F$, see  \cite[Proposition 4.22]{BBKT}. We  state their results here.

\smallskip 

\noindent(\textbf{\emph{Harmonic functions}}). Let $(\mcE,\mcF)$ be a regular Dirichlet form on $L^2(\mathcal{X};\mu)$. Let $f\in \mcF$ and let $U\subset \mathcal{X}$ be an open subset. We say $f$ is $\mcE$-harmonic in $U$ if $\mcE(f,g)=0$ for every $g\in C_c(\mathcal{X})\cap \mcF$ whose support  is contained in $U$.

\medskip

There is   a well-known probabilistic characterization of harmonic functions. Let  $\big((X_t)_{t\geq 0},\mathbb{P}_t\big)$ be the Hunt process associated with $(\mcE,\mcF)$,    
\begin{equation}\label{e:3.4}
\sigma_{A}:=\inf\{t\geq 0: X_t\in A\}  \quad \hbox{and} \quad \dot \sigma_{A}:=\inf\{t >  0: X_t\in A\}
\end{equation} 
be the entry time and hitting time of $A$, respectively. Then, if $h\in \mcF\cap L^\infty(\mathcal{X}; \mu)$ is $\mcE$-harmonic in $U$ and $h$ is quasi-continuous, then $h(x)=\mathbb{E}_x[h(X_{\sigma_{\mathcal{X}\setminus U}})]$ for q.e. $x\in E_U:=\{y\in U:\mathbb{P}_y(\sigma_{\mathcal{X}\setminus U}<\infty)=1\}$. See \cite[Proposition 2.5]{BBKT} for a proof.  A more general equivalence result between the analytic and probabilistic notions of harmonicity can be found in \cite{chen}.

\begin{theorem}[\cite{BB5,BBKT}]\label{thm32}
There exists $C\in(1,\infty)$ depending only on $F$ so that the following hold. 
\begin{enumerate} 
\item[\rm (a)] For any $x\in F$, $r>0$ and non-negative $h\in \mcF^{(F)}$ that is $\mcE^{(F)}$-harmonic in $B_F(x,r)$,  
\[
h(y)\leq C\, h(z) \quad\hbox{for every }   y,z\in B_F(x,r/2).
\]

\item[\rm (b)] For any $m\geq 0$, $x\in F_m$, $r>0$ and non-negative $h\in W^{1,2}(F_m)$ that  $\mcE^{(F_m)}$-harmonic in  $B_{F_m}(x,r)$, 
\[
h(y)\leq C\,  h(z)  \quad\hbox{for every }  y,z\in B_{F_m}(x,r/2).
\]
\end{enumerate} 
\end{theorem}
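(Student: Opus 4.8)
\textbf{Proof proposal for Theorem \ref{thm32}.}

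The plan is to cite the coupling construction of Barlow and Bass \cite{BB5} essentially verbatim, and for the statement on $F$ to use the corresponding adaptation recorded in \cite{BBKT}. First I would recall that both $(\mcE^{(F)},\mcF^{(F)})$ and each $(\mcE^{(F_m)},W^{1,2}(F_m))$ carry a full group of symmetries of the unit cube $F_0$, inherited from condition (SC1), and that the reflected Brownian motions $X^{(F_m)}$ (and the diffusion $X^{(F)}$) can be realized so that two copies started from nearby points can be coupled using reflections and rotations across the hyperplanes separating the cells in $\mcQ_k$. The key point, which is precisely \cite[Theorem 1.1]{BB5}, is that the coupling probability (that the two copies have met before exiting a ball of a fixed relative size) is bounded below by a constant depending only on $F$, uniformly in $m$, in the base point $x$, and in the radius $r$ — this is where the scale invariance and the spatial homogeneity of the $F_m$'s enter. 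Part (b) is then exactly their conclusion, and part (a) follows from the same argument run on $F$ itself; as noted in \cite[Proposition 4.22]{BBKT}, the coupling survives the passage to the limit because the symmetries and the cell decomposition persist on $F$, and the elliptic Harnack inequality on $F$ can also be derived from it.

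The mechanics of turning a uniform coupling lower bound into the Harnack inequality are standard: given a non-negative $h$ that is $\mcE$-harmonic in $B(x,r)$, write $h(y)=\bE_y\big[h(X_{\sigma_{\mathcal{X}\setminus B(x,r)}})\big]$ and $h(z)=\bE_z\big[h(X_{\sigma_{\mathcal{X}\setminus B(x,r)}})\big]$ for $y,z\in B(x,r/2)$, using the probabilistic representation of harmonic functions recalled just above the theorem (valid here since $B(x,r)$ is relatively compact and the process is conservative with strong Feller property, so $E_U=U$). Coupling the two processes started from $y$ and $z$ inside $B(x,r)$ before either exits, one gets $h(y)-\theta\, h(z) \ge (1-\theta)\cdot 0 = 0$ where $\theta>0$ is the uniform coupling success probability: on the coupling event the two exit distributions are identified, and on its complement one bounds the contribution using non-negativity of $h$. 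Iterating this inequality over a chain of overlapping balls (a Harnack chain of bounded length, again by homogeneity) upgrades it from "coupling constant" to a genuine two-sided bound $h(y)\le C\,h(z)$ with $C=C(F)$. I would not reproduce these routine steps in detail, only indicate that they are carried out as in \cite[Section 3]{BB5}.

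The main obstacle — and the only genuinely nontrivial input — is the uniform-in-$m$ coupling estimate itself, i.e. that the constant $C$ does not degenerate as the approximating domains $F_m$ become more and more intricate. This is the content of \cite[Theorem 1.1]{BB5} and rests on a careful "corner-moves / slide-moves" coupling that respects the dyadic (here $L_F$-adic) cell structure; it uses the non-diagonality condition (SC3) to ensure the coupled process does not get trapped at bad configurations, and the connectedness condition (SC2) to guarantee it can traverse cells. Since this is already established in the literature for all $d\ge 2$ and all $L_F\ge 3$ under (SC1)--(SC4), I would simply invoke it. For the statement on $F$, the additional remark is that the diffusion $X^{(F)}$ arises as a subsequential scaling limit of the $X^{(F_m)}$ (as recalled in Section \ref{sec1}), so the uniform EHI passes to the limit; alternatively one runs the Barlow--Bass coupling directly on $F$, which is the route taken in \cite[Proposition 4.22]{BBKT}. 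Either way, no new work beyond citation is required here.
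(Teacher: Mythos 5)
Your proposal matches the paper exactly: the paper offers no proof of this theorem, but simply cites \cite[Theorem 1.1]{BB5} for the uniform coupling-based EHI on the approximating domains $F_m$ and \cite[Proposition 4.22]{BBKT} for the same argument carried out on $F$, which is precisely what you do. The additional sketch of how the coupling lower bound yields the Harnack inequality is fine as background but is not required, since the cited results already contain the full statement.
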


\medskip

\subsection{Effective Resistances}\label{sec33}
There are two main ingredients in the construction of the Brownian motion on $F$: uniform elliptic Harnack inequality and resistance estimates. We quickly review resistance estimates in this part. 

\begin{lemma}[\cite{BB3,Mc}]\label{lemma36}
There exists $C \in [1,\infty)$ depending on $F$ such that $C^{-1}<R_n <C$ for each $n\geq 0$, where $R_n$ is the resistance of $F_n$ defined by \eqref{e:1.4}.  
\end{lemma}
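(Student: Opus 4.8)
The plan is to reduce the uniform two--sided bound for $R_n$ to a two--sided \emph{submultiplicativity} estimate for an auxiliary sequence of cell--graph resistances, and then to apply Fekete's subadditivity lemma in both directions. For $n\ge 0$ let $\mathsf r_n$ denote the resistance, with unit conductance on each edge, of the graph $\mathsf G_n$ whose vertices are the cells $\{F_Q:Q\in\mcQ_n(F)\}$, with $F_Q$ joined to $F_{Q'}$ whenever the cubes $Q,Q'$ are face--adjacent, between the two vertex sets coming from $F_n\cap\{x_1=0\}$ and $F_n\cap\{x_1=1\}$. The starting point is the self--similar identity
\[
\mcE^{(F_{n+m})}(g)=\rho_F^{\,n}\sum_{Q\in\mcQ_n(F)}\mcE^{(F_m)}(g\circ\Psi_Q),\qquad g\in W^{1,2}(F_{n+m}),
\]
which follows from the decomposition $F_{n+m}=\bigcup_{Q\in\mcQ_n(F)}\Psi_Q(F_m)$, the chain--rule scaling $\int_{\Psi_Q(K)}|\nabla g|^2\,dm=L_F^{(2-d)n}\int_{K}|\nabla(g\circ\Psi_Q)|^2\,dm$ for $Q\in\mcQ_n(F)$, the definition of $\mcE^{(F_k)}$, and the identity $L_F^{\,d_w-d_f}=\rho_F$; it is the finite--stage analogue of Lemma \ref{lemma31}. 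Combining this identity with the variational formula \eqref{e:1.4} and a discretisation --- on each cell $\Psi_Q(F_m)$ replace $g$ by the collection of its averages over the faces of that cell, estimate $\mcE^{(F_m)}(g\circ\Psi_Q)$ from above and below in terms of these finitely many numbers using a uniform Poincar\'e inequality and a uniform extension estimate on a single copy of $F_m$, and then compare the resulting discrete energies with those of $\mathsf G_n$ --- one obtains, with constants depending only on $F$,
\[
c\,\mathsf r_n\,\mathsf r_m\ \le\ \mathsf r_{n+m}\ \le\ C\,\mathsf r_n\,\mathsf r_m\qquad(n,m\ge 0),\qquad\text{and}\qquad c\ \le\ \rho_F^{\,n}\,R_n/\mathsf r_n\ \le\ C .
\]
The uniformity of the single--cell Poincar\'e and extension constants in the scale $m$ --- hence the uniformity of the constants above in $n$ and $m$ --- is where the hypotheses on $F$ enter: the uniform elliptic Harnack inequality of Theorem \ref{thm32} forces the effective resistance inside one copy of $F_m$ between any prescribed pair of its faces to be comparable, with scale--independent constants, to the resistance between opposite faces, while non--diagonality (SC3) and the border condition (SC4) ensure that face--adjacent cells meet along a $(d-1)$--dimensional piece rather than merely at a corner, so that the interface contributions in the gluing do not degenerate.

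Granting these estimates, I would conclude as follows. Put $a_n:=C\mathsf r_n$ and $b_n:=c\mathsf r_n$. Then $(a_n)$ is submultiplicative, so by Fekete's lemma $\rho_\ast:=\lim_{n\to\infty}\mathsf r_n^{1/n}$ exists and $\mathsf r_n\ge\rho_\ast^{\,n}/C$ for all $n$; and $(b_n)$ is supermultiplicative with $\lim_{n\to\infty}b_n^{1/n}=\rho_\ast$ as well, so $\mathsf r_n\le\rho_\ast^{\,n}/c$ for all $n$. By construction $\rho_\ast$ is the resistance scaling factor $\rho_F$ of \eqref{e:1.1}. Hence $\rho_F^{-n}\mathsf r_n$ lies between two positive constants, and therefore so does $R_n$, since $R_n\asymp\rho_F^{-n}\mathsf r_n$ by the second estimate above. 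This is the assertion of Lemma \ref{lemma36}.

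The crux of the argument --- and the only genuinely hard step --- is the displayed submultiplicativity of $(\mathsf r_n)$, and specifically arranging that every comparison constant (in the single--cell Poincar\'e and extension inequalities, and in the comparison of $\mathsf G_n$ with $F_{n+m}$) is independent of $n$ and $m$, so that the multiplicative errors do not accumulate across the infinitely many scales. This is exactly the content of the resistance estimates of Barlow--Bass \cite{BB3} for planar {\it GSC}\,s and of McGillivray \cite{Mc} in general dimension, and it is the point at which non--diagonality and the uniform elliptic Harnack inequality are indispensable; the remainder of the argument is formal.
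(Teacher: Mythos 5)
The paper does not prove Lemma \ref{lemma36} at all: it is imported from \cite{BB3} (for $d=2$) and \cite{Mc} (for $d\geq 3$), the only adjustment being the normalization $R_n\mapsto R_n/\rho_F^n$ explained after \eqref{e:1.4}, so there is no internal argument for you to match. Your scaffold is the right shape and is consistent with how those references proceed: the self-similar identity $\mcE^{(F_{n+m})}(g)=\rho_F^{\,n}\sum_{Q\in\mcQ_n(F)}\mcE^{(F_m)}(g\circ\Psi_Q)$ is correct (it is exactly the scaling $L_F^{(d_w-d_f)n}=\rho_F^n$), the comparison of $\rho_F^nR_n$ with a unit-conductance cell-graph resistance $\mathsf{r}_n$ is the standard discretisation, and the Fekete bookkeeping (submultiplicativity of $C\mathsf{r}_n$, supermultiplicativity of $c\,\mathsf{r}_n$, hence $\mathsf{r}_n\asymp\rho_*^{\,n}$) is sound.

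The gap is that the two displayed estimates you rely on are not derived; they are attributed to \cite{BB3,Mc}, i.e.\ to the very theorems the lemma is quoting, so as a self-contained proof the proposal begs the question at exactly the step you call the crux. In particular, the supermultiplicative lower bound $\mathsf{r}_{n+m}\geq c\,\mathsf{r}_n\mathsf{r}_m$ is the genuinely hard direction and does not follow from the package you invoke: uniform single-cell Poincar\'e and extension estimates plus the Harnack inequality of Theorem \ref{thm32} let you patch test potentials and so give the upper (submultiplicative) bound and the discrete--continuum comparison, but they do not produce a matching lower bound; in $d=2$ that bound comes from planar duality in \cite{BB3}, and in $d\geq3$ (where $\rho_F<1$ is possible and naive shorting or collapsing cells loses unbounded factors) it is the main content of \cite{Mc}. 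Two smaller points: the identification $\rho_*=\rho_F$ is not ``by construction'' of your graphs --- it holds because $\rho_F$ in \eqref{e:1.1} is by definition the resistance scaling factor, i.e.\ precisely the limit your Fekete argument produces, and this should be said; and since Lemma \ref{lemma36} is itself an input elsewhere in the paper (Lemma \ref{lemma37}(b), Lemma \ref{lemma38}), any proof must use only the coupling-based elliptic Harnack inequality of \cite{BB5}, which is established independently of resistance estimates --- your appeal to Theorem \ref{thm32} is therefore not circular, but that independence needs to be stated. In short, the proposal works as an annotated citation, which is all the paper does, but it is not a proof of the lemma.
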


The next result gives the lower bound estimates of resistances between a cell and the complement of its neighborhood. 
  In the sequel,  for   $Q\in \mcQ_n(F)$ with $n\geq 1$,  we set  
  $$
  \mathcal{S}_Q:=\{Q'\in Q_n(F):Q'\cap Q\neq \emptyset\}.
  $$ 
 
 \medskip

\begin{lemma}\label{lemma37}
There is a constant C depending only on F such that the following hold.
 \begin{enumerate} 
\item[\rm (a)] For each $n\geq 1$ and $Q\in \mcQ_n(F)$, there is $w_Q\in C(F)\cap \mcF^{(F)}$ so that $0\leq w_Q\leq 1$, $w_Q|_{F_Q}=1$, $w_Q|_{F\setminus F_{\mathcal{S}_Q}}=0$, and $\mcE^{(F)}(w_Q)\leq C  L_F^{(d_w-d_f)n}$.
		
\smallskip
		
\item[\rm (b)] For each $n\geq 1$, $m\geq 0$ and $Q\in \mcQ_n(F_m)$, there is $w^{(m)}_Q\in C(F_m)\cap W^{1,2}(F_m)$ so that $0\leq w_{m,Q}\leq 1$, $w^{(m)}_Q|_{F_{m,Q}}=1$, $w^{(m)}_Q|_{F_{Q'}}=0$ if $Q'\in \mcQ_n(F_m)$ and $Q'\cap Q=\emptyset$, and $\mcE^{(F_m)}(w^{(m)}_Q)\leq C\,\varphi_m(L_F^{-n})$, where  
\begin{equation}\label{eqn35}
			\varphi_m(r):=
			\begin{cases}
				r^{d_f-d_w}   &\text{ if }r\geq L_F^{-m},\\
				L_F^{(d_w-d_f+d-2)m}\,r^{d-2}  &\text{ if }0<r<L_F^{-m}.
			\end{cases}
		\end{equation}
	\end{enumerate}
\end{lemma}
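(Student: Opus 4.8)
\textbf{Proof plan for Lemma \ref{lemma37}.}

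The plan is to build the cutoff function $w_Q$ (resp.\ $w^{(m)}_Q$) by composing a ``distance-to-$F_Q$'' type function with a Lipschitz truncation, and then bound its energy by summing cell-by-cell energy contributions using self-similarity (Lemma \ref{lemma31}) together with the uniform bound on the effective resistance $R_n$ from Lemma \ref{lemma36} and the elliptic Harnack inequality of Theorem \ref{thm32}. First I would treat part (a). Fix $n\ge 1$ and $Q\in\mcQ_n(F)$. I want a function that is $1$ on $F_Q$, $0$ outside the one-step neighborhood $F_{\mathcal S_Q}$, and takes values in $[0,1]$. The natural candidate is to work at scale $L_F^{-n}$: let $u$ be the $\sE^{(F)}$-equilibrium potential of $F_Q$ relative to $F\setminus F_{\mathcal S_Q}$ (the minimizer of $\sE^{(F)}(v)$ over $v\in\mcF^{(F)}$ with $v|_{F_Q}=1$, $v|_{F\setminus F_{\mathcal S_Q}}=0$, truncated to $[0,1]$, which is harmless for the Dirichlet form). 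Then $w_Q:=u$ does everything required except that I must bound $\mcE^{(F)}(w_Q)$. The point is that $\mcE^{(F)}(w_Q)$ is exactly the effective conductance between $F_Q$ and $F\setminus F_{\mathcal S_Q}$, and by the self-similar scaling \eqref{e:3.3}, a configuration at scale $L_F^{-n}$ inside a single $n$-cell is a rescaled copy of a scale-$1$ configuration carrying the factor $L_F^{(d_w-d_f)n}$; the scale-$1$ conductance between a fixed cube and the complement of its neighbors is a finite constant depending only on $F$ by the resistance estimate of Lemma \ref{lemma36} (one only needs a conductance between a face-region and a separated region inside $F_1$, which is comparable to $R_1^{-1}$ up to constants, using the Harnack inequality and the non-diagonality/symmetry conditions (SC1)--(SC4) to chain cells). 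This gives $\mcE^{(F)}(w_Q)\le C L_F^{(d_w-d_f)n}$.

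For part (b), the structure is the same but the scale $L_F^{-m}$ now matters: $F_m$ looks like the fractal $F$ down to scale $L_F^{-m}$ and like a (smooth-ish) Euclidean domain below that scale, which is exactly the dichotomy encoded in $\varphi_m$ in \eqref{eqn35}. I would split into the two cases. If $n\le m$ (so $L_F^{-n}\ge L_F^{-m}$), then $Q\in\mcQ_n(F_m)=\mcQ_n(F)$ is a cell still seen at ``fractal resolution'': the construction and estimate mirror part (a), using the Dirichlet form $\mcE^{(F_m)}$ and its self-similarity down to scale $L_F^{-m}$, and one gets energy $\le C L_F^{(d_w-d_f)n}=C\varphi_m(L_F^{-n})$, again via the uniform resistance bound (Lemma \ref{lemma36}) and the uniform EHI on $F_m$ (Theorem \ref{thm32}(b)). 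If $n>m$ (so $L_F^{-n}<L_F^{-m}$), then at scales below $L_F^{-m}$ the set $F_m$ is a union of affine copies of the \emph{Euclidean} cube $L_F^{-m}F_0$ (rescaled), and $Q\cap F_m$ is essentially a Euclidean cube of side $L_F^{-n}$ sitting inside such a copy; here I would use the classical Euclidean cutoff: on $\mathbb R^d$, a radial function equal to $1$ on a ball of radius $r$ and $0$ outside radius $2r$ has gradient-squared integral of order $r^{d-2}$. Transporting this through $\Psi_Q$ and accounting for the normalization of $\mu_m$ and the prefactor $L_F^{(d_w-2)m}$ in \eqref{e:3.1}, a direct change-of-variables computation yields $\mcE^{(F_m)}(w^{(m)}_Q)\le C L_F^{(d_w-d_f+d-2)m}(L_F^{-n})^{d-2}=C\varphi_m(L_F^{-n})$; one must also stitch the Euclidean cutoff (defined below scale $L_F^{-m}$) to the constant functions $1$ and $0$ across the cells of $\mcQ_n(F_m)$, which costs only a bounded number of additional comparable-energy pieces by the connectedness condition (SC2) and (SC3).

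The main obstacle, and the step I would spend the most care on, is the scale-$1$ (or scale-$L_F^{-m}$) \emph{conductance estimate between $F_Q$ and the complement of its neighbor-block $F_{\mathcal S_Q}$}: unlike the resistance $R_n$ between opposite faces of $F_n$, this is a conductance between a small inner cell and a separated outer region, and its comparability to a dimension-dependent constant is not literally Lemma \ref{lemma36} but must be \emph{derived} from it. The right tool is the uniform elliptic Harnack inequality (Theorem \ref{thm32}): take the equilibrium potential, use Harnack on balls of radius comparable to $L_F^{-n}$ to show it is bounded below by a constant on an annular layer of cells, chain finitely many such cells (the number depending only on $d$ and $L_F$, and this is where (SC1)--(SC4), especially non-diagonality, guarantee the chain stays inside $F$ and inside $F_{\mathcal S_Q}$), and compare the resulting restricted energy to $R_1^{-1}$. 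I expect this chaining argument — making precise that ``a bounded number of cells of finite conductance in series/parallel gives finite conductance'' with constants depending only on $F$ — to be the technical heart; everything else is bookkeeping with the scaling relations \eqref{e:3.1}, \eqref{e:3.3} and the definition \eqref{eqn35} of $\varphi_m$.
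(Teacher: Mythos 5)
Your plan for part (a) contains a gap at exactly the step you single out as the ``technical heart''. What the lemma requires is an \emph{upper} bound on the energy of some admissible cutoff, i.e.\ an upper bound on the capacity between $F_Q$ and $F\setminus F_{\mathcal{S}_Q}$, and upper bounds on capacities are obtained by exhibiting a test function; they do not follow from Lemma \ref{lemma36} plus Harnack chaining. Your sketch (show the equilibrium potential is bounded below on an annular layer by EHI, chain cells, ``compare the resulting restricted energy to $R_1^{-1}$'') controls the potential from below and hence can only bound the capacity from below -- the wrong direction. In fact no quantitative input is needed in (a): by regularity of $(\mcE^{(F)},\mcF^{(F)})$ one can fix, for each of the \emph{finitely many similarity types} of the pairs $(F_Q,F_{\mathcal{S}_Q})$, a continuous finite-energy cutoff equal to $1$ on the cell and supported in the neighbor block, and then transport it to a general $Q\in\mcQ_n(F)$ by the similarity map; the self-similar scaling \eqref{e:3.3} produces the factor $L_F^{(d_w-d_f)n}$ with a constant depending only on the finitely many model functions. (Your phrase ``a rescaled copy of a scale-$1$ configuration'' glosses over the fact that $F_{\mathcal{S}_Q}$ is a union of up to $3^d$ cells whose pattern depends on the position of $Q$; the finite-type observation is what makes the rescaling argument legitimate.) Note also that your candidate $w_Q$, an equilibrium potential, must lie in $C(F)$, which is not automatic and which you do not address; establishing the relevant boundary regularity at this point would risk circularity, since in the paper regularity of boundary points (Proposition \ref{P:3.5}) is proved downstream of Lemma \ref{lemma37}, via Lemmas \ref{lemma38} and \ref{lemmaHK}.

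For part (b) the split into the two regimes of $\varphi_m$ is right, and the Euclidean case $m<n$ (scaled bump of energy $\asymp L_F^{(d_w-d_f+d-2)m}L_F^{-(d-2)n}$) is exactly what the paper means by ``trivial''. The gap is in the case $m\geq n$: you propose to ``mirror part (a)'', but a finite-type/compactness argument cannot yield a constant uniform in $m$, because for fixed $n$ the local configurations involve copies of $F_{m-n}$ for all $m$, infinitely many mutually non-similar sets. What is needed here is a genuinely quantitative conductance estimate on pre-carpets, uniform in the level, and this is precisely what the paper imports from McGillivray \cite[Theorem 5.8]{Mc} together with Lemma \ref{lemma36}. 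Your plan to rederive it from Lemma \ref{lemma36} and Theorem \ref{thm32} amounts to reproving that theorem, and the sketch offered has the same directional problem as in (a): lower bounds on the equilibrium potential along Harnack chains do not bound its energy from above. Either cite the uniform pre-carpet estimate as the paper does, or supply an explicit test-function (or flow) construction in $F_m$ whose energy is bounded by $C\varphi_m(L_F^{-n})$ uniformly in $m$; as written, that construction is missing.
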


\begin{proof} 
(a)   Note that there are at most $N$ different types of $ F_{\mathcal \mathcal{S}_Q} =\bigcup_{Q'\in \mathcal{S}_Q} F_{Q'}$  in the following sense. There is an integer $N\geq 1$ and a finite collection $\{Q^{(i)}:1\leq i\leq N\}\subset \bigcup_{k=1}^\infty \mcQ_k(F)$ such that for any $\wt Q\in \bigcup_{k=1}^\infty \mcQ_k(F)$,   
there are some $1\leq j\leq N$ and a  similarity map $\Psi:\R^d\to \R^d$ so that $\Psi  (F_{\wt Q})=F_{Q^{(j)}}$
and $\Psi \big(F_{\mathcal S_{\wt Q}}  \big)= F_{\mathcal S_{Q^{(j)}}}$.
 For each $1\leq i\leq N$, we fix a function $w_i \in C(F)\cap \mcF^{(F)}$ such that $w_i=1$ on $F_{Q^{(i)}}$ and  some neighborhood of the support of $w_i$ is contained in $F_{\mathcal S_{Q^{(i)}}}$. 
For a general $Q\in \mcQ_n(F)$ with $n\geq 1$, let 
 $1\leq j\leq N$ and some similarity map $\Psi:\R^d\to \R^d$ be such  that  $\Psi (F_Q)=F_{Q^{(j)}}$ and 
   $\Psi (F_{\mathcal S_Q}  )=F_{{\mathcal S}_{Q^{(j)}}}$.
Define $w_Q$  by 
\[
w_Q(x)=\begin{cases}
	w_j \circ \Psi (x) \quad&\text{ if } x\in F_{\mathcal S_Q} ,\\ 
    0 \quad&\text{ if }x\in F\setminus F_{\mathcal S_Q} . 
\end{cases}
\]
By the self-similar property of $(\mcE^{(F)},\mcF^{(F)})$, we see that $w_Q\in C(F)\cap \mcF^{(F)}$ and the desired estimate holds with $C$ depending only $\{w_i; 1\leq i\leq N\}$ and $F$. 

(b)  The case that $m>n$ is an immediate consequence of \cite[Theorem 5.8]{Mc} and Lemma \ref{lemma36}. The case that $m\leq n$ is trivial. 
\end{proof}

Next, we  establish the following estimate  using  Lemmas \ref{lemma36} and \ref{lemma37},
which was assumed as condition (A7) in \cite[p.582]{HK}.

\begin{lemma}\label{lemma38}
	$d_I>d_f-d_w$. 	
\end{lemma}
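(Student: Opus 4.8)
The plan is to recast the inequality multiplicatively and then contradict the lower resistance bound of Lemma~\ref{lemma36} by producing cheap crossing functions on $F_m$ from the cut‑off functions of Lemma~\ref{lemma37}(b). Since $d_w=d_f+\log\rho_F/\log L_F$, one has $d_f-d_w=-\log\rho_F/\log L_F$, and together with $d_I=\log m_I/\log L_F$ this gives
$$
d_I>d_f-d_w\iff \log m_I+(d_w-d_f)\log L_F>0\iff \kappa:=m_IL_F^{d_w-d_f}>1 .
$$
So it is enough to rule out $\kappa\le1$. I will assume $\kappa\le1$ and derive a contradiction with $R_m\le C$ (equivalently $R_m^{-1}\ge C^{-1}$) from Lemma~\ref{lemma36}, where $C$ may change from line to line.

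First I would fix integers $2\le k\le n\le m$ and set $\mathcal{G}_k:=\{Q\in\mcQ_k(F_m):Q\cap\{x_1=0\}\neq\emptyset\}$, the level-$k$ cells of $F_m$ touching the face $\{x_1=0\}$. Unwinding the self-similar construction (and using $\mcQ_k(F_m)=\mcQ_k(F)$ for $k\le m$) gives, by recursion on $k$, that $\#\mathcal{G}_k=m_I^k$ and $\bigcup_{Q\in\mathcal{G}_k}F_{m,Q}=F_m\cap\{0\le x_1\le L_F^{-k}\}$. Define $u_k:=\bigvee_{Q\in\mathcal{G}_k}w^{(m)}_Q$ with $w^{(m)}_Q$ as in Lemma~\ref{lemma37}(b). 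Then: $u_k\equiv1$ on the relatively open set $F_m\cap\{x_1<L_F^{-k}\}$; $\operatorname{supp}(u_k)\subseteq F_m\cap\{x_1\le 2L_F^{-k}\}$ because $w^{(m)}_Q$ vanishes off cells meeting $Q$; $u_k\equiv0$ on $\partial_{1,1}F_m$, since any level-$k$ cell touching $\{x_1=1\}$ is disjoint from every $Q\in\mathcal{G}_k$ once $L_F\ge3$; and, using $\mcE^{(F_m)}(\bigvee_i f_i)\le\sum_i\mcE^{(F_m)}(f_i)$ together with $\varphi_m(L_F^{-k})=L_F^{k(d_w-d_f)}$ for $k\le m$,
$$
\mcE^{(F_m)}(u_k)\le \#\mathcal{G}_k\cdot C\,\varphi_m(L_F^{-k})=C\,\kappa^{k}.
$$
Now put $v_n:=\tfrac1{n-1}\sum_{k=2}^n u_k$ and $g_n:=1-v_n$. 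Since each $u_k\equiv1$ on $\partial_{1,0}F_m$ and $\equiv0$ on $\partial_{1,1}F_m$, the function $g_n$ lies in $W^{1,2}(F_m)\cap C(F_m)$, vanishes on $F_m\cap\{x_1=0\}$ and equals $1$ on $F_m\cap\{x_1=1\}$, hence is admissible in \eqref{e:1.4} for $R_m^{-1}$.

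The crucial observation is that $\mcE^{(F_m)}(u_j,u_k)=0$ whenever $j\neq k$: for $j<k$ we have $\operatorname{supp}(u_k)\subseteq F_m\cap\{x_1\le 2L_F^{-k}\}\subseteq F_m\cap\{x_1<L_F^{-j}\}$ (as $2L_F^{-k}\le \tfrac2{L_F}L_F^{-j}<L_F^{-j}$), and $u_j\equiv1$ on the latter set, so $\nabla u_j=0$ there, while $\nabla u_k=0$ off $\operatorname{supp}(u_k)$; thus $\int_{F_m}\nabla u_j\cdot\nabla u_k\,d\mu_m=0$. Therefore
$$
\mcE^{(F_m)}(g_n)=\mcE^{(F_m)}(v_n)=\frac1{(n-1)^2}\sum_{k=2}^n\mcE^{(F_m)}(u_k)\le\frac{C}{(n-1)^2}\sum_{k=2}^n\kappa^{k}\le\frac{C}{n-1},
$$
the last step using $\kappa\le1$. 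Taking $m=n$ and invoking Lemma~\ref{lemma36} yields $C^{-1}\le R_n^{-1}\le\mcE^{(F_n)}(g_n)\le C/(n-1)$ for all $n\ge2$, which is impossible as $n\to\infty$. Hence $\kappa>1$, i.e. $d_I>d_f-d_w$.

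The step I expect to be the real obstacle is the borderline case $\kappa=1$ (equivalently $d_I=d_f-d_w$): there a single-scale competitor $1-u_n$ has energy only bounded, not decaying, so it cannot close the argument; what rescues it is the Ces\`aro average $v_n$, which gains the extra factor $1/(n-1)$ precisely because the supports of the $u_k$ shrink geometrically toward the face and therefore make the $u_k$ pairwise $\mcE^{(F_m)}$-orthogonal. The remaining points are routine: the count $\#\mathcal{G}_k=m_I^k$ and the identity $\bigcup_{Q\in\mathcal{G}_k}F_{m,Q}=F_m\cap\{x_1\le L_F^{-k}\}$ (both by unwinding self-similarity), the Markovian inequality $\mcE^{(F_m)}(\bigvee_i f_i)\le\sum_i\mcE^{(F_m)}(f_i)$, and the membership $g_n\in W^{1,2}(F_n)\cap C(F_n)$. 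Note that only the upper bound $R_n\le C$ of Lemma~\ref{lemma36} is used.
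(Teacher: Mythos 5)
Your proof is correct and follows essentially the same route as the paper: both build multi-scale test functions from the cut-offs of Lemma \ref{lemma37}(b) attached to the $m_I^k$ boundary cells at scale $k$ (per-scale energy $\lesssim m_I^kL_F^{(d_w-d_f)k}$), exploit that the energies decouple across scales by strong locality, and derive a contradiction with the uniform bound $R_n\le C$ of Lemma \ref{lemma36} when $d_I\le d_f-d_w$. The only differences are cosmetic: you use nested slabs at $\{x_1=0\}$ with a Ces\`aro average gaining the factor $1/(n-1)$, whereas the paper uses disjoint slabs marching toward $\{x_1=1\}$ with weights $1/k$ so that the boundary value $\sum_k 1/k$ diverges while the energy stays bounded.
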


\begin{proof} 
Let $m\geq 2$. For each $1\leq k\leq m$, let  
\[
A_k := [ 1-L_F^{-k+1}+L_F^{-k},  1-L_F^{-k+1}+2L_F^{-k} ] \times [0, 1]^{d-1}, 
\] 
and define $h_{m,k}\in C(F_m)\cap W^{1,2}(F_m)$ by 
\[
h_{m,k} (x)=\max \left\{w^{(m)}_Q(x):Q\in \mathcal{Q}_k(F_m\cap A_k)\right\},\quad  x\in F_m,
\]
where $w^{(m)}_Q$ is the function in Lemma \ref{lemma37}(b). Note  that $h_{m,k}=1$ on $F_m\cap A_k$, $0\leq h_{m,k} \leq 1$ and
$$
\#\mathcal{Q}_k(F_m\cap A_k)=m_I^{k-1}\cdot\mcQ_1(F\cap A_1).
$$
 We have by Lemma \ref{lemma37}(b) that  that $\mcE^{(F_m)}(h_{m,k})\leq C_1\cdot m_I^k\cdot L_F^{(d_w-d_f)k}$ for some constant $C_1$ depending only on $F$.

Next, for $1\leq k\leq m$,  define  $g_{m,k} $ by     
\[
g_{m,k}(x)=\begin{cases}
h_{m,k}(x)    &\text{ if }x\in   F_m \cap\{x\in \R^d: x\leq   1-L_F^{-k+1}+L_F^{-k}\},\\
1     &\text{ if }x\in  F_m \cap \{x\in \R^d: x >  1-L_F^{-k+1}+L_F^{-k} \} ,
\end{cases}
\]
which is in $C(F_m)\cap W^{1,2}(F_m)$. 
By the strong local property of $(\mcE^{(F_m)},W^{1,2}(F_m))$, we have 
\[
\mcE^{(F_m)}(g_{m,k})\leq\mcE^{(F_m)}(h_{m,k})\leq C_1  m_I^k\cdot L_F^{(d_w-d_f)k}=C_1  L_F^{(d_w-d_f+d_I)k}.
\]
	
Finally, we let $f_m=\sum_{k=1}^m\frac{1}{k}g_{m,k} \in C(F_m)\cap W^{1,2}(F_m)$. Note that  $f_m=0$ on $\partial_{1,0}F$,
 $f_m=\sum_{k=1}^m\frac{1}{k}$ on  $\partial_{1,1}F_m$, and,  by the strong  local property of $\big(\mcE^{(F_m)},W^{1,2}(F_m)\big)$,  
  \begin{equation}\label{e:3.5}
\mcE^{(F_m)}(f_m) =\sum_{k=1}^m\frac{1}{k^2}\mcE^{(F_m)}(g_{m,k})\leq C_1 \sum_{k=1}^m \frac{1}{k^2}L_F^{(d_w-d_f+d_I)k}.
\end{equation}
 Finally,   by Lemma \ref{lemma36} and   the definition of $R_m$, we have 
$$
C^{-1}\leq R_m\leq \frac{\mcE^{(F_m)}(f_m)}{\left(\sum_{k=1}^m 1/k \right)^2} \quad \hbox{for every   } m \geq 1,
$$ 
where $C\geq 1$ is a constant independent of $n$. 
It follows that  $\lim_{m\to \infty} \mcE^{(F_m)}(f_m) =\infty$ and hence by  \eqref{e:3.5} we have $d_w-d_f+d_I>0$. 
\end{proof} 

   Recall that for an  open subset $O\subset F$,  its  $\mcE^{(F)}$-capacity of $O$  is defined 
as $\operatorname{Cap}^{(F)}(O)=\inf\{\mcE^{(F)}_1(f):f\in \mcF,\ f| \geq 1 \hbox{ $\mu$-a.e. on } O\}$, where $\mcE^{(F)}_1(f):=\mcE^{(F)}(f)+\|f\|_{L^2(F;\mu)}^2$. For a general Borel subset $A\subset F$,   $\operatorname{Cap}^{(F)}(A):=\inf\{\operatorname{Cap}^{(F)}(O):O\text{ is an open subset of }F,\ A\subset O\}$.

\begin{lemma}\label{lemmaHK}
 There is a constant $c>0$ so that 
\begin{equation}\label{e:3.7}
\nu (K) \leq c \operatorname{Cap}^{(F)}(K) \quad \hbox{for every compact subset } K\subset \partial_o F.
\end{equation}
Consequently, the normalized $d_I$-dimensional Hausdorff  measure $\nu $ charges no subset of $\partial_o F$ having zero $\mcE^{(F)}$-capacity
and the boundary $ \partial_oF $ has positive $\mcE^{(F)}$-capacity and 
 \end{lemma}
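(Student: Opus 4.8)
The plan is to deduce \eqref{e:3.7} from the classical duality between capacity and equilibrium measures, the key point being to verify --- using the sub-Gaussian heat kernel estimate \eqref{e:1.2} and Lemma \ref{lemma38} --- that the $d_I$-dimensional Hausdorff measure $\nu$, restricted to any compact subset of $\partial_oF$, is a measure of finite $1$-order energy integral for $(\mcE^{(F)},\mcF^{(F)})$, with energy bounded by a fixed multiple of its total mass. Since $(\mcE^{(F)},\mcF^{(F)})$ is a regular Dirichlet form on $L^2(F;\mu)$ whose Hunt process $X^{(F)}$ has a transition density $p(t,x,y)$ that is jointly continuous off the diagonal and (as a standard consequence of \eqref{e:1.2}) bounded on $[1,\infty)\times F\times F$, the $1$-resolvent kernel $G_1^{(F)}(x,y):=\int_0^\infty e^{-t}p(t,x,y)\,dt$ is a well-defined symmetric kernel, finite for $x\ne y$. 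First I would establish the pointwise bound
\[
G_1^{(F)}(x,y)\le C_1\big(1+\rho(x,y)^{-\gamma}\big),\qquad x,y\in F,
\]
for some finite $C_1$ and some exponent $\gamma$ with $0\le\gamma<d_I$: splitting the integral at $t=1$, the tail $\int_1^\infty$ is bounded, while by the upper bound in \eqref{e:1.2} the part $\int_0^1$ is at most $C\int_0^1 t^{-d_f/d_w}\exp\big(-c_4(\rho(x,y)^{d_w}/t)^{1/(d_w-1)}\big)\,dt$, whose standard evaluation is $O(1)$ if $d_f<d_w$, $O(\log^+(1/\rho(x,y)))$ if $d_f=d_w$, and $O(\rho(x,y)^{-(d_f-d_w)})$ if $d_f>d_w$; in all cases this is dominated by $C(1+\rho(x,y)^{-\gamma})$ for any $\gamma>\max\{0,d_f-d_w\}$, and by Lemma \ref{lemma38} we may take such a $\gamma$ strictly below $d_I$.

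Next I would use that $\nu$ is Ahlfors $d_I$-regular on $\partial_oF$: each face $\partial_{i,s}F$ is the attractor of an iterated function system of $m_I$ similitudes of ratio $L_F^{-1}$ satisfying the open set condition, hence is Ahlfors $d_I$-regular, and summing over the $2d$ faces gives $\nu(B(x,r)\cap\partial_oF)\le C_2\,r^{d_I}$ for all $x\in\R^d$ and $0<r\le\operatorname{diam}(F_0)$. Combined with the Green function bound and $\gamma<d_I$, a dyadic layer-cake estimate yields
\[
c_0:=\sup_{x\in F}\int_{\partial_oF}G_1^{(F)}(x,y)\,\nu(dy)<\infty.
\]
Consequently, for every compact $K\subset\partial_oF$ the measure $\sigma:=\nu|_K$ satisfies $\int_K\!\int_K G_1^{(F)}(x,y)\,\sigma(dx)\,\sigma(dy)\le c_0\,\nu(K)<\infty$; by the potential theory of regular Dirichlet forms this implies that $\sigma$ charges no set of zero $\mcE^{(F)}$-capacity and that its $1$-potential $U_1\sigma(x):=\int G_1^{(F)}(x,y)\,\sigma(dy)$ lies in $\mcF^{(F)}$ with $\mcE^{(F)}_1(U_1\sigma)=\int\!\int G_1^{(F)}\,d\sigma\,d\sigma\le c_0\,\nu(K)$ and $\mcE^{(F)}_1(U_1\sigma,v)=\int\tilde v\,d\sigma$ for every $v\in\mcF^{(F)}$ (with $\tilde v$ a quasi-continuous version).

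To finish, fix a compact $K\subset\partial_oF$, an open $O\supset K$, and any $v\in\mcF^{(F)}$ with $v\ge1$ $\mu$-a.e.\ on $O$; then $\tilde v\ge1$ q.e.\ on $O$, hence $\sigma$-a.e.\ on $K$ since $\sigma$ charges no polar set, so that
\[
\nu(K)=\sigma(K)\le\int\tilde v\,d\sigma=\mcE^{(F)}_1(U_1\sigma,v)\le\mcE^{(F)}_1(U_1\sigma)^{1/2}\,\mcE^{(F)}_1(v)^{1/2}.
\]
Taking the infimum over such $v$ and then over open sets $O\supset K$ gives $\nu(K)^2\le\mcE^{(F)}_1(U_1\sigma)\,\operatorname{Cap}^{(F)}(K)\le c_0\,\nu(K)\,\operatorname{Cap}^{(F)}(K)$, i.e.\ \eqref{e:3.7} with $c=c_0$. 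The two remaining assertions then follow at once: if $\operatorname{Cap}^{(F)}(N)=0$ then every compact $K\subset N$ has $\nu(K)\le c_0\operatorname{Cap}^{(F)}(K)\le c_0\operatorname{Cap}^{(F)}(N)=0$, whence $\nu(N)=0$ by inner regularity, and $\operatorname{Cap}^{(F)}(\partial_oF)\ge c_0^{-1}\nu(\partial_oF)=c_0^{-1}>0$. The step I expect to need the most care is the invocation of Dirichlet-form potential theory in the previous paragraph --- that a measure of finite $1$-order energy integral charges no polar set and has a $1$-potential in $\mcF^{(F)}$ with the stated bilinear identity and kernel representation; an alternative that avoids this is to quote the trace theorem of Appendix \ref{secA} to obtain the bounded restriction $\mcF^{(F)}\ni v\mapsto\tilde v|_{\partial_oF}\in L^2(\partial_oF;\nu)$ with $\int_{\partial_oF}\tilde v^2\,d\nu\le C\,\mcE^{(F)}_1(v)$, and then test it against continuous functions $v\ge1$ on a neighbourhood of $K$ to obtain \eqref{e:3.7} directly.
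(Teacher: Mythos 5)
Your argument is correct, but it takes a different route from the paper: the paper disposes of \eqref{e:3.7} in one line, by combining Lemma \ref{lemma38} ($d_I>d_f-d_w$) with the citation \cite[Proposition 4.2(b)]{HK}, whereas you reconstruct a self-contained potential-theoretic proof. Your chain --- bounding the $1$-resolvent kernel $G_1^{(F)}$ by $C(1+\rho(x,y)^{-\gamma})$ via the sub-Gaussian estimate \eqref{e:1.2} (with the $t\ge 1$ part controlled by the standard monotonicity/Cauchy--Schwarz bound for $p(t,x,x)$), using Ahlfors $d_I$-regularity of $\nu$ on $\partial_o F$ together with $\gamma<d_I$ (available precisely because of Lemma \ref{lemma38}) to get a uniformly bounded $1$-potential of $\nu|_K$, and then running the standard $S_0$-measure/equilibrium duality $\nu(K)\le \mcE^{(F)}_1(U_1\sigma)^{1/2}\mcE^{(F)}_1(v)^{1/2}$ --- is sound, and the potential-theoretic facts you invoke (finite-energy measures charge no polar sets, the bilinear identity for $U_1\sigma$, identification of the kernel potential with the Dirichlet-form potential when the resolvent has a jointly measurable density) are standard for regular Dirichlet forms. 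What your approach buys is independence from \cite{HK}: the paper's proof is shorter but outsources exactly the estimate you derive, and indeed the remark following the lemma acknowledges that \cite{HK} proves \eqref{e:3.7} under $d_I>d_f-d_w$, which is essentially the mechanism you re-implement from the heat kernel.

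One caution about your closing aside: Theorem \ref{T:A.3} as stated controls the Besov seminorm $\Lambda_n[f|_{\partial_o F}]$ by the energy measure near the boundary; it is not literally the $L^2(\partial_o F;\nu)$ trace bound $\int_{\partial_o F}\tilde v^2\,d\nu\le C\,\mcE^{(F)}_1(v)$ you would want to test against functions $v\ge 1$ near $K$, so that shortcut needs an extra step (controlling the $L^2(\nu)$ norm of the trace by averages plus the Besov seminorm). Moreover, the appendix explicitly lists the capacity estimate of this very lemma among its ingredients, so routing the proof through Appendix \ref{secA} would require checking there is no circularity. Since your main argument does not use the trace theorem, this does not affect its validity.
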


\begin{proof}
Property \eqref{e:3.7} follows directly from  Lemma \ref{lemma38} and \cite[Propositon 4.2(b)]{HK}.
This shows that $F$ is of positive $\mcE^{(F)}$-capacity as $\nu (\partial_o F)=1 $, and 
$\nu$ does not charge on sets of zero $\mcE^{(F)}$-capacity and hence is a smooth measure
of $(\sE^{(F)}, \sF^{(F)})$ in the sense of \cite{CF, FOT}.   
 \end{proof} 
 
\begin{remark}  \rm
The second part of Lemma  \ref{lemmaHK}  was assumed as condition (A8)  in    \cite[p.583]{HK}.
{\it It is shown in \cite[Theore, 4.2]{HK} that \eqref{e:3.7} holds under the assumption of $d_I>d_f-d_w$.}
\end{remark}

\subsection{Mosco convergence}\label{sec32} 
In this subsection, we will show that $\alpha_m\mcE^{(F_m)}$ is Mosco convergent  to $\mcE^{(F)}$.
 
\begin{theorem} \label{T:3.11}
 There exists a sequence $\{\alpha_k; k\geq 0\}$ with  
$$ 
0<\inf_{k\geq 0}\alpha_k\leq\sup_{k\geq 0}\alpha_k<\infty,
$$
so that for  any $x_k\in F_k$, $k\geq 1$,  with  $\lim_{k\to \infty} x_k = x\in F$,     
	 $  (X^{(F_k)}_{\alpha_k t})_{t\geq 0}$ under $\mathbb{P}^{(F_k)}_{x_k}$ 
	 converges weakly in $C ([0,\infty); \R^d)$ to   $ (X^{(F)}_t)_{t\geq 0}$ under $ \mathbb{P}^{(F)}_x$.
  \end{theorem}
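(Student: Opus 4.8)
\textbf{Proof proposal for Theorem \ref{T:3.11}.}

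The plan is to use the precompactness of the family $\{X^{(F_k)}; k\geq 0\}$ under suitable deterministic time changes, together with the uniqueness theorem of \cite{BBKT}, to pin down the scaling factors $\alpha_k$. First I would recall from \cite{BB1, BB5} that for any subsequence of $\{k\}$ there is a further subsequence $\{k_j\}$ and a sequence of positive constants $\beta_{k_j}$, bounded between two positive numbers (these are the $\lambda_n$-type factors of \eqref{e:1.1}, normalized appropriately), so that $(X^{(F_{k_j})}_{\beta_{k_j}t})_{t\geq 0}$ starting from $x_{k_j}\to x$ converges weakly in $C([0,\infty);\R^d)$ to a conservative diffusion on $F$. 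The key point is that the limiting process is a strongly local, regular, irreducible, locally symmetric Dirichlet form process on $F$ in the sense of \cite[Definition 2.15]{BBKT}: strong locality and conservativeness pass to the limit from the reflected Brownian motions on $F_{k_j}$, local symmetry is inherited from the symmetries (SC1) preserved by each $F_{k_j}$, and irreducibility follows from connectedness (SC2) together with the uniform elliptic Harnack inequality of Theorem \ref{thm32}(b), which gives uniform modulus-of-continuity control on harmonic functions and hence prevents the limit from degenerating or decoupling. By the uniqueness theorem \cite[Theorem 1.2]{BBKT}, any such limit must be $X^{(F)}$ up to a constant time change; i.e.\ the limit is $(X^{(F)}_{t/c})_{t\geq 0}$ for some $c=c(\{k_j\})>0$.

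Next I would upgrade this to a statement about $\alpha_k := \beta_k \cdot c_k^{-1}$ where $c_k$ is chosen along each convergent sub-subsequence so that the limit is exactly $X^{(F)}$ (not a time-change of it). The tightness estimates that come from the sub-Gaussian/Gaussian heat kernel bounds — uniform in $k$ — give that the possible values of the time-change constant $c$ lie in a fixed compact subinterval of $(0,\infty)$; concretely, one compares expected exit times from balls $B_{F_k}(x,r)$, which by the heat kernel estimates \eqref{e:1.2} and \eqref{e:1.6a} are comparable to $r^{d_w}$ uniformly in $k$ after the time rescaling by $\lambda_k$, so the normalizing constants needed to match $X^{(F)}$ are bounded above and below. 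This yields a sequence $\{\alpha_k\}$ with $0<\inf_k\alpha_k\leq\sup_k\alpha_k<\infty$ such that \emph{every} subsequence of $\{(X^{(F_k)}_{\alpha_k t})_{t\geq 0}\}$ has a further subsequence converging weakly to $(X^{(F)}_t)_{t\geq 0}$ under $\mathbb{P}^{(F)}_x$. A standard subsequence argument (as in Lemma \ref{lemma28}(c), transplanted to the level of laws on $C([0,\infty);\R^d)$) then promotes this to weak convergence of the full sequence.

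The main obstacle I anticipate is not the weak-convergence machinery itself but verifying that the subsequential limit genuinely satisfies \emph{all} the hypotheses of the \cite{BBKT} uniqueness theorem — in particular irreducibility and the non-degeneracy needed to apply it — uniformly enough that the time-change constants stay bounded away from $0$ and $\infty$. Controlling this requires the uniform (in $k$) resistance estimates of Lemma \ref{lemma36} and the uniform elliptic Harnack inequality of Theorem \ref{thm32}, and one must be careful that passing to the limit does not create additional invariant sets or kill the form; the connectedness condition (SC2) and the non-diagonality condition (SC3) are what rule this out, exactly as in the original Barlow--Bass construction. A secondary technical point is handling the dependence of the limit process on the starting point $x$ and the approximating sequence $x_k\to x$: one needs the convergence $x_k\to x$ together with the strong Feller property (from the two-sided heat kernel bounds) to conclude that the limit starts at $x$, and that the constant $c_0$ in the time change is independent of $x$, which follows because $X^{(F)}$ is determined globally by $(\sE^{(F)},\sF^{(F)})$ and the normalization fixing the mean hitting time of $\partial F_0$ from $\mathbf 0$.
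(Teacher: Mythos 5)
Your overall architecture (Barlow--Bass tightness, identification of subsequential limits via the uniqueness theorem of \cite{BBKT}, then a subsequence argument to upgrade to the full sequence) matches the paper's strategy, but there is a genuine gap at the crucial step where you define the normalizing constants. You set $\alpha_k:=\beta_k c_k^{-1}$ with ``$c_k$ chosen along each convergent sub-subsequence so that the limit is exactly $X^{(F)}$.'' This is not a well-posed definition of a single sequence indexed by $k$: the time-change constant $c$ produced by the \cite{BBKT} uniqueness theorem is attached to a subsequential \emph{limit}, not to an individual index $k$, and at this stage of the paper one cannot yet rule out that different subsequences require different constants (indeed, the convergence of the normalizing constants is precisely the hard content of Theorem \ref{T:1.3}, proved only later). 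Your attempted remedy --- comparing expected exit times from balls via the heat kernel bounds to show the constants lie in a compact interval --- only yields two-sided boundedness; it does not force the constants chosen along different sub-subsequences to be consistent, so the final step ``every subsequence has a further subsequence converging to $X^{(F)}$ with the \emph{same} $\alpha_k$'' is asserted rather than proved.

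The missing idea is an \emph{intrinsic, subsequence-free} choice of $\alpha_k$ that automatically passes to the limit. The paper takes $a_k:=\bE^{(F_k)}_{\bf 0}[\tau]$, the mean time for the sped-up reflected Brownian motion $Y^{(k)}=W^{(F_k)}_{\bar\rho_F^k\,\cdot}$ started at ${\bf 0}$ to hit the faces of $\partial F_0$ not containing ${\bf 0}$, and sets $\alpha_k=1/a_k$. The tightness of the $0$-resolvents from \cite[Proposition 6.1]{BB1} shows $\{a_k\}$ is bounded between positive constants, and, along any sub-subsequence converging to a limit $Y$, the resolvent convergence gives $a_{k_j}\to a=\bE^{Y}_{\bf 0}[\tau]$; hence the rescaled limit $Y_{t/a}$ has mean hitting time exactly $1$ and is therefore identified with $X^{(F)}$ (not merely a time change of it) by the normalization of $X^{(F)}$ together with \cite[Theorem 1.2]{BBKT}. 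With that identification in hand, the subsequence principle does close the argument. If you replace your $c_k$ by this expected-hitting-time normalization and invoke the resolvent convergence to transfer it to the limit, your proof becomes essentially the paper's.
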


\begin{proof} This result is essentially stated in \cite[Remark 5.4]{BBKT}. For reader's convenience, we spell out a detailed proof here. It is established in \cite{BB1, BB5} that $\{Y^{(n)}:=( W^{(F_{n})}_{\bar \rho_F^n t})_{t\geq0}; n\geq 1\}$ is tight both in probability law  (in the sense  of \cite[Theorem 5.1]{BB1}) and in resolvents (in the sense of \cite[Propositions 6.1 and 6.2]{BB1}, where $\bar\rho_F=L_F^{d_w-2}$.
  As mentioned previously, these results from \cite{BB1} hold for $d\geq 3$ as well due to the elliptic Harnack inequality established in \cite{BB5}. 
 Denote by $\tau$ the first hitting time of the faces of $\partial F_0$ not containing the origin ${\bf 0}:=(0, \ldots, 0)$. 
  Let $a_n := \bE^{Y^{(n)}}_{\bf 0} \tau$.  
    Then in view of tightness of the 0-resolvents from \cite[Proposition 6.1]{BB1} with $f=1$ and the non-degeneracy of  any sub-sequential limit process, 
   $\{a_n; n\geq 1\}$ are bounded between two positive constants. 
  
 For every subsequence, there is a sub-subsequence $\{Y^{(n_{k_j})}\}$ that converges to a limit process $Y$ in the above two senses. 
 By the  convergence of the 0-resolvents (cf.  \cite[Proposition 6.1]{BB1}),   we have $\lim_{j\to \infty} a_{n_{k_j}} =  a:=\bE^Y_{\bf 0} \tau$. 
 It follows that $\{Y^{(n_{k_j})}_{t/a_{n_{k_j}}}; t\geq 0\}$ converges weakly to $ Z:=\{Y_{t/a}; t\geq 0\}$.  Note that  the 
mean time of $Z$ starting from ${\bf 0}$ to hit   the faces of $\partial F_0$ not containing  ${\bf 0} $ is 1 and that 
the Dirichlet form of $Z$ on $L^2(F; \mu)$ is strongly local, regular, irreducible and locally symmetric.
Hence by the uniqueness result from \cite{BBKT}, the process $Z$ has the same distribution as $X^{(F)}$. 
Since this holds for any subsequence, we conclude that 
for  any $x_k\in F_k$, $k\geq 1$,   with  $\lim_{k\to \infty} x_k = x\in F$,     
$\{ (Y^{(n)}_{\lambda_{n}t/a_n})_{t\geq0}; n\geq 1\}$ under $\mathbb{P}_{x_n}^{Y^{(n)}}$  converges weakly in $C ([0,\infty); \R^d)$ to $(X^{(F)}_t)_{t\geq 0}$ under $\mathbb{P}^{(F)}_x$. This proves the theorem with $\alpha_k=1/a_k$.
\end{proof}

The main goal of the paper is to show that $\{\alpha_k; k\geq 0\}$ in Theorem \ref{T:3.11} has a limit and thus one could take $\alpha_j \equiv \alpha $ for all $j\geq 1$ there. A consequence of this result is that 
\begin{equation}\label{e:3.11a}
\lim_{k\to \infty} \bE^{(F_k)}_{\bf 0} \tau =\alpha^{-1}  = \alpha^{-1} \bE^{(F )}_{\bf 0} \tau  . 
\end{equation} 
It will further imply the convergence of the resistance $R_n$ defined by \eqref{e:1.4}; that is,  $\lim_{k\to \infty}R_k$ exists as a finite positive number.
This  gives an affirmative answer to a long standing open problem raised by  Barlow and Bass \cite[Problem 1]{BB3}.  
This property will play a key role in our study of  quenched invariance principle on generalized unbounded 
{S}ierpi\'{n}ski carpets  in i.i.d. uniformly elliptic random environments.
   
\medskip

 For $\lambda >0$, denote by
 $U^{(F)}_\lambda$  the $\lambda$-resolvent operator for $\big((X^{(F)}_t)_{t\geq 0},\mathbb{P}^{(F)}_x\big)$; for $m\geq 0$, 
denote by $U^{(F_m)}_\lambda$ to denote the $\lambda$-Resolvent operator for $\big((X^{(F_m)}_{\alpha_mt})_{t\geq 0},\mathbb{P}^{(F_m)}_x\big)$. The following lemma is proven in \cite[Section 3.1]{BBKT} using the uniform elliptic Harnack inequality in Theorem \ref{thm32} and exit  time estimates from \cite[Proposition 5.5]{BB5}.

\begin{lemma}\label{lemma34}
For each $\lambda>0$, the class of functions 
\[
\{U_\lambda^{(F_m)}f_m:\ m\geq 0,\ f_m\in L^\infty(F_m; \mu_m),\ \|f_m\|_\infty\leq 1\}
\]
is uniformly bounded and equicontinuous.
\end{lemma}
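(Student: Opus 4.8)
The plan is to treat the two assertions separately. Uniform boundedness is immediate from the resolvent representation: for $u:=U_\lambda^{(F_m)}f_m$ with $\|f_m\|_\infty\le 1$ one has $u(x)=\bE^{(F_m)}_x\big[\int_0^\infty e^{-\lambda t}f_m(X^{(F_m)}_{\alpha_m t})\,dt\big]$, hence $\|u\|_\infty\le 1/\lambda$ for every $m\ge 0$ and every such $f_m$. The substance is equicontinuity, and I would follow the route indicated after the statement, combining the uniform elliptic Harnack inequality (Theorem \ref{thm32}) with exit time estimates. The main device is a pointwise comparison of $u$ with a harmonic function on a small intrinsic ball: fixing $m\ge 0$, $x\in F_m$ and $r\in(0,1/2]$, put $B:=B_{F_m}(x,r)$, let $\sigma$ be the exit time of $B$ for $\big(X^{(F_m)}_{\alpha_m t}\big)_{t\ge 0}$ (a.s.\ finite since $F_m\setminus B\ne\emptyset$, as $\mathrm{diam}(F_m)\ge 1$ by (SC4)), and let $h(y):=\bE^{(F_m)}_y\big[u(X^{(F_m)}_{\alpha_m\sigma})\big]$ be the $\mcE^{(F_m)}$-harmonic function on $B$ with boundary value $u|_{\partial B}$. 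Applying the strong Markov property at $\sigma$ to the resolvent representation and using $\|f_m\|_\infty\le 1$, $\|u\|_\infty\le 1/\lambda$ and $1-e^{-\lambda t}\le\lambda t$, one obtains
\[
|u(y)-h(y)|\ \le\ \bE^{(F_m)}_y[\sigma]\,+\,\tfrac1\lambda\,\bE^{(F_m)}_y\big[1-e^{-\lambda\sigma}\big]\ \le\ 2\,\bE^{(F_m)}_y[\sigma]\qquad\text{for all }y\in B.
\]

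Next I would insert the two $m$-independent ingredients. First, the exit time bound: by \cite[Proposition 5.5]{BB5}, the time change by $\alpha_m$, and $\inf_k\alpha_k>0$ (Theorem \ref{T:3.11}), there is $C_1<\infty$ not depending on $m$ such that $\bE^{(F_m)}_y\big[\sigma_{F_m\setminus B_{F_m}(y,s)}\big]\le C_1 s^2$ for every $y\in F_m$ and $s\in(0,1]$; here one checks that the fractal-scale estimate $\asymp s^{d_w}$ (valid for $s\ge L_F^{-m}$) and the Euclidean-scale estimate $\lesssim \bar\rho_F^{-m}s^2\le s^2$ (valid for $s<L_F^{-m}$, where $X^{(F_m)}$ is a time-changed reflected Brownian motion inside a single cube of side $L_F^{-m}$) are both dominated by $Cs^2$ since $d_w>2$ and $s\le 1$. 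Second, the uniform oscillation estimate: iterating Theorem \ref{thm32} in the standard way produces constants $C_2<\infty$ and $\gamma\in(0,1)$, independent of $m$, such that every bounded $\mcE^{(F_m)}$-harmonic function $h$ on $B_{F_m}(x,r)$ satisfies $\operatorname{osc}_{B_{F_m}(x,\varrho)}h\le C_2(\varrho/r)^\gamma\operatorname{osc}_{B_{F_m}(x,r)}h$ for $0<\varrho\le r$.

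Finally I would combine these. Given $y,z\in F_m$ with $\rho(y,z)<\delta<r\le 1/2$, take $x=y$, $B=B_{F_m}(y,r)$, and split $|u(y)-u(z)|\le|h(y)-h(z)|+|u(y)-h(y)|+|u(z)-h(z)|$. Since $B_{F_m}(y,r)\subset B_{F_m}(w,2r)$ for $w\in\{y,z\}$, the last two terms are bounded by $2\bE^{(F_m)}_y[\sigma]+2\bE^{(F_m)}_z[\sigma]\le 4C_1(2r)^2=16C_1 r^2$; since $y,z\in B_{F_m}(y,\delta)$ and $\|h\|_\infty\le\|u\|_\infty\le1/\lambda$, the first term is bounded by $2C_2(\delta/r)^\gamma\|h\|_\infty\le(2C_2/\lambda)(\delta/r)^\gamma$. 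Hence $|u(y)-u(z)|\le 16C_1 r^2+(2C_2/\lambda)(\delta/r)^\gamma$, and given $\varepsilon>0$ one picks first $r\le 1/2$ with $16C_1 r^2<\varepsilon/2$ and then $\delta\in(0,r)$ with $(2C_2/\lambda)(\delta/r)^\gamma<\varepsilon/2$; as $r$ and $\delta$ depend only on $\varepsilon$ and $\lambda$, this gives the claimed equicontinuity, which together with $\|u\|_\infty\le 1/\lambda$ proves the lemma. I expect the main obstacle to be the bookkeeping of uniformity in $m$: confirming that \cite[Proposition 5.5]{BB5} supplies the exit-time bound from balls centred at every point of $F_m$, including points near $\partial F_0$, with an $m$-free constant, and that Theorem \ref{thm32} upgrades to uniform H\"older regularity of harmonic functions across all scales with $m$-free constants. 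Both hold in the GSC setting but must be invoked in their scale-invariant form rather than as statements about one fixed space; the pointwise comparison $|u-h|\le 2\bE[\sigma]$ and the covering step are otherwise routine.
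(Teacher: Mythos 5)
Your argument is correct and is essentially the same as the paper's, which simply cites \cite[Section 3.1]{BBKT} and notes the two ingredients you use: the scale-invariant elliptic Harnack inequality of Theorem \ref{thm32} (giving an $m$-free H\"older oscillation decay for harmonic functions) and the exit-time estimates of \cite[Proposition 5.5]{BB5}, with the time change by $\alpha_m$ absorbed via $\inf_k\alpha_k>0$. Your resolvent decomposition $|u-h|\le 2\,\bE[\sigma]$ plus the covering step is exactly the standard argument the citation refers to, and your bookkeeping of the $m$-uniformity (comparing $s^{d_w}$ and $\bar\rho_F^{-m}s^2$ with $s^2$) is sound.
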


\begin{theorem}\label{T:3.12}
	$\alpha_m\mcE^{(F_m)}$ on $L^2(F_m; \mu_m)$ is Mosco convergent to $\mcE^{(F)}$ on $L^2(F;\mu)$ in the sense of Definition \ref{def28} as $m\to \infty$. 
\end{theorem}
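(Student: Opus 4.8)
The plan is to verify conditions (M1) and (M2) of Definition \ref{def28} directly, using the resolvent characterization in Theorem \ref{thm211} as the main tool rather than arguing at the level of quadratic forms. Indeed, by Theorem \ref{T:3.11}, for any sequence $x_m \in F_m$ with $x_m \to x \in F$, the process $(X^{(F_m)}_{\alpha_m t})_{t\ge 0}$ started at $x_m$ converges weakly in $C([0,\infty);\R^d)$ to $(X^{(F)}_t)_{t\ge 0}$ started at $x$. The first step is to translate this weak convergence of processes into strong $L^2$-convergence of the associated resolvents: for each $\lambda>0$ and each $g\in C_c(\mathcal{X})$ (here $\mathcal{X}=F_0$, or $F_0\setminus\partial F_0$ as appropriate), the functions $U^{(F_m)}_\lambda(g|_{F_m})$ converge to $U^{(F)}_\lambda(g|_F)$ in the sense of $\rightarrowtail$. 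This is where the weak convergence of processes enters: for a fixed point $x_m\to x$, $U^{(F_m)}_\lambda(g|_{F_m})(x_m) = \bE_{x_m}^{(F_m)}\int_0^\infty e^{-\lambda t} g(X^{(F_m)}_{\alpha_m t})\,dt$, and by the continuous mapping theorem together with dominated convergence this converges to $U^{(F)}_\lambda(g|_F)(x) = \bE_x^{(F)}\int_0^\infty e^{-\lambda t} g(X^{(F)}_t)\,dt$, since $g$ is bounded continuous and the functional $\omega\mapsto \int_0^\infty e^{-\lambda t} g(\omega(t))\,dt$ is continuous and bounded on $C([0,\infty);\R^d)$.

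Next, I would upgrade $\rightarrowtail$-convergence to strong $L^2$-convergence. By Lemma \ref{lemma34}, the family $\{U^{(F_m)}_\lambda f_m : m\ge 0,\ \|f_m\|_\infty \le 1\}$ is uniformly bounded and equicontinuous, so in particular $\{U^{(F_m)}_\lambda(g|_{F_m})\}_m$ is; combined with the $\rightarrowtail$-convergence just established and the fact that $\mathcal{X}$ is compact (or using a cutoff and Lemma \ref{lemma29}(b) if one works on $F_0\setminus\partial F_0$), Lemma \ref{lemma29}(b) — and Remark following it — gives that $U^{(F_m)}_\lambda(g|_{F_m})\to U^{(F)}_\lambda(g|_F)$ strongly in $L^2$ for every $g\in C_c(\mathcal{X})$. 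Since $\|\alpha_m U^{(F_m)}_\lambda\|_{L^2\to L^2}\le 1/\lambda \cdot (\alpha_m$-scaling$)$ is uniformly bounded (resolvents of Markovian semigroups are contractive up to the $1/\lambda$ factor, and $\alpha_m$ is bounded), Proposition \ref{prop211} then yields that $U^{(F_m)}_\lambda$ converges strongly in $L^2$ to $U^{(F)}_\lambda$ for every $\lambda>0$. Finally, the Dirichlet form on $L^2(F_m;\mu_m)$ associated with $(X^{(F_m)}_{\alpha_m t})_{t\ge 0}$ is precisely $\alpha_m\mcE^{(F_m)}$ with domain $W^{1,2}(F_m)$, and the Dirichlet form associated with $X^{(F)}$ is $(\mcE^{(F)},\mcF^{(F)})$; so by the equivalence (e)$\Leftrightarrow$(a) in Theorem \ref{thm211}, $\alpha_m\mcE^{(F_m)}$ is Mosco convergent to $\mcE^{(F)}$, which is the assertion.

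The main obstacle I anticipate is the passage from weak convergence of processes to strong $L^2$-convergence of resolvents in a way that is uniform enough over test functions $g\in C_c(\mathcal{X})$: one must be careful that the starting points $x_m$ are allowed to range over all of $F_m$ (not just a fixed convergent sequence), so the convergence $U^{(F_m)}_\lambda(g|_{F_m})(x_m)\to U^{(F)}_\lambda(g|_F)(x)$ must hold simultaneously for every choice of $x_m\to x$; this is exactly what Definition \ref{def27} demands, and it is guaranteed by the formulation of Theorem \ref{T:3.11}, but the equicontinuity from Lemma \ref{lemma34} is what makes this robust and what lets one conclude $\rightarrowtail$-convergence cleanly. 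A secondary technical point is the choice of ambient space $\mathcal{X}$ and measure: one works with $\mu_m$ and $\mu$ as in the Basic Setting (which holds by Example \ref{example22}), and one should check that the resolvent operators here are taken with respect to the time-changed processes $(X^{(F_m)}_{\alpha_m t})$ so that the generator is $\alpha_m$ times the original one — this bookkeeping with the $\alpha_m$ factor is routine but must be done consistently so that the limiting form is exactly $\mcE^{(F)}$ and not a constant multiple of it.
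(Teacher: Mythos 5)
Your proposal is correct and follows essentially the same route as the paper: verify Theorem \ref{thm211}(d) by showing $U^{(F_m)}_\lambda(g|_{F_m})\rightarrowtail U^{(F)}_\lambda(g|_F)$ for $g$ continuous (which the paper deduces from Theorem \ref{T:3.11}, exactly as in your continuous-mapping argument), upgrade to strong $L^2$-convergence via Lemma \ref{lemma29}(b) on the compact space $F_0$, and conclude with Proposition \ref{prop211} using the uniform bound $\|U^{(F_m)}_\lambda\|_{L^2\to L^2}\le\lambda^{-1}$; your extra appeal to Lemma \ref{lemma34} is harmless but not needed, since the paper gets continuity of the resolvents from the strong Feller property.
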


\begin{proof}
We fix $\lambda>0$ and show that (d) of Theorem \ref{thm211} holds true. For each $g \in C(F_0)$ and $m\geq 0$, $U_\lambda^{(F_m)}(g|_{F_m})\in C(F_m)$ and $U_\lambda (g|_F)\in C(F)$ by
 the strong Feller property of  $X^{(F_m)}$ and $X^{(F)}$.
Moreover,  $U_\lambda^{(F_m)}(g|_{F_m})\rightarrowtail U_\lambda^{(F)}(g|_{F})$ by Theorem \ref{T:3.11}. Since $F_0$ is compact and $\mu_m$ and $\mu$ are probability measures on $F_m$ and $F$, respectively, $U_\lambda^{(F_m)}(g|_{F_m})\to U_\lambda^{(F)}(g|_F)$ strongly in $L^2$ by Lemma \ref{lemma29} (b). Noting that  $\|U_\lambda^{(F)}\|_{L^2\to L^2}\leq \lambda^{-1}$ and $\|U_\lambda^{(F_m)}\|_{L^2\to L^2}\leq \lambda^{-1}$ for each $m\geq 0$, it follows from  Proposition \ref{prop211}  that $U_\lambda^{(F_m)}$ converges strongly to $U^{(F)}$. 
\end{proof}

\medskip

\subsection{Energy measures}\label{sec34} We end this section with a quick review of energy measures and Poincar\'e inequalities. \\

\noindent(\textbf{\emph{Energy measure}}). Let $(\mcE,\mcF)$ be a strongly local regular Dirichlet form on $L^2(\mathcal{X};\mu)$. For $f\in L^\infty(\mathcal{X}; \mu)\cap \mcF$, we define the energy measure $\mu_{\< f\>}$ as the unique Radon measure on $K$ such that 
\begin{equation}\label{e:3.11}
\int_K g(x)\mu_{\< f\>}(dx)=\mcE(f,fg)-\frac12\mcE(f^2,g),\quad\  g\in C_c(\mathcal{X})\cap \mcF.
\end{equation} 
For general $f\in \mcF$, we define $\mu_{\< f\>}=\lim\limits_{n\to\infty}\mu_{\< f_n\>}$ with $f_n=(f\wedge n)\vee (-n)$, whose limit is known to exist (see \cite{CF, FOT}). Note that our choice is different from \cite{FOT} by a constant multiplier $\frac{1}{2}$
so that $\mu_{\< f\>}(\mathcal{X})=\mcE(f)$ for $f\in \mcF$.\\

We use the notation $(\mcE^{(*)},\mcF^{(*)})$ (sometimes we use $W^{1,2}(*)$ instead of $\mcF^{(*)}$), where $*$ usually represents the underlying space, to denote various Dirichlet forms.
The corresponding energy measure of $f\in \mcF^{(*)}$ is denoted as 
 $\mu^{(*)}_{\<f\>}$. We will also use the notation $(\bar{\mcE}^{(*)},\mcF^{(*)})$ (or $(\bar{\mcE}^{(*)},W^{1,2}(*)$) from time to time, and use $\bar{\mu}^{(*)}_{\<f\>}$ for the associated energy measure of $f\in \mcF^{(*)}$ (or $f\in W^{1,2}(*)$).\\

\begin{lemma}\label{lemma39}
	There are constants $C>0$ and $c\in (0,1)$ such that the following hold.
	
	\begin{enumerate} 
		\item[\rm (a)]  For each $f\in \mcF^{(F)}$, $x\in F$ and $r\in (0, 1]$,  
		\[
		r^{d_f-d_w} \fint_{B_F(x,c r)}\big(f(y)-[f]_{\mu|_{B_F(x,cr)}}\big)^2\mu(dy)\leq C  \mu^{(F)}_{\< f \>}\big(B_F(x,r)\big). 
		\]
		
		\item[\rm (b)] For each $m\geq 0$, $f\in W^{1,2}(F_m)$, $x\in F_m$ and $r\in (0, 1]$,  
		\[
		\varphi_m(r) \fint_{B_{F_m}(x,c r)}\big(f(y)-[f]_{\mu_m|_{B_{F_m}(x,cr)}}\big)^2\mu_m(dy)\leq C \mu^{(F_m)}_{\<f\>}\big(B_{F_m}(x,r)\big). 
		\]
	\end{enumerate} 
\end{lemma}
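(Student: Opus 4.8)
\textbf{Proof proposal for Lemma \ref{lemma39}.}

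The plan is to derive both Poincar\'e-type inequalities from the uniform elliptic Harnack inequality (Theorem \ref{thm32}) together with the sub-Gaussian / Gaussian heat kernel estimates that are already available for $X^{(F)}$ and $X^{(F_m)}$. The standard route is the equivalence, going back to Grigor'yan and to Saloff-Coste--Sturm in the strongly local setting, between two-sided sub-Gaussian heat kernel bounds of the stated form and the conjunction of volume doubling, the elliptic Harnack inequality, and a scaled Poincar\'e inequality with respect to the energy measure. Since \eqref{e:1.2} holds for $X^{(F)}$ with space-time scaling $r\mapsto r^{d_w}$ and volume growth $\mu(B_F(x,r))\asymp r^{d_f}$, one obtains for $X^{(F)}$ a Poincar\'e inequality of exactly the form in (a), with the factor $r^{d_f-d_w}$ appearing because the energy density scales like $r^{-d_w}$ relative to the normalized volume $r^{-d_f}\mu$; the constant $c\in(0,1)$ (so that the left side integrates over the smaller ball $B_F(x,cr)$ while the right side uses $B_F(x,r)$) is the usual weak-Poincar\'e "ball enlargement" that one cannot in general remove. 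For $F_m$ the relevant heat kernel is Gaussian at scales below $L_F^{-m}$ and sub-Gaussian at scales above, which is precisely what the profile $\varphi_m$ in \eqref{eqn35} encodes: $\varphi_m(r)=r^{d_f-d_w}$ for $r\ge L_F^{-m}$ and $\varphi_m(r)=L_F^{(d_w-d_f+d-2)m}r^{d-2}$ for $r<L_F^{-m}$. The heat kernel bounds for $X^{(F_m)}$ then yield (b) with this $\varphi_m$.

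Concretely I would proceed as follows. First, record volume doubling: $\mu(B_F(x,r))\asymp r^{d_f}$ uniformly for $x\in F$, $r\in(0,1]$, and $\mu_m(B_{F_m}(x,r))\asymp r^{d_f}$ for $r\ge L_F^{-m}$ while $\mu_m(B_{F_m}(x,r))\asymp L_F^{(d-d_f)m}r^d$ for $r<L_F^{-m}$ (both follow from the construction of $F$ and $F_m$ and the normalization of $\mu,\mu_m$). Second, invoke the characterization theorem (e.g. \cite{BB4,BB5} or the general theory as in Grigor'yan--Telcs / Sturm) to pass from the heat kernel estimates \eqref{e:1.2} for $X^{(F)}$ to the scaled Poincar\'e inequality: there are $C>0$, $c\in(0,1)$ with
\[
\int_{B_F(x,cr)}\big(f-[f]_{\mu|_{B_F(x,cr)}}\big)^2\,d\mu\ \le\ C\,r^{d_w}\,\mu^{(F)}_{\<f\>}\big(B_F(x,r)\big),
\]
and dividing by $\mu(B_F(x,cr))\asymp r^{d_f}$ gives (a) after absorbing constants. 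Third, do the same on $F_m$: the two-sided Gaussian-type estimates give the Poincar\'e inequality with the space-time scale function whose inverse relationship to volume is exactly $\varphi_m$, and one checks case-by-case ($r\ge L_F^{-m}$ vs.\ $r<L_F^{-m}$) that the resulting prefactor matches $\varphi_m(r)$; one must verify the constants $C,c$ can be chosen uniformly in $m$, which is where the scale-invariance (in $m$) of the elliptic Harnack inequality in Theorem \ref{thm32}(b) and of the heat kernel constants is essential.

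The main obstacle I anticipate is the uniformity in $m$ for the $F_m$ estimate near the crossover scale $r\approx L_F^{-m}$, where the behavior transitions from "fractal" ($r^{d_f-d_w}$) to "Euclidean" ($r^{d-2}$, up to the $m$-dependent constant). Below scale $L_F^{-m}$ the set $F_m$ is (up to scaling) a bounded Lipschitz-like Euclidean domain, so one has the classical Sobolev--Poincar\'e inequality there with a dimensional constant; above scale $L_F^{-m}$ one is in the fractal regime and uses the Barlow--Bass machinery; the two must be glued with constants independent of $m$. This gluing, and checking that the definition of $\varphi_m$ in \eqref{eqn35} is precisely the right interpolating profile (in particular that $\varphi_m$ is, up to constants, the inverse of the volume-to-scale relation and is comparable across a factor-$L_F$ change of scale), is the technical heart of the argument; the rest is the now-standard heat-kernel $\Leftrightarrow$ Poincar\'e equivalence applied separately to $F$ and, uniformly, to each $F_m$.
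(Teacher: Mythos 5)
Your argument for part (a) is essentially the paper's: the paper deduces (a) in one line from the sub-Gaussian heat kernel bounds \eqref{e:1.2} together with their stable characterization (citing \cite{AB,BBK06,GHL}), which is exactly the heat-kernel $\Leftrightarrow$ (volume doubling + Poincar\'e + cutoff/Harnack) equivalence you invoke, and the normalization $r^{d_f-d_w}$ versus $r^{d_w}$ is just the division by $\mu(B_F(x,cr))\asymp r^{d_f}$ that you describe. For part (b), however, the paper takes a much shorter route: it simply cites \cite[Theorem 7.3]{BB5}, which is precisely the uniform-in-$m$ weighted Poincar\'e inequality on the pre-carpets with the crossover profile $\varphi_m$ of \eqref{eqn35}, so no heat-kernel argument or gluing across the scale $r\approx L_F^{-m}$ is needed at all. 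Your alternative — deriving (b) from two-sided heat kernel estimates for $X^{(F_m)}$ with the interpolating space-time scale and then checking uniformity of constants in $m$ near the crossover — is viable in principle, but the step you yourself flag as ``the technical heart'' (uniform-in-$m$ heat kernel or resistance control at and across the scale $L_F^{-m}$, matching $\varphi_m$ exactly) is not something the paper provides or that follows from Theorem \ref{thm32} alone; it is essentially the content of the Barlow--Bass coupling and resistance machinery that produces \cite[Theorem 7.3]{BB5} in the first place. So your proposal is correct in outline but, as written, it defers the crux of (b) to an unproved uniform estimate, whereas the paper's proof closes that gap by direct citation; if you want a self-contained argument you would need to either reproduce the relevant uniform estimates from \cite{BB5} or cite that theorem as the paper does.
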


\begin{proof} (a) follows from  the heat kernel estimates \eqref{e:1.2} for  $X^{(F)}$ 
	and its stable characterization (see, e.g., \cite{AB, BBK06,  GHL}). (b) follows from \cite[Theorem 7.3]{BB5}. 
\end{proof}

\section{Harmonic functions with assigned mean boundary values}\label{sec4} 

In this section, we prove a key result, Proposition \ref{prop41}, of the paper. The other two key results, trace theorems and a theorem about decreasing rate of energy measures near the boundary of cells, are proved in appendixes.

\medskip

We start with a result that will be needed in the sequel. It asserts that  every boundary point in $\partial_o F$ is regular for $\partial_oF$ with respect to the Brownian motion
 $X^{(F)}$ on $F$, that is, $\bP^{(F)}_x (\dot \sigma_{\partial_o F}=0)=1$ for every $x\in \partial_o F$, where the notation $\dot \sigma_{A}$ is defined in \eqref{e:3.4}.

 \begin{proposition}\label{P:3.5}
Every point of $\partial_o F$ is a regular point for $ \partial_o F_0$ with respect to $X^{(F)}$.
\end{proposition}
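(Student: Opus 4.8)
The plan is to establish regularity of every boundary point $x\in\partial_o F$ by exhibiting enough ``mass'' of $\partial_o F$ near $x$ to force the hitting time $\dot\sigma_{\partial_o F}$ to vanish almost surely under $\bP^{(F)}_x$. The natural tool is a capacity–density (Wiener-type) criterion: a point $x$ is regular for a closed set $A$ with respect to a diffusion satisfying sub-Gaussian heat kernel estimates whenever $A$ is not too thin at $x$, quantified by a lower bound on $\capa^{(F)}\!\big(A\cap B_F(x,r)\big)$ relative to $\capa^{(F)}\!\big(B_F(x,r)\big)$ as $r\downarrow 0$. Since the heat kernel estimates \eqref{e:1.2} hold for $X^{(F)}$, the associated resistance/capacity estimates give $\capa^{(F)}\!\big(B_F(x,2r)\setminus B_F(x,r)\big)\asymp r^{d_f-d_w}$ (for $d_w>2$, the non-trivial regime here), and the classical Wiener criterion $\sum_k 2^{k(d_w-d_f)}\,\capa^{(F)}\!\big(\partial_o F\cap (B_F(x,2^{-k})\setminus B_F(x,2^{-k-1}))\big)=\infty$ implies $\bP^{(F)}_x(\dot\sigma_{\partial_o F}=0)=1$; see the $0$–$1$ law for entrance times (\cite[Chapter II]{CF} or Blumenthal's $0$–$1$ law together with \cite[Proposition 2.5]{BBKT}).

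Concretely, first I would fix $x\in\partial_o F$; by the (SC1) symmetry of $F$ and the cell structure, it suffices to treat $x$ lying on a single face, say $\partial_{1,0}F$. For each $k\geq 1$ choose an $(k)$-cell $Q_k\in\mcQ_k(F)$ with $Q_k\subset\{x_1=0\}$-adjacent cells and $x\in\overline{F_{Q_k}}$, so that $F_{Q_k}$ carries a copy of $\partial_o F$ scaled by $L_F^{-k}$, hence $\nu(\partial_o F_{Q_k}\cap\partial_o F)\geq c\,L_F^{-k d_I}>0$. By Lemma \ref{lemmaHK}, $\capa^{(F)}\!\big(\partial_o F\cap F_{Q_k}\big)\geq c^{-1}\nu(\partial_o F\cap F_{Q_k})\asymp L_F^{-k d_I}$. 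On the other hand, the cutoff functions $w_{Q_k}$ from Lemma \ref{lemma37}(a) give $\capa^{(F)}\!\big(F_{Q_k}\big)\leq \mcE^{(F)}_1(w_{Q_k})\leq C L_F^{(d_w-d_f)k}+ \mu(F_{\mathcal S_{Q_k}})\asymp L_F^{(d_w-d_f)k}$ since $d_w>d_f$ here is not needed — more precisely $\capa^{(F)}(B_F(x,L_F^{-k}))\asymp L_F^{k(d_f-d_w)}$ is exactly the resistance-scaling statement underlying Lemma \ref{lemma37}. Combining, the ratio $\capa^{(F)}\!\big(\partial_o F\cap B_F(x,L_F^{-k})\big)\big/\capa^{(F)}\!\big(B_F(x,L_F^{-k})\big)\gtrsim L_F^{k(d_w-d_f-d_I)}$, and the key inequality $d_I>d_f-d_w$ from Lemma \ref{lemma38} makes this exponent positive, so the ratio is bounded below by a positive constant uniformly in $k$. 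The Wiener series then diverges and regularity follows.

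An alternative, slightly softer route avoids the full Wiener criterion: since the ratio $\capa^{(F)}\!\big(\partial_o F\cap B_F(x,r)\big)\big/\capa^{(F)}\!\big(B_F(x,r)\big)$ is bounded below by a constant $\kappa>0$ for all small $r$, a standard estimate (using that $\bP^{(F)}_y(\sigma_{\partial_o F}\leq T_{B_F(x,2r)^c})$ is comparable to this capacity ratio by the resistance interpretation and the heat kernel bounds) gives $\bP^{(F)}_x\big(\sigma_{\partial_o F}\leq t\big)\geq \kappa'>0$ for all $t>0$; letting $t\downarrow0$ and invoking Blumenthal's $0$–$1$ law for $X^{(F)}$ forces $\bP^{(F)}_x(\dot\sigma_{\partial_o F}=0)=1$. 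I would likely present this version since it needs only a one-sided density bound, which is precisely what Lemmas \ref{lemma37}, \ref{lemma38} and \ref{lemmaHK} deliver.

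The main obstacle I anticipate is making the comparison $\capa^{(F)}(B_F(x,r))\asymp r^{d_f-d_w}$ and the hitting-probability lower bound rigorous \emph{at a boundary point} $x\in\partial_o F$ rather than an interior point: balls centered on $\partial_o F$ only see ``half'' of the carpet, so one must check the capacity estimates and the probabilistic comparison still hold, which follows from the heat kernel estimates \eqref{e:1.2} being valid on all of $F$ (including up to $\partial_o F$, as $X^{(F)}$ is the reflected process) but requires a careful invocation of the volume doubling and capacity estimates for the metric measure space $(F,\rho,\mu)$. Once that localization is in hand, the rest is the arithmetic inequality $d_I>d_f-d_w$ from Lemma \ref{lemma38}, which is exactly what tips the Wiener-type sum to $+\infty$.
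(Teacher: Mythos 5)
There is a genuine gap at the heart of your quantitative step: the two capacity estimates you combine do not give a capacity density bounded below, and the exponent arithmetic is wrong. With $r=L_F^{-k}$, Lemma \ref{lemma37}(a) gives $\capa^{(F)}\big(B_F(x,L_F^{-k})\big)\lesssim L_F^{(d_w-d_f)k}$ (your parenthetical ``$\asymp L_F^{k(d_f-d_w)}$'' confuses $r^{d_f-d_w}$ with $L_F^{k(d_f-d_w)}$), while applying Lemma \ref{lemmaHK} directly at scale $L_F^{-k}$ only yields $\capa^{(F)}\big(\partial_o F\cap F_{Q_k}\big)\gtrsim \nu(\partial_o F\cap F_{Q_k})\asymp L_F^{-kd_I}$. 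The ratio is therefore bounded below only by $L_F^{k(d_f-d_w-d_I)}$, and Lemma \ref{lemma38} ($d_I>d_f-d_w$) makes this exponent \emph{negative}, not positive: your lower bound on the capacity density decays geometrically, so neither the divergence of the Wiener series nor the ``softer'' uniform bound $\kappa>0$ follows. (The exponent you wrote, $d_w-d_f-d_I$, is also negative, e.g.\ for the standard carpet $d_I=1>d_w-d_f\approx 0.2$; Lemma \ref{lemma38} says $d_w-d_f+d_I>0$, which is a different inequality.) The loss comes from invoking \eqref{e:3.7} at small scales: under $\Psi_Q$ the measure $\nu$ scales like $L_F^{-kd_I}$ while capacity scales like $L_F^{k(d_w-d_f)}$, so the measure--capacity inequality is far from sharp on small faces. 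The correct route to a uniform capacity density is self-similarity: Lemma \ref{lemmaHK} at scale one shows a face of $F$ has positive capacity, and \eqref{e:3.2}--\eqref{e:3.3} rescale this so that a level-$k$ sub-face has capacity comparable to $L_F^{k(d_w-d_f)}$ inside its cell, the same order as the ball capacity; no appearance of $d_I$ or Lemma \ref{lemma38} is then needed for the density.

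Even with that repaired, two ingredients you label ``standard'' are not available off the shelf in this paper: a Wiener-type test for this diffusion, or the comparison between $\bP^{(F)}_y\big(\sigma_{\partial_o F}\le \tau_{B_F(x,2r)}\big)$ and the capacity ratio, requires Green function estimates for the killed process (or an EHI-based chaining argument), and you would have to prove or cite it, including the localization at boundary points you flag. The paper's own proof sidesteps capacity densities entirely: it fixes a closed $D\subset\partial_{1,0}F$ of positive capacity (this is the only place \eqref{e:3.7} is used), sets $h(z)=\bP^{(F)}_z(\sigma_D<\sigma_E)$, uses EHI (Theorem \ref{thm32}), self-similarity and a Harnack chain to get a uniform lower bound $\bP^{(F)}_z(\sigma_{\partial_o F}<\sigma_{A_{n-1}})\ge C_1$ for $z$ on the $n$-th boundary shell interface, and then concludes by the strong Markov property and Blumenthal's zero--one law. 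If you wish to keep a capacity-flavored argument, combine the rescaled capacity density above with an explicit hitting-probability estimate proved by such a Harnack-chain method; as written, the key inequality in your proposal fails.
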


\begin{proof}  
For each $n\geq 2$, define $\mcB_n(F)=\{Q\in \mcQ_n(F):  Q\cap \partial_oF\neq \emptyset\}$ and $A_n=F_{\mcQ_n(F)\setminus \mcB_n(F)}\cap F_{\mcB_n(F)}$. 
Note that $\sigma_{A_n}=\sigma_{F_{\mcQ_n(F)\setminus \mcB_n(F)}}$ $\bP_x^{(F)}$-a.s. for each $x\in \partial_oF$ and $n\geq 2$. It suffices to show that
there is a constant $C_1\in (0,1)$ so that 
\begin{equation}\label{eqnprop310}
\bP^{(F)}_z(\sigma_{\partial_oF}<\sigma_{A_{n-1}})\geq C_1 \quad \hbox{for }   n\geq 3 \hbox{ and } z\in A_n. 
\end{equation}
Indeed, assuming \eqref{eqnprop310},    we have by the strong Markov property of $X^{(F)}$ that for each $x\in \partial_oF$ and $n\geq 3$,
\[
\bP^{(F)}_x(\dot{\sigma}_{\partial_oF}<\sigma_{\sigma_{A_{n-1}}})\geq \bP^{(F)}_x(\sigma_{\partial_oF}\circ \theta_{\sigma_{A_n}}<\sigma_{A_{n-1}}\circ \theta_{\sigma_{A_{n}}})\geq C_1,
\] 
where $\theta$ is the time shift operator  for $X^{(F)}$: $X^{(F)}_t\circ \theta_s=X^{(F)}_{t+s}$   for every $ t,s\geq 0$.
 The above inequality holds due to  the fact that $\bP_x^{(F)}$-a.s.
 $$
 \sigma_{A_{n-1}}\circ \theta_{\sigma_{A_{n}}}+\sigma_{A_n}=\sigma_{A_{n-1}} 
 \quad \hbox{and} \quad 
 \dot\sigma_{\partial_oF}\leq\sigma_{\partial_oF}\circ \theta_{\sigma_{A_n}}+\sigma_{A_n} .
 $$
 Since   $\lim_{n\to \infty} \sigma_{A_n}=\sigma_{F\setminus \partial_oF}=0$  $\bP_x^{(F)}$-a.s., we have
$
\bP^{(F)}_x(\dot{\sigma}_{\partial_oF}=0)\geq C_1>0$.
Hence by Blumenthal's zero–one law, $\bP^{(F)}_x(\dot{\sigma}_{\partial_oF}=0)=1$, proving that 
 $x$ is a regular point for $\partial_oF$. 
 
   \smallskip 
	
We now proceed to show \eqref{eqnprop310}. 
Let 
$$
E:=\bigcup \left\{\partial_{i,s}F: i\in \{1,2,\cdots,d\},s\in\{0,1\}\text{ and }(i,s)\neq (1,0) \right\},
$$
 and fix a closed set $D\subset \partial_{1,0}F$ such that $\nu(D)>0$ and $\rho(D,E)>0$. 
In view of \eqref{e:3.7},  both $D$ and $E$ have positive capacity. Define 
 \[
h(z):=\bP^{(F)}_z(\sigma_D<\sigma_{E}) \quad\hbox{for }  z\in F. 
\]
Observe that  $h\in \mcF^{(F)}$, $h=0$ on $E$, $h=1$ on $D$ and $h$ is $\mcE^{(F)}$-harmonic in $F\setminus (D\cup E)$.  
 By EHI (Theorem \ref{thm32}) and the connectivity of $F\setminus \partial_oF$,   $h(z)>0$ for each $z\in F\setminus \partial_oF$. 
 Fix a   $z_o\in F\setminus \partial_oF$  so  that $\rho(z_o,\partial_{1,0}F_0)>1/2$.

Let   $n\geq 3$ and $z\in A_n$. Take  $Q\in \mcB_n(F)$  so that there are 
 $i\in \{1,2,\cdots,d\},s\in\{0,1\}$ such that $z \in \partial_{i,s}F_Q$ and $\partial_{i,1-s}F_Q\subset \partial_oF$. Define
\[
h_z(y):=\bP^{(F)}_y(\sigma_{\partial_{i,1-s}F_Q}<\sigma_{A_{n-1}}) \quad \hbox{for }  y\in F. 
\]
Denote by $\Psi:\R^n\to\R^n$ the similarity map such that $\Phi (F)=F_Q$ and $\Psi(\partial_{1,0}F)=\partial_{i,1-s}F_Q$ (so $\Psi(D)\subset \partial_{i,1-s}F_Q\subset \partial_oF$, and $\Psi(E)$ disconnect $F_Q$ from other $n$-cells), then 
\[
h_z(y)\geq \bar h_z(y):=\bP^{(F)}_y(\sigma_{\Psi(D)}<\sigma_{\Psi(E)})=h\big(\Psi^{-1}(y)\big)
\quad \hbox{for }  y\in F_Q\setminus \partial_oF_Q,
\]
where we use self-similarity of $X^{(F)}$ under $\Psi$ in the last equality. In particular, we have 
 \[
h_z\big(\Psi(z_o)\big)\geq \bar h_z\big(\Psi(z_o)\big)=h(z_o) >0. 
\]
Noticing that $\Psi(\{y\in F:\rho(y,\partial_{0,1}F_0)\})\geq  {1}/{2})$ is connected (by a same proof of Lemma \ref{lemmaA1}),  by using EHI (we can find a chain of balls of radius $\frac14L_F^{-n}$ connecting $\Psi(z_o)$ and $z$, and the number of balls has a uniform upper bound independent of $z\in A_n$, we conclude that $h_z(z)\geq C_1$ for some $C_1>0$ depending only on $F$, which is \eqref{eqnprop310}. 
\end{proof}

\medskip

For the development of a forthcoming paper about some quenched invariance principle, we state the result in slightly more general setting (with the assumptions to be verified there). We assume in this subsection that $(\bar{\mcE}^{(F_m)},W^{(1,2)}(F_m)),\ m\geq 0$ is a sequence of strongly local regular Dirichlet forms on $L^2(F_m; \mu_m)$ such that 
\begin{equation}\label{e:4.1}
C_0^{-1}\mcE^{(F_m)}\leq \bar{\mcE}^{(F_m)}\leq C_0\mcE^{(F_m)} \quad \hbox{for every  }   m\geq 0, 
\end{equation} 
for some $C_0\in [1,\infty)$ and $\bar{\mcE}^{(F_m)}$ is Mosco convergent to $\mcE^{(F)}$.

\medskip

\begin{remark} \rm 
The above assumption can be imposed for a subsequence $\{m_k; k\geq 1\}$ instead of for
the whole sequence $\{m; m\geq 1\}$. The following assumptions (A1), (A2) and (A3) will then be assumed for the corresponding subsequence only. In this case, all the results in this section hold for this subsequence $\{m_k,k\geq 1\}$ with the same proof.
\end{remark}

\begin{enumerate} 
\item[(A1)]  For each $g_m\in C(F_m)\cap W^{1,2}(F_m)$, there is $h_m\in C(F_m)\cap W^{1,2}(F_m)$ such that 
$h_m|_{\partial_o F_m}=g_m|_{\partial_o F_m}$ and $h_m$ is  $\bar \sE^{(F_m)}$-harmonic in $F_m\setminus \partial_o F_m$.

Suppose that $h_m\in C(F_m)\cap W^{1,2}(F_m)$ is $\bar \sE^{(F_m)}$-harmonic in $F_m\setminus \partial_o F_m$ for each $m\geq 0$, and  $h_m|_{\partial_o F_m},m\geq 0$ 
are  uniformly bounded and equicontinuous. Then, for each subsequence $\{m_k; k\geq 1\}$, 
there is a sub-subsequence $\{m_{k(l)}; l\geq 1\}$ so that  $h_{m_{k(l)}}\rightarrowtail h$ for some 
 $h\in C(F)$.  
 
 \smallskip 

\item[(A2)] Let $n\geq 1$, $Q\in \mcQ_n(F)$, $f_m\in L^2(F_m; \mu_m)$,  $m\geq 1$,  and $f\in L^2(F;\mu)$. If $f_m\to f$ strongly in $L^2$, then 
$\liminf\limits_{m\to\infty}\bar{\mu}^{(F_m)}_{\<f_m\>}(F_{m,Q}) \geq \mu^{(F)}_{\< f \>}(F_Q)$.

 \smallskip 

\item[(A3)] $ \big\{\bar{U}_\lambda^{(F_m)}f:\ m\geq 0,\ f\in L^\infty(F_m; \mu_m),\ \|f\|_\infty\leq 1 \big\}$ is equicontinuous, where $\bar{U}_\lambda^{(F_m)}$ is the resolvent operator associated with $(\bar{\mcE}^{(F_m)},W^{1,2}(F_m))$ on $L^2(F_m; \mu_m)$.\\
\end{enumerate}

Conditions (A1)-(A3) hold for $\bar\sE^{(F_m)}=\alpha_m\sE^{(F_m)}$, where $\alpha_m$ is the constant in Theorem \ref{T:3.11} as well as in Theorem \ref{T:3.12}. Indeed in this case, (A3) is just Lemma \ref{lemma34}, while (A1) is proved in Lemma \ref{lemma41} by using Proposition \ref{P:3.5}. It is shown at the beginning of Section \ref{sec5} that 
 condition (A2) holds for a suitable sub-sequence, and then it holds for the whole sequence after we have established that the limit of 
$\{a_m; m\geq 1\}$ exists in \eqref{e:5.12} in view of Theorem \ref{T:3.12}.

When $F$ is a GSC equipped with the uniformly elliptic i.i.d random conductance as considered in \cite{CC}, conditions (A1) and (A3) can be shown to hold using the stability theorem of elliptic Harnack inequality \cite{BCM, BM}. Condition (A2) holds in this case as well and  its proof will be given in the forthcoming paper \cite{CC}.

\medskip

To state the main result of this section, we need  some notation.

 \medskip
 
\noindent(\textbf{\emph{Sub-faces}}).  Let $m,n\geq 0$.
\begin{enumerate} 
\item[\rm (a)] Denote by $\eth_nF$  the collection of level $n$ sub-faces of $\partial_o F$:
\[
\eth_n F= \left\{\partial_{i,s}F_Q: Q\in \mcQ_n(F),\ i\in \{1,2,\cdots, d\},\ s\in \{0, 1 \}\hbox{ such that }\partial_{i,s}F_Q\subset\partial_o F\right\}.
\]

\item[\rm(b)] Denote by $\eth_nF_m$  the collection of level $n$ sub-faces of $\partial_o F_m$:
\[
\eth_n F_m= \left\{\partial_{i,s}F_{m,Q}: Q\in \mcQ_n(F_m),\ i\in \{1,2,\cdots, d\},\ s\in \{0, 1\}\hbox{ such that }\partial_{i,s}F_{m,Q}\subset \partial_o F_m
\right\}.
\]
\end{enumerate}

\medskip
 Recall that $\nu$ is the normalized $d_I$-dimensional Hausdorff measure on  $\bigcup_{n=0}^\infty\bigcup_{Q\in \mcQ_n(F)}\partial_o F_{Q}$ with $\nu(\partial_oF)=1$
and $\nu_m$ is the normalized $(d-1)$-dimensional Hausdorff measure on  $\bigcup_{n=0}^\infty\bigcup_{Q\in \mcQ_n(F_m)}\partial_o F_{m,Q}$ with $\nu_m(\partial_o F_m)=1$.

\medskip

\noindent(\textbf{\emph{Discrete energy on sub-face graphs}}). Let $m\geq 0,k\geq 1$.
\begin{enumerate} 
\item[\rm (a)]   For $A,A'\in \eth_kF$ such that $A\neq A'$, we say $A\sim A'$ if 
\begin{equation}\label{e:4.2a}
A\cap A'\neq \emptyset \ \hbox{ or } \ 
A,A'\subset B \hbox{ for some } B \in \eth_{k-1}F. 
\end{equation} 
For each $f\in L^1(\partial_o  F; \nu)$,  define 
\[
I_k[f]=\sum_{A,A'\in \eth_k F\atop A\sim A'}([f]_{\nu|_A}-[f]_{\nu|_{A'}})^2.
\]

\item[(b)] For $A,A'\in \eth_kF_m$ such that $A\neq A'$, we say $A\sim A'$ if $A\cap A'\neq \emptyset$ or $A,A'\subset B$ for some $B\in \eth_{k-1}F_m$. For each $f\in L^1(\partial_o  F_m; \nu_m)$, define 
\[
I^{(m)}_k[f]=\sum_{A,A'\in \eth_k F_m\atop A\sim A'}([f]_{\nu_m|_A}-[f]_{\nu_m|_{A'}})^2.
\]
\end{enumerate}

\begin{remark} \rm We need to consider two possibilities when defining the relation $\sim$ because $\partial_o  F$ may not be connected. We remark 
that the graphs $(\eth_1F,\sim)$ and $(\eth_1F_m,\sim)$ are always connected.
\end{remark} 
  
\noindent(\textbf{\emph{Besov type spaces on faces}}).  
(a)  For each $n\geq 1$ and $f\in L^1(\partial_o  F; \nu)$,   define  
\[
\Lambda_n[f]:=\sum_{k=n}^\infty L_F^{k(d_w-d_f)}I_k[f],
\] 
and   the space
\begin{equation} \label{e:4.3} 
\Lambda(\partial_o  F) :=\{f\in L^2(\partial_o  F; \nu): \Lambda_1 [f]<\infty\}.
\end{equation}

(b) For each $m\geq 0,n\geq 1$ and $f\in L^1(\partial_o  F_m; \nu_m)$,   define (where $\varphi_m$ is defined in \eqref{eqn35})  
\[
\Lambda^{(m)}_n[f]:=\sum_{k=n}^{\infty}\varphi_m( L_F^{-k})I^{(m)}_k[f],
\]
and  the space 
\begin{equation} \label{e:4.4a} 
{\Lambda^{(m)} }
(\partial_o  F_m) :=\{f\in L^2(\partial_o  F_m; \nu_m): \Lambda^{(m)}_1  [f]<\infty\}. 
 \end{equation}

\medskip

 \noindent(\textbf{\emph{Boundary shells}}). For $n\geq 0$, define $\mcB_n:=\{Q\in \mcQ_n:Q\cap \partial F_0\neq\emptyset\}$. For $n\geq 0$ and $A\subset \R^d$,    let $\mcB_n(A):=\{Q\in \mcB_n:\operatorname{int}(Q)\cap A\neq\emptyset\}$.

\medskip

Note that $\mathcal{B}_n(F_m)=\mathcal{B}_n(F)$ if $n\leq m$. See Figure \ref{fig3} for the pictures of $F_{\mcB_n(F)}=\bigcup_{Q\in \mcB_n(F)} F_Q$ of the standard {S}ierpi\'{n}ski  carpet $F$ for $n=1, 2, 3$.

\begin{figure}[htp]
\includegraphics[width=3.5cm]{standardSC.pdf}\qquad 
\includegraphics[width=3.5cm]{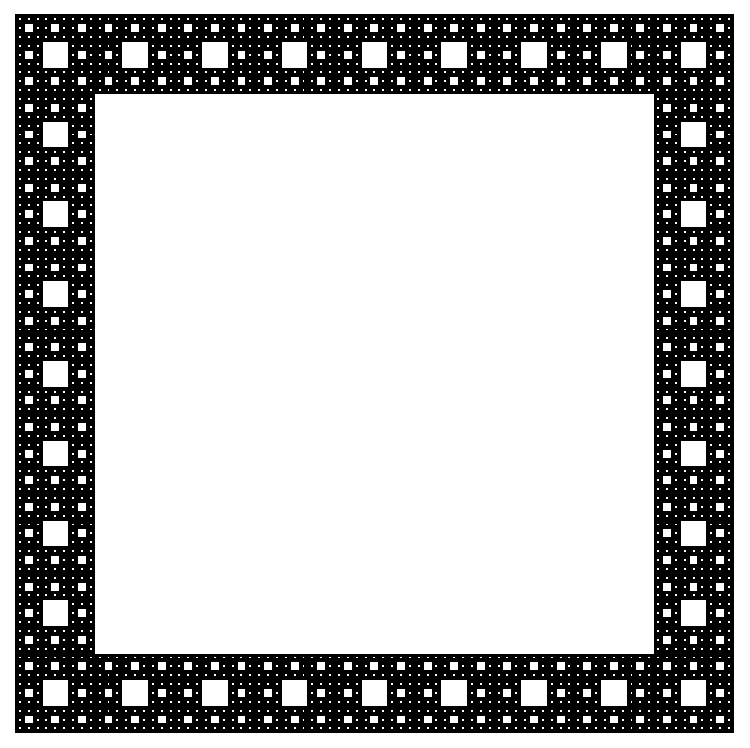}\qquad
\includegraphics[width=3.5cm]{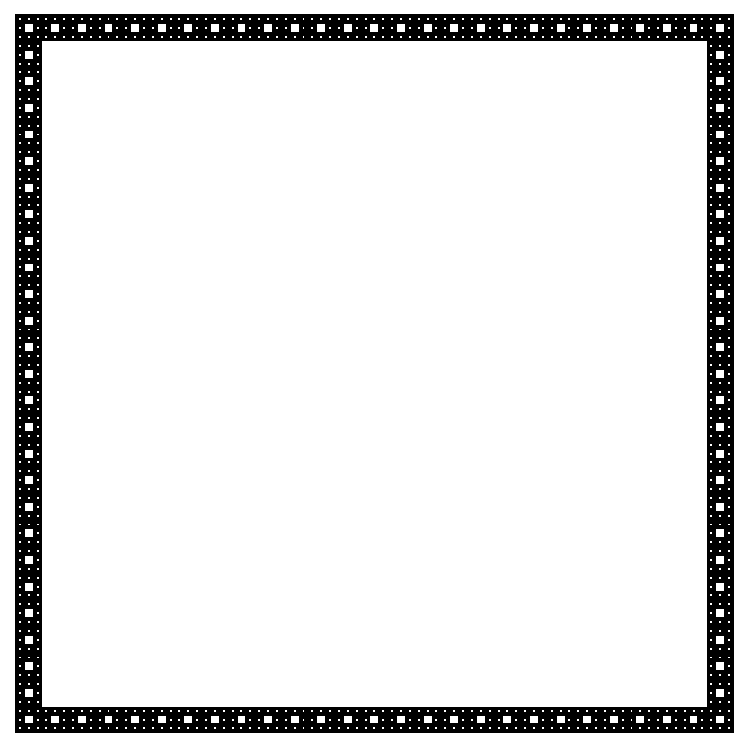}
\caption{$F_{\mcB_1(F)}$, $F_{\mcB_2(F)}$ and $F_{\mcB_3(F)}$ of the standard {S}ierpi\'{n}ski carpet $F$ in $\R^2$}\label{fig3}
\end{figure}

\medskip

The following is the main result of this section.

\begin{proposition}\label{prop41}
Let $n\geq 1$ and assume that (A1), (A2), (A3) hold. Then, there are positive finite constants $C$ depending only on the constant $C_0$ in \eqref{e:4.1} and $N\geq n$ (which depends on $n$ and $\bar{\mcE}^{(F_m)},m\geq 0$) such that 
\[
\left|\sqrt{\bar{\mcE}^{(F_{m})}(g_{m})}-\sqrt{\mcE^{(F)}(g)}\right|\leq C  \left(\sqrt{\mu^{(F)}_{\<g\>}(F_{\mcB_{n-1}(F)})}+\sqrt{\Lambda^{(m)}_n[g_{m}|_{\partial_o  F_{m}}]}\right)
\]
  for any $m\geq N$, any $g\in \mcF^{(F)}\cap C(F)$ that is $\mcE^{(F)}$-harmonic in $F\setminus \partial_o  F$, and any $g_{m}\in W^{1,2}(F_m)\cap C(F_m)$ that  is $\bar{\mcE}^{(F_{m})}$-harmonic in $F_{m}\setminus \partial_o  F_{m}$ having 
\[\fint_{A_m}  g_m d\nu_m = \fint_A g d\nu  
\quad \hbox{for every }   A\in \eth_nF\text{ and }A_m\in \eth_nF_m\text{ with  }A\subset A_m.
\]
\end{proposition}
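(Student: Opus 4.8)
The plan is to compare $g_m$ and $g$ by building an intermediate harmonic function on $F_m$ whose boundary data matches that of $g$ exactly (not just on average over level-$n$ sub-faces), and to control all three energies against each other. First I would use the trace theorems (Theorems \ref{T:A.3}, \ref{T:A.4}) to relate $\mcE^{(F)}(g)$ and $\bar\mcE^{(F_m)}(g_m)$ to the weighted Besov energies $\Lambda_1[g|_{\partial_o F}]$ and $\Lambda_1^{(m)}[g_m|_{\partial_o F_m}]$ of their boundary traces, since $g$ and $g_m$ are harmonic in the interior and hence are the energy-minimizing extensions of their boundary values. Splitting the Besov energy as $\Lambda_1 = \sum_{k=1}^{n-1} L_F^{k(d_w-d_f)} I_k + \Lambda_n$, the ``high-frequency'' tail $\Lambda_n$ is exactly the second term on the right-hand side (for $g_m$; for $g$ one invokes the matching-averages hypothesis together with the energy-measure boundary-decay estimate Theorem \ref{thmB1} to bound the $g$-tail by $\mu^{(F)}_{\<g\>}(F_{\mcB_{n-1}(F)})$), while the finitely many ``low-frequency'' terms $I_1,\dots,I_{n-1}$ depend only on the level-$n$ averages $\fint_A g\,d\nu$, which by hypothesis equal the corresponding averages $\fint_{A_m} g_m\,d\nu_m$. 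So the low-frequency parts of the two Besov energies literally coincide.

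Next I would address the discrepancy between $\Lambda^{(m)}$ (which uses $\varphi_m$, a modified weight that switches behavior at scale $L_F^{-m}$) and the ``$F$-type'' weight $L_F^{k(d_w-d_f)}$. For $k\le m$ the two weights agree, so choosing $N$ large enough (depending on $n$ and the forms) ensures that on the relevant range of scales the weights are comparable up to the constant $C_0$ from \eqref{e:4.1}. The residual contribution from scales $k>m$ in $\Lambda_1^{(m)}$, being part of $\Lambda_n^{(m)}[g_m|_{\partial_o F_m}]$, is already accounted for on the right-hand side. Combining: $\sqrt{\bar\mcE^{(F_m)}(g_m)}$ is comparable (constants depending on $C_0$) to $\sqrt{\sum_{k<n} L_F^{k(d_w-d_f)} I_k[\text{common data}]} + \sqrt{\Lambda_n^{(m)}[g_m|_{\partial_o F_m}]}$, and $\sqrt{\mcE^{(F)}(g)}$ is comparable to $\sqrt{\sum_{k<n} L_F^{k(d_w-d_f)} I_k[\text{common data}]} + \sqrt{\Lambda_n[g|_{\partial_o F}]}$, with the latter tail dominated by $\sqrt{\mu^{(F)}_{\<g\>}(F_{\mcB_{n-1}(F)})}$. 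Subtracting and using $|\sqrt{a}-\sqrt{b}|\le \sqrt{|a-b|}$ on the common low-frequency block (whose difference telescopes away) gives the claimed inequality.

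I expect the main obstacle to be making the trace-theorem comparisons genuinely \emph{uniform} in $m$ and matching the $F_m$-side (which has a built-in small scale $L_F^{-m}$ below which it behaves like a Euclidean Sobolev space) with the genuinely fractal $F$-side. Concretely: the trace theorem on $F_m$ must produce constants independent of $m$, and the energy-measure boundary decay (Theorem \ref{thmB1}) must be invoked in the form that converts the matching of level-$n$ boundary averages into a bound of the ``missing'' part of $g$'s energy near $\partial_o F$ by $\mu^{(F)}_{\<g\>}(F_{\mcB_{n-1}(F)})$ plus the Besov tail; closing this loop rigorously — in particular controlling how the level-$n$ average constraint propagates through the harmonic extension — is the delicate step, and it is where (A2) (lower semicontinuity of energy measures on cells under strong $L^2$-convergence) and the uniform Harnack/equicontinuity inputs (A1), (A3) get used to pass between $g_m$ and $g$. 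The choice of $N$ absorbs precisely the threshold beyond which $\varphi_m$ and the fractal weight agree on all scales coarser than $L_F^{-n}$, so that the ``low-frequency'' block is handled by the two forms' $C_0$-comparability rather than by any limiting argument.
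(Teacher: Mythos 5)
There is a genuine gap at the heart of your plan. The trace theorems only give \emph{two-sided comparability up to multiplicative constants}: Theorem \ref{T:A.3} and Theorem \ref{T:A.4} yield $c\,\Lambda_1[g|_{\partial_o F}]\leq \mcE^{(F)}(g)\leq C\,\Lambda_1[g|_{\partial_o F}]$ (and similarly on $F_m$, uniformly in $m$), with constants that are in no way close to $1$. From this you cannot conclude that $\big|\sqrt{\bar{\mcE}^{(F_m)}(g_m)}-\sqrt{\mcE^{(F)}(g)}\big|$ is small: even if both energies are comparable to the same ``common low-frequency block'' $\sum_{k<n}L_F^{k(d_w-d_f)}I_k$ (and you are right that these discrete energies literally coincide for $k\leq n$ under the matching-averages hypothesis), their difference can still be of the order of the full energy, not of the boundary-layer quantity $\mu^{(F)}_{\<g\>}(F_{\mcB_{n-1}(F)})$ plus the tail $\Lambda^{(m)}_n[g_m]$. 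Your step ``subtracting \dots the common low-frequency block telescopes away'' silently replaces comparability by equality; that is exactly the point where the argument fails. Relatedly, your explanation of $N$ is off: $\varphi_m(L_F^{-k})=L_F^{k(d_f-d_w)}$ already for all $k\leq m$, so no large $N$ is needed to reconcile the weights; $N$ must come from somewhere else, and it cannot come from a purely scale-by-scale comparison. (Also, the bound $\Lambda_n[g|_{\partial_o F}]\leq C\mu^{(F)}_{\<g\>}(F_{\mcB_{n-1}(F)})$ is just Theorem \ref{T:A.3}(a); Theorem \ref{thmB1} plays no role here.)

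The missing idea is the one the paper is built around: an \emph{asymptotic-equality} comparison, obtained from the Mosco convergence together with (A1)--(A3), made uniform by compactness on a finite-dimensional space. One fixes the $(\#\eth_nF)$-dimensional space $\mathscr{C}_n$ of Lemma \ref{lemma44}, the harmonic extension $\mathcal{H}f$ on $F$, and the maps $\Theta^{(m)}_n$ of Lemma \ref{lemma46}, for which $\bar{\mcE}^{(F_m)}(\Theta^{(m)}_nf)\to\mcE^{(F)}(\mathcal{H}f)$ and $\bar\mu^{(F_m)}_{\<\Theta^{(m)}_nf\>}(F_{m,\mcB_{n-1}})$ is asymptotically dominated by $\mu^{(F)}_{\<\mathcal{H}f\>}(F_{\mcB_{n-1}})$; by homogeneity and compactness of the unit sphere of $\mathscr{C}_n$ modulo constants (Lemma \ref{lemma47}) these convergences are uniform in $f$, and together with the lower bound $\mu^{(F)}_{\<\mathcal{H}f\>}(F_{\mcB_{n-1}})\geq c\,\mcE^{(F)}(\mathcal{H}f)$ this produces the threshold $N$ and the estimates $|\bar{\mcE}^{(F_m)}(\Theta^{(m)}_nf)-\mcE^{(F)}(\mathcal{H}f)|\leq\mu^{(F)}_{\<\mathcal{H}f\>}(F_{\mcB_{n-1}})$, $\bar\mu^{(F_m)}_{\<\Theta^{(m)}_nf\>}(F_{m,\mcB_{n-1}})\leq 2\mu^{(F)}_{\<\mathcal{H}f\>}(F_{\mcB_{n-1}})$ for $m\geq N$. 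Only then do the trace theorems enter, to control $\mcE^{(F)}(g-\mathcal{H}f)$ and $\bar{\mcE}^{(F_m)}(g_m-\Theta^{(m)}_nf)$, which is legitimate because these differences have \emph{vanishing} level-$n$ boundary averages, so their Besov norms reduce to the tails $\Lambda_n$, $\Lambda^{(m)}_n$. Your proposal gestures at (A1)--(A3) but never constructs the intermediate objects or the uniformization mechanism through which they act, and without that the desired additive, small-error inequality cannot be reached by trace-theorem comparability alone.
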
\vspace{0.15cm}

The proof of this proposition will be given in subsection \ref{sec43}, after a series of preparations. 
 Our proof also needs some trace theorems, which are given in Appendix \ref{secA}.

\subsection{Good approximation sequence of harmonic functions}\label{sec41} 

In this subsection, we show that 
under condition (A1) and (A3), every  $h\in \mcF^{(F)}\cap C(F)$ that is $\mcE^{(F)}$-harmonic in $F\setminus \partial_o  F$ can be approximated by some nice functions in $W^{1,2}(F_m)\cap C(F_m)$; see Lemma \ref{lemma43}.
 First we show that condition (A1) holds when $\bar{\mcE}^{(F_m)} =\alpha_m\mcE^{(F_m)}$ for every $ m\geq 0$. 

\begin{lemma}\label{lemma41}
(A1) holds for $(\alpha_m\mcE^{(F_m)},W^{1,2}(F_m))$ with  $m\geq 0$.
\end{lemma}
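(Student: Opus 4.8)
\textbf{Proof proposal for Lemma \ref{lemma41}.}

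The statement (A1) for $\bar{\mcE}^{(F_m)}=\alpha_m\mcE^{(F_m)}$ has two parts: an existence/solvability part (harmonic extension of prescribed boundary data exists) and a compactness part (uniformly bounded, equicontinuous boundary traces of harmonic functions yield a $\rightarrowtail$-convergent subsequence with continuous limit). The plan is to handle the solvability part first. Given $g_m\in C(F_m)\cap W^{1,2}(F_m)$, I would solve the Dirichlet problem on the open set $F_m\setminus\partial_oF_m$ with boundary data $g_m|_{\partial_oF_m}$. Since $\alpha_m$ is just a positive constant, $\bar{\mcE}^{(F_m)}$-harmonicity coincides with $\mcE^{(F_m)}$-harmonicity, i.e.\ with being weakly harmonic for $(\mcE^{(F_m)},W^{1,2}(F_m))$. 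Existence of $h_m\in W^{1,2}(F_m)$ with $h_m-g_m\in W^{1,2}_0(F_m\setminus\partial_oF_m)$ minimizing the Dirichlet energy is the standard orthogonal-projection argument (the affine subspace $g_m+W^{1,2}_0$ is closed and convex). For continuity up to the boundary, I would invoke Proposition \ref{P:3.5}: every point of $\partial_oF_m$ is regular for $\partial_oF_m$ with respect to the reflected Brownian motion $X^{(F_m)}$ (the same argument that proves Proposition \ref{P:3.5} for $F$ applies verbatim to $F_m$, since $F_m$ is itself a nice Euclidean domain with the same self-similar boundary structure at scales $\le m$ and is a Lipschitz-type domain at scales $>m$, where classical Wiener-criterion regularity is immediate). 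Then $h_m(x)=\bE^{(F_m)}_x[g_m(X^{(F_m)}_{\sigma_{\partial_oF_m}})]$ is continuous on $F_m$ by the strong Feller property together with boundary regularity, and it agrees with the energy minimizer, hence $h_m\in C(F_m)\cap W^{1,2}(F_m)$, is $\bar{\mcE}^{(F_m)}$-harmonic in $F_m\setminus\partial_oF_m$, and restricts to $g_m$ on $\partial_oF_m$.

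For the compactness part, suppose $h_m\in C(F_m)\cap W^{1,2}(F_m)$ are $\bar{\mcE}^{(F_m)}$-harmonic in $F_m\setminus\partial_oF_m$ with $\{h_m|_{\partial_oF_m}\}$ uniformly bounded and equicontinuous. Using the probabilistic representation $h_m(x)=\bE^{(F_m)}_x[g_m(X^{(F_m)}_{\sigma_{\partial_oF_m}})]$ with $g_m=h_m|_{\partial_oF_m}$, I would first get a uniform sup-norm bound $\|h_m\|_\infty\le\sup_m\|h_m|_{\partial_oF_m}\|_\infty<\infty$ by the maximum principle. The key is equicontinuity of $\{h_m\}$ on $F_0$: for interior points this follows from the uniform elliptic Harnack inequality (Theorem \ref{thm32}(b)) applied to oscillation, giving the standard interior oscillation decay $\mathrm{osc}_{B(x,r)}h_m\le \theta^k\,\mathrm{osc}_{B(x,2^kr)}h_m$ with $\theta\in(0,1)$ uniform in $m$ (one applies Harnack to $M-h_m$ and $h_m-m'$ on concentric balls); near the boundary one combines this interior decay with the equicontinuity of the boundary data and a barrier/hitting-probability estimate that controls $\bP^{(F_m)}_x(\sigma_{\partial_oF_m}$ is far from $x)$ — precisely the kind of estimate furnished in the proof of Proposition \ref{P:3.5} via EHI and the connectivity/chaining argument. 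Having established that $\{h_m\}$, viewed via restriction or extension on the common compact space $F_0$, is uniformly bounded and equicontinuous, Lemma \ref{lemma28}(a) applied with $A_n=F_n$, $A=F$, $\sX=F_0$ yields a subsequence $h_{m_{k(l)}}\rightarrowtail h$ for some $h\in C(F)$. This gives the conclusion for any subsequence $\{m_k\}$ by passing to a further subsequence.

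The main obstacle I anticipate is the uniform-in-$m$ equicontinuity up to the boundary $\partial_oF_m$: the interior piece is a clean consequence of the scale-invariant EHI, but near $\partial_oF_m$ one must marry the modulus of continuity of the boundary data with a uniform (in $m$ and in the boundary point) estimate on how fast $X^{(F_m)}$ started near $\partial_oF_m$ actually hits $\partial_oF_m$ — this is where boundary regularity must be made quantitative. I would structure this exactly as in the proof of Proposition \ref{P:3.5}: define, for $z$ in a small neighborhood of a boundary sub-face, the harmonic function $h_z(y)=\bP^{(F_m)}_y(\sigma_{\text{local outer face}}<\sigma_{\text{shell}})$, lower-bound it by a scaled copy of a fixed positive harmonic function via self-similarity, and propagate the bound by EHI-chaining with a uniformly bounded number of balls. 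The uniformity of all constants in $m$ is exactly the content of Theorem \ref{thm32}(b) and the self-similar structure of $F_m$ at scales $\le m$ (and, at finer scales, the fact that $F_m$ is smooth there, so classical estimates apply with better constants). Once these ingredients are in place, the rest is bookkeeping with Lemma \ref{lemma28}.
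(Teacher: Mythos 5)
Your first half (existence of a continuous harmonic extension on $F_m$) matches the paper: the paper simply notes that $F_m$ is a Lipschitz domain, so every boundary point is classically regular, and then takes $h_m(x)=\bE_x\big[g_m(X^{(F_m)}_{\sigma_{\partial_o F_m}})\big]$, which is continuous and lies in $W^{1,2}(F_m)$ by compactness of $F_m$; your energy-minimization plus probabilistic-representation argument is the same in substance. Where you genuinely diverge is the compactness half. You propose to prove a \emph{uniform-in-$m$} modulus of continuity up to $\partial_o F_m$ (interior oscillation decay from EHI, married to a quantitative barrier/hitting estimate near the boundary) and then apply Arzel\`a--Ascoli once on all of $F_0$. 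The paper never proves such a uniform boundary modulus. It applies Lemma \ref{lemma28}(a) twice — once on the open set $F_0\setminus\partial F_0$ (interior, via EHI) and once on $\partial_o F_m$ (boundary data, equicontinuous by hypothesis) — obtaining limits separately, and then glues them with a soft argument: writing $h_{m}(x_{m})\approx\bE_{x_{m}}\big[f(X^{(F_{m})}_{\sigma_{\partial F_0}})\big]$ for a fixed $f\in C(\partial F_0)$, it uses the weak convergence of the time-changed processes from Theorem \ref{T:3.11} (along a further subsequence with $\alpha_{m_{k(l)}}$ convergent) together with Proposition \ref{P:3.5} (every $x\in\partial_o F$ is regular for $\partial_o F$ under $X^{(F)}$, so $\bP^{(F)}_x(\sigma_{\partial F_0}\geq\eps)=0$) to conclude that $X^{(F_{m_{k(l)}})}_{\sigma_{\partial F_0}}\to x$ in law, hence $h_{m_{k(l)}}(x_{m_{k(l)}})\to h(x)$. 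This buys the boundary continuity of the limit without any quantitative uniform estimate; your route, if carried out, would give the stronger statement of uniform equicontinuity up to the boundary, but at a real cost.

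That cost is precisely where your sketch has a gap. In the proof of Proposition \ref{P:3.5} on $F$, the lower bound propagates from a \emph{single fixed} positive harmonic function $h$ on the \emph{fixed} space $F$, via exact self-similarity of $X^{(F)}$; the number $h(z_o)>0$ is one constant. On $F_m$ the self-similar rescaling of a level-$n$ cell produces a copy of $F_{m-n}$, a space that varies with $m$ and $n$, so there is no ``fixed positive harmonic function'' to scale: you would need a lower bound of the form $\inf_{m'\geq 0}\bP^{(F_{m'})}_{z_o}(\sigma_{D}<\sigma_{E})>0$, uniform over all approximation levels. EHI-chaining (Theorem \ref{thm32}(b)) only transfers positivity between interior points of the \emph{same} function; it does not by itself produce a uniform-in-$m'$ non-degeneracy constant. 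Such uniform crossing/hitting estimates do exist (they are part of the Barlow--Bass coupling machinery, alongside the uniform resistance and exit-time bounds), but they are an additional input you neither cite nor derive, and without them the ``bookkeeping'' step near the boundary does not close. So either import those uniform hitting estimates explicitly, or switch to the paper's softer route, which needs only Theorem \ref{T:3.11}, Proposition \ref{P:3.5}, and Lemma \ref{lemma28}, and avoids uniform boundary regularity of the $F_m$'s altogether.
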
   

\begin{proof} 
Since $F_m$ are Lipschiz domains,
 it is well known that  each $x\in \partial  F_m$ is regular for $\partial  F_m$ with respected to the normally  reflected Brownian motion on $F_m$. 
Thus  for every   $g \in C(F_m)\cap W^{1,2}(F_m)$, $h(x)= \bE_x [ g(X^{(F_m)}_{\sigma_{\partial_0 F_m}})]$ is a continuous function on $F_m$
with $h=g$ on $\partial_o F_m$. On the other hand, since $F_m$ is compact, it follows from \cite{CF, FOT} that $h\in W^{1,2}(F_m)$. 

\medskip

 Suppose that for each $m\geq 0$, $h_m\in C(F_m)\cap W^{1,2}(F_m)$ is  $  \sE^{(F_m)}$-harmonic in $F_m\setminus \partial_o F_m$   
 and  $h_m|_{\partial_o F_m}$ is uniformly bounded and equicontinuous. 
   For each subsequence $\{m_k; k\geq 1\}$,  by (EHI) (Theorem \ref{thm32}) and Lemma \ref{lemma28}(a),
   there is a sub-subsequence $\{m_{k(l)}; l\geq 1\}$ so that    $h_{m_{k(l)}}|_{F_{m_{k(l)}}\setminus\partial_o F_{m_{k(l)}}}\rightarrowtail h|_{F\setminus \partial_o F}$ and $h_{m_{k(l)}}|_{\partial_o F_{m_{k(l)}}}\rightarrowtail h|_{\partial_o F}$ as $l\to \infty$
  for some bounded function $h$ on $F$ that is  continuous on $F\setminus \partial_o F$ and on $\partial_o F$.  
  In view of Theorem \ref{T:3.11}, we can further assume that the constants $\{\alpha_{m_{k(l)}}; l\geq 1\}$ there converge  to a positive number $\alpha$.  
    If we can show that 
\begin{equation}\label{eqn42}
\lim_{l\to \infty} h_{m_{k(l)}}(x_{m_{k(l)}})= h(x) \quad\hbox{for every }   x_{m_{k(l)}}\in F_{m_{k(l)}} \hbox{ that converges to  }  x\in \partial_o F, 
\end{equation}
then $h_{m_k}\rightarrowtail h$ and so $h\in C(F)$ by Lemma \ref{lemma28}(b). This would establish (A1). 
 
To show \eqref{eqn42},   fix $x\in \partial_oF$ and $x_{m_{k(l)}}\in F_{m_{k(l)}},l\geq 1$ such that $x_{m_{k(l)}}$ converges to $x$.
 By  Lemma \ref{lemma28} (b), there are 
 $f_{m_{k(l)}}\in C(\partial F_0)$, $l\geq 1$ and $f\in C(\partial F_0)$ such that $h_{m_{k(l)}}|_{\partial_o F_{m_{k(l)}}}=f_{m_{k(l)}}|_{\partial_o F_{m_{k(l)}}}$, 
$l\geq 1$, $h|_{\partial_o F}=f|_{\partial_o F}$ and $\|f_{m_{k(l)}}-f\|_\infty\to 0$. So,
\begin{equation} \label{e:4.2} 
\begin{aligned}
&\lim\limits_{l\to\infty} \left|  h_{m_{k(l)}}({x_{m_{k(l)}}}) -   \bE_{x_{m_{k(l)}}}  \left[ f (X^{(F_{m_{k(l)}})}_{\sigma_{\partial_0 F_{m_{k(l)}}}} ) \right]  \right|
\\=&\lim\limits_{l\to\infty}\left| \bE_{x_{m_{k(l)}}} \left[   ( h_{m_{k(l)}} -f) (X^{(F_{m_{k(l)}})}_{\sigma_{\partial_0 F_{m_{k(l)}}}} ) \right]  \right|
\leq \lim\limits_{l\to\infty}\| f_{m_{k(l)}} -f\|_\infty=0.
\end{aligned}
 \end{equation} 
 Recall that  $\sigma_{\partial F_0} (\omega):=\inf\{t\geq 0: \omega (t) \in  \partial F_0\}$  and $\tau_{B(x,r)}:=\inf\{t\geq 0: \omega (t) \notin B(x,r)\}$  for $\omega \in  C([0, \infty); \R^d)$. 
 As $\{  \sigma_{\partial F_0}\geq \eps \}$ is a closed  set in $C([0, \infty); \R^d)$, we have by Theorem \ref{T:3.11} that 
$$
\limsup_{l\to \infty}  \bP_{x_{m_k(l)}}^{(F_{m_{k(l)}})}  (   \sigma_{\partial F_0} \geq \eps/\alpha_{m_{k(l)}}) 
\leq   \bP_x^{(F)}  (   \sigma_{\partial F_0} \geq \eps) =0. 
 $$
 On the other hand, for any $r>0$, $\{\tau_{B(x, r)} > \eps\}$ is an open set in $C([0, \infty); \R^d)$, we have by Theorem \ref{T:3.11}
 again  that 
$$
\liminf_{l\to \infty}  \bP_{{x_{m_{k(l)}} }}^{(F_{m_{k(l)}})}  ( \tau_{B(x, r)} > \eps /\alpha_{m_{k(l)}}) 
\geq   \bP_x^{(F)}  ( \tau_{B(x, r)} > \eps ).
 $$
It follows that 
\begin{eqnarray*}
&& \liminf_{l\to \infty}  \bP_{{x_{m_{k(l)}} }}^{(F_{m_{k(l)}})}  \left( X^{(F_{m_{k(l)}})}_{  \sigma_{\partial F_0}} \in B(x, r)  \right)  \nonumber \\
&\geq&  \liminf_{l\to \infty}  \bP_{{x_{m_{k(l)}} }}^{(F_{m_{k(l)}})}  \left(     \{  \sigma_{\partial F_0} < \eps/\alpha_{m_{k(l)}} \}  \cap 
  \{ \tau_{B(x, r)} > \eps /\alpha_{m_{k(l)}})  \}     \right)   \nonumber   \\
  &\geq & \liminf_{l\to \infty}  \left( \bP_{{x_{m_{k(l)}} }}^{(F_{m_{k(l)}})}  \left(   \tau_{B(x, r)} > \eps /\alpha_{m_{k(l)}})     \right) 
  - \bP_{{x_{m_{k(l)}} }}^{(F_{m_{k(l)}})}  \left(     \sigma_{\partial F_0} \geq  \eps/\alpha_{m_{k(l)}}   \right)  \right)  \nonumber \\
&\geq & \bP_x^{(F)}  ( \tau_{B(x, r)} > \eps ).    \label{e:4.4} 
\end{eqnarray*}
  Since $X^{(F)}$ has right continuous sample paths, $\lim_{\eps\downarrow 0} \bP_x^{(F)}(\tau_{B(x,r)}>\eps)
  = \bP_x^{(F)}  ( \tau_{B(x, r)} > 0 ) =1$ for every $r>0$ and so 
  $$
  \liminf_{l\to \infty}  \bP_{{x_{m_{k(l)}} }}^{(F_{m_{k(l)}})} \left( X^{(F_{m_{k(l)}})}_{  \sigma_{\partial F_0}} \in B(x, r) \right)=1.
  $$
   Since this holds for every $r>0$, we conclude   that 
  $X^{(F_{m_{k(l)}})}_{  \sigma_{\partial F_0}}$ converges to $x$ in law as $l\to \infty$.      
    This combined with \eqref{e:4.2} yields that 
 $$\lim_{l\to \infty} h_{m_{k(l)}}(x_{m_{k(l)}})
 =\lim_{l\to \infty} \bE_{x_{m_{k(l)}}}  \left[ f (X^{(F_{m_{k(l)}})}_{  \sigma_{\partial_0 F_{m_{k(l)}}}} ) \right] 
 =\lim_{l\to \infty} \bE_{x_{m_{k(l)}}}  \left[ f (X^{(F_{m_{k(l)}})}_{  \sigma_{\partial  F_0}} ) \right]=f(x)=h(x),\\
 $$
which is \eqref{eqn42}.
\end{proof}

By Proposition \ref{P:3.5},  each point $x\in \partial_o F$ is regular for $\partial_o F$ with respect to $X^{(F)}$. So,  by the Harnack inequality (Theorem \ref{thm32})
and its implication, 
for  $g\in C(F)\cap \mcF^{(F)}$, the function 
$$
h(x):=\bE_x \left[ g(X^{(F)}_{\sigma_{\partial_o F}} )\right], \quad  x\in F,
$$
is in $C(F)\cap \mcF^{(F)}$ and is $\sE^{(F)}$-harmonic in $F\setminus \partial_o F$
 with $h|_{\partial_o  F}=g|_{\partial_o  F}$.

\begin{lemma}\label{lemma43}
Assume that (A1) and (A3) hold, and $h\in C(F)\cap \mcF^{(F)}$ 
is $\mcE^{(F)}$-harmonic in $F\setminus \partial_o F$. 
Then there exist $h_m\in W^{1,2}(F_m)\cap C(F_m)$, $m\geq 0$, so that  
\begin{enumerate}
\item[\rm (i)]   $h_m$ is $\bar{\mcE}^{(F_m)}$-harmonic in $F_m\setminus \partial_o F_m$.

\item[\rm (ii)]  $h_m\rightarrowtail h$ as $m\to \infty$. 

\item[\rm (iii)] $\lim\limits_{m\to\infty}\bar{\mcE}^{(F_m)}(h_m)=\mcE^{(F)}(h)$.   
\end{enumerate}
\end{lemma}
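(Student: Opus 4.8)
\textbf{Proof proposal for Lemma \ref{lemma43}.}

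The plan is to construct $h_m$ as the $\bar{\mcE}^{(F_m)}$-harmonic extension (in $F_m\setminus\partial_oF_m$) of a suitable approximation of the boundary trace $h|_{\partial_oF}$, and then to verify the three claimed properties using the tools already in place. For the construction, fix $g\in C(\R^d)$ with $g|_F = h$ (Tietze), and let $g_m := g|_{F_m}\in C(F_m)\cap W^{1,2}(F_m)$; note $g_m\rightarrowtail h$ trivially since $g$ is continuous on $\R^d$. By condition (A1), there is $h_m\in C(F_m)\cap W^{1,2}(F_m)$ with $h_m|_{\partial_oF_m} = g_m|_{\partial_oF_m}$ that is $\bar{\mcE}^{(F_m)}$-harmonic in $F_m\setminus\partial_oF_m$; this gives (i). The traces $h_m|_{\partial_oF_m} = g|_{\partial_oF_m}$ are uniformly bounded (by $\|g\|_\infty$) and equicontinuous (by uniform continuity of $g$ on the compact set $F_0$), so the second half of (A1) applies: along any subsequence there is a sub-subsequence with $h_{m_{k(l)}}\rightarrowtail \tilde h$ for some $\tilde h\in C(F)$.

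For (ii), the key point is to identify every such subsequential limit $\tilde h$ with $h$, so that Lemma \ref{lemma28}(c) forces the full convergence $h_m\rightarrowtail h$. I would argue as follows: $\tilde h|_{\partial_oF} = \lim h_{m_{k(l)}}|_{\partial_oF_{m_{k(l)}}} = g|_{\partial_oF} = h|_{\partial_oF}$ using that $\partial_oF_m \to \partial_oF$ in Hausdorff metric and $g$ is continuous. Moreover $\tilde h$ must be $\mcE^{(F)}$-harmonic in $F\setminus\partial_oF$: this follows from the Mosco convergence of $\bar{\mcE}^{(F_m)}$ to $\mcE^{(F)}$ together with resolvent convergence (Theorem \ref{thm211}) and the probabilistic representation $h_m(x) = \bE_x[g(X^{(F_m)}_{\sigma_{\partial_oF_m}})]$ — this is exactly the argument carried out in the proof of Lemma \ref{lemma41}, where condition (A3) and the weak convergence of the processes (Theorem \ref{T:3.11}, extended to $\bar{\mcE}^{(F_m)}$ via Mosco convergence and Theorem \ref{thm211}) show that $X^{(F_m)}_{\sigma_{\partial_oF_m}}$ started near $x\in\partial_oF$ converges in law to $x$, and for interior points one passes to the limit in the harmonic measure. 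Since $\mcE^{(F)}$-harmonic functions on $F\setminus\partial_oF$ with prescribed continuous boundary values on $\partial_oF$ are unique (by the maximum principle / the probabilistic representation, using Proposition \ref{P:3.5} that every point of $\partial_oF$ is regular), we get $\tilde h = h$, proving (ii).

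For (iii), the lower bound $\liminf_{m\to\infty}\bar{\mcE}^{(F_m)}(h_m)\geq \mcE^{(F)}(h)$ is immediate from (M1) of Mosco convergence once we know $h_m\to h$ weakly in $L^2$ — and strong (hence weak) $L^2$-convergence follows from $h_m\rightarrowtail h$ via Lemma \ref{lemma29}(b), since $F_0$ is compact and the $\mu_m$, $\mu$ are probability measures. For the matching upper bound $\limsup_{m\to\infty}\bar{\mcE}^{(F_m)}(h_m)\leq\mcE^{(F)}(h)$, I would invoke (M2): there exist $u_m\in L^2(F_m;\mu_m)$ converging strongly to $h$ with $\limsup_m\bar{\mcE}^{(F_m)}(u_m)\leq\mcE^{(F)}(h)$; then the $\bar{\mcE}^{(F_m)}$-harmonic extension $h_m$ has boundary values $g|_{\partial_oF_m}$, and the Dirichlet principle (harmonic functions minimize energy among functions with the same boundary trace, for strongly local Dirichlet forms) gives $\bar{\mcE}^{(F_m)}(h_m)\leq\bar{\mcE}^{(F_m)}(v_m)$ for any $v_m\in W^{1,2}(F_m)$ with $v_m|_{\partial_oF_m} = g|_{\partial_oF_m}$. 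The main obstacle is therefore to produce such a competitor $v_m$ that also has asymptotically the right energy: one cannot directly use the $u_m$ from (M2) since their boundary traces need not match. I expect the resolution to be to take $v_m = u_m$ corrected near $\partial_oF_m$ — replacing $u_m$ by $g|_{F_m}$ on a thin boundary shell $F_{m,\mcB_{n}(F)}$ and interpolating, then letting $n\to\infty$ slowly with $m$; controlling the energy of the correction on the shrinking boundary shell is precisely where the boundary energy-decay estimates of Appendix \ref{secB} and the trace/Besov machinery of Section \ref{sec4} (cf. Proposition \ref{prop41}) are needed. This is the technically heaviest step; everything else is soft.
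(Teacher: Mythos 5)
There is a genuine gap, and it sits exactly where you flagged it: the upper bound $\limsup_m\bar{\mcE}^{(F_m)}(h_m)\leq\mcE^{(F)}(h)$ in (iii). With your choice of boundary data (the trace on $\partial_o F_m$ of a Tietze extension $g$ of $h$), the Dirichlet principle only helps if you can exhibit a competitor $v_m\in W^{1,2}(F_m)$ with $v_m|_{\partial_o F_m}=g|_{\partial_o F_m}$ and $\limsup_m\bar{\mcE}^{(F_m)}(v_m)\leq\mcE^{(F)}(h)$, and your proposed fix (glue the (M2) recovery sequence $u_m$ to $g$ on a thin shell $F_{m,\mcB_n}$) does not go through with the tools you cite: the gluing term is of the form $\int|\nabla\chi|^2|u_m-g|^2$ with $|\nabla\chi|\sim L_F^{n}$, and neither Theorem \ref{thmB1} (which concerns energy decay of \emph{harmonic} functions near cell boundaries) nor the trace theorems give the needed rate of $L^2$-closeness of $u_m$ to $g$ on the shell; indeed it is not even clear that $g|_{F_m}\in W^{1,2}(F_m)$ or that $\Lambda^{(m)}_1[g|_{\partial_o F_m}]<\infty$, so (A1) as stated does not directly produce your $h_m$, and $\bar{\mcE}^{(F_m)}(h_m)$ could a priori be infinite. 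A second, related problem is your identification of subsequential limits in (ii): you pass harmonicity to the limit via the probabilistic representation and weak convergence of the processes, but that argument (the proof of Lemma \ref{lemma41}) uses Theorem \ref{T:3.11} and is specific to $\bar{\mcE}^{(F_m)}=\alpha_m\mcE^{(F_m)}$; under the standing hypotheses (A1), (A3) and Mosco convergence alone, which is the generality in which the lemma is stated, no process-level convergence is available, so harmonicity of $\wt h$ is not justified — and the alternative identification via energy minimization requires precisely the missing upper bound.

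The paper's proof avoids both issues by \emph{not} prescribing $h|_{\partial_o F}$ as the boundary data. It takes the Mosco (M2) recovery sequence $f_n$ (truncated), smooths it by resolvents, $g_n:=m_n\bar{U}^{(F_n)}_{m_n}f_n$, which by (A3) is continuous, satisfies $\bar{\mcE}^{(F_n)}(g_n)\leq\bar{\mcE}^{(F_n)}(f_n)$, and (after choosing $m_n\to\infty$ suitably via Lemma \ref{lemma29}(a)) satisfies $g_n\rightarrowtail h$ with equicontinuous boundary restrictions. Then $h_m$ is defined as the $\bar{\mcE}^{(F_m)}$-harmonic extension of $g_m|_{\partial_o F_m}$: the Dirichlet principle gives $\bar{\mcE}^{(F_m)}(h_m)\leq\bar{\mcE}^{(F_m)}(g_m)$, hence the upper bound in (iii) for free, and any subsequential limit $\wt h$ (from (A1)) agrees with $h$ on $\partial_o F$ and has $\mcE^{(F)}(\wt h)\leq\mcE^{(F)}(h)$ by (M1), so $\wt h=h$ by uniqueness of the energy minimizer with given continuous boundary trace — no harmonicity of the limit and no boundary-layer surgery is needed. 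If you want to salvage your route you would essentially have to reproduce this idea; as written, the proposal does not constitute a proof.
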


\begin{proof}
By the Mosco convergence of $\bar{\mcE}^{(F_n)}$ to $\mcE^{(F)}$, there is $f_n\in W^{1,2}(F_n)$ so that $f_n\to h$ strongly in $L^2$ and $\limsup\limits_{n\to\infty}\bar{\mcE}^{(F_n)}(f_n)\leq\mcE^{(F)}(h)$. Replacing $f_n$ by $(-\| h\|_\infty )\vee (h_n \wedge \| h\|_\infty)$ if needed, we may and do assume that $\|f_n\|_\infty\leq \|h\|_\infty$. By the Mosco convergence of $\bar{\mcE}^{(F_n)}$ to $\mcE^{(F)}$ and Theorem \ref{thm211}, for each $m\geq 1$, $ \bar{U}^{(F_n)}_m  f_n\to  U^{(F)}_mh$ strongly in $L^2$ as $n\to \infty$.
 Moreover, by (A3) and Lemma \ref{lemma28},  $\bar{U}^{(F_n)}_m  f_n\rightarrowtail  U^{(F)}_m h$ as $n\to \infty$. Next, as $mU^{(F)}_m h\to h$ uniformly as $m\to\infty$,   by Lemma \ref{lemma29}(a), there is an increasing subsequence $\{m_n; n\geq 0\}$ so that $g_n\rightarrowtail h$, 
 where $g_n:= m_n\bar{U}^{(F_n)}_{m_n} f_n$, which by condition (A3) is continuous on $F_n$.
 Applying  Lemma \ref{lemma28}(b) by taking $A_n=\partial_o F_n$ and $A=\partial_o F$,
 we conclude from Remark \ref{R:2.9} that 
 \begin{equation}\label{e:4.5} 
\{  g_n |_{\partial_o F_n}; n\geq 0\} \hbox{ are equicontinuous}   \hbox{ and }   g_n |_{\partial_o F_n} \rightarrowtail h|_{\partial_o F} \quad \hbox{as } n\to \infty.
 \end{equation} 
Moreover,  as $\bar{\mcE}^{(F_n)}(g_n)\leq \bar{\mcE}^{(F_n)} (f_n)$ for each $n\geq 1$, we have 
$$
\limsup\limits_{n\to\infty}\bar{\mcE}^{(F_n)}(g_n) \leq \limsup\limits_{n\to\infty}\bar{\mcE}^{(F_n)} (f_n)
 \leq\mcE^{(F)}(h). 
$$

Let $h_m\in W^{1,2}(F_m)\cap C(F_m)$ be the unique function such that $h_m|_{\partial_o  F_m}=g_m|_{\partial_o  F_m}$ and $h_m$ is $\bar{\mcE}^{(F_m)}$-harmonic
in $F_m\setminus \partial_o  F_m$, so (i) holds. By \eqref{e:4.5} and condition (A1), there is  a subsequence $\{m_k; k\geq 1\}$ such that
 $h_{m_k}\rightarrowtail \wt h$ for some $\wt h\in C(F)\cap \mcF^{(F)}$.  Hence by \eqref{e:4.5} and Lemma \ref{lemma29}(b)
 that 
\begin{equation}\label{eqn43} 
 \wt h =h  \quad \hbox{on } \partial_o  F 
 \end{equation}
  and $ h_{m_k} \rightarrow \wt h$  strongly in $L^2$ as $k\to \infty$.  By the Mosco convergence of $\bar{\mcE}^{(F_m)}$,  
 \begin{equation}
 \mcE^{(F)}(\wt h)= \lim\limits_{k\to\infty}\bar{\mcE}^{(F_{m_k})}(h_{m_k})\leq \limsup\limits_{k\to\infty}\bar{\mcE}^{(F_{m_k})}(g_{m_k})=\mcE^{(F)}(h).  \label{eqn44} 
\end{equation}
 Since $h$ is the unique function in $C(F)\cap \mcF^{(F)}$ that minimizes the energy among $\{f\in C(F)\cap \mcF^{(F)}:f|_{\partial_o  F}=h|_{\partial_o  F}\}$,
we conclude from \eqref{eqn43}  that $\wt h=h$ and, consequently,  $h_{m_k}\rightarrowtail h$ and   $\lim\limits_{k\to\infty}\bar{\mcE}^{(F_{m_k})}(h_{m_k})=\mcE^{(F)}(h)$. Since the above argument works for any subsequence as well, by Lemma \ref{lemma29}(a), we have  (ii) and (iii).  
\end{proof}

\subsection{A finite dimensional kernel}\label{sec42}
In this subsection, we show that we can find continuous harmonic functions with assigned mean values on every  level-$n$ sub-face, that almost minimizes the energies. Recall that $\eth_nF $  and $\eth_nF_m$  are  the collections of level $n$ sub-faces of $\partial_o F$ and $\partial_o F_m$, respectively. \smallskip

\medskip

 In the remainder  of this paper, the following elementary lemma  will be used several times.
 Let $G=(V,E)$ be a locally finite graph, where $V$ is a finite or countable set and $E\subset\big\{ (x,y): x,y\in V\big\}$ is the set of undirected edges.
 We assume assume that $G$ has bounded degrees, that is there is some constant $N \geq 1 $ so that
\[
\deg(x):=\#\big\{y\in V:  (x,y) \in E\big\}\leq N 
\quad \hbox{for every }  x\in V .
\] 
Let $d_G$ be the graph distance between $x,y\in V$, i.e. 
\begin{eqnarray*}
d_G(x,y)&:=&\min\big\{L:\hbox{ there exists } \{ x^{(0)},x^{(1)},x^{(2)},\cdots,  x^{(L)}  \}\subset  V  \hbox{ such that } \\
&& \hskip  0.6truein   x^{(0)}=x, \, x^{(L)}=y \hbox{ and  } (x^{(l-1)},x^{(l)}) \in E\hbox{ for each }1\leq l\leq L \big\}.
\end{eqnarray*}

 \begin{lemma}  for every $L\geq 1$ and $f\in l(V)$, 
  \begin{equation}\label{e:4.7}
\sum_{x\in V}\sum_{y\in V\atop d_G(x,y)\leq L}\big(f(x)-f(y)\big)^2\leq  L  (N+1)^{2(L-1)}\sum_{x,y\in V: (x,y)\in E}\big(f(x)-f(y)\big)^2  .
\end{equation}
\end{lemma}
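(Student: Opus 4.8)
The plan is to prove the inequality by first reducing the sum over pairs at graph distance $\leq L$ to a sum over pairs at distance exactly $L$ (and then iterating), since the $L=1$ case is the trivial identity $\sum_{(x,y)\in E}(f(x)-f(y))^2 = \sum_{(x,y)\in E}(f(x)-f(y))^2$. Concretely, for a pair $x,y$ with $d_G(x,y)=\ell\leq L$, fix a geodesic $x=x^{(0)},x^{(1)},\dots,x^{(\ell)}=y$; by the Cauchy--Schwarz inequality (in the form $(\sum_{i=1}^\ell a_i)^2\leq \ell\sum_{i=1}^\ell a_i^2$),
\[
(f(x)-f(y))^2 = \Big(\sum_{i=1}^{\ell}\big(f(x^{(i-1)})-f(x^{(i)})\big)\Big)^2 \leq \ell \sum_{i=1}^{\ell}\big(f(x^{(i-1)})-f(x^{(i)})\big)^2 \leq L \sum_{i=1}^{L}\big(f(x^{(i-1)})-f(x^{(i)})\big)^2,
\]
where in the last step we pad the geodesic with repeated vertices if $\ell<L$. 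So each term on the left of \eqref{e:4.7} is bounded by $L$ times a sum of $L$ edge-terms along a chosen geodesic.

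The second step is a counting argument: I would bound, for each edge $e=(u,v)\in E$, the number of pairs $(x,y)$ with $d_G(x,y)\leq L$ whose chosen geodesic uses $e$. Since such a geodesic has length $\leq L$ and passes through $u$, both endpoints $x$ and $y$ lie within graph distance $L-1$ of $u$ (one slightly more careful accounting gives distance $\leq L$, but distance $L-1$ from one of the two endpoints of $e$ suffices after symmetrizing). The number of vertices within distance $r$ of a fixed vertex in a graph of maximal degree $\leq N$ is at most $1+N+N(N-1)+\cdots \leq (N+1)^{r}$ (a crude but valid bound), so the number of such pairs $(x,y)$ is at most $\big((N+1)^{L-1}\big)^2=(N+1)^{2(L-1)}$. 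Summing the per-term bound from the first step over all pairs and interchanging the order of summation (pair $\to$ edges on its geodesic, then edge $\to$ pairs using it) yields
\[
\sum_{x\in V}\sum_{y\in V: d_G(x,y)\leq L}(f(x)-f(y))^2 \leq L \sum_{(u,v)\in E}\#\{\text{pairs using }(u,v)\}\cdot (f(u)-f(v))^2 \leq L(N+1)^{2(L-1)}\sum_{(x,y)\in E}(f(x)-f(y))^2,
\]
which is \eqref{e:4.7}.

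The main obstacle, though entirely routine, is bookkeeping the constant: one must make the geodesic selection coherent so that the double counting is clean, and one must be slightly careful whether the ball radius that enters is $L$ or $L-1$ (the stated constant $(N+1)^{2(L-1)}$ suggests radius $L-1$, which is achieved by noting that if the geodesic of $(x,y)$ passes through edge $(u,v)$, then $\min(d_G(x,u),d_G(x,v))+\min(d_G(y,u),d_G(y,v))\leq L-1$, so $x$ and $y$ each lie within distance $L-1$ of the pair $\{u,v\}$). No estimate here is delicate; the only thing to watch is not to lose a factor by over-counting geodesics, which is why fixing one geodesic per pair in advance is essential.
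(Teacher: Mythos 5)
Your proposal is correct and takes essentially the same route as the paper's proof: Cauchy--Schwarz along a fixed geodesic for each pair, followed by the counting argument that a pair $(x,y)$ with $d_G(x,y)\leq L$ whose geodesic uses a given edge must have $x$ and $y$ within distance $L-1$ of its endpoints, giving the factor $(N+1)^{2(L-1)}$ from the ball-size bound. The only cosmetic difference is that the paper counts per \emph{directed} edge (matching the convention that both sides of \eqref{e:4.7} count ordered pairs), which disposes of the symmetrization bookkeeping you flag at the end.
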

\begin{proof}
	 For each $x, y\in V$ with $d_G(x,y)\leq L$, there is a geodesic path $ x^{(0)},x^{(1)},x^{(2)},\cdots, x^{(d_G(x,y))} $
	connecting $x$ and $y$, and so by Cauchy-Schwarz inequality,
	$$
	\big(f(x)-f(y)\big)^2 \leq  L   \sum_{l=1}^{d_G(x,y)}\big(f(x^{(l-1)})-f(x^{(l)})\big)^2.
	$$
	For each directed edge $(a, b)$ in $ E$ to be within a geodesic path from  some $x\in V$ to  $y\in V$ with $d_G(x, y) \leq L$, 
	$x$ and $y$ have to be within distance $L-1$ from $a$ and $b$, respectively. There are at most  $(N+1)^{L-1}$ of such $x$ and at most
	$(N+1)^{L-1}$ of such $y$.  Hence each directed edge $(a, b)$ can be used no more than $(N+1)^{2(L-1)}$ times 
	in directed geodesics of length at most $L$.
	This establishes \eqref{e:4.7} where $x$ and $y$ are un-ordered on both sides.  
\end{proof}

\begin{lemma}\label{lemma44}
There is a positive finite constant $C$ such that the following hold.
\begin{enumerate} 
\item[\rm (a)]  For each $n\geq 0$, there is a $(\#\eth_nF)$-dimensional linear subspace 
$\mathscr{C}_n\subset C(\partial_o  F)$ 
 that contains 
constant functions so that 

\smallskip

	\begin{itemize}
\item[\rm (a.1)]  for each $u\in l(\eth_n F)$, there is a unique $f\in \mathscr{C}_n$ so that    $\fint_A f(x)\nu(dx)  =u(A)$ for every 
$ A\in \eth_n F$. Moreover,  $\|f\|_\infty\leq C\|u\|_\infty$. 
	
	\smallskip

\item[\rm (a.2)]  $\Lambda_n[f]\leq CL_F^{n(d_w-d_f)}I_n[f]$ for every $ f\in \mathscr{C}_n$.
\end{itemize} 
	
	\medskip
	
\item[\rm (b)] For each $m\geq n\geq 0$, there is a $(\#\eth_nF_m=\#\eth_nF)$-dimensional  linear 
subspace $\mathscr{C}_{m,n}\subset C(\partial_o  F_m)$ 
 that contains 
constant functions so that  
	
	\smallskip
	
\begin{itemize}
\item[\rm (b.1)]   for each $u\in l(\eth_n F_m)$, there is a unique $f\in \mathscr{C}_{m,n}$ so that $\fint_A f(x)\nu_m (dx)=u(A)$
for every $ A\in \eth_n F_m$. Moreover, $\|f\|_\infty\leq C\|u\|_\infty$.
	
	\smallskip
	
\item[\rm (b.2)]  $\Lambda^{(m)}_n[f]\leq CL_F^{n(d_w-d_f)}I^{(m)}_n[f]$ for every $ f\in \mathscr{C}_{m,n}$.
\end{itemize} 
\end{enumerate}
\end{lemma}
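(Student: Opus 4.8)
I would define $\mathscr{C}_n$ as a space of \emph{boundary--harmonic} functions, obtained as the range of a linear solution operator. Concretely, let $\Lambda(\partial_o F)$ be the Besov space \eqref{e:4.3}, regard $\Lambda_n$ as a quadratic form on $L^2(\partial_o F;\nu)$, and for $u\in l(\eth_nF)$ let $\mathcal H_n u$ be the (unique) minimiser of $\Lambda_n[f]$ over the affine set $\{f\in\Lambda(\partial_oF)\cap C(\partial_oF):\ \fint_A f\,d\nu=u(A)\ \forall A\in\eth_nF\}$, and set $\mathscr{C}_n:=\{\mathcal H_n u:u\in l(\eth_nF)\}$ (this is the ``finite dimensional kernel'' of the title: the null space of $f\mapsto f-\mathcal H_n(\text{its }\eth_n\text{-averages})$). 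Since $\mcQ_n(F_m)=\mcQ_n(F)$ gives $\eth_nF_m=\eth_nF$ for $m\ge n$, part (b) is the identical construction with $\Lambda^{(m)}_n,\nu_m$ in place of $\Lambda_n,\nu$; the only new point is the tail $k>m$, where $F_m$ is a Euclidean $(d-1)$-complex, $\varphi_m(L_F^{-k})/\varphi_m(L_F^{-k-1})=L_F^{d-2}$, and discrete harmonic functions have geometrically decaying energy at a rate beating $L_F^{d-2}$, so the tail is absorbed into a bounded multiple of the $k=m$ term.

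\textbf{Setup and the easy properties.} Using the self-similarity of $(\partial_oF,\nu)$ — each $A\in\eth_kF$ is an $L_F^{-k}$-scaled isometric copy of one of finitely many prototype faces, $\nu(A)=(\#\eth_kF)^{-1}$, and the family $\{I_j\}_{j>k}$ restricted to $A$ is a rescaled copy of $\{I_j\}_{j>0}$ — together with connectivity of the graphs $(\eth_kF,\sim)$ (propagated from that of $(\eth_1F,\sim)$, noted before the statement, through the ``same level-$(k-1)$ cell'' clause of $\sim$) and their uniformly bounded degree, I would verify that $\Lambda_1+\|\cdot\|_{L^2(\partial_oF;\nu)}^2$ is a Dirichlet form with a scaled Poincar\'e inequality on each $A\in\eth_kF$. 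This gives coercivity of $\Lambda_n$ on the constraint difference space $\{f:\fint_A f=0\}$, hence existence, uniqueness, and linearity of $\mathcal H_n$, and an a priori $L^\infty$ bound via the Markov property of $\Lambda_n$. Since the averaging map $f\mapsto(\fint_A f)_{A}$ is a left inverse of $\mathcal H_n$, the space $\mathscr{C}_n=\operatorname{range}(\mathcal H_n)$ has dimension exactly $\#\eth_nF$, contains the constants (a constant datum is reproduced), and each prescribed average vector is realised by a unique element of $\mathscr{C}_n$; and $I_n[\mathcal H_n u]=I_n[u]$ because $\mathcal H_n u$ has $\eth_n$-averages exactly $u$.

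\textbf{The two quantitative estimates.} For (a.2) it now suffices, by minimality, to exhibit \emph{one} competitor $g\in\Lambda(\partial_oF)\cap C(\partial_oF)$ with $\fint_A g=u(A)$ and $\Lambda_n[g]\le C L_F^{n(d_w-d_f)}I_n[u]$. I would build $g$ by the iterated self-similar ``harmonic-extension-with-recentering'' scheme: starting from $u$ on $\eth_nF$, extend level by level, taking on each level-$k$ cell the discrete $I_{k+1}$-harmonic extension of the coarse values of its $\sim$-neighbours and then subtracting the cell-mean (recentering at \emph{every} level keeps all $\eth_n$-averages equal to $u$); the per-level energy then contracts by a fixed factor $\gamma$, and if $\gamma L_F^{d_w-d_f}<1$ the geometric sum yields both $g\in C(\partial_oF)$ (oscillations decay geometrically) and $\Lambda_n[g]=\sum_{k\ge n}L_F^{k(d_w-d_f)}I_k[u^{(k)}]\le C L_F^{n(d_w-d_f)}I_n[u]$. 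Alternatively one may take $g=\sum_{A}u(A)\psi_A$ with $\psi_A$ a renormalised boundary restriction of the cut-off $w_Q$ of Lemma \ref{lemma37}(a), once one has the bound $\Lambda_1[\psi_A]\le C L_F^{n(d_w-d_f)}$. For (a.1) the only remaining claim is $\|\mathcal H_n u\|_\infty\le C\|u\|_\infty$ with $C$ independent of $n$; I would prove this by a self-similar ``overshoot'' estimate: on each level-$n$ cell $\mathcal H_n u$ minimises a rescaled copy of the same energy with one average constraint and Robin-type coupling to the neighbours' data, and a constrained minimiser exceeds the maximum of its prescribed average and its neighbours' values by at most a fixed multiple of the local oscillation of those values; bootstrapping across scales and summing the resulting geometric series bounds the overshoot uniformly.

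\textbf{Main obstacle.} The crux is making the estimates in the previous paragraph \emph{uniform in $n$}, which comes down to two self-similar facts about the boundary complex, both of which I expect to be the hard part. First, the energy-contraction rate: one needs $\gamma L_F^{d_w-d_f}<1$, i.e.\ harmonic extension across one level of $\partial_oF$ must contract the discrete energy at a rate strictly better than $\rho_F^{-1}=L_F^{-(d_w-d_f)}$; when $\rho_F\le1$ any sub-Markov-compatible scheme has $\gamma\le1$ and this is free, while for $\rho_F>1$ (all planar carpets and some in $\R^3$) it is an honest resistance gain that I would extract from the ``thinness'' of $\partial_oF$ — for $d=2$, $\partial_oF$ is a cycle of resistance scale $L_F>\rho_F$ by \eqref{e:1.4a}, and in general from Lemma \ref{lemma37} together with Lemma \ref{lemma38} ($d_I>d_f-d_w$). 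Second, the overshoot/maximum-principle estimate for constrained minimisers, and (if one uses the $\psi_A$-competitor) the bound $\Lambda_1[\psi_A]\le C L_F^{n(d_w-d_f)}$ proved directly from the prototype structure rather than via the trace theorems of Appendix \ref{secA}, which appear only later. All of this rests on the same package: connectivity and bounded degree of $(\eth_kF,\sim)$, $\nu$ being the self-similar measure, and $d_w-d_f=\log\rho_F/\log L_F$ with $\rho_F\le 2^{1-d}L_F$.
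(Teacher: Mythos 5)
Your plan is a genuinely different route from the paper's: you define $\mathscr{C}_n$ variationally, as the range of the constrained minimiser of $\Lambda_n$ (resp.\ $\Lambda^{(m)}_n$), and then try to prove (a.2) by exhibiting an admissible competitor built by an intrinsic extension scheme \emph{on the boundary complex}, deliberately avoiding Appendix \ref{secA}. The paper instead constructs $\mathscr{C}_n$ explicitly: a partition of unity made from the cut-off functions $w_Q$ of Lemma \ref{lemma37}, weighted by $u$, plus an explicit average-correcting term supported in single $n$-cells; this gives continuity, linearity, the uniform sup bound and the dimension count for free, and (a.2) is obtained by estimating the bulk energy measure of these functions on the shell $F_{\mcB_n(F)}$ and then invoking the restriction trace Theorem \ref{T:A.3}(a). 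Note that your reason for avoiding the trace theorems ("they appear only later") is a misreading of the logical structure: Appendix \ref{secA} depends only on Lemmas \ref{lemma37}--\ref{lemma39} and is independent of Section \ref{sec4}, so there is no circularity, and Theorem \ref{T:A.3}(a) is exactly the tool that converts the easy bulk energy bound $C L_F^{n(d_w-d_f)} I_n[u]$ into the Besov bound $\Lambda_{n+1}\le C L_F^{n(d_w-d_f)} I_n[u]$ that you need.

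By refusing that tool you create the gap you yourself flag as the "main obstacle," and the sources you cite do not close it. Your scheme needs a per-level energy contraction $\gamma$ for harmonic-type extension on $\partial_o F$ with $\gamma\,\rho_F<1$, i.e.\ $\gamma<L_F^{d_f-d_w}$. For $d=2$ this is fine ($\partial_oF$ is a cycle, linear interpolation gives $\gamma=1/L_F$, and $\rho_F\le L_F/2$ by \eqref{e:1.4a}). But for $d\ge 3$ with $\rho_F>1$ (such carpets exist, cf.\ the reference to \cite[Section 9]{BB5} in Section \ref{sec1}), you would need, in effect, that the resistance scaling of the $(d-1)$-dimensional face exceeds $\rho_F$; Lemma \ref{lemma38} gives $d_I>d_f-d_w$, i.e.\ $m_I\rho_F>1$, which is a different inequality, and neither Lemma \ref{lemma37} nor $\rho_F\le 2^{1-d}L_F$ yields the required comparison. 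No argument is offered, and this is precisely the estimate the paper's route replaces by Theorem \ref{T:A.3}(a) (whose proof runs through the bulk Poincar\'e inequality and the factor $\theta=L_F^{-1}\vee L_F^{d_f-d_w-d_I}<1$), so as written the proposal does not prove (a.2) for general GSCs. Two further unresolved points: (i) membership $\mathscr{C}_n\subset C(\partial_oF)$ — your minimiser exists by a Hilbert-space argument in the Besov space $\Lambda(\partial_oF)$ (once an admissible finite-energy function is produced, which itself already requires a construction like the paper's), but its continuity needs an oscillation-decay estimate that again hinges on the unproved contraction; and (ii) the uniform-in-$n$ bound $\|f\|_\infty\le C\|u\|_\infty$ — an average-constrained minimiser does not obey a maximum principle automatically, and the "overshoot" argument is only sketched. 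The paper's explicit construction avoids both issues outright, at the cost of the extra correction step ensuring the averages are exactly $u$.
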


\medskip

\begin{remark}\label{R:4.7} \rm 
If $u=c$  in  $l(\eth_nF)$  for some constant $c$, then by the uniqueness, the corresponding function $f=c$ in both (a.1) and (b.1) 
\end{remark}
 
 \medskip
 
\noindent{\it Proof of Lemma \ref{lemma44}}. 
Here, we only give a proof  for  (a), as  (b) follows by a similar argument.
Fix $u\in l(\eth_n F)$.
We construct a good $f_u\in C(\partial_o  F)$ in two steps. The function $f_u$ will depend on $u$ linearly
 so we get a linear space $\mathscr{C}_n$. \\

\noindent\textbf{Step 1}. For each $A\in \eth_nF$,   let $Q_A$ be the unique cube in $\mcQ_n(F_n )$ such that $A\subset Q_A$, and   
$g_A:=w_{Q_A}  \in C(F)\cap \sF^{(F)}$ be as in Lemma \ref{lemma37} for which we fix one, so
\[
g_A =1 \hbox{ on } F_{Q_A},\ g_A=0 \hbox{ on } F\setminus F_{\mathcal{S}_{Q_A}} ,\ 0\leq  g_A \leq 1\text{  and } \, \mcE^{(F)}(g_A)\leq C_1L_F^{(d_w-d_f)n},
\]
where $C_1$ the constant in Lemma \ref{lemma37} and  $\mathcal{S}_{Q_A} :=\{Q\in\mcQ_n(F):Q \cap Q_A\neq \emptyset\}$. For each $A\in \eth_nF$, 
define 
\[
g^*_A :=\frac{g_A}{(\sum_{B\in \eth_nF}g_{B})\vee 1} \in C(F)\cap \sF^{(F)}. 
\]
Observe that $\sum_{A\in \eth_nF} g^*_A =1$ on $  F$. Recall that 
$$
\mcB_n (F):=\{Q\in \mcQ_n:Q\cap \partial F_0\neq \emptyset \hbox{ and }{\rm int}(Q) \cap F  \not=\emptyset \} . 
$$
For each $Q\in\mcB_n(F)\cap \mathcal{S}_A$, 
 by the strongly local and derivation properties of the energy measure $\mu^{(F_Q)}_{\<u\>}$ for $\mcE^{(F_Q)}$ (see, e.g. \cite[Theorem 4.3.7]{CF}), 
 \begin{align*}
	\mu^{(F)}_{\<g^*_A\>}(F_Q)
 	&\leq \mu^{(F)}_{\<g_A\>}(F_Q)+2\mu^{(F)}_{\< ( \sum_{B\in \eth_nF}g_{B})\vee 1\>} (F_Q)\\
	&\leq 2\mu^{(F)}_{\<g_A\>}(F_Q)+2\mu^{(F)}_{\< \sum_{B\in \eth_nF}g_{B}  \>} (F_Q)\\
	&\leq 2\mu^{(F)}_{\<g_A\>}(F_Q)+2\mu^{(F)}_{\< \sum_{B\in \eth_nF:Q\in \mathcal{S}_{Q_{B}}}g_{B}\>} (F_Q)\\
  	& \leq (2+2C_2^2)\, C_1L_F^{(d_w-d_f)n} ,
\end{align*}  
where $\#\{B\in \eth_nF:Q\subset \mathcal{S}_{Q_B}\}\leq C_2=3^d$. 

\medskip

Let $f^*_u= \sum\limits_{A\in \eth_n F}u(A)g^*_A |_{\partial_o  F}$. Clearly,
\begin{equation}  \label{eqn45}
\min\limits_{B\in \eth_n F:A\cap B\neq \emptyset}u(B)\leq  f^*_u(x)\leq \max\limits_{B\in \eth_n F:A\cap B\neq \emptyset}u(B)
\quad \hbox{for }  A\in\eth_nF \hbox{ and } x\in A.
\end{equation}
 Set $g^*_u=\sum\limits_{A\in \eth_nF}u(A)g^*_A$. Then
\begin{align*}
\mu^{(F)}_{\<g^*_u\>}(F_{\mcB_n(F)})& 
 \leq\sum_{Q\in\mcB_n(F)}\mu^{(F)}_{\<g^*_u\>}(F_Q)=\sum_{Q\in\mcB_n(F)}\mu^{(F)}_{\<g^*_u-u(A_Q)\>}(F_Q) \\
& =\sum_{Q\in \mcB_n(F)}\mu^{(F)}_{\big\<\Big(\sum_{B\in \eth_nF:Q\in \mathcal{S}_{Q_{B}}}\big(u(B)-u(A_Q)\big)g^*_{B}\Big)\big\>}(F_Q) \\
&\leq C_2\, (2+2C_2^2)C_1L_F^{(d_w-d_f)n}\, \sum_{A\in \eth_nF}\sum_{B\in \eth_nF:B\cap Q_A \neq\emptyset}\big(u(B)-u(A)\big)^2\\
&\leq  2C_2^2 \,C_2\, (2+2C_2^2)C_1L_F^{(d_w-d_f)n}\cdot\sum\limits_{A,B\in\eth_nF: A\cap B\neq \emptyset}\big(u(A)-u(B)\big)^2,
\end{align*}
where in the first and second lines for each $Q\in \mcB_n(F)$,  $A_Q\in\eth_n F$
so that $A_Q\subset Q$,  and  \eqref{e:4.7} is used  in the last line. 
Thus by Theorem \ref{T:A.3}(a) there is some constant $C_3>0$ depending only on $F$ so that 
 \begin{equation} \label{eqn46}
\Lambda_{n+1}[f^*_u]\leq C_3 L_F^{(d_w-d_f)n}\sum\limits_{A,B\in\eth_nF: A\sim B}\big(u(A)-u(B)\big)^2 .
\end{equation}

\noindent\textbf{Step 2}. Applying Lemma \ref{lemma37} to $(n+1)$-cells, for each $A\in \eth_nF$, there is (and we fix one)
$g^{**}_A\in C(F)\cap \mcF^{(F)}$ such that 
\[
g^{**}_A|_{F\setminus Q_A}=0,\quad  [g^{**}_A]_{\nu|_A}=1,\quad  \|g^{**}_A\|_\infty\leq C_4 
\quad \text{and} \quad \mcE^{(F)}(g^{**}_A)\leq C_4L_F^{(d_w-d_f)n}
\]
for some constant $C_4$ depending only on $F$. 
Let  $f_u=f^*_u+f^{**}_u$, where $f^{**}_u=\sum_{A\in \eth_nF}\big(u(A)-[f^*_u]_{\nu|_A}\big) g^{**}_A$. 
 Clearly,  $[f_u]_{\nu|_A}=u(A)$ for $ A\in \eth_nF$. In addition, $\|f^*_u\|_\infty\leq \|u\|_\infty$ by \eqref{eqn45}.
 On the other hand,  $\|f^{**}_u\|_\infty\leq 2C_4\|u\|_\infty$ by the property of $g^{**}_A$ for $A\in\eth_nF$ and  \eqref{eqn45}. Hence, $\|f_u\|_\infty\leq (2C_4+1)\|u\|_\infty$. 
Note that $u\mapsto f_u$ is one-to-one on $l(\eth_n F)$. Indeed, if $f_u=f_v$ for $u, v \in l(\eth_n F)$, then for each $A\in \eth_n F$,
$$
u(A)= \fint_A f_u(x)\nu(dx)=\fint_A f_v(x)\nu(dx)=v(A);
$$
 that is, $u\equiv v$. 
 Let $ \mathscr{C}_n$ denote the collection of such constructed $f_u$ with $u \in  l(\eth_n F)$. 
 Since $l(\eth_n F) \ni u \mapsto  f_u \in C(\partial_o F)$ is linear,   
 $ \mathscr{C}_n  $ is a finite dimensional linear subspace of $ C(\partial_o F)$ with basis $\{f_{u_A}; A\in  \eth_n F\}$, where $u_A(A) :=1$ and $u_A(B):=0$
 for $B\in \eth_nF\setminus\{A\}$. Note that when $u=1$ on $\partial_o F$, by the above construction $f_u=1$ on $F$ so constants are in the space $ \mathscr{C}_n  $.   This establishes (a.1). 
 
For (a.2),  it follows from  \eqref{eqn45}, the local property of $\mcE^{(F)}$ and the energy estimates of $g^{**}_A$ that 
  \[
  \mcE^{(F)}(f^{**}_u)\leq C_5L_F^{n(d_w-d_f)}\sum\limits_{A,B\in\eth_nF: A\sim B}\big(u(A)-u(B)\big)^2=L_F^{n(d_w-d_f)}I_n[f_u]
  \] 
  for some constant $C_5>0 $ depending only on $F$. So by Theorem \ref{T:A.3}(a), 
  $\Lambda_{n+1}[f^{**}_u]\leq C_6L_F^{n(d_w-d_f)}I_n[f_u]$ for some $C_6 >0 $ depending only on $F$. Combining the estimate with \eqref{eqn46}
  gives $\Lambda_n[f_u]\leq (2C_3+2C_6+1)L_F^{n(d_w-d_f)}I_n[f_u]$. 
\qed 

\smallskip
 
\begin{remark} \rm Lemma \ref{eqn44}(a) implies that $\Lambda(\partial_o  F)\cap C(\partial_o  F)$ is dense in $\Lambda(\partial_o  F)$ with respect to the norm  
$ \sqrt{\Lambda_1[\cdot]+\|\cdot\|^2_{L^2(\partial_o  F; \nu)}}$.  
 This improves \cite[Theorem 2.6]{HK}, where the inequality stated in our Lemma \ref{lemma38} is assumed as a condition and there is a discrepency between two Besov spaces $\widehat \Lambda^\beta_{2, 2}(L)$ and $\Lambda^\beta_{2, 2}(L)$ there.   
\end{remark} 

In the sequel, for each $n\geq 0$, we fix a finite dimensional linear subspace $\mathscr{C}_n$ of $C(\partial_o F)$ 
as   in Lemma \ref{lemma44} (a); for each $m\geq n\geq 0$, we fix a finite dimensional linear subspace $\mathscr{C}_{m,n}$ of $C(\partial_o  F_m)$ as in Lemma \ref{lemma44} (b).
  
  \medskip
  
\begin{definition}\label{def45}
For each $n\geq 0$ and $f\in \mathscr{C}_n$,  let ${\mathcal H} f   $  be the unique function in $C(F)\cap\mcF^{(F)}$ such that ${\mathcal H} f =f$ on $\partial_o F$  and ${\mathcal H} f$ is $\mcE^{(F)}$ harmonic in $F\setminus \partial_o  F$.
\end{definition}

\medskip

\subsection{Proof of Proposition \ref{prop41}}\label{sec43} 

We finish the proof of Proposition \ref{prop41} in this subsection. The key idea is that we have some uniform control over approximating sequences of finite  dimensional cores ${\mathcal H} (\mathscr{C}_n)$ by compactness, which translates to general cases by trace theorems. 

\begin{lemma}\label{lemma46}
Assume (A1) and (A3). For every  $m\geq n\geq 0$,  
there is a  linear map $\Theta^{(m)}_n:\mathscr{C}_n\to W^{1,2}(F_m)\cap C(F_m)$   such that the following  hold for every $f\in \mathscr{C}_n$.
	
\begin{enumerate}
	
\item[\rm 	(1)]  $\Theta^{(m)}_nf$ is $\bar{\mcE}^{(F_m)}$-harmonic in $F_m\setminus \partial_o  F_m$  
and $\Theta^{(m)}_n1=1$.

\item[\rm 	(2)] 
 $\fint_{A_m} \Theta^{(m)}_nf (x) \nu_m (dx)= \fint_A f(x) \nu (dx)$
for each $m\geq n\geq 0$ and $A\in\eth_n F, A_m\in\eth_n F_m$ with  $A\subset A_m$. 
	
\item[\rm 	(3)]  For each $n\geq 0$, $\lim\limits_{m\to\infty}\bar{\mcE}^{(F_m)}(\Theta^{(m)}_nf)
=\mcE^{(F)} ({\mathcal H} f)$.
	
\item[\rm 	(4)]  For each $n\geq 0$, $\Theta^{(m)}_nf\to{\mathcal H} f$ strongly in $L^2$ as $m\to\infty$. 
\end{enumerate}
\end{lemma}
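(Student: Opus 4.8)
The plan is to build $\Theta^{(m)}_n$ on a basis of $\mathscr{C}_n$ and extend by linearity, taking the basis to be $\{1\}\cup\{f_{u_A}:A\in\eth_nF\setminus\{A_0\}\}$ (a basis by Remark \ref{R:4.7}), where $f_{u_A}\in\mathscr{C}_n$ is characterized by $[f_{u_A}]_{\nu|_B}=\delta_{AB}$ and $A_0\in\eth_nF$ is fixed. For the constant I simply set $\Theta^{(m)}_n 1:=1$; this is trivially $\bar\sE^{(F_m)}$-harmonic, converges trivially, and has the right sub-face averages since $\nu_m(\partial_oF_m)=1$, which also forces $\Theta^{(m)}_n1=1$. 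For a non-constant basis element $e=f_{u_A}$ the construction has two steps, using throughout the natural bijection $A\leftrightarrow A_m$ between $\eth_nF$ and $\eth_nF_m$ coming from $\mcQ_n(F_m)=\mcQ_n(F)$ (and $A\subset A_m$ for corresponding faces).

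First, I apply Lemma \ref{lemma43} to ${\mathcal H}e$ to obtain $h_m^e\in W^{1,2}(F_m)\cap C(F_m)$ that is $\bar\sE^{(F_m)}$-harmonic in $F_m\setminus\partial_oF_m$, with $h_m^e\rightarrowtail{\mathcal H}e$, $\bar\sE^{(F_m)}(h_m^e)\to\sE^{(F)}({\mathcal H}e)$, and (since $F_0$ is compact) $h_m^e\to{\mathcal H}e$ strongly in $L^2$ by Lemma \ref{lemma29}(b). These $h_m^e$ need not carry the prescribed sub-face averages, so second I correct them: set $\eta_m^e(A_m):=[e]_{\nu|_A}-\fint_{A_m}h_m^e\,d\nu_m\in l(\eth_nF_m)$. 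A routine weak-convergence argument using $h_m^e\rightarrowtail{\mathcal H}e$, ${\mathcal H}e|_{\partial_oF}=e$ and Example \ref{example23} gives $\fint_{A_m}h_m^e\,d\nu_m\to[e]_{\nu|_A}$, hence $\|\eta_m^e\|_\infty\to0$ (the index set is finite). Let $\psi_m^e\in\mathscr{C}_{m,n}$ be the function with averages $\eta_m^e$ provided by Lemma \ref{lemma44}(b.1), so $\|\psi_m^e\|_\infty\le C\|\eta_m^e\|_\infty\to0$, and define $\Theta^{(m)}_n e$ to be the $\bar\sE^{(F_m)}$-harmonic extension of $h_m^e|_{\partial_oF_m}+\psi_m^e$ granted by (A1). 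Then $\Theta^{(m)}_n e\in W^{1,2}(F_m)\cap C(F_m)$ is $\bar\sE^{(F_m)}$-harmonic off $\partial_oF_m$, and $\fint_{A_m}\Theta^{(m)}_n e\,d\nu_m=\fint_{A_m}h_m^e\,d\nu_m+\eta_m^e(A_m)=[e]_{\nu|_A}$; extending linearly and using linearity of $f\mapsto[f]_{\nu|_A}$ yields (1) and (2).

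For (3) and (4), write $\Theta^{(m)}_n e=h_m^e+r_m^e$, where $r_m^e$ is $\bar\sE^{(F_m)}$-harmonic off $\partial_oF_m$ with boundary value $\psi_m^e$. Since a harmonic function minimizes energy among functions with the same boundary data, and since $\psi_m^e\in\mathscr{C}_{m,n}$ admits an extension of $\mcE^{(F_m)}$-energy at most $C(\Lambda^{(m)}_1[\psi_m^e]+\|\psi_m^e\|^2_{L^2(\partial_oF_m;\nu_m)})$ (trace theorem, Theorem \ref{T:A.4}) with $\Lambda^{(m)}_1[\psi_m^e]\to0$ (Lemma \ref{lemma44}(b.2) together with the fact that the sub-face graphs $(\eth_kF_m,\sim)$ for $k\le n$ have size independent of $m$, plus $\|\psi_m^e\|_\infty\to0$), \eqref{e:4.1} gives $\bar\sE^{(F_m)}(r_m^e)\to0$; also $\|r_m^e\|_\infty\le\|\psi_m^e\|_\infty\to0$ by the maximum principle. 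Combining with Lemma \ref{lemma43}(iii) and the triangle inequality for $\sqrt{\bar\sE^{(F_m)}(\cdot)}$ gives $\bar\sE^{(F_m)}(\Theta^{(m)}_n e)\to\sE^{(F)}({\mathcal H}e)$, and (since $\mu_m(F_m)=1$ and strong $L^2$-convergence is stable under uniformly small perturbations) $\Theta^{(m)}_n e\to{\mathcal H}e$ strongly in $L^2$. To pass from basis elements to general $f\in\mathscr{C}_n$ I will upgrade quadratic-form convergence to bilinear-form convergence by polarizing against (M1): for $t\in\R$, $\Theta^{(m)}_n(e_\alpha+te_\beta)=\Theta^{(m)}_n e_\alpha+t\,\Theta^{(m)}_n e_\beta\to{\mathcal H}e_\alpha+t\,{\mathcal H}e_\beta$ strongly in $L^2$, so $\liminf_m\bar\sE^{(F_m)}(\Theta^{(m)}_n e_\alpha+t\,\Theta^{(m)}_n e_\beta)\ge\sE^{(F)}({\mathcal H}e_\alpha+t\,{\mathcal H}e_\beta)$; expanding, substituting the already-established limits of $\bar\sE^{(F_m)}(\Theta^{(m)}_n e_\alpha)$ and $\bar\sE^{(F_m)}(\Theta^{(m)}_n e_\beta)$, and letting $t$ run over positive and negative values forces $\bar\sE^{(F_m)}(\Theta^{(m)}_n e_\alpha,\Theta^{(m)}_n e_\beta)\to\sE^{(F)}({\mathcal H}e_\alpha,{\mathcal H}e_\beta)$. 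Summing over the finitely many basis indices proves (3), while (4) follows from linearity of strong $L^2$-convergence.

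The main obstacle is exactly the second step: Lemma \ref{lemma43} produces energy- and $L^2$-convergent harmonic approximants but gives no control on their sub-face averages, so one must superimpose a correction whose boundary data stays inside the controlled finite-dimensional space $\mathscr{C}_{m,n}$ and whose energy vanishes; the crucial quantitative inputs are that $\#\eth_nF_m=\#\eth_nF$ is bounded uniformly in $m$ and that extensions of elements of $\mathscr{C}_{m,n}$ have energy controlled by the discrete energies $I_k^{(m)}[\cdot]$ (Lemma \ref{lemma44}(b.2) and the trace theorem). The polarization argument promoting quadratic-form convergence to the full statement of (3), and the bookkeeping needed to keep the construction linear while forcing $\Theta^{(m)}_n1=1$, are secondary technical points.
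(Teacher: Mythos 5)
Your proposal is correct and follows essentially the same route as the paper: handle the constant separately, take the energy-convergent harmonic approximants from Lemma \ref{lemma43}, repair the level-$n$ sub-face averages by superimposing a harmonic extension of a small element of $\mathscr{C}_{m,n}$ whose energy vanishes via Lemma \ref{lemma44}(b) and Theorem \ref{T:A.4}(b), then extend linearly from a basis and upgrade the diagonal energy limits to linear combinations using the Mosco lower bound (M1). Your polarization in $t$ is just the paper's parallelogram argument in disguise, and defining $\Theta^{(m)}_n e$ as the harmonic extension of $h^e_m|_{\partial_o F_m}+\psi^e_m$ is the same function as the paper's $h_m+g_m$ (sum of harmonic pieces with the same combined boundary data), so the differences are purely cosmetic.
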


\begin{proof}
Fx $n\geq 0$, and   let $\{f^{(i)}\}_{i=1}^{\#\eth_nF}$ be a basis of the linear subspace $\mathscr{C}_n$, which  contains  
 the constant function $1$. We simply let $f^{(1)}=1$. Write $h^{(i)}={\mathcal H} f^{(i)}$. In particular, $h^{(1)}=1$. 

We first define a map $\Theta_n^{(m)}$ on $\{f^{(i)}\}_{i=1}^{\#\eth_nF}$. 
Define  $\Theta_n^{(m)}f^{(1)}=1$ for $m\geq n$. For $2\leq i\leq \#\eth_nF$, by  Lemma \ref{lemma43} there is
 a sequence of functions $h^{(i)}_m\in C(F_m)\cap W^{1,2}(F_m)$ that is $\mcE^{(F_m)}$-harmonic in $F_m\setminus \partial_o  F_m$ and
\begin{eqnarray}
\label{eqn47}&h^{(i)}_m\rightarrowtail h^{(i)} \quad \hbox{as } n\to \infty,  \\ 
\label{eqn48}&\lim\limits_{m\to\infty}\bar{\mcE}^{(F_m)}(h^{(i)}_m)=\mcE^{(F)}(h^{(i)}).
\end{eqnarray}
Let $u_m^{(i)}\in l(\eth_nF_m)$ be such that  $u_m^{(i)}(A_m)=[h^{(i)}_m]_{\nu_m|A_m}-[h^{(i)}]_{\nu|A}$ for each $A_m\in \eth_nF_m$ and $A\in \eth F$ with $A\subset A_m$. Let $g^{(i)}_m$ be the $\bar{\mcE}^{(F_m)}$-harmonic extension of the unique $f^{(i)}_m\in \mathscr{C}_{m,n}$ associated with $u_m^{(i)}$ as in Lemma \ref{lemma44} (b.1). 
By Lemma \ref{lemma28}(b), \eqref{eqn47} and Lemma \ref{lemma44}(b.1),
\begin{equation}\label{eqn49}
\lim\limits_{m\to\infty}\|f_m^{(i)}\|_\infty 
\leq C_1\lim\limits_{m\to\infty}\|u_m^{(i)}\|_\infty=0. 
\end{equation}
   If follows from \eqref{eqn49} that $\lim_{m\to\infty}L_F^{(d_w-d_f)k} I^{(m)}_k [f^{(i)}_m]=0$ for each $k\geq 1$ as there are no more than $2dL_F^{(d-1)k}$ many sub-faces in $\eth_kF_m$. Then, by Lemma \ref{lemma44}(b.2),  
\begin{align*}
\lim_{m\to \infty}\Lambda_1^{(m)}[g^{(i)}_m|_{\partial_o  F_m}]
&=\lim_{m\to \infty}  \Big(\sum_{k=1}^{n-1}L_F^{(d_w-d_f)k} I^{(m)}_k [f^{(i)}_m]+\Lambda_n^{(m)}[f^{(i)}_m]\Big)\\
&\leq\lim_{m\to \infty} \Big(\sum_{k=1}^{n-1}L_F^{(d_w-d_f)k} I^{(m)}_k [f^{(i)}_m]+C_1I_n^{(m)}[f^{(i)}_m]\Big)=0.
\end{align*}
Hence by Theorem \ref{T:A.4}(b),   
\begin{equation}\label{eqn410}
	\lim\limits_{m\to\infty}\bar{\mcE}^{(F_m)}(g_m^{(i)})=0.
\end{equation}
Define  $\Theta_n^{(m)}f^{(i)}=h^{(i)}_m+g^{(i)}_m$ for $m\geq n$. Then on $\{f^{(i)}\}_{i=1}^{\#\eth_nF}$, properties (1) and (2) hold by the construction, (3) holds by \eqref{eqn48} and \eqref{eqn410}, and (4) holds by \eqref{eqn47}, \eqref{eqn49} and Lemma \ref{lemma29}(b). 
We extend the definition of $\Theta^{(m)}_n$ linear to $\mathscr{C}_n$, that is, define 
\[
\Theta^{(m)}_n(\sum_{i=1}^{\#\eth_n F}c_if^{(i)}):=\sum_{i=1}^{\#\eth_n F}c_i\Theta_n^{(m)}(f^{(i)})
\quad\hbox{for every }  m\geq n \hbox{ and } \{c_i\}_{i=1}^{\#\eth_nF}\subset  \R .
\] 
 Properties  (1), (2) and (4) clearly hold on $\mathscr{C}_n$, while (3) follows directly from   the following property.
 If $f_m\to f$ weakly in $L^2$, $g_m\to g$ weakly in $L^2$, $\lim\limits_{m\to\infty}\bar{\mcE}^{(F_m)}(f_m)=\mcE^{(F)}(f)$ and $\lim\limits_{m\to\infty}\bar{\mcE}^{(F_m)}(g_m)=\mcE^{(F)}(g)$, then 
 \begin{equation}\label{e:4.14}
 \lim\limits_{m\to\infty}\bar{\mcE}^{(F_m)}(f_m+g_m)=\mcE^{(F)}(f+g).
 \end{equation} 
Indeed,  since $f_m+g_m$ and $f_m-g_m$ converges weakly in $L^2$ to $f+g$  and $f-g$, respectively, it follows from the Mosco convergence  of $\bar{\mcE}^{(F_m)}$ to ${\mcE}^{(F )}$ that 
  \[
\liminf\limits_{m\to\infty}\bar{\mcE}^{(F_m)}(f_m+g_m)\geq \mcE^{(F)}(f+g),\qquad \liminf\limits_{m\to\infty}\bar{\mcE}^{(F_m)}(f_m-g_m)\geq \mcE^{(F)}(f-g).
\]
On the other hand, by the parallelogram equality, we know that 
\[
\begin{aligned}
	\lim\limits_{m\to\infty}\bar{\mcE}^{(F_m)}(f_m+g_m)+\bar{\mcE}^{(F_m)}(f_m-g_m)
	=&\lim\limits_{m\to\infty} 2\bar{\mcE}^{(F_m)}(f_m)+2\bar{\mcE}^{(F_m)}(g_m)\\
	=&2\mcE^{(F)}(f)+2\mcE^{(F)}(g)=\mcE^{(F)}(f+g)+\mcE^{(F)}(f-g).
\end{aligned}
\]
So identity \eqref{e:4.14} must hold.
\end{proof}

\medskip

\begin{lemma}\label{lemma47}
 Let  $n\geq 1$. 
\begin{enumerate} 	
\item[\rm (a)] There is a constant $C\in (0,1)$ such that $\mu^{(F)}_{\<{\mathcal H} f\>}(F_{ \mcB_{n-1}})\geq C\, \mcE^{(F)}({\mathcal H} f)$ for each $f\in \mathscr{C}_n$. 
	
\item[\rm (b)]   Suppose that (A1) and (A3) hold. There is a sequence of positive numbers $\{C_m\geq 1; m\geq n\}$ such that $\lim\limits_{m\to\infty}C_m=1$ and 
$$
C_m^{-1}\mcE^{(F)}({\mathcal H} f)\leq \bar{\mcE}^{(F_m)}(\Theta^{(m)}_nf)\leq C_m\mcE^{(F)}({\mathcal H} f)
\quad \hbox{for every  }  f\in \mathscr{C}_n.
$$

\item[\rm (c)]  Suppose that  (A1),(A2) and (A3) hold. There is a sequence of positive numbers $\{C'_m\geq 1; m\geq n\}$ such that $\lim\limits_{m\to\infty}C'_m=1$ and 
$$
\bar{\mu}^{(F_{m})}_{\<\Theta^{(m)}_nf\>}(F_{m,\mathcal{B}_{n-1}})\leq C'_m \mu^{(F)}_{\<{\mathcal H} f\>}(F_{\mathcal{B}_{n-1}}) 
\quad \hbox{for every } f\in\mathscr{C}_n.
$$
\end{enumerate}
\end{lemma}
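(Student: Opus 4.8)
\textbf{Proof proposal for Lemma \ref{lemma47}.}

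The plan is to treat the three parts separately but with a common theme: part (a) is a finite-dimensional compactness argument, while parts (b) and (c) upgrade it to a uniform-in-$m$ statement by combining the convergence statements in Lemma \ref{lemma46} with a compactness argument on the finite-dimensional space $\mathscr{C}_n$. For part (a), fix $n\geq 1$ and consider the ratio $\mu^{(F)}_{\<{\mathcal H} f\>}(F_{\mcB_{n-1}})/\mcE^{(F)}({\mathcal H} f)$ for $f\in \mathscr{C}_n$ that is not constant (if $f$ is constant, ${\mathcal H} f$ is constant, $\mcE^{(F)}({\mathcal H} f)=0$, and there is nothing to prove; otherwise ${\mathcal H} f$ is genuinely non-constant because $f$ is determined by its boundary averages and cannot be non-constant with zero energy). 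Since ${\mathcal H}$ is linear, this ratio is scale-invariant, so it suffices to bound it from below on the compact set $\{f\in\mathscr{C}_n: \|f\|_\infty=1, f\text{ not a constant}\}$ — actually I would use $\{f\in\mathscr{C}_n : \mcE^{(F)}({\mathcal H} f)=1\}$, which is compact in the finite-dimensional space $\mathscr{C}_n$ since $f\mapsto \mcE^{(F)}({\mathcal H} f)$ is a positive-definite quadratic form on $\mathscr{C}_n/(\text{constants})$. On this compact set, $f\mapsto \mu^{(F)}_{\<{\mathcal H} f\>}(F_{\mcB_{n-1}})$ is continuous (this needs justification — the map $f\mapsto {\mathcal H} f$ is continuous from $\mathscr{C}_n$ into $\mcF^{(F)}$, and then the energy measure of a fixed Borel set depends continuously on the function in $\mcF^{(F)}$), hence attains a positive minimum, provided we know $\mu^{(F)}_{\<{\mathcal H} f\>}(F_{\mcB_{n-1}})>0$ whenever ${\mathcal H} f$ is non-constant. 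The latter follows because ${\mathcal H} f$ is $\mcE^{(F)}$-harmonic in $F\setminus\partial_o F$ and takes its boundary values on $\partial_o F\subset F_{\mcB_{n-1}}$, so if its energy measure vanished on $F_{\mcB_{n-1}}$ it would have to be constant on each connected component, and by harmonicity and the connectedness of $F\setminus\partial_o F$ (together with the strong locality of $\mcE^{(F)}$) this forces ${\mathcal H} f$ to be globally constant — contradiction.

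For part (b), fix $f\in\mathscr{C}_n$. Lemma \ref{lemma46}(3) gives $\lim_{m\to\infty}\bar{\mcE}^{(F_m)}(\Theta^{(m)}_nf)=\mcE^{(F)}({\mathcal H} f)$ for each \emph{individual} $f$, but I need a uniform constant $C_m\to 1$ working for all $f\in\mathscr{C}_n$ simultaneously. Since $\mathscr{C}_n$ is finite-dimensional, I pick a basis $\{f^{(i)}\}$ and write the quadratic forms $Q(f):=\mcE^{(F)}({\mathcal H} f)$ and $Q_m(f):=\bar{\mcE}^{(F_m)}(\Theta^{(m)}_nf)$ on $\mathscr{C}_n$. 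By \eqref{e:4.14} (the polarization/parallelogram argument at the end of the proof of Lemma \ref{lemma46}) and Lemma \ref{lemma46}(3),(4), the bilinear form $Q_m(\cdot,\cdot)$ converges entrywise (on the basis, hence as matrices) to $Q(\cdot,\cdot)$. Convergence of a sequence of non-negative definite matrices to a non-negative definite matrix $Q$ on a finite-dimensional space implies that for any $\eps>0$, $(1-\eps)Q\leq Q_m\leq (1+\eps)Q$ on $\ker(Q)^\perp$ for $m$ large; combined with the fact that $Q_m$ vanishes on constants (since $\Theta^{(m)}_n1=1$ and $\bar{\mcE}^{(F_m)}(1)=0$) just as $Q$ does, this yields $C_m^{-1}Q\leq Q_m\leq C_m Q$ with $C_m\to 1$. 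This is exactly the claim.

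Part (c) is analogous but with the set functions $f\mapsto \mu^{(F)}_{\<{\mathcal H} f\>}(F_{\mcB_{n-1}})$ and $f\mapsto \bar{\mu}^{(F_m)}_{\<\Theta^{(m)}_nf\>}(F_{m,\mcB_{n-1}})$; these are again non-negative quadratic forms on $\mathscr{C}_n$. Here condition (A2) enters: since $\Theta^{(m)}_nf\to{\mathcal H} f$ strongly in $L^2$ by Lemma \ref{lemma46}(4), (A2) applied cell-by-cell to the cubes $Q\in\mcB_{n-1}$ gives $\liminf_{m\to\infty}\bar{\mu}^{(F_m)}_{\<\Theta^{(m)}_nf\>}(F_{m,Q})\geq \mu^{(F)}_{\<{\mathcal H} f\>}(F_Q)$, so after summing over the finitely many $Q\in\mcB_{n-1}$ we get $\liminf_m \bar{\mu}^{(F_m)}_{\<\Theta^{(m)}_nf\>}(F_{m,\mcB_{n-1}})\geq \mu^{(F)}_{\<{\mathcal H} f\>}(F_{\mcB_{n-1}})$. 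For the matching upper bound — which is what the lemma actually asserts — I would use the \emph{complementary} region: write $\bar{\mcE}^{(F_m)}(\Theta^{(m)}_nf)=\bar{\mu}^{(F_m)}_{\<\Theta^{(m)}_nf\>}(F_{m,\mcB_{n-1}})+\bar{\mu}^{(F_m)}_{\<\Theta^{(m)}_nf\>}(F_m\setminus F_{m,\mcB_{n-1}})$, and similarly $\mcE^{(F)}({\mathcal H} f)=\mu^{(F)}_{\<{\mathcal H} f\>}(F_{\mcB_{n-1}})+\mu^{(F)}_{\<{\mathcal H} f\>}(F\setminus F_{\mcB_{n-1}})$. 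Applying (A2) to the complement (which is a union of cells $F_Q$, $Q\in\mcQ_{n-1}(F)\setminus\mcB_{n-1}$, up to taking a finer subdivision so that it is a union of level-$(n-1)$ cells) gives $\liminf_m \bar{\mu}^{(F_m)}_{\<\Theta^{(m)}_nf\>}(F_m\setminus F_{m,\mcB_{n-1}})\geq \mu^{(F)}_{\<{\mathcal H} f\>}(F\setminus F_{\mcB_{n-1}})$; subtracting this from the \emph{exact} energy convergence of part (b) yields $\limsup_m \bar{\mu}^{(F_m)}_{\<\Theta^{(m)}_nf\>}(F_{m,\mcB_{n-1}})\leq \mu^{(F)}_{\<{\mathcal H} f\>}(F_{\mcB_{n-1}})$. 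Then the same finite-dimensional compactness argument as in part (b) converts this pointwise-in-$f$ statement into the uniform constant $C'_m\to 1$. \textbf{The main obstacle} I anticipate is the bookkeeping in part (c): making sure the decomposition of $F$ and $F_m$ into $\mcB_{n-1}$ and its complement is handled at a compatible level of subdivision so that (A2) applies to both pieces, and verifying that ${\mathcal H} f$ is non-constant on $F_{\mcB_{n-1}}$ in part (a) (i.e. that the energy measure there is genuinely positive), which is where the non-diagonality/connectedness structure of the GSC and the harmonicity of ${\mathcal H} f$ are really used.
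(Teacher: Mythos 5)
Your overall route is the paper's: a compactness argument on the finite-dimensional space $\mathscr{C}_n$ for (a), convergence of the quadratic forms $f\mapsto\bar{\mcE}^{(F_m)}(\Theta^{(m)}_nf)$ to $f\mapsto\mcE^{(F)}({\mathcal H}f)$ on $\mathscr{C}_n$ for (b) (your entrywise/matrix-convergence argument via polarization is a harmless variant of the paper's seminorm-equicontinuity argument on the compact set $M_n=\{f\in\mathscr{C}_n:\mcE^{(F)}({\mathcal H}f)=1,\int_{\partial_oF}f\,d\nu=0\}$; note you must quotient by constants or impose such a normalization, since $\{f:\mcE^{(F)}({\mathcal H}f)=1\}$ is not compact in $\mathscr{C}_n$ itself), and for (c) the same complement trick the paper uses: apply (A2) cell-by-cell on $\mcQ_{n-1}(F)\setminus\mcB_{n-1}$ and subtract from the exact energy convergence of Lemma \ref{lemma46}(3). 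Two small caveats in (c): the uniformity in $f$ cannot be "the same argument as (b)" verbatim, because for the shell measures you only have a one-sided $\limsup$ bound, not convergence; you need the seminorm/equicontinuity bound $\bigl|\sqrt{\bar{\mu}^{(F_m)}_{\<\Theta^{(m)}_nf\>}(F_{m,\mcB_{n-1}})}-\sqrt{\bar{\mu}^{(F_m)}_{\<\Theta^{(m)}_ng\>}(F_{m,\mcB_{n-1}})}\bigr|\leq C\|f-g\|_{\mathscr{C}_n}$ together with the pointwise bound on a finite net of $M_n$, and the resulting additive error is converted into the multiplicative constant $C'_m$ only thanks to the lower bound of part (a); this dependence on (a) should be made explicit.

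The genuine gap is in (a), precisely at the point you yourself flag as the main obstacle: the positivity $\mu^{(F)}_{\<{\mathcal H}f\>}(F_{\mcB_{n-1}})>0$ for non-constant $f$. Your argument is that vanishing energy measure on the shell makes ${\mathcal H}f$ constant on each connected component of $F_{\mcB_{n-1}}$, and that "harmonicity and the connectedness of $F\setminus\partial_o F$ (together with the strong locality of $\mcE^{(F)}$)" then force global constancy. This does not follow: $\partial_o F$, and hence the boundary shell, need not be connected (the paper points this out when defining the relation $\sim$ on sub-faces), so ${\mathcal H}f$ could a priori be constant with \emph{different} values on different shell components, i.e. $f$ locally constant but non-constant on $\partial_o F$; and strong locality provides no unique-continuation mechanism to propagate constancy from a region of vanishing energy measure into the rest of $F\setminus\partial_o F$. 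What closes the argument is a strong maximum principle for $\mcE^{(F)}$-harmonic functions: the paper takes cells $Q,Q'\in\mcB_{n-1}$ carrying the minimal and maximal boundary values and uses Proposition \ref{P:3.5} (regularity of every point of $\partial_o F$), the strong Markov property and the elliptic Harnack inequality (Theorem \ref{thm32}) to show that at an interior point of the "minimal" cell one has ${\mathcal H}f(x)>\min_{\partial_o F}f$, contradicting constancy of ${\mathcal H}f$ on $F_Q$; equivalently, one can run a Harnack-chain argument for $\max_{\partial_o F}f-{\mathcal H}f\geq 0$ in the connected open set $F\setminus\partial_o F$. Without such an ingredient the key positivity in (a) — on which the multiplicative bound in (c) also rests — is not established.
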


\begin{proof}   
Let $M_n=\{f\in \mathscr{C}_n:\mcE^{(F)}({\mathcal H} f)=1, \int_{\partial_oF}f(x)\nu(dx)=0\}$, which is a compact subset of 
of the finite dimensional vector space $\mathscr{C}_n \subset C(\partial_o F)$. 

(a)  By compactness, we only need to show $\mu^{(F)}_{\<{\mathcal H} f\>}(F_{\mcB_{n-1}})>0$ for each $f\in M_n$. Since ${\mathcal H} f$ is continuous on $F$ and the form $(\mcE^{(F_Q)},\mcF^{(F_Q)})$ is irreducible for each $Q\in \mathscr{Q}_{n-1}$, it suffices to show ${\mathcal H}f$ not  constant on ${F_Q}=F\cap Q$  for some $Q\in \mcB_{n-1}$.  We prove this by contradiction. Suppose that there is a function $f\in M_n$ so that ${\mathcal H}f$ is constant on $F_Q$, so in particular 
$f$ is constant on  $F_Q\cap\partial_o  F$, for each $Q\in \mcB_{n-1}$. In this case, we take $Q,Q'\in\mcB_{n-1}$ such that 
$$
f|_{A_Q}=\min_{x\in \partial_o  F}f(x) < f|_{A_{Q'}}=\max_{x\in \partial_o  F}f(x),
$$
 where $A_Q=F\cap Q\cap\partial_o  F$ and $A_{Q'}=F \cap {Q'}\cap\partial_o  F$. By Proposition \ref{P:3.5}  and strong Markov property,
\[
\mathbb{P}_x^{(F)} \Big( X^{(F)}_{\tau_{A_{Q'}}}<X^{(F)}_{\tau_{A_Q}} \Big) >0
\quad  \hbox{for }  x\in(F\setminus \partial_o F) \cap Q, 
\]
which implies that ${\mathcal H} f(x)>f|_{A_Q}$ for $ x\in (F\setminus \partial_o F) \cap Q$. Hence  ${\mathcal H}f$ is not a constant on $F\cap Q$, which is a contradiction. Thus we get $\mu^{(F)}_{\<{\mathcal H} f\>}(F_{\mcB_{n-1}})>0$ for every  $f\in M_n$.

\smallskip

(b)  Note that by the irreducibility of the reflected Brownian motion on $F_m$, 
for each $m\geq n$ and $f\in \mathscr{C}_n$,  $\bar{\mcE}^{(F_m)}(\Theta^{(m)}_n f) =0$ if and only if $\Theta^{(m)}_n f$ is a constant on $F_m$ by \eqref{e:4.1},  which happens if and only if $f$ is a constant in view of  Lemma \ref{lemma46}(2) and Remark \ref{R:4.7}. On the other hand, ${\mcE}^{(F)} ({\mathcal H} f)=0$ if and only if ${\mathcal H} f$ is constant, which happens if and only if $f$ is constant on  
$\partial_o F$.  For two functions $f, g$ on $\partial_o F$, if we define them to be equivalent, denoted by $f \sim g$, if  and only if they differ by  a constant, 
then  both   $f\mapsto  \bar{\mcE}^{(F_m)}(\Theta^{(m)}_n f)^{1/2}$  and $f\to {\mcE}^{(F)} ({\mathcal H} f)^{1/2}$ are  norms on the quotient space $\mathscr{C}_n/\sim$, which is a linear finite dimensional space. Hence there is a constant $C_m\geq 1$ so that 
\begin{equation}\label{e:4.15} 
C_m^{-1}\mcE^{(F)}({\mathcal H} f)\leq \bar{\mcE}^{(F_m)}(\Theta^{(m)}_nf)\leq C_m\mcE^{(F)}({\mathcal H} f)
\quad \hbox{for every  }  f\in \mathscr{C}_n.
\end{equation} 

 Next, we introduce a norm on the finite dimensional space  $\mathscr{C}_n$ as follows, which will also be used in the proof of (c). Let $\{f^{(i)}\}_{i=1}^{\#\eth_nF}$ be a basis of $\mathscr{C}_n$ as in the proof of Lemma \ref{lemma46}, and we define the norm $\|\cdot\|_{\mathscr{C}_n}$ by
\begin{equation}\label{e:416}
\big\| \sum_{i=1}^{\#\eth_nF}c_if^{(i)}  \big\|_{\mathscr{C}_n}:=\sum_{i=1}^{\#\eth_nF}|c_i|\quad\hbox{for each }c_1,c_2,\cdots,c_{\#\eth_nF}\in \R.
\end{equation}
By Lemma \ref{lemma46}(3), there exists $C^*_1>0$ so  that 
\begin{equation}\label{e:417}
\sup_{m\geq n}\sqrt{\bar{\mcE}^{(F_m)}(\Theta^{(m)}_n f^{(i)})}\leq C^*_1\quad\hbox{ for each }i=1,2,\cdots,\#\eth_nF.
\end{equation}
Since $\sqrt{\bar{\mcE}^{(F_m)}(\Theta^{(m)}_n\bullet)}$ is a 
seminorm on $\mathscr{C}_n$, 
we have by  \eqref{e:416}, \eqref{e:417} and the triangle inequality that    for any $f=\sum_{i=1}^{\#\eth_nF}c_if^{(i)}\in \mathscr{C}_n$
\begin{equation}\label{e:4.18}
\sup_{m\geq n}\sqrt{\bar{\mcE}^{(F_m)}(\Theta^{(m)}_n f)}\leq C_1^*\|f\|_{\mathscr{C}_n}.
\end{equation}
Hence
 $$
 \left| \sqrt{\bar{\mcE}^{(F_m)}\big(\Theta^{(m)}_n f\big)}-\sqrt{\bar{\mcE}^{(F_m)}\big(\Theta^{(m)}_ng\big)} \right|
 \leq \sqrt{\bar{\mcE}^{(F_m)}\big(\Theta^{(m)}_n (f-g)\big)}\leq C_1^*\|f-g\|_{\mathscr{C}_n}
 $$
  for any $f,g\in \mathscr{C}_n$ and $m\geq n$, which means that $\sqrt{\bar{\mcE}^{(F_m)}\big(\Theta^{(m)}_n\bullet)}$ is equicontinuous on $\mathscr{C}_n$. 
  Since by Lemma \ref{lemma46}(3),   $ \lim_{m\to \infty} \sqrt{\bar{\mcE}^{(F_m)}(\Theta^{(m)}_nf)}=\sqrt{\sE^{(F)}(\mathcal H f)}=1$ for every $f\in M_n$ and $M_n$ is compact, it follows that   the converges is uniform on $M_n$. So we can take the constant $C_m\geq 1$ in \eqref{e:4.15} in such a way that it converges to $1$ as $m\to \infty$.

\smallskip

(c)  Let  $f\in M_n$, and set  $h_m :=\Theta^{(m)}_nf$ for  $m\geq n$ and $h :={\mathcal H} f$ for short. Since $h_m$ converges to $h$ strongly in $L^2$
by  Lemma \ref{lemma46}(4) ,  it follows from   the fact  that  each $ \bar{\mu}^{(F_{m})}_{\<h_{m}\> }$ does not charge on
sets having zero Lebesgue measure  from \eqref{e:4.1} and condition (A2) that 
\[
\begin{aligned}
\liminf\limits_{m\to\infty}\bar{\mu}^{(F_{m})}_{\<h_{m}\>}(F_{m}\setminus F_{m,\mcB_{n-1}})&=\liminf\limits_{m\to\infty}\sum_{Q\in \mathscr{Q}_{n-1}\setminus \mcB_{n-1}}\bar{\mu}^{(F_{m})}_{\<h_{m}\>}(F_{m,Q})\\
&\geq \sum_{Q\in \mathscr{Q}_{n-1}\setminus \mcB_{n-1}}\mu^{(F)}_{\<h\>}(F_Q)\geq
\, \mu^{(F)}_{\<h\>}(F\setminus F_{\mcB_{n-1}}).
\end{aligned}
\]
On the other hand,  by Lemma \ref{lemma46}(3)
$$
\lim\limits_{m\to\infty}\bar{\mu}^{(F_{m})}_{\<h_{m}\>}(F_{m})= 
\lim\limits_{m\to\infty}\bar{\mcE}^{(F_{m})}(h_{m})=\mcE^{(F)}(h)
= \mu^{(F)}_{\<h\>}(F). 
$$
Hence 
\begin{equation}\label{e:4.19}
\limsup\limits_{m\to\infty}\bar{\mu}^{(F_{m})}_{\<\Theta^{(m)}_n f \>}(F_{m,\mathcal{B}_{n-1}})\leq \mu^{(F)}_{\< {\mathcal H} f\>}(F_{\mathcal{B}_{n-1}})  \quad \hbox{for every } f\in M_n.
\end{equation}

Note that  for every Borel subset $A\subset F_m$, 
$$
 \bar{\mu}^{(F_{m})}_{\<f, g\>} (A):=    L_F^{(d_w- 2)n} \int_{A}  \nabla f(x) \,\nabla g(x) \mu_m(dx)
$$
is a non-negative symmetric bilinear form in $f, g\in W^{1,2}(F_m)$ with $\bar{\mu}^{(F_{m})}_{\<f, f\>}= \bar{\mu}^{(F_{m})}_{\<f\>}$.   
This together with  \eqref{e:4.18} implies that for any $f,g\in \mathscr{C}_n$ and $m\geq n$, 
$$
\left| \sqrt{\bar{\mu}^{(F_{m})}_{\<\Theta^{(m)}_nf\>}(F_{m,\mathcal{B}_{n-1}})}-\sqrt{\bar{\mu}^{(F_{m})}_{\<\Theta^{(m)}_ng\>}(F_{m,\mathcal{B}_{n-1}})} \right| 
\leq \sqrt{\bar{\mu}^{(F_{m})}_{\<\Theta^{(m)}_n(f-g)\>}(F_{m,\mathcal{B}_{n-1}})}\leq C_1^*\|f-g\|_{\mathscr{C}_n}.
$$
Hence $\big\{ \bar{\mu}^{(F_{m})}_{\<\Theta^{(m)}_nf\>}(F_{m,\mathcal{B}_{n-1}})^{1/2}; m\geq n \big\}$ is equi-continuous   on $\mathscr{C}_n$. The desired conclusion follows from this and \eqref{e:4.19}
\end{proof}

\medskip

\begin{proof}[Proof of Proposition \ref{prop41}]
By Lemma \ref{lemma47}, there is an integer  $N\geq n$ such that 
for every $m\geq N$ and $f\in \mathscr{C}_n$, 
\begin{eqnarray}\label{eqn411}
	|\bar{\mcE}^{(F_m)}(\Theta_n^{(m)}f)-\mcE^{(F)}({\mathcal H} f)|
	&\leq &   C\,\mcE^{(F)} ({\mathcal H} f)  
	\, \leq \, \mu^{(F)}_{\< {\mathcal H} f\>}(F_{\mathcal{B}_{n-1}}), 
	\\
	\label{eqn412}
	\bar{\mu}^{(F_{m})}_{\<\Theta_n^{(m)}f\>}(F_{m,\mcB_{n-1}})
	&\leq & 2\mu^{(F)}_{\< {\mathcal H} f\>}(F_{\mathcal{B}_{n-1}}),
\end{eqnarray}
where $ C\in (0,1)$ is the constant in Lemma \ref{lemma47}(a).

Let  $m\geq N$, and  let $g\in \mcF^{(F)}$ and $g_{m}\in W^{1,2}(F_{m})$  satisfy the assumption of the proposition. 
Take $f\in \mathscr{C}_n$ so that $[f]_{\nu|_A}= [g]_{\nu|_A}$ for every $ A\in\eth_nF$. 
For notational convenience, set $h :={\mathcal H} f$ and $h_m :=\Theta^{(m)}_nf$.
We need to estimate $\mcE^{(F)}(g-h)$, $|\mcE^{(F)}(h)-\bar{\mcE}^{(F_m)}(h_m)|$ and $\bar{\mcE}^{(F_{m})}(h_m-g_m)$. 
	 By Lemma \ref{lemma44} (a) and the definition of $I_n$ and $\Lambda_n$, 
\begin{equation}\label{eqn413}
\Lambda_n[h|_{\partial_o  F}]
=\Lambda_n[f ] \leq C_1  L_F^{n(d_w-d_f)} I_n [f]
=C_1 L_F^{n(d_w-d_f)} I_n [g|_{\partial_o  F}] \leq 
C_1 \Lambda_n[g|_{\partial_o  F}],
\end{equation}
where $C_1>0$ denotes the constant $C$ in Lemma \ref{lemma44}(a). As $g-h\in C(F)\cap \sF^{(F)}$ is $\sE^{(F)}$-harmonic in $F\setminus \partial_o F$, 
by Theorem \ref{T:A.4}(a) with the positive constant $C$ there denoted by $C_2$ below and the fact that $[g-h]_{\nu|_A}=0$ for every $A\in \eth_nF$, 
\begin{equation}\label{eqn414}
\begin{aligned}
	\mcE^{(F)}(g-h)&\leq C_2\, \Lambda_1[(g-h)|_{\partial_o  F}]\\
	&=C_2\, \Lambda_n[(g-h)|_{\partial_o  F}] \\
	&\leq 2C_2\, \big(\Lambda_n[g|_{\partial_o  F}]+\Lambda_n[h|_{\partial_o  F}]\big)\\
	&\leq 2C_2\, \big(\Lambda_n[g|_{\partial_o  F}]+C_1\, \Lambda_n[g|_{\partial_o  F}]\big)\\
	&\leq 2(C_1+1)C_2C_3\, \mu^{(F)}_{\<g\>}(F_{\mcB_{n-1}}), 
\end{aligned}
\end{equation}
where  in the last line, we used Theorem \ref{T:A.3}(a) with the positive constant $C$ there denoted by $C_3$.
Thus  by \eqref{eqn411} and  \eqref{eqn414},  
\begin{eqnarray}\label{eqn416}
	  |\mcE^{(F)}(h)-\bar{\mcE}^{(F_m)}(h_m)|
	  &\leq & \mu^{(F)}_{\< {\mathcal H} f\>}(F_{\mathcal{B}_{n-1}}) \nonumber \\
	  &\leq &   2\mu^{(F)}_{\<g\>}(F_{\mathcal{B}_{n-1}})+2\mcE^{(F)}(g-h) \nonumber \\
  	   	  &\leq & \big(2+4(C_1+1)C_2C_3\big)\, \mu^{(F)}_{\<g\>}(F_{\mcB_{n-1}}).
\end{eqnarray}
Since $g_m-h_m \in C(F_m) \cap W^{1,2}(F_m)$ are $\bar \sE^{(F_m)}$-harmonic in $F_m \setminus \partial_o F_m$, by \eqref{e:4.1}, Theorem \ref{T:A.4}(b), the fact $[g_m-h_m]_{\nu_m|_{A_m}}=0$ for every $A_m\in \eth_nF_m$ and Theorem \ref{T:A.3}(b), 
we have 
\begin{equation}\label{eqn417}
 \begin{aligned}
	\bar{\mcE}^{(F_{m})}(h_m-g_m)
	&\leq C_2C_0\, \Lambda^{(m)}_1[(h_m-g_m)|_{\partial_o  F_{m}}]\\&=C_2C_0\cdot\Lambda^{(m)}_n[(h_m-g_m)|_{\partial_o  F_{m}}]\\
	&\leq C_2C_0\, 2\big(\Lambda^{(m)}_n[h_m|_{\partial_o  F_{m}}]+\Lambda^{(m)}_n[g_m|_{\partial_o  F_{m}}]\big)\\
	&\leq C_2C_0\,   2\big(C_3C_0\cdot\bar{\mu}^{(F_m)}_{\<h_m\>}(F_{m,\mcB_{n-1}})+\Lambda^{(m)}_n[g_m|_{\partial_o  F_{m}}]\big)\\
	&\leq C_2C_0\cdot2\big(2C_3C_0\cdot\mu^{(F)}_{\<h\>}(F_{\mcB_{n-1}})+\Lambda^{(m)}_n[g_m|_{\partial_o  F_{m}}]\big)\\
	&\leq C_4\cdot(\mu^{(F)}_{\<g\>}(F_{\mcB_{n-1}})+\Lambda^{(m)}_n[g_m|_{\partial_o  F_{m}}]),
\end{aligned}
\end{equation}
where we used \eqref{eqn412} in the second to the last inequality, and a part of \eqref{eqn416} in the last inequalty with the constant $C_4>0$ being  a polynomial of $C_1,C_2,C_3$ and $C_0$. The proposition follows by combining  \eqref{eqn414}, \eqref{eqn416} and \eqref{eqn417}.  
\end{proof}

We end this section with a corollary that will not be used in this paper, but will be needed in a forthcoming paper \cite{CC}. 

\begin{corollary}\label{coro48}
	Let $(\bar{\mcE}^{(F_m,1)},W^{1,2}(F_m))$ and $(\bar{\mcE}^{(F_m,2)},W^{1,2}(F_m))$ be two sequences of strongly local regular Dirichlet forms on $L^2(F_m;\mu_m)$ such that the following  hold for $i=1,2$.
\begin{enumerate} 
\item[\rm (i)] $C_0^{-1}\mcE^{(F_m)}\leq \bar{\mcE}^{(F_m,i)}\leq C_0 \mcE^{(F_m)}$ for every $ m\geq 0$; 
	
	\smallskip
	
\item[\rm (ii)]  $\bar{\mcE}^{(F_m,i)}$ Mosco converges to $\mcE^{(F)}$ as $m\to\infty$; 
	
	\smallskip
	
\item[\rm (iii)]  (A1),  (A2) and (A3) hold for $\{(\bar{\mcE}^{(F_m,i)},W^{1,2}(F_m));  m\geq 0\}$.
\end{enumerate} 
Then for each $n\geq 0$ and $\varepsilon>0$, there exist $M\geq n$ (depending on $n,\varepsilon,\bar{\mcE}^{(F_m,1)},\bar{\mcE}^{(F_m,2)}$) and $C>0$ (independent of $n,\varepsilon,\bar{\mcE}^{(F_m,1)},\bar{\mcE}^{(F_m,2)}$ but dependent on $F$ and $C_0$) such that 
	\[
	\left|\sqrt{\bar{\mcE}^{(F_m,1)}(h_1)}-\sqrt{\bar{\mcE}^{(F_m,2)}(h_2)} \right|
	\leq C \left(\varepsilon\, \sqrt{\bar{\mcE}^{(F_m,1)}(h_1)}
	+\sqrt{\bar{\mu}^{(F_m,1)}_{\<h_1\>}(F_{m,\mcB_n})}\right)
	\] 
	for any $m\geq M$ and $h_i\in W^{1,2}(F_m)\cap C(F_m),i\in\{1,2\}$ 
	that  is $\bar{\mcE}^{(F_m,i)}$-harmonic in $F_m\setminus \partial_o  F_m$ for $i\in \{1,2\}$ with $h_1|_{\partial_o  F_m}=h_2|_{\partial_o  F_m}$. 
\end{corollary}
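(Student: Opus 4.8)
The plan is to derive Corollary~\ref{coro48} from Proposition~\ref{prop41} applied twice, once to each of the two Dirichlet forms $\bar{\mcE}^{(F_m,1)}$ and $\bar{\mcE}^{(F_m,2)}$, and then to glue the two estimates together using the common boundary data. First I would fix $n\geq 0$ and $\varepsilon>0$. Since constants are dense in the relevant spaces and the maps $\Theta^{(m)}_n$ and $\mathcal H$ depend linearly and continuously on finitely many boundary averages, I would choose an auxiliary level $n'\geq n$, large compared with $n$ and depending on $\varepsilon$, such that the ``interior energy near the boundary'' contribution at level $n'$ is small relative to the total energy; more precisely, using Theorem~\ref{T:A.3}(a), Theorem~\ref{T:A.4}(a), and the polynomial decay of energy measures near cell boundaries (Theorem~\ref{thmB1} of the Appendix), one gets $\mu^{(F)}_{\<u\>}(F_{\mcB_{n'-1}(F)})\leq \eta(n')\,\mcE^{(F)}(u)$ for $\mcE^{(F)}$-harmonic $u$, with $\eta(n')\to 0$ as $n'\to\infty$; pick $n'$ so that $\eta(n')\leq \varepsilon^2$.

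Next I would apply Proposition~\ref{prop41} at level $n'$ to the pair $(\bar{\mcE}^{(F_m,1)}, \mcE^{(F)})$ and separately to the pair $(\bar{\mcE}^{(F_m,2)}, \mcE^{(F)})$. This requires, for each $i$, an auxiliary $\mcE^{(F)}$-harmonic function $g^{(i)}\in\mcF^{(F)}\cap C(F)$ whose level-$n'$ boundary averages match those of $h_i$. Since $h_1|_{\partial_o F_m}=h_2|_{\partial_o F_m}$, the two functions $h_1$ and $h_2$ have \emph{identical} boundary averages on every sub-face in $\eth_{n'}F_m$, hence we may and do take the \emph{same} $g := g^{(1)} = g^{(2)}$: namely $g:=\mathcal H f$ where $f\in \mathscr{C}_{n'}$ is the function from Lemma~\ref{lemma44}(a) with $[f]_{\nu|_A}=\fint_{A_m} h_1\,d\nu_m$ for the sub-face $A\subset A_m$. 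For $i=1,2$, Proposition~\ref{prop41} then yields
\[
\left|\sqrt{\bar{\mcE}^{(F_m,i)}(h_i)}-\sqrt{\mcE^{(F)}(g)}\right|
\leq C_5\left(\sqrt{\mu^{(F)}_{\<g\>}(F_{\mcB_{n'-1}(F)})}+\sqrt{\Lambda^{(m)}_{n'}[h_i|_{\partial_o F_m}]}\right)
\]
for all $m\geq N_i$, with $C_5$ depending only on $C_0$ and $F$. Because $h_1$ and $h_2$ agree on $\partial_o F_m$, the Besov term $\Lambda^{(m)}_{n'}[h_i|_{\partial_o F_m}]$ is the same for $i=1$ and $i=2$; set $N:=\max(N_1,N_2)$ and subtract the two inequalities via the triangle inequality to get
\[
\left|\sqrt{\bar{\mcE}^{(F_m,1)}(h_1)}-\sqrt{\bar{\mcE}^{(F_m,2)}(h_2)}\right|
\leq 2C_5\left(\sqrt{\mu^{(F)}_{\<g\>}(F_{\mcB_{n'-1}(F)})}+\sqrt{\Lambda^{(m)}_{n'}[h_1|_{\partial_o F_m}]}\right).
\]

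It remains to bound the right-hand side in terms of the quantities appearing in the statement, i.e.\ $\sqrt{\bar{\mcE}^{(F_m,1)}(h_1)}$ and $\sqrt{\bar{\mu}^{(F_m,1)}_{\<h_1\>}(F_{m,\mcB_n(F)})}$. For the first term, by Lemma~\ref{lemma44}(a.2) and Theorem~\ref{T:A.3}(a), $\mu^{(F)}_{\<g\>}(F_{\mcB_{n'-1}(F)})\leq C_6\,\eta(n')\,\mcE^{(F)}(g)\leq C_6\varepsilon^2\,\mcE^{(F)}(g)$, and $\mcE^{(F)}(g)=\mcE^{(F)}(\mathcal H f)$ is comparable to $\bar{\mcE}^{(F_m,1)}(h_1)$ up to the factor $C_m\to1$ of Lemma~\ref{lemma47}(b) (applied with $\Theta^{(m)}_{n'}f$ versus $h_1$: one also needs Theorem~\ref{T:A.4}(b) to replace the $\bar{\mcE}^{(F_m,1)}$-harmonic extension of $f_m\in\mathscr{C}_{m,n'}$ by $h_1$, paying a controlled error — this is where enlarging $N$ again is used); hence the first term is $\lesssim \varepsilon\sqrt{\bar{\mcE}^{(F_m,1)}(h_1)}$ for $m$ large. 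For the second term, Theorem~\ref{T:A.3}(b) gives $\Lambda^{(m)}_{n'}[h_1|_{\partial_o F_m}]\leq \Lambda^{(m)}_n[h_1|_{\partial_o F_m}]\leq C_7 C_0\,\bar{\mu}^{(F_m,1)}_{\<h_1\>}(F_{m,\mcB_{n-1}(F)})$ since $h_1$ is $\bar{\mcE}^{(F_m,1)}$-harmonic off $\partial_o F_m$; absorbing $F_{m,\mcB_{n-1}(F)}\subset F_{m,\mcB_n(F)}$ finishes it. I expect the main obstacle to be the careful bookkeeping in the step that replaces $\mathcal H f$ and $\Theta^{(m)}_{n'}f$ by $g$ and $h_1$ respectively while keeping all error constants uniform in $m$ and independent of $n,\varepsilon$; this is exactly the kind of argument carried out in the proof of Proposition~\ref{prop41} (estimates \eqref{eqn413}--\eqref{eqn417}) and should be adapted rather than reproved, but one must be attentive that the ``bad'' constant $C$ in the final statement is allowed to depend on $F$ and $C_0$ only, so every place where $n$ or $\varepsilon$ enters must be pushed into the explicit $\varepsilon\sqrt{\bar{\mcE}^{(F_m,1)}(h_1)}$ term or handled by taking $m\geq M$ large.
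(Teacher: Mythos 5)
There is a genuine gap, and it sits at the heart of your argument: the claim that $\mu^{(F)}_{\<g\>}(F_{\mcB_{n'-1}(F)})\leq C\varepsilon^2\,\mcE^{(F)}(g)$ for $g=\mathcal H f$ with $f\in\mathscr{C}_{n'}$, i.e.\ for the harmonic extension of data matched to $h_1$ at the \emph{same} level $n'$ at which you truncate the boundary shell. This is false in general: Lemma \ref{lemma47}(a) gives the opposite inequality $\mu^{(F)}_{\<\mathcal H f\>}(F_{\mcB_{n'-1}})\geq C\,\mcE^{(F)}(\mathcal H f)$ with $C>0$ independent of $n'$, because data oscillating at scale $L_F^{-n'}$ forces a fixed fraction of the energy into the shell of comparable width. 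Moreover, the uniform decay rate $\eta(n')$ you invoke cannot come from Theorem \ref{thmB1}: that theorem requires harmonicity in the interior of $F_{\mathcal S_{Q^*}}$, i.e.\ \emph{across} the faces of the cell, whereas $\mathcal H f$ is harmonic only on one side of $\partial_o F$ (there is nothing beyond the outer boundary), and no uniform-in-$u$ decay holds for all functions harmonic in $F\setminus\partial_o F$. The paper avoids this by a two-level scheme: it first takes $f\in\mathscr{C}_{n+1}$ matching $h_1$ at the coarse level $n+1$, proves $\mcE^{(F)}(\mathcal H f)\leq C\,\bar{\mcE}^{(F_m,1)}(h_1)$ by Besov comparisons (estimate \eqref{eqn419}, using Lemma \ref{lemma44}(a.2) and Theorems \ref{T:A.3}(b), \ref{T:A.4}), then chooses the deep level $n_1\geq n+2$ so that $\mu^{(F)}_{\<\mathcal H f\>}(F_{\mcB_{n_1-1}})\leq\varepsilon^2\mcE^{(F)}(\mathcal H f)$ uniformly over the \emph{fixed finite-dimensional} space $\mathscr{C}_{n+1}$ (Corollary \ref{coroA5} plus compactness — not a universal decay rate), and only then introduces $\wt f\in\mathscr{C}_{n_1}$ matching $h_1$ at level $n_1$, with the transfer error $\mcE^{(F)}(\mathcal H f-\mathcal H\wt f)$ controlled by $I^{(m)}_{n+1}[h_1|_{\partial_o F_m}]\lesssim \bar{\mu}^{(F_m,1)}_{\<h_1\>}(F_{m,\mcB_n})$ (estimate \eqref{eqn420}). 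Proposition \ref{prop41} is applied at level $n_1$ to the pairs $(h_1,\mathcal H\wt f)$ and $(h_2,\mathcal H\wt f)$.

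Your outer skeleton is the right one — apply Proposition \ref{prop41} twice with a common $\mcE^{(F)}$-harmonic comparison function (possible because $h_1|_{\partial_o F_m}=h_2|_{\partial_o F_m}$), triangulate, and convert the Besov terms via Theorem \ref{T:A.3}(b) and the comparability (i) — and your handling of the Besov term at the end is fine. But with only one auxiliary level the shell term cannot be made small, so the conclusion does not follow; also, your appeal to Lemma \ref{lemma47}(b) to compare $\mcE^{(F)}(g)$ with $\bar{\mcE}^{(F_m,1)}(h_1)$ is misplaced, since that lemma compares $\mcE^{(F)}(\mathcal H f)$ with $\bar{\mcE}^{(F_m)}(\Theta^{(m)}_n f)$, not with the harmonic extension of arbitrary boundary data sharing averages; the correct substitute is the direct Besov estimate \eqref{eqn419}, which needs no largeness of $m$. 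Repairing the proof essentially means reinstating the paper's coarse/deep two-level construction.
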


\begin{proof}   
For $n\geq 1$, let $\mathscr{C}_{n }$  be the  finite  dimensional linear subspace of $C(\partial_o F)$ in Lemma \ref{lemma44}(a). Since  $\mu^{(F)}_{\<g\>}(\partial_o  F)=0$ for each $g\in \mcF^{(F)}$ by Corollary \ref{coroA5}, there is some ${n_1}\geq n+2$ such that 
\begin{equation}\label{eqn418}
\mu^{(F)}_{\<\mathcal{H} f\>}
(F_{\mcB_{{n_1}-1}})\leq \varepsilon^2\, \mcE^{(F)}(\mathcal{H}f)
\quad\hbox{for every }  f\in \mathscr{C}_{n+1},
\end{equation}
where $\mathcal{H}f$ is the $\sE^{(F)}$-harmonic extension of $f$. For $i=1,2$, let $N_i\geq {n_1}$ be the constant Proposition of \ref{prop41} with   $\bar{\mcE}^{(F_m,i)},m\geq 0$ and ${n_1}$ in place of $\bar{\mcE}^{(F_m)},m\geq 0$ and $n$ there. Define  $N=N_1\vee N_2$.
	
Fix $m\geq N$. Let $h_1,h_2$ be the functions in the statement of this corollary. 
Let $f$ be the unique function in $\mathscr{C}_{n+1}$ so that $[f]_{\nu|_A}=[h_1]_{\nu_m|_{A_m}}$ for every  $A\in \eth_{n+1}F$ and $A_m\in \eth_{n+1}F_m$ with
 $A\subset A_m$. Set  $h=\mathcal{H}f$. Then by Theorem \ref{T:A.3} (a), there are positive constants $C_1,C_2,C_3$ depending on $F$ and $C_0$ so that 
\begin{equation}\label{eqn419}
\begin{aligned}
	\mcE^{(F)}(h)&\leq C_1\,\Lambda_1[h|_{\partial_o F}]=C_1\,\Lambda_1[f]\\
	&=C_1\,\big(\sum_{k=1}^{n}L_F^{k(d_w-d_f)}I_k[f]+\Lambda_{n+1}[f]\big)\\
	&\leq C_1\,\sum_{k=1}^{n}L_F^{k(d_w-d_f)}I^{(m)}_k[h_1|_{\partial_oF_m}]+C_1C_2\,L_F^{(n+1)(d_w-d_f)}I_{n+1}^{(m)}[h_1|_{\partial_oF_m}]\\
	&\leq (C_1\vee C_1C_2)\ \Lambda_1^{(m)}[h_1|_{\partial_o {F_m}}] \\
	&\leq C_3\bar{\mcE}^{(F_m,1)}(h_1), 
\end{aligned}
\end{equation}
  where   we used Lemma \ref{lemma44}(a.2) and the fact that $I_k[f]=I_k^{(m)}[h_1|_{\partial_oF}]$ for $1\leq k\leq n+1$ in second inequality, and Theorem \ref{T:A.4} (b) in the last inequality.

Next, let $\wt f$ be the unique function in $   \mathscr{C}_{{n_1}}$ such that $[\wt f]_{\nu|_A}=[h_1]_{\nu_m|_{A_m}}$ for each $A\in \eth_{{n_1}}F$ and $A_m\in \eth_{{n_1}}F_m$ 
with  $A\subset A_m$. Set  $\wt h :=\mathcal{H}\wt f$. Then   by   Theorem \ref{T:A.3}(a),
\begin{equation}\label{eqn420}
\begin{aligned}
	\mcE^{(F)}(h-\wt h)&\leq C_1\Lambda_1[(h-\wt h)|_{\partial_oF}]=C_1\Lambda_1[f-\wt f]\\
	&=C_1\Lambda_{n+1}[f-\wt f]\\
	&\leq 2C_1\Lambda_{n+1}[f]+2C_1\Lambda_{n+1}[\wt f]\\
	&\leq 4(C_1\vee C_1C_2)\,I_{n+1}^{(m)}[h_1|_{\partial_o  F_m}]\\
	&\leq 4C_3\,\bar{\mu}^{(F_m,1)}_{\<h_1\>}(F_{m,\mcB_n(F_m)}),
\end{aligned}
\end{equation}
where   the third inequality holds by a  similar argument as that for \eqref{eqn419}, while  the last inequality is due to Theorem \ref{T:A.4}(b). 
It follows from \eqref{eqn418}, \eqref{eqn419} and \eqref{eqn420}  that 
\begin{eqnarray} \label{e:4.32} 
\mu ^{(F)}_{\<\wt h \>}(F_{\mcB_{{n_1}-1}(F)})
&\leq & 2\mu^{(F)}_{\<h\>}(F_{\mcB_{{n_1}-1}(F)})+2\mcE^{(F)}(h-\wt h)  \nonumber \\
&\leq &2\varepsilon^2 C_3 \,  \bar{\mcE}^{(F_m,1)} (h_1)
+4C_3\bar{\mu}^{(F_m,1)}_{\<h_1\>}(F_{m,\mcB_n(F_m)}).
\end{eqnarray}
By Proposition \ref{prop41} applied to the pairs  $(h_1,\wt h)$ and $(h_2,\wt h)$,  condition (i) and $h_1|_{\partial_o  F_m}=h_2|_{\partial_o  F_m}$, 
\begin{eqnarray*}
&&\left|\sqrt{\bar{\mcE}^{(F_m,1)}(h_1)}-\sqrt{\bar{\mcE}^{(F_m,2)}(h_2)} \right| \\
&\leq & \Big| \sqrt{\bar{\mcE}^{(F_m, 1)}(h_1)}-\sqrt{\mcE^{(F)}(\wt h)} \Big| + \Big|\sqrt{\bar{\mcE}^{(F_m, 2)}(h_2)}-\sqrt{\mcE^{(F)}(\wt h)} \Big|  \\
&\leq& C_4  \left(\sqrt{\mu^{(F)}_{\<\wt h\>}(F_{\mcB_{{n_1}-1}(F)})}+\sqrt{\Lambda^{(m, 1)}_{n_1}[h_1  |_{\partial_o  F_{m}}]}
+\sqrt{\Lambda^{(m, 2)}_{n_1}[h_2  |_{\partial_o  F_{m}}]}  \right)\\
&\leq& C_5  \left(\sqrt{\mu^{(F)}_{\<\wt h\>}(F_{\mcB_{{n_1}-1}(F)})}+\sqrt{\Lambda^{(m, 1)}_{n_1}[h_1  |_{\partial_o  F_{m}}]}
  \right)\\
 &\leq & C_6  \left( \sqrt{\mu^{(F)}_{\<\wt h\>} (F_{\mcB_{{n_1}-1}(F)})}+\sqrt{\mu^{(F_m, 1)}_{\<h_1\>}(F_{m,\mcB_{{n_1}-1}(F_m)})}
    \right),
 \end{eqnarray*}
 where the last inequality is due to Theorem \ref{T:A.3}(b). The corollary then follows from this, \eqref{e:4.32} and that $n_1\geq n+2$. 
 \end{proof}

\medskip

\section{Convergence of resistances}\label{sec5}

In this section, we present the proof for Theorems \ref{T:1.1}, \ref{T:1.3} and \ref{T:1.4}. Recall from Theorem \ref{T:3.11} and 
Theorem \ref{T:3.12} that $\{\alpha_m; m\geq 1\}$ is a sequence of real numbers that are bounded between two positive constants
so that   $\alpha_m\mcE^{(F_m)}$ on $L^2(F_m; \mu_m)$ is Mosco convergent to $\mcE^{(F)}$ on $L^2(F; \mu)$. Let 
\[
\alpha:= \limsup\limits_{m\to\infty}\alpha_m .
\] 
 In the rest of the section, we fix a subsequence $\{m_k; k\geq 1\}$ so that 
 \begin{equation}\label{e:5.1a}
\lim_{k\to\infty}\alpha_{m_k}=\alpha.
\end{equation} 
 Hence $ \alpha \sE^{(F_{m_k})}$ is Mosco convergent to $  \sE^{(F)}$ as $k\to  \infty$. For $Q\in \mcQ_n(F_n)$ with $n\geq 1$, $F_{m_k,Q}$ is similar to $F_{m_k-n}$ when $k$ is sufficiently large. 
Hence by self-similarity, $\alpha_{m_k-n}\mcE^{(F_{m_k,Q})}$ is Mosco convergent to $\mcE^{(F_Q)}$ as $k\to \infty$.
Thus  for any $f_{m_k}\in W^{1,2}(F_{m_k})$, $ k\geq 1$,  and $f\in \mcF^{(F)}$ so that $f_{m_k}\to f$ strongly, we have 
\[
\liminf\limits_{k\to\infty} \alpha \mcE^{(F_{m_k,Q})}(f_{m_k})\geq \liminf\limits_{k\to\infty}\alpha_{m_k-n}\mcE^{(F_{m_k,Q})}(f_{m_k})\geq \mcE^{(F_Q)}(f).
\]
This shows that condition (A2) holds for  $(\bar \sE^{(k)}, \bar \sF^{(k)}):=\big(\alpha \mcE^{(F_{m_k})},W^{1,2}(F_{m_k})\big)$
for $k\geq 1$.

 \medskip
 
In the case that $\partial_o  F\neq \partial F_0$, we need one more preparation for the  proofs of Theorems \ref{T:1.1} and \ref{T:1.4}. 

\begin{lemma}\label{lemma51}
Let $n,m\geq 0$ and $h\in \mcF^{(F)}\cap C(F)$ that is $\mcE^{(F)}$-harmonic in $F\setminus \partial_o  F$. Then there exists  $g=g_{m,n}\in W^{1,2}(F_{n+m})\cap C(F_{n+m})$ such that the following  hold for some constant $C>0$ independent of $m,n$ and $h$.
\begin{enumerate}	
\item[\rm (i)]  For each $Q\in \mcQ_n(F_n)$, $A\in \eth_m F$ and $A_m\in \eth_mF_m$ with  $A\subset A_m$, 
$$ \fint_{\Psi_Q(A_m)} g(x) \nu_{m+n} (dx)=   \fint_{\Psi_Q(A )}  h(x) \nu (dx).
$$
	
	\smallskip
	
\item[\rm (ii)]  $
\sum\limits_{Q\in \mcQ_n(F_n)}\Lambda^{(m)}_m[(g\circ \Psi_Q)|_{\partial_o  F_m}]
 \, \leq \, C \sum_{Q\in \mcQ_n(F_n)}  L_F^{(d_w-d_f)m} I_m [(h\circ \Psi_Q)|_{\partial_o  F}]     \hfill \break 
{} \hskip 1.9truein    \leq C\sum_{Q\in \mcQ_n(F_n)}\Lambda_m[(h\circ \Psi_Q)|_{\partial_o  F}].
$
	
		\smallskip

\item[\rm (iii)]  For each $Q\in \mcQ_n(F_n)$, $x\in F_Q$, $y\in F_{m+n,Q}$,
$$|h(x)-g(y)|\leq C\max\limits_{x',y'\in F_{\mathcal{S}_Q}}|h(x')-h(y')|, 
$$
where   $\mathcal{S}_Q :=\{Q'\in \mcQ_n(F_n):Q'\cap Q\neq \emptyset\}$. 

	\smallskip
	
\item[\rm (iv)]  $g$ is $\mcE^{(F_{m+n})}$-harmonic in $\Psi_Q(F_m\setminus \partial_o  F_m)
=F_{m+n,Q}\setminus \partial_oF_{m+n,Q}$ for every  $Q\in \mcQ_n(F_n)$. 
\end{enumerate}
\end{lemma}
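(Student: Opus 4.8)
The plan is to build $g=g_{m,n}$ cell by cell from the function $h$, using the self-similar structure of both $F$ and its approximating domains together with the finite-dimensional machinery developed in Section \ref{sec4}. Recall that $F_n$ is a union of $n$-cells $\{F_{n,Q}: Q\in\mcQ_n(F_n)\}$, each of which is an affine copy of $F_0$ via $\Psi_Q$, so $F_{n+m,Q}=\Psi_Q(F_m)$ for $Q\in\mcQ_n(F_n)$. The idea is: on each such $Q$, transplant $h$ by the affine map, extract its level-$m$ boundary averages $\{[h\circ\Psi_Q]_{\nu|_A}: A\in\eth_mF\}$, run these through the interpolation space $\mathscr{C}_{m,m}\subset C(\partial_oF_m)$ of Lemma \ref{lemma44}(b) (applied with $n$ replaced by $m$) to produce a continuous function with the prescribed sub-face averages and controlled Besov energy, pull back by $\Psi_Q^{-1}$, and then take the $\mcE^{(F_{m+n})}$-harmonic extension in $F_{m+n,Q}\setminus\partial_oF_{m+n,Q}$. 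One must check these local pieces glue into a global $g\in W^{1,2}(F_{n+m})\cap C(F_{n+m})$: continuity across shared faces of adjacent $n$-cells follows because those shared faces lie in $\partial_oF_0$ (the ``Borders included'' condition (SC4) and the affine structure), and the prescribed boundary averages are consistent there since they come from the single globally defined $h$ (here one uses that on a common face the two transplanted boundary-average data agree, which should be arranged by fixing the interpolation convention in $\mathscr{C}_{m,m}$ compatibly — this is the point needing care).

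Once $g$ is defined, each of (i)--(iv) should follow without much effort. Property (i) is immediate from the construction, since $\mathscr{C}_{m,m}$ is built precisely to realize assigned sub-face averages (Lemma \ref{lemma44}(b.1)) and the averages of $g$ on $\Psi_Q(A_m)$ equal those of the pre-composed function on $A_m\in\eth_mF_m$, which in turn (by how we chose the data) equal the $\nu$-averages of $h\circ\Psi_Q$ over $A\subset A_m$. Property (iv) is immediate because harmonic extension is exactly how $g$ is defined on the interiors of the cells. Property (ii) is the quantitative heart: on each cell, Lemma \ref{lemma44}(b.2) gives $\Lambda^{(m)}_m[(g\circ\Psi_Q)|_{\partial_oF_m}]\leq C L_F^{m(d_w-d_f)}I^{(m)}_m[(g\circ\Psi_Q)|_{\partial_oF_m}]$, and since the level-$m$ sub-face averages of $g\circ\Psi_Q$ coincide with those of $h\circ\Psi_Q$ we get $I^{(m)}_m[(g\circ\Psi_Q)|_{\partial_oF_m}]=I_m[(h\circ\Psi_Q)|_{\partial_oF}]$; summing over $Q$ and recalling $\Lambda_m[\cdot]=\sum_{k\geq m}L_F^{k(d_w-d_f)}I_k[\cdot]\geq L_F^{m(d_w-d_f)}I_m[\cdot]$ gives the second inequality in (ii), with the first just the cellwise bound.

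For (iii), fix $Q\in\mcQ_n(F_n)$, $x\in F_Q$, $y\in F_{m+n,Q}$. Write $M_Q:=\max_{x',y'\in F_{\mathcal{S}_Q}}|h(x')-h(y')|$. Subtracting a constant (the value of $h$ at some point of $F_{\mathcal{S}_Q}$) we may assume $|h|\leq M_Q$ on $F_{\mathcal{S}_Q}$, hence in particular the level-$m$ boundary averages of $h\circ\Psi_Q$ all lie in $[-M_Q,M_Q]$ — wait, more precisely each such average is an average of $h$ over a subset of $\partial_oF_Q\subset F_{\mathcal{S}_Q}$, so it lies within $M_Q$ of $h(x)$ up to oscillation; I would instead note that by the maximum principle for $\mcE^{(F)}$-harmonic functions $h$ on $F_Q$ (it is harmonic off $\partial_oF$ with continuous boundary data), $|h(x)-h(x_0)|\leq \mathrm{osc}_{\partial_oF_Q}h\leq M_Q$ for any reference point $x_0\in\partial_oF_Q$. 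On the $F_{m+n}$ side, $g|_{F_{m+n,Q}}$ is the harmonic extension of a function whose sup-norm, by Lemma \ref{lemma44}(b.1)'s bound $\|f\|_\infty\leq C\|u\|_\infty$ with $u$ the vector of prescribed averages, is $\leq C\max_A|[h\circ\Psi_Q]_{\nu|_A}|\leq C(|h(x_0)|+M_Q)$; then the maximum principle for reflected Brownian motion on $F_{m+n,Q}$ gives $|g(y)-h(x_0)|\leq C M_Q$ (after the same constant subtraction). Combining, $|h(x)-g(y)|\leq |h(x)-h(x_0)|+|g(y)-h(x_0)|\leq C M_Q$, which is (iii). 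I expect the main obstacle to be the gluing/consistency argument in the construction — ensuring that the cellwise interpolants in $\mathscr{C}_{m,m}$, transplanted to different but face-adjacent cells $Q,Q'$, actually agree on the shared face $F_{m+n,Q}\cap F_{m+n,Q'}$ — which forces one either to choose the interpolation basis in a face-symmetric way or to define $g$ first on the full boundary skeleton $F_{\mcB_n(F_n)}$ and only then extend harmonically into each cell interior; the estimates (i)--(iv) are then comparatively routine bookkeeping with the tools already in the paper.
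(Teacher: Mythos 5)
Your overall architecture (prescribe the level-$m$ fractal averages of $h$ on the transplanted boundary skeleton $\bigcup_{Q\in\mcQ_n(F_n)}\partial_o F_{m+n,Q}$, extend harmonically inside each cell, get (i) and (iv) for free, (iii) from the maximum principle, and (ii) from a cellwise Besov estimate) matches the paper's strategy, and you correctly flag the one real obstruction: gluing. But the gap you flag is genuine and is not resolved, and it is not a routine detail. The interpolation space $\mathscr{C}_{m,m}$ of Lemma \ref{lemma44}(b) is built from the partition-of-unity functions $g^*_A$ (and corrections $g^{**}_A$) whose supports straddle the edges of the cube, so the restriction of $f_{u}\in\mathscr{C}_{m,m}$ to one face of $\partial_o F_m$ depends on the data $u$ on sub-faces of the \emph{adjacent} faces as well. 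For two face-adjacent $n$-cells $Q,Q'$ the two transplants $f_{Q}\circ\Psi_Q^{-1}$ and $f_{Q'}\circ\Psi_{Q'}^{-1}$ are fed the same averages on the shared face but different averages on the other faces of $Q$ and $Q'$, so they generally disagree near the $(d-2)$-dimensional edges of the shared face; $g$ is then not even well defined. Moreover, any repair that modifies the boundary data to force agreement (symmetrizing the two candidates, re-interpolating face by face, etc.) takes the datum out of $\mathscr{C}_{m,m}$, and then Lemma \ref{lemma44}(b.2) no longer applies, so your entire proof of (ii) — which must control $I^{(m)}_k$ at \emph{all} levels $k\geq m$, including cross-face pairs near edges — collapses and has to be redone from scratch. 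So the "comparatively routine bookkeeping" is contingent on exactly the step you left open.

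The paper resolves this by abandoning $\mathscr{C}_{m,m}$ for the boundary data and using an explicit Euclidean interpolant instead: corner values on the skeleton are defined by symmetrically averaging the quantities $[h]_{\nu|_{S\cap F}}$ over all level-$(m+n)$ sub-faces $S$ containing the corner; on each sub-face one takes the multilinear extension $v_u$ of the corner values, whose restriction to the boundary of the sub-face depends \emph{only} on the corners of that boundary (so adjacent pieces glue automatically, across faces and across cells), corrected by a fixed bump $w^*$ vanishing on the sub-face boundary so that the prescribed average $[h]_{\nu|_{S\cap F}}$ is achieved without disturbing the glued edge values; then one extends harmonically into the cell interiors. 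Property (ii) is then proved not via Lemma \ref{lemma44}(b.2) but by direct gradient bounds on $w_{v_S,a_S}$: these give $I^{(m)}_{m+k}$ of the boundary datum a factor $L_F^{(d-3)k}$ times the level-$m$ discrete energy of $h$, and summing against the weights $\varphi_m(L_F^{-(m+k)})$ yields a convergent geometric series, with the non-diagonality condition (SC3) used to chain neighboring sub-faces. Your fallback suggestion (define $g$ first on the skeleton, then extend) is indeed what the paper does, but carrying it out requires this new interpolant and a fresh proof of (ii); with your construction as written, the lemma is not proved.
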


\begin{proof}
We first record two observations.  With a slight abuse of notations, in this proof, 
$\nu_0$ also denotes the Lebesgue measure on $[0,1]^{d-1}$, which is the same as 
the normalized $(d-1)$-dimensional Hausdorff measure on $[0, 1]^{d-1}$.
	
	\smallskip
	
\noindent{\it Observation 1.} For each  function $u$ defined on $\{0,1\}^{d-1}$, it can be uniquely extended to be a continuous function $v_u$ on $[0,1]^{d-1}$ that is
linear on each line segment parallel to a coordinate axis, i.e. $v_u\big(ta+(1-t)b\big)=tv_u(a)+(1-t)v_u(b)$ for any $0\leq t\leq 1$ and $a,b\in [0,1]^{d-1}$ such that $\#\{i\in \{1,\cdots,d-1\}:a_i\neq b_i\}=1$. Indeed, the uniqueness of such $v_u$ is clear as $u$ uniquely determines by linearity 
the values of $v_i$ on each faces of the unit hypercube $[0, 1]^{d-1}$, which in turn uniquely determine the values of $v_u$ in the interior of $[0, 1]^{d-1}$.
 For the existence, we can define $v_u$ recursively as follows.
 Let $u_0=u$, and   for $1\leq k\leq d-1$, define $u_k\in C\big([0,1]^k\times \{0,1\}^{d-k-1}\big)$ by 
\[
u_k(a,t,b)=(1-t)u_{k-1}(a,0,b)+tu_{k-1}(a,1,b)  \quad \hbox{for }  (a, t) \in [0,1]^{k-1}\times [0,1],b\in \{0,1\}^{d-k-1},
\]
with the convention that for a set $A$,  $A^0:= \emptyset$. 
Then the function  $v_u= u_{d-1}$ is a continuous function $[0,1]^{d-1}$ that has the desired property. In particular, 
$$
\int_{[0, 1]^{d-1}} v_u (x) \nu_0 (dx) = 2^{1-d} \sum_{y\in \{0,1\}^{d-1}} u(y).
$$

The restriction of $v_u$ on a face of $[0,1]^{d-1}$ depends only on the values of $u$ on the corner points in the face, so, we can glue such functions on copies of $[0,1]^{d-1}$. Another useful property, which can been easily seen through induction,  is that 
$$
|\nabla v_u(x)|\leq (d-1)\max_{y,z\in \{0,1\}^d \atop |y-z|=1}|u(y)-u(z)| 
\quad  \hbox{for }  x\in (0,1)^{d-1}.
$$

\medskip

\noindent {\it Observation 2.}  Let $w^*\in C^1([0,1]^{d-1})$ be such that $w^*=0$ on $\partial ([0,1]^{d-1})
:= [0, 1]^{d-1} \setminus (0, 1)^{d-1}$ and $\int_{[0, 1]^{d-1}} w^*(x) \nu_0(dx) =1$. For each $u\in l(\{0,1\}^{d-1})$ and $a\in \R$, define 
\begin{equation}\label{e:5.1}
w_{u,a} (x)=v_u (x) + 2^{1-d} \sum_{y\in \{0,1\}^{d-1}} (a-u(y)) \,w^* (x) \quad \hbox{for } x\in [0,1]^{d-1},
\end{equation} 
	where $u_v\in C[2^{-d+1}]$ is the function in {\it Observation 1}. Clearly, $w_{u, a}\in C^1([0, 1]^{d-1})$ with 
	$w_{u,a}|_{\partial ([0,1]^{d-1})}=v_u|_{\partial ([0,1]^{d-1})}$,
	$[w_{u,a}]_{\nu_0|_{[0,1]^{d-1}}}=a$, and for some constant $C_1>0$ depending only on $d$,
\begin{equation}\label{e:5.2}
|\nabla w_{u,a}(x)|\leq C_1\, \max_{y\in \{0,1\}^{d-1}}|u(y)-a|
\quad  \hbox{for } x\in(0,1)^{d-1}.
\end{equation}

\medskip
	
Fix $n, m\geq 0$. We will use \eqref{e:5.1}  to construct a function $g$ as follows. For $k\geq 0$, we let
\[
\mathcal{A}_k=\{\Psi_Q(B):Q\in \mcQ_n(F_n),\ B\in\eth_{m+k}F_m\},
\]
and, for each $A\in \mathcal{A}_0$,  let $\Phi_A:[0,1]^{d-1}\to A$ be an affine map (here the orientation is not important). Let $V_A=\Phi_A (\{0,1\}^{d-1})$ for each $A\in \mathcal{A}_0$ and $V=\bigcup_{A\in \mathcal{A}_0}V_A$.
	
	\smallskip
	
Let $h\in \mcF^{(F)}\cap C(F)$ that is $\mcE^{(F)}$-harmonic in $F\setminus \partial_oF$.

	\smallskip
	
\noindent\textbf{Step 1}. We construct $g^*$ on $V$ first. For each $x\in V$,  let 
\[
g^*(x)=\frac{1}{\#\{S\in\mathcal{A}_0,x\in V_S\}}\sum_{S\in \mathcal{A}_0:x\in V_S}[h]_{\nu|_{S\cap F}}.
	\]
	
\noindent\textbf{Step 2}. 
We extend $g^*$ to $g^{**}\in C(\bigcup_{A\in \mathcal{A}_0} A )$ using {\it Observation 2} \, as follows. 
For each $S\in \mathcal{A}_0$, let $v_S:=g^*|_{V_S}\circ \Phi_S$,  $a_S:=\fint_{S\cap F} h(x) \nu (dx)$, and  
\begin{equation}\label{e:5.3}
g^{**}|_{S}:=w_{v_S,a_S}\circ \Phi_S^{-1},
\end{equation}
where $w_{v_S,a_S}$ is defined as in \eqref{e:5.1}. By the construction of $g^*$ in Step 1 and \eqref{e:5.2}, there is some constant $C_2 >0$ depending only on $d$ so that 
\begin{equation}\label{eqn52}
\max_{  y\in [0,1]^{d-1}}|\nabla w_{v_S,a_S}(y)|\leq C_2\, \max_{S'\in \mathcal{A}_0:S\cap S'\neq \emptyset}|[h]_{\nu|_{S\cap F}}-[h]_{\nu|_{S'\cap F}}|.
\end{equation}
	
\noindent\textbf{Step 3}. Let $g$ be the unique function in $W^{1,2}(F_{n+m})$ 
so that $g=g^{**}$ on $\bigcup_{A\in \mathcal{A}_0} A$ and $g$ is harmonic in $F_{m+n}\setminus
\bigcup_{A\in \mathcal{A}_0} A$. Since every point on $\bigcup_{A\in \mathcal{A}_0} A$ is regular for $\bigcup_{A\in \mathcal{A}_0} A$
and $g^{**}\in C(\bigcup_{A\in \mathcal{A}_0} A)$, we have $g\in C(F_{n+m})$.
	
	\medskip
	
	From the construction, we see immediately that  $g_{n, m}:=g$ enjoys properties (i) and (iv) of the Lemma. 
	
	\medskip
	
	To see (iii), fix $Q\in \mcQ_n(F_n)$ and $x\in F_Q$. Then, for each $y\in \partial_o F_{m+n,Q}$ and $ S\in \mathcal{A}_0$ such that $y\in S\subset \partial_o F_{m+n,Q}$, we have by (i), \eqref{e:5.3}
	and \eqref{eqn52} that
	\begin{align}
	|g(y)-[h]_{\nu|_{S\cap F}}|
	&=\big|g^{**}(y)-[g^{**}]_{\nu_0|_S}\big|=\big|w_{v_S,a_S}\big(\Phi_S^{-1}(y)\big)-[w_{v_S,a_S}]_{\nu_0|_{[0,1]^{d-1}}}\big|\nonumber\\
	&\leq   \sqrt{d-1} \,   \sup_{z\in [0,1]^{d-1}} |\nabla w_{v_S,a_S}(z)|    \label{e:5.4}  \\
	&\leq  C_2  \, 
	\sqrt{d-1}\max\limits_{S'\in \mathcal{A}_0:S\cap S'\neq \emptyset}|[h]_{\nu|_{S\cap F}}-[h]_{\nu|_{S'\cap F}}|.
	\nonumber 
	\end{align}
	Hence 
	\[
	|h(x)-g(y)|\leq |h(x)-[h]_{\nu|_{S\cap F}}|+|g(y)-[h]_{\nu|_{S\cap F}}|\leq (1+C_2\sqrt{d-1})\max\limits_{x',y'\in
	F_{\mathcal{S}_Q}}|h(x')-h(y')|.
	\]
	Property (iii) now  follows immediately since $g$ is harmonic in $F_{m+n,Q}\setminus 
	\bigcup_{Q\in \mcQ_n(F_n) } \partial_o  F_{m+n,Q}$.
	
	\medskip
	
	It remains to show (ii). For $k\geq 0$ and $A,B\in \mathcal{A}_k$, we say $A\sim B$ if there is $Q\in \mcQ_n(F_n)$ such that $A,B\subset Q$ and $\Psi_Q^{-1}(A)\sim \Psi_Q^{-1}(B)$ in the sense of \eqref{e:4.2a} but with $m+k$ in place of $k$ there. 
	Then 
	\[
	\sum\limits_{Q\in \mcQ_n(F_n)}I^{(m)}_{m+k}[(g\circ \Psi_Q)|_{\partial_o  F_m}]=\sum_{A,B\in \mathcal{A}_k:A\sim B}\big([g]_{\nu_{m+n}|_A}-[g]_{\nu_{m+n}|_{B}}\big)^2.
	\] 
	
	  Next,   for $A\in \mathcal{A}_k$ with 
	$k\geq 1$ and $S\in \mathcal{A}_0$ such that $A\subset S$
	\begin{eqnarray}\label{e:5.6}
	\big|g^{**}(y)-[g^{**}]_{\nu_0|_A}\big|&=&
	\big|w_{v_S,a_S}\big(\Phi_S^{-1}(y)\big)-[w_{v_S,a_S}]_{\nu_0|_{\Phi_S^{-1}(A)}}\big|  \nonumber \\
	&\leq & L_F^{-k}\sqrt{d-1}\sup_{z\in [0,1]^{d-1}} |\nabla w_{v_S,a_S}(z)|   \nonumber \\
	&\leq &  L_F^{-k}C_2\sqrt{d-1}\max\limits_{T\in \mathcal{U}(S)}|[h]_{\nu|_{S\cap F}}-[h]_{\nu|_{T\cap F}}|,
	\end{eqnarray}
where in the first inequality we used the fact that  $\Phi_S^{-1}(A)$ is a $(d-1)$-dimensional cube with side length $L_F^{-k}$, and in the second inequality we used \eqref{eqn52} with 
$\mathcal{U}_S := \{T\in \mathcal{A}_0:\ T\cap S\neq \emptyset\}$ for $S\in \mathcal{A}_0$.
 Fix $k\geq 1$ and $A,B\in \mathcal{A}_k$ such that $A\sim B$. 
	Let $S,S'\in \mathcal{A}_0$ be such that $A\subset S$, $B\subset S'$. Then either $S=S'$ or $S\cap S'\neq \emptyset$.  Then there are positive constants 
	$C_3$ and $C_4$ depending only on $d$ so that 
	\begin{eqnarray}
	&& \big|[g]_{\mu_{m+n}|_A}-[g]_{\mu_{m+n}|_{B}}\big|^2  \nonumber \\
	& \leq &{C_3\, L_F^{-2k}}\Big(\sum_{T\in {\mathcal U}_S} \big([h]_{\nu|_{T\cap F}}-[h]_{\nu|_{S\cap F}}\big)^2+\sum_{T\in \mathcal{U}_{S'}} \big([h]_{\nu|_{T\cap F}}-[h]_{\nu|_{S'\cap F}}\big)^2\Big)  \nonumber \\
	&\leq &C_4\, {L_F^{-2k}}\, 
	\Big(\sum_{T,T'\in  {\mathcal U}_S:T\sim T'} \big([h]_{\nu|_{T\cap F}}-[h]_{\nu|_{T'\cap F}}\big)^2+\sum_{T,T'\in\mathcal{U}_{S'}:T\sim  T'}\big([h]_{\nu|_{T\cap F}}-[h]_{\nu|_{T'\cap F}}\big)^2\Big),  \nonumber \\
	\label{e:5.7}
	\end{eqnarray}
	 	 where we used \eqref{e:5.6} (by taking $y\in S\cap S'$) in the first inequality, 
	while for the second inequality, we used (SC3) on the non-diagonality of {\it GSC} $F$,
	which implies that if $T\cap S\neq \emptyset$, 
	there are $S=S_1,S_2,\cdots, S_l=T\in \mathcal{A}_0$ with  $l\leq 2^{2d}$ so that 
	$S\cap S_i\neq \emptyset$ for every $ i\in \{1,\cdots, l\}$ and $S_i\cup S_{i+1}\subset Q$ for some $Q\in \mathcal{Q}_{m+n}(F)$
	(so $S_i\sim S_{i+1}$) for every $i\in \{1,\cdots,l-1\}$.

	For $A,B\in \mathcal{A}_k$ such that $A\sim B$, 
	take and then fix $S(A),S(B)\in \mathcal{A}_0$  so that $A\subset S(A)$ and $B\subset S(B)$.
	Summing \eqref{e:5.7} over all possible $A,B\in \mathcal{A}_k$ with $ A\sim B$, we have 
	\begin{align*}
	&\sum_{Q\in \mcQ_n(F_n)}I_{m+k}^{(m)}[g\circ \Psi_Q|_{\partial_o  F_m}]\\
	=&\sum_{A,B\in \mathcal{A}_k:A\sim B}\big([g]_{\nu_{m+n}|_A}-[g]_{\nu_{m+n}|_{B}}\big)^2  \\
	 \leq &\sum_{A,B\in \mathcal{A}_k\atop A\sim B}
		C_4 L_F^{-2k}\cdot\Big(\sum_{T,T'\in {\mathcal U}_{S(A)} \atop T\sim T'} \big([h]_{\nu|_{T\cap F}}-[h]_{\nu|_{T'\cap F}}\big)^2+\sum_{T,T'\in \mathcal{U}_{S(B)}\atop T\sim T'}\big([h]_{\nu|_{T\cap F}}-[h]_{\nu|_{T'\cap F}}\big)^2\Big)\\
    \leq& 2C_4L_F^{-2k}\sum_{S\in \mathcal{A}_0}\sum_{A\in \mathcal{A}_k\atop A\subset S}\sum_{B\in \mathcal{A}_k\atop  B\sim A}\sum_{T,T'\in {\mathcal U}_S\atop T\sim T'} \big([h]_{\nu|_{T\cap F}}-[h]_{\nu|_{T'\cap F}}\big)^2 \\
	\leq &C_5L_F^{-2k}\sum_{S\in \mathcal{A}_0}\sum_{A\in \mathcal{A}_k\atop A\subset S}\sum_{T,T'\in {\mathcal U}_S\atop T\sim T'} \big([h]_{\nu|_{T\cap F}}-[h]_{\nu|_{T'\cap F}}\big)^2\\
	=&C_5L_F^{-2k}\,L_F^{(d-1)k}\, \sum_{S\in \mathcal{A}_0}\sum_{T,T'\in {\mathcal U}_S\atop T\sim T'} \big([h]_{\nu|_{T\cap F}}-[h]_{\nu|_{T'\cap F}}\big)^2\\=&C_5L_F^{(d-3)k}\sum_{T,T'\in \mathcal{A}_0\atop T\sim T'}\sum_{S\in \mathcal{A}_0\atop T,T'\in {\mathcal U}_S}\big([h]_{\nu|_{T\cap F}}-[h]_{\nu|_{T'\cap F}}\big)^2\\
	\leq&  C_6L_F^{(d-3)k}\sum\limits_{Q\in \mcQ_n(F_n)}I_m[(h\circ \Psi_Q)|_{\partial_o  F}]
	\end{align*}
	for some $C_5,C_6$ depending only on $d$, where the third inequality is due to the fact that for each $A\in \mathcal{A}_k$	there are at most $ 3^d +L_F^{d-1}$ number of $B\in \mathcal{A}_k$ with $B\sim A$. Hence 
	\begin{eqnarray*}
	&& \sum_{Q\in \mcQ_n(F_n)} \Lambda_{m}^{(m)}[( g\circ \Psi_Q )|_{\partial_o  F_m}]\\
	&=& \sum_{Q\in \mcQ_n(F_n)}  \sum_{k=0}^\infty \varphi_m (L_F^{-(m+k)})  I_{m+k}^{(m)}[ (g\circ \Psi_Q ) |_{\partial_o  F_m}]\\
	&\leq &   \sum_{k=0}^\infty   L_F^{(d_w-d_f + d-2)m} L_F^{-(d-2)(m+k)} \, C_6 \, L_F^{(d-3)k} \sum_{Q\in \mcQ_n(F_n)}   
	I_m[(h\circ \Psi_Q)|_{\partial_o  F}] \\
	&=& \frac{C_6}{1-(1/L_F)}   \sum_{Q\in \mcQ_n(F_n)}    L_F^{(d_w-d_f)m}  I_m[(h\circ \Psi_Q)|_{\partial_o  F}] \\
	&\leq & \frac{C_6}{1-(1/L_F)} \sum\limits_{Q\in \mcQ_n(F_n)}\Lambda_m[(h\circ \Psi_Q)|_{\partial_o  F}].
	\end{eqnarray*}
	This establishes part (ii) of the Lemma. 
\end{proof}

\medskip

\begin{proof}[Proof  of Theorems \ref{T:1.1}, \ref{T:1.3}  and \ref{T:1.4}]
 Let $h$ be the unique function that is $\sE^{(F)}$-harmonic in $F\setminus (\partial_{1,0}F\cup \partial_{1,1}F)$
	having $h|_{\partial_{1,0}F}=0$ and $h|_{\partial_{1,1}F}=1$. For each $m,n\geq 0$, we construct a function $g_{m,n}$ using Lemma \ref{lemma51}. Recall that $\{m_k,k\geq 1\}$ is the subsequence in \eqref{e:5.1a}, and without loss of generality, we assume $m_1=0$. For 
	any integer $l\geq 0$, define 
	\[
	k(l):=\max\{k\geq 1:m_k\leq l/2\}
	\] 
	and set 
	$$
	       g_l :=g_{m_{k(l)},l-m_{k(l)}}   .
	$$
    We claim that $g_l\rightarrowtail h$  as $l\to \infty$. 
	For this, we need to show $\lim_{l\to \infty} g_l(y_l) = h(x)$   for any $x\in F$ and $y_l\in F_l$ so that $y_l\to x$ as $l\to \infty$. 
	Fix such $x\in F$ and $y_l\in F_l$. For $l\geq 0$, let $x_l\in F$ be such that $x_l$ and $y_l$  belong to the 
	same $Q(l)\in \mathcal{Q}_{l-m_k(l)}(F_l)=\mathcal{Q}_{l-m_k(l)}(F)$. By Lemma \ref{lemma51}(iii),
	$$
	|g_l(y_l)-h(x_l)|\leq C'_1\max_{x',y'\in F_{\mathcal{S}_{Q(l)}}}|h(x')-h(y')|.
	$$
	Since $l-m_k(l)\geq l/2$ and $h$ is uniform continuous on $F$, $ \lim_{l\to \infty} |g_l(y_l)-h(x_l)|= 0$.
	In addition, $\lim_{l\to \infty} h(x_l)=h(x)$ because $\rho(x_l,x)\leq \rho(x_l,y_l)+\rho(y_l,x)\to 0$ as $l\to\infty$. 
	Hence we have $\lim_{l\to \infty} g_l(y_l)= h(x)$. This proves the claim that $g_l\rightarrowtail h$ as $l\to \infty$. 
	
	We next estimate $\mcE^{(F_l)}(g_l)$. For each $n,n'\geq 0$, define $D_{n,n'}:=\bigcup_{Q\in \mathcal {Q}_{n'} (F_{n'})} \Psi_Q (F_{\mcB_{n}(F)})$. 	By Lemma \ref{lemmaA2} and by applying Theorem \ref{thmB1} on each $Q\in \mcQ_{n'} (F_{n'})\setminus \mcB_{n'} (F)$,  we have
	\begin{equation}\label{eqn53}
	\begin{aligned}
	\mu^{(F)}_{\<h\>}(D_{n,n'})&=\mu^{(F)}_{\<h\>}(F_{\mcB_{n'}(F)})+\sum_{Q\in \mcQ_{n'}(F)\setminus\mcB_{n'}(F)}\mu^{(F)}_{\<h\>}\big(\Psi_Q (F_{\mcB_{n}(F)})\big)\\
	&\leq \mu^{(F)}_{\<h\>}(F_{\mcB_{n'}(F)})+C_1e^{-c_1n}\sum_{Q\in \mcQ_{n'}(F)\setminus\mcB_{n'}(F)}\mu^{(F)}_{\<h\>}(F_{\mathcal{S}_Q})\\
	&\leq \mu^{(F)}_{\<h\>}(F_{\mcB_{n'}(F)})+3^dC_1e^{-c_1n}\mcE^{(F)}(h),
	\end{aligned}
    \end{equation}
 where $C_1$ and $c_1$ are the positive constants from Theorem \ref{thmB1} that depend only on $F$. In the last line we used the fact that each $Q'\in \mathcal{Q}_{n'}(F)$, $F_{Q'}$ is covered by at most $3^d$ number of $F_{S_{Q}}$ with $Q\in \mathcal{Q}_{n'}(F)$ as well as Corollary \ref{coroA5}. Note that (A1),(A2) and (A3) are satisfied for $(\bar \sE^{(k)}, \bar \sF^{(k)})=(\alpha\sE^{(m_k)}, W^{1,2}(F_{m_k}))$ by Lemma \ref{lemma41}, \eqref{e:5.1a} and 
the first paragraph of this section, and Lemma \ref{lemma34}, respectively. Note also that for non-negative $a, b$ with $ c:=|a-b|$ and any $\eps \in (0, 1)$,
	\begin{equation}\label{e:5.5}
	a^2 \leq (b+c)^2 \leq (1+\eps)b^2 + (1+\eps^{-1}) c^2.
	\end{equation}
	Fix $n\geq 1$ and $\varepsilon \in (0, 1)$. Since $ \lim_{l\to \infty} k(l) = \infty$, 
	applying Proposition \ref{prop41} locally on $F_Q$ for each $Q\in \mathcal{Q}_{l-m_{k(l)}}(F)$ with sufficiently large $l$, we have for some constants $C_2,C_3\geq 1$ depending only on $F$ that
	\begin{eqnarray*}
	&&\limsup\limits_{l\to\infty}\Big(\alpha \mcE^{(F_l)}(g_l)-(1+\varepsilon)\mcE^{(F)}(h)\Big)\\
	&= &\limsup\limits_{l\to\infty}L_F^{(d_w-d_f)(l-m_{k(l)})}\cdot\sum_{ Q\in \mathcal{Q}_{l-m_{k(l)}}(F)}
	\Big(\alpha\mcE^{(F_{m_{k(l)}})}(g_l\circ\Psi_Q)-(1+\varepsilon)\mcE^{(F)}(h\circ\Psi_Q)\Big)\\
	&\leq & \limsup\limits_{l\to\infty}L_F^{(d_w-d_f)(l-m_{k(l)})}\cdot\sum_{ Q\in \mathcal{Q}_{l-m_{k(l)}}(F)}
	 \left(1+\frac1{\eps}\right) \left| \sqrt{\alpha\mcE^{(F_{m_{k(l)}})}(g_l\circ\Psi_Q)}-\sqrt{\mcE^{(F)}(h\circ\Psi_Q)} \right|^{2} 
	\\
	&\leq&\limsup\limits_{l\to\infty}L_F^{(d_w-d_f)(l-m_{k(l)})}\cdot\sum_{ Q\in \mathcal{Q}_{l-m_{k(l)}}(F)}\frac{C_2}{\varepsilon}
	\big(\mu^{(F)}_{\< h\circ\Psi_Q\>}(F_{\mcB_{n-1}})+\Lambda^{(m_{k(l)})}_n[(g_l\circ \Psi_Q)|_{\partial_o  F_{m_k(l)}}]\big)\\
	&\leq&\limsup\limits_{l\to\infty}L_F^{(d_w-d_f)(l-m_{k(l)})}\cdot\sum_{ Q\in \mathcal{Q}_{l-m_{k(l)}}(F)}\frac{C_3}{\varepsilon}\mu^{(F)}_{\< h\circ\Psi_Q\>}(F_{\mcB_{n-1}})\\
	&=&\limsup\limits_{l\to\infty}\frac{C_3}{\varepsilon}\mu^{(F)}_{\<h\>}(D_{n-1,l-m_{k(l)}})\\
	&\leq &\frac{C_3}{\varepsilon}\,{ 3^d}C_1e^{-c_1(n-1)}\mcE^{(F)}(h),
    \end{eqnarray*}	
	where we used self-similarity in the first equality, \eqref{e:5.5} in the first inequality, 
	  Proposition \ref{prop41} in the second inequality, Lemma \ref{lemma51}(ii)  and Theorem \ref{T:A.3}(b) in the third inequality, self-similarity and Corollary \ref{coroA5} in the second equality, and 
	  \eqref{eqn53} and the fact that $\lim\limits_{l\to\infty}\mu_{\<h\>}^{(F)}(F_{\mcB_{l-m_{k(l)}}})=\mu_{\<h\>}^{(F)}(\partial_o F)=0$ from Corollary \ref{coroA5} in the last inequality. 
		By taking $n\to\infty$, we get  
		$
		\limsup\limits_{l\to\infty}\alpha\mcE^{(F_l)}(g_l)\leq (1+\varepsilon)\mcE^{(F)}(h)$.
		Letting $\varepsilon\to 0$ yields 
		$$\limsup\limits_{l\to\infty}\alpha\mcE^{(F_l)}(g_l)\leq \mcE^{(F)}(h).
		$$
		On the other hand,
		since $g_l\rightarrowtail h$, $g_l \in C(F_l)$ and $h\in C(F)$, 
		by Lemma \ref{lemma29}, $g_l\to h$ strongly in $L^2$ as $l\to \infty$.
		Hence  by the Mosco convergence (M1) of $\alpha_m \sE^{(F_m)}$ to $\sE^{(F)}$,  
		$$
		\liminf_{l\to \infty} \alpha \mcE^{(F_l)}(g_l) \geq \liminf_{l\to \infty} \alpha_l \mcE^{(F_l)}(g_l) \geq \mcE^{(F)}(h).
		$$
		Thus we have 
	\begin{equation}\label{eqn55}
	\lim_{l\to\infty}\alpha\mcE^{(F_l)}(g_l)=\mcE^{(F)}(h).
	\end{equation}

	 Now we show that  $\lim\limits_{m\to\infty}\alpha_m=\alpha := \limsup\limits_{m\to\infty}\alpha_m$. Suppose not, 
	 then there is a subsequence $\{l_k; k\geq 1\}$ such that $\lim\limits_{k\to\infty}\alpha_{l_k}=\delta<\alpha$. 
	By the Mosco convergence (M1) of $\alpha_m \sE^{(F_m)}$ to $\sE^{(F)}$,
	$$
	\delta\lim_{k\to\infty}\mcE^{(F_{l_k})}(g_{l_k})=\lim_{k\to \infty} \alpha_{l_k}\mcE^{(F_{l_k})}(g_{l_k})\geq \mcE^{(F)}(h).
	$$
	This contradicts \eqref{eqn55}. So we must have  
	\begin{equation}\label{e:5.12} 
	\lim\limits_{m\to\infty}\alpha_m=\alpha .
	\end{equation} 
	This together with Theorems \ref{T:3.11} and \ref{T:3.12} and \eqref{e:3.11a}  establishes Theorems \ref{T:1.1} and \ref{T:1.3}.
	
	\medskip 
	
	Finally, we show the convergence of the effective resistances. First, since $g_l \rightarrowtail h$ as $l\to \infty$, 
	\eqref{eqn55} implies that 
	\[
	\liminf\limits_{n\to\infty}R_n\geq\lim\limits_{l\to\infty}\big(\mcE^{(F_l)}(g_l)\big)^{-1}=\alpha\big(\mcE^{(F)}(h)\big)^{-1}.
	\]
	To see the other direction, let $h_l\in C(F_l)\cap W^{1,2}(F_l)$ be the unique function such that $h_l|_{\partial_{1,0}F_l}=0$, $h_l|_{\partial_{1,1}F_l}=1$ and $h_l$ is $\mcE^{{(F_l)}}$-harmonic in $F_l\setminus (\partial_{1,0}F_l\cup \partial_{1,1}F_l)$. Then, by the same proof of Lemma \ref{lemma41}, for each subsequence $l_k,k\geq 1$ there is a further 
	subsequence $\{ l'_k,k\geq 1\}$ and $h'\in C(F)$ such that $h_{l'_k}\rightarrowtail h'$. As $h'|_{\partial_{1,0}F}=0$ and $h'|_{\partial_{1,1}F}=1$, by the Mosco convergence (M1),
	\[
	\limsup\limits_{k\to\infty}R_{l'_k}=\big(\liminf\limits_{k\to\infty}\mcE^{(F_{l'_k})}(h_{l'_k})\big)^{-1}\leq\alpha \big(\mcE^{(F)}(h')\big)^{-1}\leq\alpha/ \mcE^{(F)}(h).
	\]  
	Since the argument works for each subsequence, we have $\limsup_{n\to \infty} R_n \leq \alpha /\mcE^{(F)}(h)$ and so $\lim\limits_{n\to\infty}R_n=\alpha/\mcE^{(F)}(h)$.
	This proves Theorem \ref{T:1.4}.
\end{proof}

\appendix
\renewcommand{\appendixname}{Appendix~\Alph{section}}

\section{Trace theorems and energy measures on the boundary}\label{secA}

We prove some trace theorems in this appendix, based on the Poincar\'e inequalities in Lemma \ref{lemma39}, the estimate $d_f-d_w<d_I$ 
from Lemma \ref{lemma38}, and the capacity estimates in Lemma \ref{lemmaHK}. 
For simplicity, the theorems are stated for continuous functions. 
 The proof of the restriction part,  Theorem \ref{T:A.3},  is based on the method in \cite{CQ1}, while the extension part,
Theorem \ref{T:A.4},  is proved by adapting the approach in \cite{HK} where part of the outer boundary $\partial_{1,0}F$ is considered as oppose to the whole outer boundary $\partial_o F$ in this paper. 
 We also remark that the study of trace theorems on the boundary of fractals goes back to \cite{Jonsson}.
 
 \medskip

We begin with a lemma  on the connectedness  of certain subdomains of $F$ and $F_m$. 

\begin{lemma}\label{lemmaA1}
	Suppose that $I\subset \{(i,s):i\in \{1,2,\cdots,d\},s\in \{0,1\}\}$ and $j=1,2,\cdots$. Set   
	\[
	\begin{aligned}
		K&=F\setminus \bigcup_{(i,s)\in I}  \left\{x\in F:\rho(x,\partial_{i,s}F_0)< L_F^{-j}/2  \right\},\\
		K_m&=F_m\setminus \bigcup_{(i,s)\in I} \left\{x\in F_m:\rho(x,\partial_{i,s}F_0)< L_F^{-j}/2  \right\}
		\quad \hbox{for }  m\geq 0.
	\end{aligned}
	\]
	Then $K$ and $K_m,m\geq 0$ are pathwise connected. 
\end{lemma}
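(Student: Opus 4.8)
The plan is to reduce the statement to a purely combinatorial fact about the cubical complex of $F$, exploiting that $F$ contains the full $1$-skeleton of each of its cubical decompositions. Write $w:=L_F^{-j}/2$ and $P:=\bigcap_{(i,s)\in I}\{x\in\R^d:\rho(x,\partial_{i,s}F_0)\ge w\}$, so that $P$ is a convex polytope and $K=F\cap P$, $K_m=F_m\cap P$. I would begin with two reductions. First, since $F_0\supseteq F_1\supseteq\cdots$ and $F=\bigcap_mF_m$, the sets $K_m$ form a decreasing sequence of compacta with $K=\bigcap_mK_m$, so it suffices to treat the $K_m$; pathwise connectedness of $K$ then follows either from local path-connectedness of $K$ (inherited from that of $F$) or from a compactness argument applied to a sequence of paths $\gamma_m$ in $K_m$ joining two given points of $K$. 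Second — and this generality is forced once one passes to subcubes — it is convenient to prove the following statement by induction, for \emph{every} GSC $F$ in \emph{every} dimension $d\ge2$: for every $I$, every $m\ge0$, and every polytope $P=\bigcap_{(i,s)\in I}\{x:\rho(x,\partial_{i,s}F_0)\ge w_{i,s}\}$ with $w_{i,s}>0$ \emph{possibly unequal from face to face}, the set $F_m\cap P$ is pathwise connected or empty (for the polytope of the Lemma one checks $F_m\cap P\ne\emptyset$ using the interior edges of $F$ furnished by (SC4)).

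The key structural input is that iterating (SC4) and (SC1) puts the $1$-skeleton of the unit cube into $F_1$, hence into every $F_n$ and into $F$; combined with the self-similar decompositions $F_m=\bigcup_{Q\in\mcQ_1(F)}\Psi_Q(F_{m-1})$, this places the $1$-skeleton of the level-$n$ cubical decomposition inside $F_m$ for all $n\le m$ (and inside $F$ for all $n$), and in particular shows that any cube $Q$ whose intersection with $P$ has nonempty interior contributes a nonempty set $F_m\cap Q\cap P$. With this in hand I would run the induction on $m$. The base case $m=0$ is immediate, $F_0\cap P=P$ being convex. For the inductive step, write $F_m\cap P=\bigcup_{Q\in\mcQ_1(F)}\Psi_Q(F_{m-1}\cap P_Q)$ with $P_Q:=\Psi_Q^{-1}(P)$; one computes that $P_Q$ is again a polytope of the admissible type (its width in a given direction on a given side is $L_F$ times the old width shifted by an integer, hence either nonpositive — no constraint — or positive, and when it is $\ge1$ the cube contributes either nothing or, in a few exceptional values of $L_F$, a lower-dimensional slice handled by a parallel induction on $d$ whose base case $d=2$ is trivial). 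By the inductive hypothesis each $\Psi_Q(F_{m-1}\cap P_Q)$ is pathwise connected or empty, and these pieces glue into a pathwise connected set as soon as the \emph{incidence graph} — with vertices the cubes $Q$ having $F_{m-1}\cap P_Q\ne\emptyset$ and an edge between $Q$ and $Q'$ whenever they share an interior $(d-1)$-face $f$ with $F_m\cap f\cap P\ne\emptyset$ — is connected; one then joins two points of $F_m\cap P$ by concatenating arcs that stay inside successive pieces and cross through common points on the shared faces. The condition $F_m\cap f\cap P\ne\emptyset$ for adjacent contributing cubes is elementary, since $F_m\cap f$ contains the $1$-skeleton of $f$ (alternatively it follows from the dimension induction).

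The remaining step, which I expect to be the main obstacle, is the connectedness of this incidence graph. Unwinding definitions, its vertex set is $\mcQ_1(F)$ with, near each face $(i,s)\in I$, the outermost $\lfloor L_Fw_{i,s}\rfloor$ layers of cubes deleted — precisely those cubes lying entirely inside a removed strip — so the statement needed is: \emph{deleting from the cubical complex of a GSC the outermost several layers of cubes abutting an arbitrary subset of its faces leaves a complex whose face-adjacency graph is connected, or empty.} For the polytope of the Lemma itself no layer is deleted at all (the strips are thinner than a single cube), so this reduces to connectedness of the face-adjacency graph of $\mcQ_1(F)$, which is just (SC2) iterated; but the subcube polytopes $P_Q$ produced by the recursion genuinely delete layers, and handling those is where the non-diagonality condition (SC3) becomes essential: (SC3) rules out the "diagonal" configurations in which the only path between two parts of $F$ would run through a layer being deleted, and I would prove the deletion claim by the local-surgery (slide and corner-move) arguments that the axioms (SC1)--(SC3) make available on GSCs (cf. \cite{BB1,BBKT}). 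Once this combinatorial statement is established, the induction on $m$ closes, and the two reductions of the first paragraph then yield the Lemma for $K$ and for all $K_m$.
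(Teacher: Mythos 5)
Your strategy leaves the heart of the matter unproved, and the combinatorial statement you reduce to is false as you state it. By your own account the connectedness of the incidence graph is ``the main obstacle,'' yet you only gesture at ``slide and corner-move'' arguments; that is precisely where all the work would lie. Moreover, the claim ``deleting from the cubical complex of a GSC the outermost several layers of cubes abutting an arbitrary subset of its faces leaves a complex whose face-adjacency graph is connected'' fails in general: take $d=2$, $L_F=5$ and let $F_1$ be the ring of the $16$ boundary cells (one checks (SC1)--(SC4) hold). Deleting one layer from each of the two opposite faces $\{x_1=0\}$ and $\{x_1=1\}$ leaves the cells $(l_1,l_2)$ with $l_1\in\{2,3,4\}$, $l_2\in\{1,5\}$, whose face-adjacency graph splits into a bottom and a top component; the set $F\cap\{1/5\le x_1\le 4/5\}$ stays connected only through degenerate face contributions of the deleted cubes, which is exactly the bookkeeping you defer. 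It is true that the recursion started from the widths $L_F^{-j}/2$ of the Lemma never produces two-sided constraints on the same axis below level $0$ (a cube of side $\le L_F^{-1}\le 1/3$ cannot meet both strips, whose gap is $1-L_F^{-j}\ge 2/3$), but you neither observe this nor prove the corrected, one-sided claim --- which is essentially a discrete form of the Lemma itself, so the problem is restated rather than solved. The degenerate slices do occur (for even $L_F$ the strip boundary at depth $j+1$ lands exactly on cube faces), and the ``parallel induction on $d$'' you invoke for them is not carried out and is not routine for $d\ge 3$, where hyperplane slices of a GSC are not themselves GSCs.

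Two further points. First, the passage from the $K_m$ to $K$ is not justified: a nested intersection of compact path-connected sets is connected but need not be path-connected, and a sequence of paths $\gamma_m$ in $K_m$ need not subconverge to a path without equicontinuity or uniform local connectedness, which you do not establish; ``local path-connectedness of $K$ inherited from $F$'' is not automatic at points of the cutting hyperplanes. (A smaller inaccuracy: $F_m$ contains the $1$-skeletons of the retained cubes of $\mcQ_n$, not of the full level-$n$ decomposition, though this weaker fact suffices for your nonemptiness claims.) Second, for comparison, the paper's proof needs none of this machinery: given $x,y\in K$, take any path $\gamma$ in $F$ (or $F_m$) joining them and replace $\gamma(t)$ by its reflection across the hyperplane $\{x_1=L_F^{-j}/2\}$ whenever $\gamma(t)\notin K$. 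By (SC1) each level-$j$ cell in the first layer is symmetric about this hyperplane, so the folded path stays in $F$ (resp.\ $F_m$); it is continuous because the reflection fixes the hyperplane, and it lies in $K$; iterating over the faces in $I$ finishes the proof. That reflection trick is exactly where (SC1) enters, and it replaces your entire induction and graph analysis.
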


\begin{proof}
	We prove only for the case that $I=\{(1,0)\}$. The general case follows by iterating the same argument. Let $x,y\in K\subset F$. Since $F$ is pathwise connected, we can find a continuous path $\gamma:[0,1]\to F$ such that $\gamma(0)=x,\gamma(1)=y$. 
	We let $\bar{\gamma}:[0,1]\to K$ be defined as 
	\[
	\bar{\gamma}(t)=
	\begin{cases}
		\gamma(t),&\text{ if }\gamma(t)\in K,\\
		\Gamma\circ\gamma(t),&\text{ if }\gamma(t)\notin K,
	\end{cases}
	\]
	where $\Gamma:\R^d\to\R^d$ is the reflection map $\Gamma(z_1,z_2,\cdots,z_d)=(L_F^{-j}-z_1,z_2,\cdots,z_d)$ with respect to the hyperplane $x_1=L_F^{-j}/2$.
	Note that both $F_m$ and $F$ are symmetric with respect to $x_1=1/2$ so for each $Q\in {\mathcal Q}_j$, 
	$F_Q$ and $F_{m, Q}$ are symmetric with respect to the hyperplane passing through the center $Q$ that is parallel to $x_1=0$. Thus $\bar{\gamma}$ is a continuous path in $K$ connecting $x,y$. Hence, $K$ is path connected. The same argument shows that $K_m$ is path connected for each $m\geq 0$. 
\end{proof}

The next lemma considers the energy measure, where the second statement can be improved by replacing `$\geq$' with `$=$' by using Corollary \ref{coroA5}. 

\begin{lemma}\label{lemmaA2}
For  each $f\in \mcF^{(F)}$, $n\geq 1$ and Borel $A\subset F$, 
	$$
	\mu^{(F)}_{\<f\>}(A)=\sum\limits_{Q\in \mcQ_n(F)}\mu^{(F_Q)}_{\<f\>}(A\cap F_Q).
		$$
		In particular, if $A \subset F_Q$ for some $Q\in 
	  \cup_{n=1}^\infty \mcQ_n(F)$, then $\mu^{(F)}_{\<f\>}(A)\geq \mu^{(F_Q)}_{\<f\>}(A)$.
\end{lemma}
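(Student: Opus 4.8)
The statement is an additivity-and-localization property for energy measures under the self-similar decomposition of $(\mcE^{(F)},\mcF^{(F)})$. The plan is to deduce it from the self-similarity identity \eqref{eqn32} in Lemma \ref{lemma31} together with the characterization \eqref{e:3.11} of energy measures and the strong locality of $\mcE^{(F)}$. I would first treat bounded functions $f\in\mcF^{(F)}\cap L^\infty(F;\mu)$ and then pass to general $f\in\mcF^{(F)}$ by the truncation procedure used to define $\mu^{(F)}_{\<f\>}$ in Section \ref{sec34}.

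\textbf{Step 1 (the case $n=1$ for bounded $f$).} For $f\in\mcF^{(F)}\cap C(F)\cap L^\infty$ and $g\in C_c(F)\cap\mcF^{(F)}$, I would apply \eqref{eqn32} to the functions $f$, $fg$ and $f^2$ (each of which restricts to an element of $\mcF^{(F_Q)}$ on every $1$-cell $F_Q$, $Q\in\mcQ_1(F)$, by \eqref{eqn31}), obtaining
\[
\mcE^{(F)}(f,fg)-\tfrac12\mcE^{(F)}(f^2,g)
=\sum_{Q\in\mcQ_1(F)}\Big(\mcE^{(F_Q)}(f,fg)-\tfrac12\mcE^{(F_Q)}(f^2,g)\Big)
=\sum_{Q\in\mcQ_1(F)}\int_{F_Q}g\,d\mu^{(F_Q)}_{\<f\>},
\]
where I used \eqref{e:3.2}--\eqref{e:3.3} to identify $\mu^{(F_Q)}_{\<f\>}$ with the pullback of the energy measure of $f\circ\Psi_Q$ under $\Psi_Q$. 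Comparing with \eqref{e:3.11} and using that $g\in C_c(F)$ is arbitrary, this shows $\mu^{(F)}_{\<f\>}=\sum_{Q\in\mcQ_1(F)}\mu^{(F_Q)}_{\<f\>}$ (extending each $\mu^{(F_Q)}_{\<f\>}$ by zero off $F_Q$) as Radon measures on $F$. One technical point to handle here: the measures $\mu^{(F_Q)}_{\<f\>}$ for adjacent $Q$ may each charge the shared boundary $F_Q\cap F_{Q'}$, so I would record that $\mu^{(F)}_{\<f\>}$ restricted to $F_Q$ dominates $\mu^{(F_Q)}_{\<f\>}$; this is exactly where strong locality (and \eqref{e:3.11} tested against $g$ supported in $\operatorname{int}(Q)$) gives $\mu^{(F)}_{\<f\>}|_{\operatorname{int}(Q)}=\mu^{(F_Q)}_{\<f\>}|_{\operatorname{int}(Q)}$, while the boundary pieces are absorbed into the sum, justifying the final clause $\mu^{(F)}_{\<f\>}(A)\geq\mu^{(F_Q)}_{\<f\>}(A)$ when $A\subset F_Q$.

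\textbf{Step 2 (iteration to general $n$, and density/truncation).} For general $n\geq 1$ I would iterate Step 1: each $1$-cell $F_Q$ carries its own self-similar structure via $\Psi_Q$, so applying the $n=1$ identity repeatedly and using \eqref{e:3.2}--\eqref{e:3.3} under composition of the affine maps gives $\mu^{(F)}_{\<f\>}=\sum_{Q\in\mcQ_n(F)}\mu^{(F_Q)}_{\<f\>}$ for bounded continuous $f$. Then I would remove the continuity assumption by density of $\mcF^{(F)}\cap C(F)$ in $\mcF^{(F)}$ together with the continuity of $f\mapsto\mu^{(F)}_{\<f\>}$ in total variation on bounded energy balls (standard for energy measures, cf. \cite{CF,FOT}), and finally remove boundedness using $f_k=(f\wedge k)\vee(-k)$ and the defining limit $\mu^{(F)}_{\<f\>}=\lim_k\mu^{(F)}_{\<f_k\>}$, noting the same limit holds simultaneously for each $\mu^{(F_Q)}_{\<f\>}$ because $f\circ\Psi_Q$ is truncated compatibly. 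Evaluating the resulting measure identity on an arbitrary Borel set $A$ gives the displayed formula, and restricting to $A\subset F_Q$ gives the ``in particular'' statement.

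\textbf{Main obstacle.} The routine parts are the algebra with \eqref{eqn32} and the truncation limits; the one genuinely delicate point is the bookkeeping on cell boundaries — making sure that the decomposition $\mu^{(F)}_{\<f\>}(A)=\sum_{Q}\mu^{(F_Q)}_{\<f\>}(A\cap F_Q)$ is an exact equality and not merely an inequality, i.e. that the portions of the various $\mu^{(F_Q)}_{\<f\>}$ living on lower-dimensional intersections $F_Q\cap F_{Q'}$ are correctly and exactly accounted for by the single measure $\mu^{(F)}_{\<f\>}$. This is resolved by testing \eqref{e:3.11} against continuous functions $g$ supported in arbitrarily small neighborhoods of such intersections and invoking the strongly local property of $\mcE^{(F)}$; I expect this is the step requiring the most care in the full write-up.
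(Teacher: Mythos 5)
Your proposal is correct and follows essentially the same route as the paper: both rest on the self-similarity identity \eqref{eqn32} (polarized to bilinear forms) together with the defining formula \eqref{e:3.11} of the energy measure, plus a regularity/approximation step to pass from a core to general $f$ and general Borel $A$. The only cosmetic differences are that the paper decomposes directly over $\mcQ_n(F)$ and tests against cut-off functions of a compact set $A$ (then invokes inner regularity), whereas you iterate from $n=1$ and deduce equality of Radon measures by testing against all of $C(F)\cap\mcF^{(F)}$; note also that the ``in particular'' clause needs no separate strong-locality argument, since it follows immediately from the main identity and the non-negativity of the other summands.
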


\begin{proof}
By  the inner regular property of Radon measures, 
	it suffices to prove the lemma for compact subset $A\subset F$.   
	Moreover, by the the regularity of the Dirichlet form $(\mcE^{(F)},\mcF^{(F)})$ on $L^2(F; \mu)$, 
	it sufficies to consider $f\in \mcF^{(F)}\cap C(F)$. For each $m\geq 1$, let $g_m\in \mcF^{(F)}\cap C(F)$ such that $0\leq g_m\leq 1$, $g_m|_A=1$ and $g_m(x)=0$ for each $x\in F$ satisfying $\rho(x,A)\geq \frac{1}{m}$. Then, by using the self-similar property of 
	$(\mcE^{(F)},\mcF^{(F)})$ from Lemma \ref{lemma31}, we see
	\[
	\begin{aligned}
		\mu^{(F)}_{\<f\>}(A)&=\lim_{m\to\infty}\int_F g_m(x)\mu^{(F)}_{\<f\>}(dx)\\
		&=\lim_{m\to\infty}\Big(\mcE^{(F)}(g_mf,f)-\frac12\mcE^{(F)}(g_m^2,f)\Big)\\
		&=\lim_{m\to\infty}\sum_{Q\in \mcQ_n(F)}\Big(\mcE^{(F_{Q})}(g_mf,f)-\frac12\mcE^{(F_{Q})}(g_m^2,f)\Big)\\
		&=\lim_{m\to\infty}\sum_{Q\in \mcQ_n(F)}\int_{F_{Q}} g_m(x)\mu^{(F_{Q})}_{\<f\>}(dx)=\sum\limits_{Q\in \mcQ_n(F)}\mu^{(F_Q)}_{\<f\>}(A\cap F_Q).
	\end{aligned}
	\]
\end{proof}

Recall that $\mcB_{k}(F)$ and $\mcB_{k} (F_m)$ are the $k$-level boundary shells of $F$ and $F_m$ as defined in Section \ref{sec4}. For a subset $\mathcal A$ of ${\mathcal Q}_n$, $F_{\mathcal A}:= \cup_{Q\in \mathcal A} F_Q$   and $F_{m,\mathcal{A}}:=\cup_{Q\in \mathcal{A}}F_{m,Q}$.

\begin{theorem}\label{T:A.3}
	There is a constant $C>0$ depending on $F$ only such that the following hold.
	\begin{enumerate}
		\item[\rm (a)] $\Lambda_n[f|_{\partial_o F}]\leq C\, \mu_{\< f\>}^{(F)}(F_{\mcB_{n-1}(F)})$ for each $f\in \mcF^{(F)}\cap C(F)$ and $n\geq 1$. 
		
		\item[\rm (b)] $\Lambda^{(m)}_n[f|_{\partial_o F_m}]\leq C\, \mu_{\< f\>}^{(F_m)}(F_{m,\mcB_{n-1} (F_m)} )$ for each $m\geq 0$, $f\in W^{1,2}(F_m)\cap C(F_m)$ and $n\geq 1$.
	\end{enumerate}
\end{theorem}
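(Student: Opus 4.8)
\textbf{Proof plan for Theorem \ref{T:A.3}.}

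The plan is to prove part (a) in detail and to obtain part (b) by the same argument carried out on $F_m$ with the weight function $\varphi_m$ in place of $r\mapsto r^{d_f-d_w}$; I will comment below on what changes. The core idea is to compare, for each level $k\geq n$, the discrete energy $I_k[f|_{\partial_o F}]$ — which is a sum over adjacent sub-faces $A\sim A'$ in $\eth_k F$ of $([f]_{\nu|_A}-[f]_{\nu|_{A'}})^2$ — against the energy measure $\mu^{(F)}_{\langle f\rangle}$ localized in boundary cells near level $k$, using the Poincaré-type inequality in Lemma \ref{lemma39}(a). First I would fix $k\geq n$ and for each pair $A\sim A'$ in $\eth_k F$ pick a ball $B=B_F(x,cr)$ with $r\asymp L_F^{-k}$ (here $c$ is the constant in Lemma \ref{lemma39}) that sees enough of both $A$ and $A'$ — since either $A\cap A'\neq\emptyset$ or $A,A'$ lie in a common $(k-1)$-sub-face $B'\in\eth_{k-1}F$, one can choose such a ball whose concentric double $B_F(x,r)$ is contained in a cell $F_Q$ with $Q\in\mcB_{k-1}(F)$ (or a bounded number of such cells). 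Then $\nu$-averages of $f$ over $A$ and over $A'$ each differ from the $\mu|_B$-average $[f]_{\mu|_B}$ by a quantity controlled via a standard argument (comparing a surface average to a solid average through a Whitney-type chain of balls, using the Ahlfors regularity of $\nu$ with exponent $d_I$, $\mu$ with exponent $d_f$, and Lemma \ref{lemma39}(a)) by $C\,L_F^{k(d_w-d_f)/2}\sqrt{\mu^{(F)}_{\langle f\rangle}(B_F(x,r))}$ — this is where the inequality $d_I>d_f-d_w$ of Lemma \ref{lemma38} is needed to make the geometric series arising from the chain of balls summable. Squaring and summing over the bounded-degree graph $(\eth_k F,\sim)$ gives $L_F^{k(d_w-d_f)}I_k[f|_{\partial_o F}]\leq C\sum_{Q\in\mcB_{k-1}(F)}\mu^{(F)}_{\langle f\rangle}(F_Q^*)$, where $F_Q^*$ is a bounded enlargement of $F_Q$; the finite overlap of the $F_Q^*$ keeps the constant uniform.

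Next I would sum over $k\geq n$. The subtlety is that $\sum_{k\geq n}\sum_{Q\in\mcB_{k-1}(F)}\mu^{(F)}_{\langle f\rangle}(F_Q^*)$ would naively overcount, since boundary shells at different levels are nested. To handle this I would instead, at each level $k$, bound the right-hand side not by the full energy measure of a neighborhood of the boundary but by its restriction to the annular region $F_{\mcB_{k-1}(F)}\setminus F_{\mcB_{k}(F)}$ together with a geometrically small tail, exploiting that a ball of radius $\asymp L_F^{-k}$ centered near $\partial_o F$ either lies in this annulus or forces a cell one level deeper; organizing the estimate this way the levels become essentially disjoint and telescoping, so that $\sum_{k\geq n}L_F^{k(d_w-d_f)}I_k[f|_{\partial_o F}]=\Lambda_n[f|_{\partial_o F}]\leq C\,\mu^{(F)}_{\langle f\rangle}(F_{\mcB_{n-1}(F)})$, using Lemma \ref{lemmaA2} to assemble the local contributions into $\mu^{(F)}_{\langle f\rangle}(F_{\mcB_{n-1}(F)})$ and the local energy measures $\mu^{(F_Q)}_{\langle f\rangle}$ dominated by $\mu^{(F)}_{\langle f\rangle}$.

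For part (b) the same scheme applies verbatim on $F_m$: one uses Lemma \ref{lemma39}(b) with the weight $\varphi_m$, the $(d-1)$-Ahlfors regularity of $\nu_m$, the $d_f$- or $d$-Ahlfors regularity of $\mu_m$ at the relevant scales, and the elementary observation that $\varphi_m(L_F^{-k})$ is exactly the weight appearing in $\Lambda^{(m)}_n$. Since $\varphi_m(r)=r^{d_f-d_w}$ for $r\geq L_F^{-m}$ and $\varphi_m(r)=L_F^{(d_w-d_f+d-2)m}r^{d-2}$ for $r<L_F^{-m}$, the summability of the chain-of-balls series at scales below $L_F^{-m}$ reduces to $d-1>d-2$, which is trivially true, while at scales above $L_F^{-m}$ it is again $d_I>d_f-d_w$; so no new ingredient is required. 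The main obstacle I anticipate is the bookkeeping in the second step — making the passage from level-by-level bounds to the single clean bound $C\mu^{(F)}_{\langle f\rangle}(F_{\mcB_{n-1}(F)})$ without losing a factor that grows in $n$ — together with verifying carefully that the Whitney chains connecting a $k$-sub-face to a solid ball stay inside the appropriate boundary cells and that the exponent $d_I>d_f-d_w$ genuinely makes the resulting geometric series converge with a constant independent of $k$, $n$, $m$ and $f$.
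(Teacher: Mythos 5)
Your proposal is correct and follows essentially the same route as the paper: the paper likewise telescopes each boundary average $[f]_{\nu|_A}$ through solid averages over successively finer neighborhoods (its quantities $u^{(m)}_i(f,A)$ are the discrete form of your chain of shrinking balls), controls each increment with the Poincar\'e inequality of Lemma \ref{lemma39} plus the connectivity of Lemma \ref{lemmaA1}, gets the geometric decay in the depth from precisely the comparison $d_I>d_f-d_w$ of Lemma \ref{lemma38} (and $d-1>d-2$ below scale $L_F^{-m}$ for part (b)), and then sums over levels $k\geq n$ using the bounded overlap of the annuli $F_{\mcB_{k-1}}\setminus F_{\mcB_{k+2}}$, which is exactly your ``annulus plus geometrically small tail'' bookkeeping. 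The only slip is the exponent in your per-pair estimate, which should read $C\,L_F^{k(d_f-d_w)/2}\sqrt{\mu^{(F)}_{\langle f\rangle}(B_F(x,r))}$ so that, after multiplying by the weight $L_F^{k(d_w-d_f)}$, it matches the level-$k$ conclusion you then state.
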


\begin{proof}
We will only give a proof for (b) as  the proof for (a) is very similar.
	We introduce a few more notations. For $k\geq 1$ and $A\in \eth_kF_m$, 
	let $Q_A\in \mcB_k(F_m)$ be such that $A\subset Q_A$, and define 
	$$
    U_{k, m, A} 
	:=F_{m,Q_A}\setminus F_{m,\mcB_{k+1}(F_m)},
	$$
	and 
	\begin{equation}\label{e:A.1} 
	u^{(m)}_0(f,A):= [f]_{\mu_m|_{U_{k, m,A}} = \fint_{U_{k, m, A}} f (x) \mu_m (dx)}.
	\end{equation} 
	Define for $i\geq 0$,  
	$$
	\eth_{k+i}F_m(A):=\{B\in \eth_{k+i}F_m:\ B\subset A\}
	$$
	and
	\begin{equation}\label{e:A.2} 
	u^{(m)}_i(f,A):=\frac{\sum_{B\in \eth_{k+i}F_m(A)}[f]_{\mu_m|_{U_{k+i, m,B}}}}{\#\eth_{k+i}F_m(A)}. 
	\end{equation} 
	Observe that for $A\in  \eth_{k}F_m$, 
	\begin{equation} \label{e:A.3} 
	\#\eth_{k+i}F_m(A)=m_I^{i\wedge((m-k)\vee 0)}\,  L_F^{(d-1)\left(i-i\wedge((m-k)\vee 0)\right)}.
	\end{equation}

	By the continuity of $f$, we have
	\[
	[f]_{\nu_m|_A}=\lim\limits_{i\to\infty}u^{(m)}_i(f,A)=u^{(m)}_0(f,A)+\sum_{i=1}^\infty\big(u^{(m)}_i(f,A)-u^{(m)}_{i-1}(f,A)\big),
	\]
	Define  
	\[
	D_{k,i}^{(m)}[f]=
	\begin{cases}
		\sum\limits_{A,B\in \eth_kF_m\atop A\sim B}\big(u^{(m)}_i(f,A)-u^{(m)}_i(f,B)\big)^2  &\text{if }i=0,\\
		\sum\limits_{A,B\in \eth_kF_m\atop A\sim B}\big(u^{(m)}_i(f,A)-u^{(m)}_{i-1}(f,A)-u^{(m)}_i(f,B)+u^{(m)}_{i-1}(f,B)\big)^2
		&\text{if }i\geq 1.
	\end{cases}
	\]
    By the triangle inequality for the $l^2$-norm, 
	\begin{equation}\label{eqnA1}
		\sqrt{I_k^{(m)}[f|_{\partial_oF_m}]}\leq \sum_{i=0}^\infty\sqrt{D_{k,i}^{(m)}[f]}.
	\end{equation}
	We next estimate $D^{(m)}_{k,i}[f]$.\\
	
	\noindent\textbf{Claim 1}. Let $k\geq 1$  and $A,A'\in \eth_kF_m$  with $A\cap A'\neq \emptyset$.
	Let   $Q,Q'\in \mathcal{B}_k(F_m)$ be such that $A\subset Q,A'\subset Q'$. Define 
	\[
	\begin{aligned}
		B'&=(F_{m,Q}\cup F_{m,Q'})\setminus  \big\{x\in F_m: \rho(x,\partial_oF_0)< L_F^{-k-1} /2  \big\},\\
		B&= \big\{x\in F_m: \rho(x,B')< L_F^{-k-2}/2 \big\}.
	\end{aligned}
	\]
	 Clearly, 
	 $$ 
	 B\subset \bigcup_{Q^*\in \mcB_{k-1}(F_m)\atop Q^*\cap (A\cup A')\neq\emptyset}F_{m,Q^*} 
	  \setminus F_{m,\mcB_{k+2}(F_m)}\subset F_{m,\mcB_{k-1}(F_m)}\setminus F_{m,\mcB_{k+2}(F_m)} . 
	 $$ 
	      Then, for some $C_1$ depending only on $F$, we claim that 
	\begin{equation}\label{e:A.5} 
	\varphi_m(L_F^{-k})\,  \big(u^{(m)}_0(f,A)-u^{(m)}_0(f,A')\big)^2\leq C_1\mu_{\< f\>}^{(F_m)}(B).
	\end{equation}

	First, note that $B'$ is connected by Lemma \ref{lemmaA1}. Indeed,
	$F_{m,Q}\setminus \{x\in F_{m,Q}: \rho(x,\partial_oF_0)<L_F^{-k-1}/2\}$ is a scaled version of $F_{(m-k)\vee0}$ in Lemma \ref{lemmaA1} (for some $I$ and $j=1$) and so is $F_{m,Q'}\setminus \{x\in F_{m,Q'}: \rho(x,\partial_oF_0)< L_F^{-k-1}/2\}$, and these two connected sets do intersect as $A\cap A' \not= \emptyset$.
	
	Next, choose a $(c/8) L_F^{-k-2}$-net $\{x^{(i)}\}_{i=1}^N$ of $B'$, where $c$ is the constant in the Poincar\'e inequalities (Lemma \ref{lemma39} (b)), and $N\geq 2$ is an integer depending only on $d$ and $c$. Let 
	 $B_i:=B_{F_m}(x^{(i)},L_F^{-k-2}/2)$ and  $B_i'=B_{F_m}(x^{(i)},cL_F^{-k-2}/2)$ for each $1\leq i\leq N$. Recall the definition of $\varphi_m$ from \eqref{eqn35} and note that 
	 \begin{equation}\label{e:A.6}
	  \varphi_m(L_F^{-k-2} /2)  \leq  \varphi_m(L_F^{-k})\leq (2L_F^2)^{(d-2)\vee (d_f-d_w)}\varphi_m(L_F^{-k-2}/2) . 
	 \end{equation} 
	By the Poincar\'e inequality from Lemma \ref{lemma39}(b),  there is a constant $C_1'>0$ depending only on $F$
	 so that for $1\leq i\leq N$,  
	 \begin{equation}\label{eqnA2}
	\varphi_m(L_F^{-k})\fint_{B_i'}\big(f(x)-[f]_{\mu_m|_{B_i'}}\big)^2\mu_m(dx) \leq  C'_1\mu^{(F_m)}_{\<f\>}(B_i) . 
	\end{equation} 
For each $1\leq i,j\leq N$ such that $\rho(x^{(i)},x^{(j)})<cL_F^{-k-2}/4$,
 $$ 
 B'_i\cap B'_j   \supset  B_{F_m} \big(x^{(i)},cL_F^{-k-2}/4 \big)   \cup   B_{F_m} \big( x^{(j)},cL_F^{-k-2}/4 \big)   .
$$
Moreover, there is a constant $ C_2' >0$  depending only on $F$ such that 
$$ 
m(B_{F_m}(x^{(i)},cL_F^{-k-2}/4)) \geq C_2' \mu_m(B'_i)  \quad \hbox{and} \quad 
m(B_{F_m}(x^{(j)},cL_F^{-k-2}/4)) \geq C_2' \mu_m(B'_j).
$$
Thus by  \eqref{eqnA2}, 
\begin{eqnarray*}
&&  \varphi_m(  L_F^{-k} )^{1/2}  \big|[f]_{\mu_m|_{B'_i}}-[f]_{\mu_m|_{B_j'}}  \big|   \nonumber \\
&\leq & \frac{\varphi_m(  L_F^{-k} )^{1/2}}{\mu_m (B_i' \cap B_j')} \int_{B_i'\cap B_j'} \left( \Big| f(x) - [f]_{\mu_m|_{B'_i}} \Big|  + 
\Big| f(x) - [f]_{\mu_m|_{B_j'}} \Big|  \right) \mu_m(dx)  \nonumber \\
&\leq & \frac{\varphi_m(  L_F^{-k} )^{1/2}}{C_2' \mu_m(B_i')} \int_{B_i' }   \Big| f(x) - [f]_{\mu_m|_{B'_i}} \Big|  \mu_m(dx) 
 + \frac{\varphi_m(  L_F^{-k} )^{1/2}}{C_2' \mu_m(B_j')} \int_{B_j' }   \Big| f(x) - [f]_{\mu_m|_{B'_i}} \Big|\mu_m(dx)  \nonumber \\
 &\leq & C_3'\sqrt{\mu^{(F_m)}_{\<f\>}(B_i\cup B_j)} \nonumber \\ 
  &\leq&  C_3'\sqrt{\mu^{(F_m)}_{\<f\>}(B)} 
  \end{eqnarray*}	
 for some $C'_3$ depending only on $F$.	 Next, noticing that $B'$ is connected and $\{x^{(i)}\}_{i=1}^N$ is a $cL_F^{-k-3}/8$ net of $B'$, we have by the triangle  inequality that 
	\begin{equation} \label{e:A.8}
\varphi_m(  L_F^{-k} )^{1/2} \, 
\big|[f]_{\mu_m|_{B'_1}}-[f]_{\mu_m|_{B_i'}}\big|\leq C'_3 N\sqrt{\mu^{(F_m)}_{\<f\>}(B)} \quad 
\hbox{for every } 1\leq i\leq N.
	\end{equation}
  It follows from \eqref{eqnA2} and \eqref{e:A.8} that 
	\begin{eqnarray}
	&&    \Big( \varphi_m(L_F^{-k})  \fint_{B'}\big(f(x)-[f]_{\mu_m|_{B'}}\big)^2\mu_m(dx)   \Big)^{1/2}   \nonumber \\  
	&\leq &   \Big(  \varphi_m(L_F^{-k})  \fint_{B'}\big(f(x)-[f]_{\mu_m|_{B'_1}}\big)^2\mu_m(dx)  \Big)^{1/2}  \nonumber \\
	&\leq &  \varphi_m(L_F^{-k})^{1/2}  \Big(    \sum_{i=1}^N  \frac{\mu_m(B_i')}{\mu_m (B')}  \fint_{B_i'}\big(f(x)-[f]_{\mu_m|_{B'_1}}\big)^2\mu_m(dx)  	\Big)^{1/2}   \nonumber \\ 
	&\leq &  C_4'     \varphi_m(L_F^{-k})^{1/2}    \sum_{i=1}^N    
	\left( \Big(   \fint_{B_i'}\big(f(x)-[f]_{\mu_m|_{B'_i}}\big)^2\mu_m(dx)   \Big)^{1/2} 
	     +  \big| [f]_{\mu_m|_{B'_1}}-[f]_{\mu_m|_{B_i'}} \big|  \right)   \nonumber \\ 
	&\leq & C_4'   \sum_{i=1}^N   \left(   \sqrt{ C'_1 \mu^{(F_m)}_{\<f\>}(B_i) } + C_3'N \sqrt{\mu^{(F_m)}_{\<f\>}(B) } \right) \nonumber \\
	  &\leq &  C'_5 \sqrt{\mu^{(F_m)}_{\<f\>}(B)},    \label{e:A.9} 
	\end{eqnarray}
	where the constants $C_4'$ and $C'_5$ depend only on $F$. Recall that $u_0^{(m)}(f,A):=[f]_{\mu_m|_{U_{k,m,A}}}$. Notice that $B' \supset U_{k,m,A} \cup U_{k,m,A'}$, and there is a constant $C_6'\in (0,1)$ depending only on $F$ so that 
 	$\mu_m(U_{k,m,A})\geq C_6' \, \mu_m(B')$ and $\mu_m(U_{k,m,A'})\geq C_6'  \, \mu_m(B')$.
	 We have   by \eqref{e:A.9} that 
	\begin{eqnarray*}
	&& \varphi_m(L_F^{-k})\big(u_0^{(m)}(f,A)-u_0^{(m)}(f,A')\big)^2\\
	& \leq& 2\varphi_m(L_F^{-k})  \left( \left( [f]_{\mu_m|_{U_{k,m,A}}}-[f]_{\mu_m|_{B'}}\right)^2+ 
	\left( [f]_{\mu_m|_{B'}}-[f]_{\mu_m|_{U_{k,m,A'}}}\right)^2 \right) \\
	&\leq&2\varphi_m(L_F^{-k}) \left( \fint_{U_{k,m,A}}\big(f(x)-[f]_{\mu_m|_{B'}}\big)^2\mu_m(dx)+ \fint_{U_{k,m,A'}}\big(f(x)-[f]_{\mu_m|_{B'}}\big)^2\mu_m(dx) \right) \\
	&\leq&  \frac{4  \varphi_m(L_F^{-k})}{ C_6'}\,   \fint_{B'}\big(f(x)-[f]_{\mu_m|_{B'}}\big)^2\mu_m(dx)\\
	& \leq& \frac{4(C'_5)^2 }{ C_6'}\, \mu^{(F_m)}_{\<f\>}(B).
\end{eqnarray*}
    This proves the claim  \eqref{e:A.5}.

	\medskip
	
	\noindent\textbf{Claim 2}. There is a constant $C_2>0$ depending only on $F$  so that the following hold. 
	\begin{enumerate}
		\item[\rm (2.a)] Let $k\geq 1$ and  $A,A'\in \eth_kF_m$ that there is some   $Q\in\mcB_{k-1}(F_m)$ such that  $A\cup A'\subset Q$.
		Then 
		\[
		\varphi_m(L_F^{-k})\,  \left( u^{(m)}_0(f,A)-u^{(m)}_0(f,A')\right)^2
		    \leq C_2\, \mu_{\< f\>}^{(F_m)} \left( F_{m,Q}\setminus F_{m,\mcB_{k+2}(F_m)} \right).
		\]
		
		\item[\rm (2.b)]  Let $k\geq 1$ and $A\in \eth_kF_m$, $A'\in \eth_{k+1}F_m$ that there is some  $Q\in\mcB_k(F_m)$ such    that $A'\subset A\subset Q$.
		Then 
		\[
		\varphi_m(L_F^{-k-1})\,  \left( u^{(m)}_0(f,A)-u^{(m)}_0(f,A')\right)^2
		\leq C_2\,  \mu_{\< f\>}^{(F_m)} \left( F_{m,Q}\setminus F_{m,\mcB_{k+3}(F_m)} \right).
		\]
	\end{enumerate}
	
	\medskip
	
	Claim 2 follows from an argument similar to that for Claim 1, by applying the Poincar\'e inequalities locally on $\mcE^{(F_{m,Q})}$, which is essentially a rescaled version of $\mcE^{(F_{(m-k+1)\vee 0})}$ in (2.a) and $\mcE^{(F_{(m-k)\vee 0})}$ in (2.b). 
	
	\medskip
	
	\noindent\textbf{Remark}. For the proof of  Theorem \ref{T:A.3} (a), we can still use the Poincar\'e inequalities locally on cells to prove a corresponding version of Claim 2
	in view of Lemma \ref{lemmaA2} and the self-similarity \eqref{e:3.3} of the Dirichlet form $(\sE^{(F)}, \sF^{(F)})$.

	\medskip

	\noindent\textbf{Claim 3}. Let $k\geq 1$, $i\geq 1$, $A\in \eth_{k}F_m$ and $Q\in \mcB_k(F_m)$ such that $A\subset Q$. Then
	\begin{align*}
		&\varphi_m(L_F^{-k-i})\,  \left( u^{(m)}_i(f,A)-u^{(m)}_{i-1}(f,A)\right)^2\\
		\leq& C_2\,  (\#\eth_{k+i-1}F_m(A))^{-1}\, \mu_{\< f\>}^{(F_m)}\left(Q\cap (F_{m,\mcB_{k+i-1}(F_m)}\setminus F_{m,\mcB_{k+i+2}(F_m)})\right).
	\end{align*}
	
	\medskip
	
	When  $i=1$, Claim 3 is an immediately consequence of  part (2.b) of Claim 2. 
	Indeed,  as  $u^{(m)}_1(f,A)$ is the average of $u^{(m)}_0(f,A')$ over $ A'\in \eth_{k+1}F_m(A)$,  we have by (2.b) of Claim 2 that  
	\begin{eqnarray}
		&& \varphi_m(L_F^{-k-1})\,  \left(u^{(m)}_1(f,A)-u^{(m)}_0(f,A)\right)^2  \nonumber \\
		&\leq& \max_{A'\in \eth_{k+1}F_m(A)} \varphi_m(L_F^{-k-1})\, \left( u^{(m)}_0(f,A')-u^{(m)}_0(f,A)\right)^2 \nonumber \\
		&\leq& C_2\,  \mu_{\< f\>}^{(F_m)}  \left( F_{m,Q}\setminus F_{m,\mcB_{k+3}(F_m)} \right ) \nonumber \\
		&=&C_2\,  \mu_{\< f\>}^{(F_m)}\left(Q\cap (F_{m,\mcB_k(F_m)}\setminus F_{m,\mcB_{k+3}(F_m)})\right) .   \label{e:A.10} 
	\end{eqnarray}
	
	For $i\geq 2$,  by the definition of $u^{(m)}_i(f,A)$ in \eqref{e:A.1}-\eqref{e:A.2}, 
	$$
	u^{(m)}_{i-1} (f,A):=\frac{\sum_{A'\in \eth_{k+i-1}F_m(A)}[f]_{\mu_m|_{ U_{k+i-1, m,A'}}}}{\#\eth_{k+i-1}F_m(A)}
	= \frac{\sum_{A'\in \eth_{k+i-1}F_m(A)}u_0^{(m)}(f, A')}{\#\eth_{k+i-1}F_m(A)},
	$$
	and 
	\begin{eqnarray*}
	u^{(m)}_{i} (f,A)&:=&\frac{\sum_{B\in \eth_{k+i}F_m(A)}[f]_{\mu_m|_{ U_{k+i, m,B}}}}{\#\eth_{k+i}F_m(A)} \\
	&=& \frac{\sum_{A'\in \eth_{k+i-1}F_m(A)} \sum_{B\in \eth_{k+i}F_m(A')}u^{(m)}_0 (f,B)}{\#\eth_{k+i}F_m(A)} \\
	&=& \frac{\sum_{A'\in \eth_{k+i-1}F_m(A)}  u^{(m)}_1 (f,A')\,  (\# \eth_{k+i}F_m(A') )  }{\#\eth_{k+i}F_m(A)} \\
	&=& \frac{\sum_{A'\in \eth_{k+i-1}F_m(A)}    u^{(m)}_1 (f,A')}{\#\eth_{k+i-1}F_m(A)} .
 	\end{eqnarray*} 
	 Thus by the Cauchy-Schwarz inequality and \eqref{e:A.10},  
	\begin{eqnarray*}
		&&(\#\eth_{k+i-1}F_m(A))^2\cdot\varphi_m(L_F^{-k-i})\cdot \big(u^{(m)}_i(f,A)-u^{(m)}_{i-1}(f,A)\big)^2\\
		&=&\varphi_m(L_F^{-k-i})\Big(\sum_{A'\in \eth_{k+i-1}F_m(A)} \big(u^{(m)}_1(f,A')-u^{(m)}_{0}(f,A')\big)\Big)^2\\
		&\leq&(\#\eth_{k+i-1}F_m(A))\,   \sum_{A'\in \eth_{k+i-1}F_m(A)}\varphi_m(L_F^{-k-i})\,
		  \left(u^{(m)}_1(f,A')-u^{(m)}_{0}(f,A')\right)^2\\
		&\leq&(\#\eth_{k+i-1}F_m(A))\,  \sum_{A'\in \eth_{k+i-1}F_m(A)}C_2\, 
		  \mu^{(F_m)}_{\<f\>}\big(Q_{A'}\cap (F_{m,\mcB_{k+i-1}(F_m)}\setminus F_{m,\mcB_{k+i+2}(F_m)})\big)\\
		& =&  C_2\, (\eth_{k+i-1}F_m(A))\,  \mu_{\< f\>}^{(F_m)}(Q\cap (F_{m,\mcB_{k+i-1}(F_m)}\setminus F_{m,\mcB_{k+i+2}(F_m)})),
	\end{eqnarray*}
	where  in the second inequality $Q_{A'}\in \mcB_{k+i-1}(F_m)$ is such that $Q_{A'} \supset A'$. 
		
	\medskip 
	
	By Claim 1 and part (2.a) of Claim 2, there is a constant $C_3>0$ depending only on $F$ so that  
	\begin{eqnarray}
	\varphi_m(L_F^{-k})\,  D^{(m)}_{k,0}[f]
	&=& \varphi_m(L_F^{-k}) \sum_{A,B\in \eth_kF_m\atop A\sim B}\big(u^{(m)}_0 (f,A)-u^{(m)}_0 (f,B)\big)^2  \nonumber \\
	&\leq &  C_2 \sum_{A,B\in \eth_kF_m\atop A\sim B} \mu_{\< f\>}^{(F_m)} 
	 \Big((\bigcup_{Q\in \mcB_{k-1}(F_m)\atop Q\cap (A\cup B)\neq\emptyset}F_{m,Q})\setminus F_{m,\mcB_{k+2}(F_m)} \Big) \nonumber  \\
	&\leq & C_3\,  \mu^{(F_m)}_{\<f\>}(F_{m,\mcB_{k-1}(F_m)}\setminus F_{m,\mcB_{k+2}(F_m)}) , \label{e:A.11} 
	\end{eqnarray}  
	where in the last inequality, we used the fact that for each $Q\in\mcB_{k-1}(F_m)$,  there are at most $2(L_F+2)^{d-1}(3^d+L_F^{d-1})$ number of ordered $k$-cells $(A, B)$ from $\eth_k F_m  $  so that $A\sim B$ and $Q\cap (A\cup B)\neq\emptyset$.

	For $ m\geq 0$, $k\geq 1$ and $i\geq 0$,
	define 
	$$
	\alpha_{m, k, i}:=	m_I^{i\wedge((m-k)\vee 0)}\,  L_F^{ (d-1)  \left(i-(i\wedge(m-k))\vee 0\right)} . 
$$
Note that by \eqref{e:A.3}, $\alpha_{m, k, i}= \#\eth_{k+i}F_m(A)$ for any $A\in   \eth_{k}F_m $.
Moreover, $\alpha_{m, k, 0}=1$ and $\alpha_{m, k, i}$ is uniformly comparable to $\alpha_{m, k, i-1}$ for $i\geq 1$.
 Thus  it follows from  from Claim 3 and \eqref{e:A.11} that there is a constant $C_4>0$ depending on $F$ only so that for any
$m\geq 0$, $k\geq 1$ and $i\geq 0$, 
\begin{equation}\label{e:A.12}
	\varphi_m(L_F^{-k-i})\,  D^{(m)}_{k,i}[f]
	\leq \frac{C_4}{\alpha_{m, k, i} } \, \mu^{(F_m)}_{\<f\>}(F_{m,\mcB_{k+i-1}(F_m)}\setminus F_{m,\mcB_{k+i+2}(F_m)}) .
\end{equation} 
It follows from the definition of $\varphi_m$ in  \eqref{eqn35} that 
	\[
	\frac{\varphi_m(L_F^{-k})}{\varphi_m(L_F^{-k-i})} \, \frac1{\alpha_{m, k, i}}\leq \theta^i 
	\quad\hbox{for every } m\geq 0,  k\geq 1 \hbox{ and }    i\geq 0  ,
	\]
	where $\theta :=L_F^{d-2-(d-1)}\vee L_F^{d_f-d_w-d_I}<1$ by Lemma \ref{lemma38}. Hence, we have from \eqref{e:A.12} that 
	 for every  $m\geq 0$, $k\geq 1$   and $ i\geq 0$,
\begin{equation}\label{e:A.13}
 	\varphi_m(L_F^{-k})\,  D^{(m)}_{k,i}[f]\leq C_4\theta^i\,  \mu^{(F_m)}_{\<f\>}(F_{m,\mcB_{k+i-1}(F_m)}\setminus F_{m,\mcB_{k+i+2}(F_m)}).
\end{equation} 
Now for any $n\geq 1$ and $m\geq 0$,  we have by \eqref{eqnA1} and \eqref{e:A.13}, 

\begin{eqnarray*}
		\left( \Lambda^{(m)}_n[f]\right)^{1/2}
		 &=& 		\left( \sum_{j=0}^\infty\varphi_m(L_F^{-n-j})\,  I^{(m)}_{n+j}[f] \right)^{1/2}\\
		&=&\Big\|  \sqrt{ \varphi_m(L_F^{-n-j})\,  I^{(m)}_{n+j}[f]} \, \Big\|_{l^2(\hbox{\tiny in } j\geq 0)}\\
		&\leq& \sum_{i=0}^\infty \Big\|   \sqrt{\varphi_m(L_F^{-n-j})\, D^{(m)}_{n+j,i}[f]}  
		  \, \Big\|_{l^2(\hbox{\tiny in } j\geq 0)}\\
		&\leq & \sum_{i=0}^\infty (C_4 \theta^{i})^{1/2} 
		\Big\| \sqrt{\mu^{(F_m)}_{\<f\>}(F_{m,\mathcal{B}_{n+i+j-1}(F_m)}\setminus F_{m,\mathcal{B}_{n+i+j+2}(F_m)})} \, 
		   \Big\|_{l^2(\hbox{\tiny in } j\geq 0)}\\
        &\leq & \sum_{i=0}^\infty(C_4 \theta^{i})^{1/2}
		\Big\| \sqrt{\mu^{(F_m)}_{\<f\>}(F_{m,\mathcal{B}_{n+j-1}(F_m)}\setminus F_{m,\mathcal{B}_{n+j+2}(F_m)})} \, 
		   \Big\|_{l^2(\hbox{\tiny in } j\geq 0)}\\
 	 	&\leq &   \frac{\sqrt{C_4}}{1-\sqrt{\theta}}  \,
		\Big\|\sqrt{\mu^{(F_m)}_{\<f\>}(F_{m,\mathcal{B}_{n+j-1}(F_m)}\setminus F_{m,\mathcal{B}_{n+j+2}(F_m)})}  \, 
		 \Big\|_{l^2   (\hbox{\tiny in } j\geq 0) }  \\
		&\leq &  \frac{\sqrt{3C_4}}{1-\sqrt{\theta}}    \, \sqrt{  \mu^{(F_m)}_{\<f\>}(F_{m,\mcB_{n-1}(F_m)}) },
	\end{eqnarray*}
	where $\|a_j\|_{l^2   (\hbox{\tiny in } j\geq 0)}:=\sqrt{\sum_{j=0}^\infty a_j^2}$ denotes the $l^2$-norm of the sequence $\{a_j;j\geq 0\}$. 
	This proves Theorem \ref{T:A.3}(b).  
\end{proof}

\medskip
 
 Recall that the Besov spaces $\Lambda (\partial_o F)$ and  $\Lambda^{(m)} (\partial_o F_m)$ defined in \eqref{e:4.3} and \eqref{e:4.4a}, respectively.
 
 \medskip

\begin{theorem}\label{T:A.4}
	There is a constant $C>0$ depending only on $F$ such that the following hold.	
	\begin{enumerate}
		\item[\rm (a)]  There is an extension map $\Ex:\Lambda(\partial_o F)\cap C(\partial_o F)\to \mcF^{(F)}\cap C(F)$ such that $\Ex f|_{\partial_o F}=f$ and $\mcE^{(F)}(\Ex f)\leq C\Lambda_1[f]$ for any $f\in \Lambda(\partial_o F)\cap C(\partial_o F)$. Moreover, $\mu^{(F)}_{\<\Ex f\>}(\partial_o F)=0$ for any $f\in \Lambda(\partial_o F)\cap C(\partial_o F)$.
		
		\item[\rm (b)] For each $m\geq 0$, there is an extension map $\Ex_m:\Lambda^{(m)} (\partial_o F_m)\cap C(\partial_o F_m)\to \mcF^{(F_m)}\cap C(F_m)$ such that $\Ex_m f|_{\partial_o F_m}=f$ and $\mcE^{(F_m)}(\Ex_m f)\leq C\Lambda_1^{(m)}[f]$ for any 
		$f\in \Lambda^{(m)} (\partial_o F_m)\cap C(\partial_o F_m)$.
	\end{enumerate}
\end{theorem}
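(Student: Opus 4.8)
\textbf{Proof proposal for Theorem \ref{T:A.4}.}

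The plan is to build the extension map $\Ex$ (and similarly $\Ex_m$) cell by cell, mirroring the layered structure used in the restriction theorem, Theorem \ref{T:A.3}, and using the cutoff functions $w_Q^{(m)}$ (resp. $w_Q$) from Lemma \ref{lemma37} as the basic building blocks. First I would fix $f\in\Lambda(\partial_o F)\cap C(\partial_o F)$ and, for each $k\geq 1$ and each $k$-level sub-face $A\in\eth_k F$, set $c_k(A):=[f]_{\nu|_A}$; these are the discrete boundary averages whose increments control $I_k[f]$ and hence $\Lambda_n[f]$. The idea is to write $\Ex f=\sum_{k\geq 1}(\text{level-}k\text{ correction})$, where the level-$k$ correction is a finite linear combination $\sum_{A\in\eth_k F}\big(c_k(A)-c_{k-1}(\pi(A))\big)\,\psi_{k,A}$, with $\pi(A)$ the parent $(k-1)$-face containing $A$, and $\psi_{k,A}\in C(F)\cap\mcF^{(F)}$ a bump function supported near $A$ with $[\psi_{k,A}]_{\nu|_A}=1$, $\|\psi_{k,A}\|_\infty\leq C$, and $\mcE^{(F)}(\psi_{k,A})\leq C L_F^{(d_w-d_f)k}$, constructed exactly as in Step 2 of the proof of Lemma \ref{lemma44} by rescaling Lemma \ref{lemma37}. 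One also needs a level-$1$ base term using a partition of unity $\{g^*_A\}_{A\in\eth_1 F}$ as in Step 1 there. The partial sums $S_N:=\sum_{k=1}^N(\cdots)$ then satisfy, by the local property of the energy measure, the bounded-overlap of the supports of $\{\psi_{k,A}\}_{A\in\eth_k F}$, and the elementary graph inequality \eqref{e:4.7} applied to the sub-face graph $(\eth_k F,\sim)$, an energy bound $\mcE^{(F)}(S_N)\leq C\sum_{k=1}^N L_F^{(d_w-d_f)k}I_k[f]\leq C\,\Lambda_1[f]$, uniformly in $N$.

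Next I would show $\{S_N\}$ converges: since $\mcE^{(F)}(S_N-S_{N'})\leq C\sum_{k=N'+1}^{N}L_F^{(d_w-d_f)k}I_k[f]\to 0$ as $N,N'\to\infty$, and the boundary traces $S_N|_{\partial_o F}\to f$ in $L^2(\partial_o F;\nu)$ (indeed uniformly, using continuity of $f$ and $\nu(\partial_o F)=1$), the sequence is Cauchy in the Hilbert space $(\mcF^{(F)},\mcE_1^{(F)})$ after adding the $L^2(F;\mu)$-part, which is controlled because $\|S_N\|_\infty\leq C\|f\|_\infty$ by \eqref{eqn45}-type estimates and $\mu(F)=1$. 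Hence $S_N\to\Ex f$ in $\mcF^{(F)}$, and $\mcE^{(F)}(\Ex f)\leq C\,\Lambda_1[f]$ by lower semicontinuity. That $\Ex f|_{\partial_o F}=f$ follows because each level-$k$ correction for $k\geq 1$ has $\nu$-average on every $j$-face equal to the prescribed averages in the limit, and continuity pins down the actual boundary values; I would verify this by checking $[\,\Ex f\,]_{\nu|_A}=c_k(A)$ for all $A\in\eth_k F$, all $k$, which determines $f$ by continuity and the fact that $\bigcup_k\eth_k F$ generates a basis of neighborhoods on $\partial_o F$. The vanishing of the energy measure on the boundary, $\mu^{(F)}_{\<\Ex f\>}(\partial_o F)=0$, follows from Corollary \ref{coroA5} once that is available; alternatively, it can be extracted directly from the construction since each $\psi_{k,A}$ is $\mcE^{(F)}$-harmonic in a neighborhood of $\partial_o F$ away from its own face and the energy near $\partial_o F$ is dominated by the tail $\sum_{k\geq n}L_F^{(d_w-d_f)k}I_k[f]\to 0$.

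For part (b), the argument is identical after replacing $\mcE^{(F)}$ by $\mcE^{(F_m)}$, $\nu$ by $\nu_m$, the weight $L_F^{(d_w-d_f)k}$ by $\varphi_m(L_F^{-k})$, and the bumps $\psi_{k,A}$ by the rescaled $w_Q^{(m)}$ from Lemma \ref{lemma37}(b); the key point is that the energy estimate $\mcE^{(F_m)}(w_Q^{(m)})\leq C\varphi_m(L_F^{-n})$ is exactly the analogue needed, and $\varphi_m$ is submultiplicative enough that the geometric summation in the level decomposition still converges to $C\,\Lambda_1^{(m)}[f]$. The main obstacle I anticipate is not any single estimate but the bookkeeping: controlling the overlaps of the supports of $\{\psi_{k,A}\}$ across a \emph{single} level (so that summing $\mcE^{(F)}(\psi_{k,A})$ over $A\in\eth_k F$ costs only the discrete energy $I_k[f]$ rather than the full count $\#\eth_k F$), and ensuring the cross terms between consecutive levels $k$ and $k+1$ are absorbed — this is where the relation $A\sim A'$ in \eqref{e:4.2a} (allowing two faces to be related if they share a common parent) and the inequality \eqref{e:4.7} do the real work, and where one must be careful because $\partial_o F$ may be disconnected. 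A secondary subtlety is the uniform $L^\infty$ bound on the partial sums, needed so that $\Ex f\in L^2(F;\mu)$ and the convergence is in $\mcF^{(F)}$ rather than merely in energy; this follows from the telescoping structure and $\|\psi_{k,A}\|_\infty\leq C$ but must be stated carefully.
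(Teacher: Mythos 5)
Your overall strategy (a Whitney-type series built from the cutoff functions of Lemma \ref{lemma37}, with coefficients given by face averages) is in the same spirit as the paper, but the paper organizes it differently and that difference is exactly where your argument has a genuine gap. In the paper, the level-$k$ building block attached to $A\in\eth_kF$ is supported in the annular shell $F_{\mcB_{k-1}(F)}\setminus F_{\mcB_{k+2}(F)}$, i.e.\ at distance comparable to $L_F^{-k}$ from $\partial_o F$ and \emph{not} touching the boundary; the boundary values are recovered only in the limit of partition-of-unity approximants $\Ex^{(n)}f$ via a uniform-convergence (oscillation) estimate. Because of this, at any point of $F$ only $O(1)$ levels are active, and the energy can be estimated shell by shell, each shell costing $\sum_{j=k-1}^{k+1}L_F^{(d_w-d_f)j}I_j[f]$, which sums to $C\Lambda_1[f]$. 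In your construction the bumps $\psi_{k,A}$ are of the type $g^{**}_A$ from Step 2 of Lemma \ref{lemma44}: they satisfy $[\psi_{k,A}]_{\nu|_A}=1$ and are supported in the boundary cell $Q_A$, which reaches all the way down to $\partial_o F$. Hence at a point at distance $\approx L_F^{-j}$ from the boundary the bumps of \emph{all} levels $k\le j$ are simultaneously nonzero, so the overlap count is unbounded as $j\to\infty$. Your claimed bound $\mcE^{(F)}(S_N)\le C\sum_{k\le N}L_F^{(d_w-d_f)k}I_k[f]$, and likewise the Cauchy estimate for $S_N-S_{N'}$, therefore do not follow from "bounded overlap within a level plus absorption of consecutive-level cross terms": the cross terms couple arbitrarily distant levels, and the only estimate the triangle inequality gives is $\mcE^{(F)}(S_N)\le\big(\sum_k(CL_F^{(d_w-d_f)k}I_k[f])^{1/2}\big)^2$, which is not dominated by $\Lambda_1[f]$ for general $f$ in the Besov class. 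To repair this along your lines you would have to either push the supports of the level-$k$ corrections off the boundary (and then your identification of the trace, which relies on $[\psi_{k,A}]_{\nu|_A}=1$, breaks and must be replaced by a separate uniform-convergence argument, as in the paper), or prove geometric decay of the energy of each bump near $\partial_o F$, which is essentially Theorem \ref{thmB1} and is not available at this point.

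Two smaller points. First, invoking Corollary \ref{coroA5} for the statement $\mu^{(F)}_{\<\Ex f\>}(\partial_o F)=0$ is circular: that corollary is deduced from Theorem \ref{T:A.4}(a). Your alternative — bounding the energy measure of $\Ex f$ on $F_{\mcB_k(F)}$ by the tail $\Lambda_{k-1}[f]$ and letting $k\to\infty$ — is the correct route and is exactly what the paper does; the harmonicity of the bumps plays no role there. Second, your verification that $\Ex f|_{\partial_o F}=f$ by checking $[\Ex f]_{\nu|_A}=[f]_{\nu|_A}$ for all sub-faces is not automatic in a telescoping construction, because the level-$(k+1)$ correction perturbs the averages over level-$k$ faces through the overlapping traces of neighboring bumps; this needs either an exact-correction scheme or, again, a uniform-convergence argument for the boundary traces.
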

\begin{proof}
We will only present a proof for (a) as it has an additional statement. The  proof for (b) is the same. For $k\geq 1$ and $A\in \eth_kF$, let $Q_A\in \mcB_k(F)$ be such that $A\subset F_{Q_A}$, and  $U_A$ be the closure of $F_{Q_A}\setminus F_{\mcB_{k+1}(F)}$.  Define $\bar{u}_A\in C(F)\cap\mcF^{(F)}$ by
\[
\bar{u}_A(x):=\max_{Q\in \mathcal{Q}_{k+2}(U_A)}w_Q(x). \quad \hbox{for }  x\in F, 
\]
where   $w_Q \in C(F)\cap \sF^{(F)}$ is the non-negative function in   Lemma \ref{lemma37}(a). Define
$$
	\bar{u}_o = \begin{cases}
	\max_{Q\in \mcQ_2(  F\setminus F_{\mcB_1(F)})}w_Q \quad &\hbox{if } F\setminus F_{\mcB_1(F)}\neq \emptyset,  \cr 
	0 &\hbox{otherwise.}
	\end{cases}
$$
 Denote by  $\operatorname{supp}[f]$ the support of $f$. Note that for $A\in \eth_kF$  with $k\geq 1$,
\begin{equation} \label{e:A.14}
U_A\subset \operatorname{supp}[\bar{u}_A]\subset F_{S_{\mathcal{Q}_A}}\setminus F_{\mcB_{k+2(F)}}\subset F_{\mcB_{k-1}(F)}\setminus F_{\mcB_{k+2(F)}} 
 \end{equation}
 and
\begin{equation} \label{e:A.15}
F\setminus F_{\mcB_1(F)} \subset \operatorname{supp}[\bar{u}_o]\subset F\setminus F_{\mcB_2(F)}.
\end{equation}
See Figure \ref{figA1} for an illustration. 

\begin{figure}[htp]
	\includegraphics[width=4.5cm]{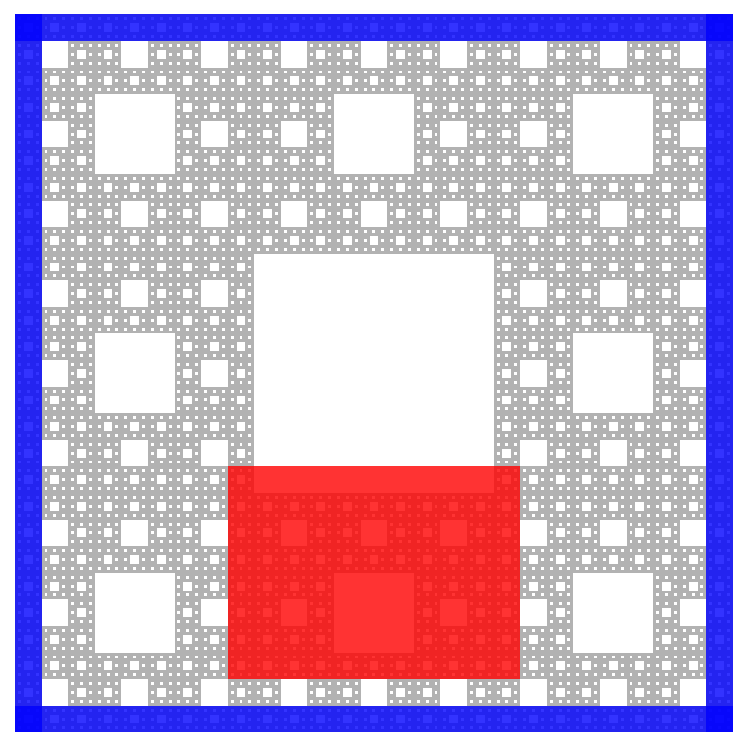}\qquad 
	\includegraphics[width=4.5cm]{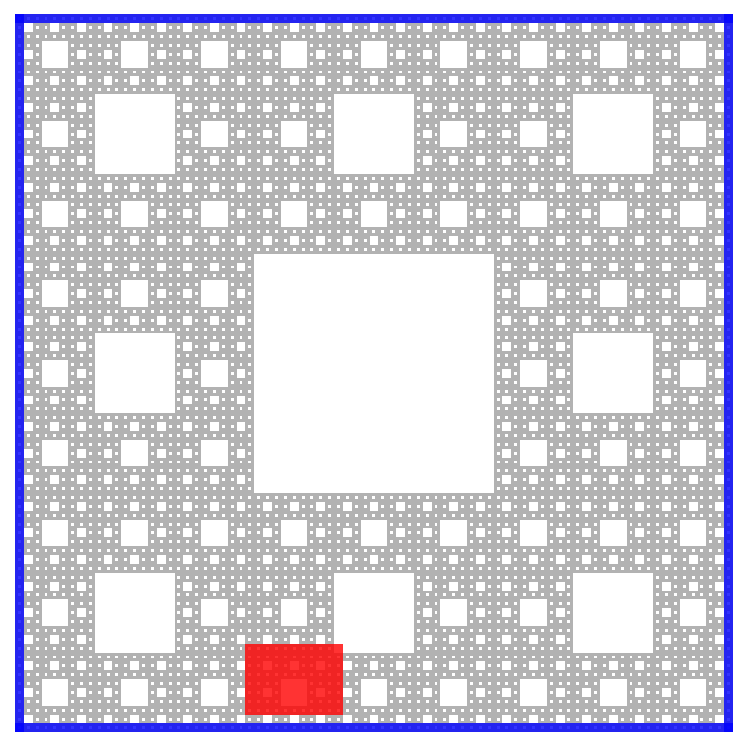}\qquad 
	\includegraphics[width=4.5cm]{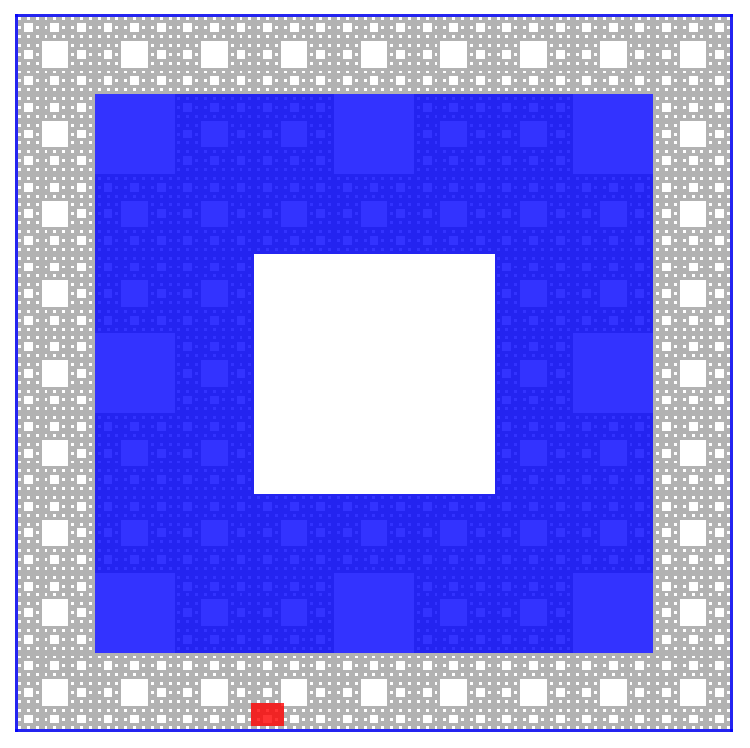}
	\caption{ An illustration of $\operatorname{supp} [\bar{u}_A]$ for $A\in \eth_kF$ with $k=1,2,3$: $\operatorname{supp} [\bar{u}_A]$ is contained in the red area, 
	while  the blue area is $F_{\mcB_{k+2}(F)}\cup (F\setminus F_{\mcB_k-1(F)})$}
	\label{figA1}
\end{figure}

\medskip

For $n\geq 3$, let $g_n:=\bar{u}_o+\sum\limits_{k=1}^{n-1}\sum\limits_{A\in \eth_kF}\bar{u}_A+\sum\limits_{A\in \eth_nF}w_{Q_A}$ 
and define 
	$$		u_o:=\bar{u}_o/g_n  \quad \hbox{ and } \quad 
		u_{A, n} := \begin{cases}
		\bar{u}_A /g_n  \qquad &\hbox{for } A\in \eth_kF  \hbox{ with } 1\leq k\leq n-1 , \cr
		 w_{Q_A}/g_n\ & \hbox{for } A\in \eth_nF.  
		 \end{cases}
	$$
Note  that  $g_n\geq 1$ on $F$,  
 and  $g_{n+k} (x)= g_n(x) $ for each $n\geq 3$, $k\geq 1$ and $x\in F\setminus F_{\mcB_{n-1}(F)}$.  
Moreover,  for $A\in \eth_kF$ with $k\leq n-2$, 
\begin{equation}\label{e:A.16} 
u_{A, n+j} = u_{A,n} \quad \hbox{ for every } j\geq 1.
\end{equation}

Now  define a linear operator $\Ex^{(n)}$ on $C(\partial_o F)$ for each $n\geq 3$ by 
\[
\Ex^{(n)}f :=[f]_{\nu|_{\partial_oF}}u_o+\sum_{k=1}^n\sum_{A\in \eth_kF}[f]_{\nu|_A}u_{A,n}\quad\hbox{ for } f\in C(\partial_oF).
\]
  For each   $n\geq 3$ and $j\geq 0$,
  by \eqref{e:A.14} 
$\operatorname{supp}[u_{A,n+j}]\subset \operatorname{supp}[\bar{u}_A]\subset F_{\mcB_{n-2}(F)}$  for $A\in \eth_kF$ 
  with $n-1\leq k\leq n+j-1 $,  
and $\operatorname{supp}[u_{A,n+j}]\subset \operatorname{supp}[w_{Q_A}]\subset F_{\mcB_{n-2}(F)}$ for $A\in \eth_{n+j}F$.  Thus on $F\setminus F_{\mcB_{n-2}(F)}$, we have by \eqref{e:A.16}
\[
\begin{aligned}
 \Ex^{(n+j)}f    &=[f]_{\nu|_{\partial_oF}}u_o+\sum_{k=1}^{n+j}\sum_{A\in \eth_kF}[f]_{\nu|_A}u_{A,n+j}\\
&=[f]_{\nu|_{\partial_oF}}u_o+\sum_{k=1}^{n-2}\sum_{A\in \eth_kF}[f]_{\nu|_A}u_{A,n+j} \\
&=[f]_{\nu|_{\partial_oF}}u_o+\sum_{k=1}^{n-2}\sum_{A\in \eth_kF}[f]_{\nu|_A}u_{A,n} . 
\end{aligned}
\]
 As a consequence,  for each  $n\geq 3$ and $j\geq 0$, 
\begin{equation}\label{eqnA17}
\Ex^{(n+j)}f =\Ex^{(n)}f 
\quad \hbox{  on }F\setminus F_{\mcB_{n-2}(F)} .
\end{equation}
 On the other hand,  we have by  \eqref{e:A.14} and \eqref{e:A.15} that for $n\geq 4$, $j\geq 0$ and $x\in F_{\mcB_{n-2}(F)}$,
\[
\Ex^{(n+j)}f(x)=\sum_{k=n-3}^{n+j}\sum_{A\in \eth_kF\atop x\in \operatorname{supp}[u_{A,n+j}]}[f]_{\nu|_A}u_{A,n+j}(x) . 
\]
  Hence, if we fix $Q(x)\in \mcB_{n-3}(F)$ so that $x\in Q(x)$, then we have
\begin{equation}\label{eqnA18}
\begin{aligned}
& \inf\{f(y):y\in A',  A'\in \eth_{n-3}F, A'\cap Q(x)\neq\emptyset\} \\
 \leq & \ \Ex^{(n+j)}f(x)\leq \sup\{f(y):y\in A', A'\in \eth_{n-3}F, A'\cap Q(x)\neq\emptyset\}.
\end{aligned}
\end{equation}
We conclude from \eqref{eqnA17} and \eqref{eqnA18} that 
$$\|\Ex^{(n+j)}f-\Ex^{(n)}f\|_\infty\leq \operatorname{Osc}(f,2\sqrt{d}L_F^{3-n}) 
\quad \hbox{for each } n\geq 3 \hbox{ and } j\geq 0,
$$
where $\operatorname{Osc}(f,r):=\sup_{x,y\in F:\rho(x,y)\leq r}|f(x)-f(y)|$.  Consequently, $\Ex^{(n)}f$ converges uniformly on $F$ as $n\to \infty$ to a bounded continuous $\Ex f$ function on $F$ with $\Ex f =f $ on $\partial_o F$.  
 In the rest of the proof, we fix $f\in C(\partial_oF)\cap \Lambda(\partial_oF)$, and write $h_n=\Ex^{(n)}f$ with $ n\geq 3$.
	
	\medskip
	
	We first show that  there is a constant $C_1$ depending only on $F$ such that 
	\begin{eqnarray}\label{e:A.19}
	\mcE^{(F)}(u_o)\leq C_1  & \hbox{and} &   \mcE^{(F)}(u_{A,n})\leq C_1L_F^{(d_w-d_f)k} \nonumber \\
	&& \hbox{for each } n\geq 3 \hbox{ and } A\in \eth_kF  \hbox{ with } 1\leq k\leq n.
	\end{eqnarray}  
	
	\medskip
	
  Note that $\mcE^{(F)}(u_o) <\infty$ as $u_o\in \sF^{(F)}$.  So we only need to estimate  $\mcE^{(F)}(u_{A,n})$. 
From now on,  fix $1\leq k<n$ (the case that $k=n$ follows by the same argument) and $A\in \eth_kF$.   Since $g_n\geq 1$ on $F$, 
$$
1/g_n \leq g_n  \quad \hbox{ and } \quad 
\Big| \frac1{g_n(x)} -\frac1{g_n(y)} \Big| \leq |g_n(x) -g_n (y)| \quad \hbox{for any } x, y\in F.
$$
Thus by the proof of \cite[Lemma 4.3.9]{CF}, $\mu^{(F)}_{\<1/g_n\>} \leq \mu^{(F)}_{\<g_n\>}$ on $F$. 
 Hence for some constant $C_1' >0$ depending only on $F$,
	\[
	\mu^{(F)}_{\<1/g_n\>} \left(\operatorname{supp} [{\bar{u}_A}] \right)
	\leq \mu^{(F)}_{\<g_n\>}(\operatorname{supp}[{\bar{u}_A}])
	\leq C_1'L_F^{(d_w-d_f)k},
	\]
	where  the second inequality is due to the definition of $g_n$, the energy estimates of  $w_{Q_A}$  from Lemma \ref{lemma37}(a)
	and of  $\bar{u}_A$  by the proof of \cite[Lemma 4.3.9]{CF}, 
	and the fact that there are no more than $ (3^d-1) m_F$ of functions among $\bar{u}_0,\bar{u}_A,w_{Q_A}$ 
	 	in the definition of $g_n$
	that are  non-zero on $\operatorname{supp} [{\bar{u}_A}]$. By the strongly local property and the derivation property of $\mu^{(F)}_{\<f\>}$ (see, e.g., \cite[Proposition 4.3.1 and Lemma 4.3.6]{CF}), 
	 $$
	\begin{aligned}
	\mcE^{(F)}(u_{A,n})& =\mu^{(F)}_{\<u_{A,n}\>}{(F)}
	=\mu^{(F)}_{\<\bar{u}_A/g_n\>}(\operatorname{supp}[{\bar{u}_A}])\\
	&\leq 2\Big(\mu^{(F)}_{\<\bar{u}_A\>}(\operatorname{supp} [u_A])+\mu^{(F)}_{\<1/g_n\>}(\operatorname{supp}[u_A])\Big)
	\leq C_1L_F^{(d_w-d_f)k},
	\end{aligned}
	$$
	where in the first inequality   we also used the facts that $\|\bar{u}_A\|_\infty\leq 1$ and $\|1/g_n\|_\infty\leq 1$.
	This proves  the claim \eqref{e:A.19}.
	 
	\medskip
	
 	We next estimate the energy of $h_n:=\Ex^{(n)}f$. 
	 	Let $A\in \eth_kF$ with $2\leq k\leq n-1$.  Note that 
		$$ 
		h_n- [f]_{\nu|_A}= ( [f]_{\nu|_{\partial_oF}} -[f]_{\nu|_A}) u_o+\sum_{j=1}^n \sum_{B\in \eth_j F}  ([f]_{\nu|_{B}} -[f]_{\nu|_A}) u_{B,n}.
		$$
		By \eqref{e:A.14}-\eqref{e:A.15} and the strong local property of the energy measure $\mu^{(F)}_{\< f\>}$ (see \cite[Proposition 4.3.1]{CF}), 
		there are positive constants   $C_2,C_3$ depending only on $F$ so that
		\begin{eqnarray*}
		\mu^{(F)}_{\<h_n\>}(U_A) &=&\mu^{(F)}_{\<h_n(x)-[f]_{\nu|_A}\>}(U_A) \\
		&=& \sum_{j=1}^n \sum_{B\in \eth_j F}  ([f]_{\nu|_{B}} -[f]_{\nu|_A})^2 \mu^{(F)}_{\< u_{B,n}\>} (U_A) \\
		&&      + \sum_{i\not=j=1}^n \sum_{B_1\in \eth_i F, B_j\in \eth_j F}  ([f]_{\nu|_{B_1}} -[f]_{\nu|_A}) ([f]_{\nu|_{B_2}} -[f]_{\nu|_A}) 
		 \mu^{(F)}_{\< u_{B_1,n},  u_{B_2,n} \>} (U_A) \\
		 &\leq & 3^d  \sum_{j=k-1}^{k+1} \sum_{\sum_{A'\in \eth_{j}F\atop A'\cap A\neq \emptyset}\ }  ([f]_{\nu|_{A'}} -[f]_{\nu|_A})^2 \mu^{(F)}_{\< u_{A',n}\>} (U_A) . 
	 	\end{eqnarray*}
		Thus by \eqref{e:A.19}
		\begin{eqnarray}
		\mu^{(F)}_{\<h_n\>}(U_A) 
		&\leq & C_2  \sum_{j=k-1}^{k+1} \sum_{A'\in \eth_{j}F\atop A'\cap A\neq \emptyset\ }   L_F^{(d_w-d_f)j} ([f]_{\nu|_{A'}} -[f]_{\nu|_A})^2 
		\nonumber \\
		&\leq &C_3\sum_{j=k-1}^{k+1}L_F^{(d_w-d_f)j}\sum_{A'\in \eth_{j}F\atop A'\cap A\neq \emptyset}
		 \sum_{B\in \eth_{j}F\atop B\sim A'}\big([f]_{\nu|_{A'}}-[f]_{\nu|_{B}}\big)^2,   \label{e:A.20} 
		\end{eqnarray} 
		where the last inequality is due to the observation that  for any $i\geq 1$, $\wt A\in  \eth_{i-1}F$ and any $\wt B\in  \eth_{i-1}F \hbox{ with } \wt B\supset \wt A$, 
		$$
		\big|[f]_{\nu|_{\wt A}}-[f]_{\nu|_{\wt B}}\big|^2\leq m_I\sum_{B\in \eth_i F\atop B\sim \wt A}\big|[f]_{\nu|_{\wt A}}-[f]_{\nu|_{B}}\big|^2 .
		 $$
	
	 Summing \eqref{e:A.20} over $A\in \eth_kF$, we get  for some positive constant $C_4$ depending only on $F$ that 
	\begin{equation}\label{e:A.21}
 		\mu_{\<h_n\>}^{(F)}(F_{\mcB_k(F)}\setminus F_{\mcB_{k+1}(F)}) \leq C_4\sum_{j=k-1}^{k+1} L_F^{(d_w-d_f)j}I_{j}[f]
		\quad \hbox{for }  2\leq k\leq n-1.
 	\end{equation} 
	By the same arguments, we get  that
	\begin{eqnarray}\label{e:A.22}
 	& \mu_{\<h_n\>}^{(F)}(F_{\mcB_n(F)}) \leq C_4\sum_{j=n-1}^{n} L_F^{(d_w-d_f)j}I_{j}[f]\\
 	& \label{e:A.23}\mu_{\<h_n\>}^{(F)}(F\setminus F_{\mcB_1(F)}) \leq C_4 L_F^{(d_w-d_f)} I_1[f]\\
 	& \label{e:A.24a}  \mu_{\<h_n\>}^{(F)}(F_{\mcB_1(F)}\setminus F_{\mcB_2(F)}) \leq C_4\sum_{j=1}^{2} L_F^{(d_w-d_f)j}I_j[f].
	\end{eqnarray} 
 For example, for  \eqref{e:A.22},  following the same argument as that for \eqref{e:A.19} we have  for each $A\in\eth_nF$,  
\begin{eqnarray*}
	\mu^{(F)}_{\<h_n\>}(F_{Q_A}) 
	&\leq & C'_2  \sum_{j=n-1}^{n} \sum_{A'\in \eth_{j}F\atop A'\cap A\neq \emptyset\ }   L_F^{(d_w-d_f)j} ([f]_{\nu|_{A'}} -[f]_{\nu|_A})^2 
	\nonumber \\
	&\leq &C'_3\sum_{j=n-1}^{j=n}L_F^{(d_w-d_f)j}\sum_{A'\in \eth_{j}F\atop A'\cap A\neq \emptyset}
	\sum_{B\in \eth_{j}F\atop B\sim A'}\big([f]_{\nu|_{A'}}-[f]_{\nu|_{B}}\big)^2. 
\end{eqnarray*}
Estimate  \eqref{e:A.22} follows by taking the summation over $A\in \eth_nF$.

By  \eqref{e:A.21}, \eqref{e:A.22}, \eqref{e:A.23}  and \eqref{e:A.24}, 
\begin{eqnarray}
\mcE^{(F)}(h_n)&=&\mu_{\<h_n\>}^{(F)}(F\setminus F_{\mcB_1(F)})+ 	\sum_{k=1}^{n-1}\mu_{\<h_n\>}^{(F)}(F_{\mcB_k(F)}\setminus F_{\mcB_{k+1}(F)})+\mu_{\<h_n\>}^{(F)}(F_{\mcB_n(F)})\nonumber \\
&\leq&3C_4\sum_{j=1}^{n} L_F^{(d_w-d_f)j}I_{j}[f]\nonumber\\
&\leq&3C_4\Lambda_1[f].\label{e:A.24}
\end{eqnarray}
In particular, we have $\sup_{n\geq 3} \mcE^{(F)}(h_n) <\infty$.  
Since $h_n$ converges  to $\Ex f$  uniformly on $F$   and hence in $L^2(F; \mu)$, 
there is a subsequence of $\{h_n; n\geq 1\}$ whose  Ces\`aro means   converges in $\sqrt{\mcE^{(F)}_1}$-norm to $\Ex f$. Thus in view of \eqref{e:A.24}, 
$\Ex f \in \sF^{(F)}\cap C(F)$ with  $\mcE^{(F)}(\Ex f )  \leq 3C_4\Lambda_1[f] $. 
Moreover, for each $k\geq 1$, it follows from \eqref{e:A.21}, \eqref{e:A.22}, \eqref{e:A.23} and \eqref{e:A.24} that for $n\geq k\vee 3$, 
\begin{eqnarray*}
 	\mu^{(F)}_{\<h_n\>}(F_{\mcB_k(F)}) 
	=\sum_{j=k}^{n-1} \mu_{\<h_n\>}^{(F)}(F_{\mcB_j(F)}\setminus F_{\mcB_{j+1}(F)}) + \mu_{\<h_n\>}^{(F)}(F_{\mcB_n(F)} )\leq \ 3C_4\Lambda_{k-1}[f]. 
\end{eqnarray*}
Consequently, we have $\mu^{(F)}_{\<\Ex f\>}(F_{\mcB_k(F)})\leq 3C_4 \Lambda_{k-1}[f]$ for every $k\geq 1$. Hence $ \mu^{(F)}_{\<\Ex f\>}(\partial_o F ) =\lim_{k\to \infty} \mu^{(F)}_{\<\Ex f\>}(F_{\mcB_k(F)}) =0$. 
This completes the proof for  part (a) of the theorem. 
\end{proof}

\smallskip

\begin{remark} \rm  After the proof of  Theorem \ref{T:A.4}, one can simply replace $\Ex$ and $\Ex_m$  there by
 the harmonic extension operators  as 
harmonic extensions minimize the corresponding energies among those functions having the same boundary data. 
\end{remark}

The next result improves a corresponding result in \cite[\S 5.3]{HK} where  the $(d-1)$-dimensional fractal $\partial_0 F$ is  additionally assumed to satisfy  conditions  (SC1)-(SC4); see \cite[Remarks 2.16 and 5.3]{BBKT}.

\begin{corollary}\label{coroA5}
 $\mu^{(F)}_{\<f\>}(\partial_o F_Q)=0$ for each $f\in \mcF^{(F)}$ and $Q\in \mcQ_n(F)$ with $n\geq 0$.
  As a consequence, for each $n\geq 1$ and $\mathcal{A}\subset \mcQ_n(F)$, we have $\mu^{(F)}_{\<f\>}(F_{\mathcal{A}})=\sum_{Q\in \mathcal{A}}\mcE^{(F_Q)}(f)$. 
\end{corollary}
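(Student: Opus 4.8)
\textbf{Proof proposal for Corollary \ref{coroA5}.}

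The plan is to reduce everything to the claim that $\mu^{(F)}_{\<f\>}(\partial_o F)=0$ for every $f\in\mcF^{(F)}$, since all the faces $\partial_o F_Q$ are similar copies of $\partial_o F$ and, by the self-similarity \eqref{eqn32} together with Lemma \ref{lemmaA2}, the energy measure restricted to a cell $F_Q$ is (the push-forward of) the energy measure of the rescaled function on $F$. So first I would fix $Q\in\mcQ_n(F)$ and write $\mu^{(F)}_{\<f\>}(\partial_o F_Q)=\mu^{(F_Q)}_{\<f\>}(\partial_o F_Q)$ using Lemma \ref{lemmaA2} (note $\partial_o F_Q\subset F_Q$), and then, by \eqref{e:3.3}, $\mu^{(F_Q)}_{\<f\>}(\partial_o F_Q)=L_F^{n(d_w-d_f)}\mu^{(F)}_{\<f\circ\Psi_Q\>}(\partial_o F)$. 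Hence it is enough to treat the case $n=0$, $Q=F_0$.

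For the case $Q=F_0$: first reduce to $f\in\mcF^{(F)}\cap C(F)$ by regularity of $(\mcE^{(F)},\mcF^{(F)})$ and the fact that the map $f\mapsto\mu^{(F)}_{\<f\>}$ is continuous in the sense that $\mu^{(F)}_{\<f_k\>}(\partial_o F)\to\mu^{(F)}_{\<f\>}(\partial_o F)$ if $\mcE^{(F)}_1(f_k-f)\to 0$ (this uses $|\sqrt{\mu_{\<f_k\>}(A)}-\sqrt{\mu_{\<f\>}(A)}|\le\sqrt{\mu_{\<f_k-f\>}(A)}\le\sqrt{\mcE^{(F)}(f_k-f)}$). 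For continuous $f$, decompose $f=\mathcal Hf + (f-\mathcal Hf)$ where $\mathcal Hf\in C(F)\cap\mcF^{(F)}$ is the $\mcE^{(F)}$-harmonic extension of $f|_{\partial_o F}$ (available since every point of $\partial_o F$ is regular, Proposition \ref{P:3.5}, and by the Harnack inequality Theorem \ref{thm32}; note $f-\mathcal Hf$ vanishes on $\partial_o F$). For the harmonic part, since $f|_{\partial_o F}\in C(\partial_o F)$, Lemma \ref{lemma44}(a) shows $\Lambda(\partial_o F)\cap C(\partial_o F)$ is dense in $\Lambda(\partial_o F)$; but I still need $f|_{\partial_o F}\in\Lambda(\partial_o F)$, which follows from Theorem \ref{T:A.3}(a) applied to $\mathcal Hf$ (it gives $\Lambda_1[f|_{\partial_o F}]\le C\mu^{(F)}_{\<\mathcal Hf\>}(F_{\mcB_0(F)})<\infty$), so $f|_{\partial_o F}\in\Lambda(\partial_o F)\cap C(\partial_o F)$ and Theorem \ref{T:A.4}(a) yields an extension $\Ex(f|_{\partial_o F})$ with $\mu^{(F)}_{\<\Ex(f|_{\partial_o F})\>}(\partial_o F)=0$. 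Since $\Ex(f|_{\partial_o F})$ and $\mathcal Hf$ have the same boundary trace and $\mathcal Hf$ is the unique minimizer of energy among functions with that trace, and since $\mathcal Hf-\Ex(f|_{\partial_o F})$ vanishes on $\partial_o F$, I claim $\mu^{(F)}_{\<\mathcal Hf\>}(\partial_o F)\le 2\mu^{(F)}_{\<\Ex(f|_{\partial_o F})\>}(\partial_o F)+2\mu^{(F)}_{\<\mathcal Hf-\Ex(f|_{\partial_o F})\>}(\partial_o F)$, and the last term is handled by the ``boundary-vanishing'' argument below; so $\mu^{(F)}_{\<\mathcal Hf\>}(\partial_o F)=0$.

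For the remaining part, i.e. any $g\in\mcF^{(F)}\cap C(F)$ with $g|_{\partial_o F}=0$, I would show $\mu^{(F)}_{\<g\>}(\partial_o F)=0$ directly: approximate $g$ by $g_k:=(g-1/k)^+-(g+1/k)^-$, which are in $\mcF^{(F)}\cap C(F)$, vanish on a neighborhood (in $F$) of $\partial_o F$ of the form $\{x:|g(x)|<1/k\}$, and satisfy $\mcE^{(F)}_1(g_k-g)\to 0$; since $\mu^{(F)}_{\<g_k\>}$ vanishes on an open set containing $\partial_o F$, in particular $\mu^{(F)}_{\<g_k\>}(\partial_o F)=0$, and then by the continuity estimate above $\mu^{(F)}_{\<g\>}(\partial_o F)=\lim_k\mu^{(F)}_{\<g_k\>}(\partial_o F)=0$. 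Combining the two parts gives $\mu^{(F)}_{\<f\>}(\partial_o F)=0$ for all $f\in\mcF^{(F)}$, hence $\mu^{(F)}_{\<f\>}(\partial_o F_Q)=0$ for all $Q\in\mcQ_n(F)$. Finally, for $n\ge 1$ and $\mathcal A\subset\mcQ_n(F)$, the boundary $\partial F_{\mathcal A}$ inside $F$ is contained in a finite union of faces $\partial_{i,s}F_Q$, each of zero energy measure (same argument applied to faces, or note $\partial_{i,s}F_Q$ is a similar copy of $\partial_o F$), so $\mu^{(F)}_{\<f\>}(F_{\mathcal A})=\sum_{Q\in\mathcal A}\mu^{(F)}_{\<f\>}(\operatorname{int}(F_Q))+0=\sum_{Q\in\mathcal A}\mu^{(F_Q)}_{\<f\>}(F_Q)=\sum_{Q\in\mathcal A}\mcE^{(F_Q)}(f)$ by Lemma \ref{lemmaA2} and \eqref{e:3.3}, upgrading the inequality in Lemma \ref{lemmaA2} to an equality as claimed. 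The main obstacle I anticipate is making rigorous the reduction of the harmonic part to the explicit extension $\Ex$ from Theorem \ref{T:A.4}; everything else is routine approximation, but one must be careful that $f|_{\partial_o F}$ genuinely lies in the Besov space $\Lambda(\partial_o F)$ before invoking the extension theorem — this is exactly what the trace theorem \ref{T:A.3}(a) provides.
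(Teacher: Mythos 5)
Your proof is correct in substance and leans on the same two key inputs as the paper (Theorem \ref{T:A.3}(a) to see $f|_{\partial_o F}\in\Lambda(\partial_o F)$ and Theorem \ref{T:A.4}(a) for the extension whose energy measure does not charge $\partial_o F$), but you handle the ``vanishing-on-the-boundary'' part differently. The paper decomposes $f=\Ex(f|_{\partial_o F})+\varphi$ and disposes of $\varphi$ by noting that a continuous element of $\mcF^{(F)}$ vanishing on $\partial_o F$ lies in the domain $\sF^{(F)}_{F\setminus\partial_o F}$ of the part form, where $\mcF^{(F)}\cap C_c(F\setminus\partial_o F)$ is $\sqrt{\mcE^{(F)}_1}$-dense, so its energy measure cannot charge $\partial_o F$; you instead prove this directly by truncating, $g_k=(g-1/k)^+-(g+1/k)^-$, using strong locality on the open set $\{|g|<1/k\}\supset\partial_o F$ and the Lipschitz-type estimate $|\sqrt{\mu_{\<g_k\>}(A)}-\sqrt{\mu_{\<g\>}(A)}|\le\sqrt{\mcE^{(F)}(g_k-g)}$. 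That is a perfectly good, more elementary substitute, provided you say a word about why $\mcE^{(F)}_1(g_k-g)\to0$ (normal contraction gives $\mcE^{(F)}_1(g_k)\le\mcE^{(F)}_1(g)$, $g_k\to g$ in $L^2$, hence weak convergence in $(\mcF^{(F)},\mcE^{(F)}_1)$ plus convergence of norms gives strong convergence); the paper's route buys you this for free from the cited part-form density, while yours avoids invoking the killed process altogether. Two further small points: your detour through $\mathcal{H}f$ is unnecessary (you can decompose $f=\Ex(f|_{\partial_o F})+(f-\Ex(f|_{\partial_o F}))$ directly, exactly as the paper does), and your opening identity $\mu^{(F)}_{\<f\>}(\partial_o F_Q)=\mu^{(F_Q)}_{\<f\>}(\partial_o F_Q)$ is not literally what Lemma \ref{lemmaA2} gives, since $\partial_o F_Q$ also meets the neighboring cells; the correct statement is $\mu^{(F)}_{\<f\>}(\partial_o F_Q)=\sum_{Q'}\mu^{(F_{Q'})}_{\<f\>}(\partial_o F_Q\cap F_{Q'})\le\sum_{Q'}\mu^{(F_{Q'})}_{\<f\>}(\partial_o F_{Q'})$, which still vanishes once the case $Q=F_0$ is transported to every cell by \eqref{e:3.3}, so the conclusion — and your final cell-decomposition computation for $F_{\mathcal A}$, which matches the paper's — is unaffected.
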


\begin{proof}
First for every $\varphi \in \mcF^{(F)}\cap C_c (F\setminus \partial_o F)$, by the strong local property of the energy measure from
\cite[Proposition 4.3.1]{CF}, $ \mu^{(F)}_{\<f\>}(\partial_o F)=0$.  Denote by $(\sE^{(F)}, \sF^{(F)}_{F\setminus \partial_o F})$ the Dirichlet form
of the part process of the Brownian motion $X^{(F)}$ on $F$ killed upon hitting $\partial_0 F$.  It is well known (see, e.g.,  \cite[Theorem 3.39]{CF}) 
that $\mcF^{(F)}\cap C_c (F\setminus \partial_o F)$ is $\sqrt{\sE^{(F)}_1}$-dense in $\sF^{(F)}_{F\setminus \partial_o F}$.
Hence 
$$
\mu^{(F)}_{\<f\>}(\partial_o F)=0 \quad \hbox{for every } f\in \sF^{(F)}_{F\setminus \partial_o F}.
$$
For every $f\in  \mcF^{(F)}\cap C(F)$,  $f|_{\partial_o F} \in C(\partial_o F) \cap \Lambda ( \partial_o F)$ by Theorem \ref{T:A.3}.
Thus 
$\varphi := f-\Ex (f|_{\partial_o F}) \in \sF^{(F)}\cap C(F)$  vanishes continuously on $\partial_o F$ and so 
$\varphi \in  \sF^{(F)}_{F\setminus \partial_o F}$.  
Thus it follows from Theorem \ref{T:A.4}(a) that 
$$
\mu^{(F)}_{\<f\>}(\partial_o F) \leq 2 \mu^{(F)}_{\<\varphi \>}(\partial_o F)  + 2 \mu^{(F)}_{\<\Ex (f|_{\partial_o F})  \>}(\partial_o F) =0.
$$
By the regularity  of the Dirichlet form $(\mcE^{(F)},\mcF^{(F)})$,
$\mu^{(F)}_{\<f\>}(\partial_o F) =0$ for every $f\in \mcF^{(F)}$. 
 This implies that for each $f\in \mcF^{(F)}$ and $Q\in \mcQ_n(F)$ with $n\geq 1$,
\begin{equation}\label{e:A.25}
\mu^{(F_Q)}_{\<f\>}(\partial_o F_Q)=L_F^{n(d_w-d_f)}\mu^{(F)}_{\<f\circ \Psi_Q\>}(\partial_oF)=0
\end{equation}
due to Lemma \ref{lemma31} and \eqref{e:3.3}.   It follows from Lemma \ref{lemmaA2} and \eqref{e:A.25} that
$$
\mu^{(F)}_{\<f\>}  (\partial_o F_{Q })  = 
\sum_{{Q'}\in {\mathcal Q}_n(F)} \mu^{(F_{Q'})} _{\<f\>}  (  {Q'} \cap \partial_o F_{Q })  
\leq   \sum_{{Q'}\in {\mathcal Q}_n(F)} \mu^{(F_{Q'})} _{\<f\>}  (    \partial_o F_{Q'}) =0.
$$

For any $\mathcal{A}\subset \mcQ_n(F)$ with $n\geq 1$,  again by  Lemma \ref{lemmaA2} and \eqref{e:A.25}, 
\begin{align*}
\mu^{(F)}_{\<f\>}(F_{\mathcal{A}})&=\sum_{Q\in \mcQ_n(F)}\mu^{(F_Q)}_{\<f\>}(F_Q\cap F_{\mathcal{A}})\\
&=\sum_{Q\in\mathcal{A}}\mu^{(F_Q)}_{\<f\>}(F_Q\cap F_{\mathcal{A}})+\sum_{Q\in \mcQ_n(F)\setminus \mathcal{A}}\mu^{(F_Q)}_{\<f\>}(F_Q\cap F_{\mathcal{A}})=\sum_{Q\in \mathcal{A}}\mcE^{(F_Q)}(f),
\end{align*}
where in the last equality, we used the facts that $F_Q\subset F_{\mathcal{A}}$ if $Q\in \mathcal{A}$ and $F_Q\cap F_{\mathcal{A}}\subset \partial_oF_Q$ if $Q\in \mcQ_n(F)\setminus \mathcal{A}$.  
\end{proof}

\section{An estimate of energy measure}\label{secB}

In this appendix, we show that the energy measure on $\Psi_Q(F_{\mcB_n})$ of a function that is $\mcE^{(F)}$-harmonic in a neighborhood of the boundary of a cell $F_Q$, decreases at an exponential rate in $n\to\infty$. A similar type result is given in \cite[Proposition 3.8]{HK}  as a preparation for the restriction theorem under some additional assumptions. Our approach is different from that in \cite{HK} and is based on the idea of trace theorems.
 
\begin{theorem}\label{thmB1}
	There are positive finite contants $C,c$ depending only on $F$ such that for each $l\geq 1,Q^*\in \mathcal{Q}_l(F)$, $n\geq 0$ and $f\in \mcF^{(F)}\cap C(F)$ that is $\mcE^{(F)}$-harmonic in the interior of $F_{S_{Q^*}}$, where $\mathcal{S}_{Q^*}=\{Q\in\mathcal{Q}_l(F):Q\cap Q^*\neq\emptyset\}$, we have 
	\[
	\mu^{(F)}_{\< f \>}(\Psi_{Q^*} (F_{\mcB_n(F)}))\leq Ce^{-cn}\, \mu^{(F)}_{\< f \>}\big(F_{\mathcal{S}_{Q^*}}\big).
	\] 
\end{theorem}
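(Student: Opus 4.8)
The plan is to prove the exponential decay by an iteration over the scales $k = 1, 2, \ldots, n$, showing that each additional boundary shell $F_{\mcB_k}$ captures only a fixed fraction (bounded away from $1$) of the energy already present in the shell $F_{\mcB_{k-1}}$, so that the energy in $F_{\mcB_n}$ is bounded by $\theta^n$ times the total energy for some $\theta \in (0,1)$. By self-similarity \eqref{e:3.3} and scaling of the energy measure, it suffices to treat the case $l = 0$, $Q^* = F_0$, so that $f \in \mcF^{(F)} \cap C(F)$ is $\mcE^{(F)}$-harmonic in a neighborhood of $\partial_o F$ (in fact in the interior of $F = F_{\mathcal{S}_{F_0}}$), and we must show $\mu^{(F)}_{\<f\>}(F_{\mcB_n(F)}) \leq C e^{-cn}\mcE^{(F)}(f)$.

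\textbf{Key steps.} First, I would combine the two trace theorems: since $f$ is harmonic near $\partial_o F$, apply Theorem \ref{T:A.3}(a) to get $\Lambda_n[f|_{\partial_o F}] \leq C \mu^{(F)}_{\<f\>}(F_{\mcB_{n-1}(F)})$, and conversely apply the extension estimate in Theorem \ref{T:A.4}(a) — using that the harmonic extension $\mathcal{H}(f|_{\partial_o F})$ minimizes energy among functions with the given trace, so $\mcE^{(F)}(f) = \mcE^{(F)}(\mathcal{H}(f|_{\partial_o F})) \leq C\Lambda_1[f|_{\partial_o F}]$; and more sharply, I would want a localized version giving $\mu^{(F)}_{\<f\>}(F_{\mcB_k(F)}) \leq C \Lambda_k[f|_{\partial_o F}]$, which follows from the energy-measure estimates \eqref{e:A.21}–\eqref{e:A.24} in the proof of Theorem \ref{T:A.4}(a) together with the minimality of harmonic extensions (via a cutoff/comparison argument, since $f$ and its harmonic extension from $\partial_o F$ agree near the boundary). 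Second, the core of the argument is the inequality $\Lambda_{n}[f|_{\partial_o F}] \leq \kappa\, \Lambda_{n-1}[f|_{\partial_o F}]$ — no, that is false as stated since $\Lambda_{n-1} = L_F^{(n-1)(d_w-d_f)}I_{n-1} + \Lambda_n$; instead I would show $L_F^{(n-1)(d_w-d_f)}I_{n-1}[f|_{\partial_o F}] \geq \eta\, \Lambda_n[f|_{\partial_o F}]$ for some $\eta > 0$, i.e. the energy at the coarsest remaining scale is a definite fraction of the total remaining energy. This is exactly the type of reverse-Poincaré/good-$\lambda$ bound encoded in Lemma \ref{lemma44}(a.2): $\Lambda_n[g] \leq C L_F^{n(d_w-d_f)}I_n[g]$ for $g$ in the finite-dimensional space $\mathscr{C}_n$. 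The point is that the trace $f|_{\partial_o F}$ of a harmonic function, while not literally in $\mathscr{C}_n$, has the property that its harmonic extension equals the harmonic extension of its $\mathscr{C}_n$-interpolant up to an error controlled by $I_n$ and higher-scale terms — so one gets $\Lambda_n[f|_{\partial_o F}] \leq C L_F^{n(d_w-d_f)} I_n[f|_{\partial_o F}] + (\text{lower order})$, and iterating yields $\Lambda_n \leq \theta^n \Lambda_0$ hence geometric decay.

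\textbf{Alternative cleaner route and the main obstacle.} A cleaner implementation avoids the function space $\mathscr{C}_n$ and argues probabilistically: let $h$ be the harmonic extension to $F$ of $f|_{\partial_o F}$ (which equals $f$ near $\partial_o F$, so $\mu^{(F)}_{\<f\>}$ and $\mu^{(F)}_{\<h\>}$ agree on all shells). For the part process of $X^{(F)}$ killed off $F \setminus \partial_o F$, the function $h - (\text{const})$ is harmonic, and one uses the strong Markov property at the boundary $A_k := F_{\mcQ_k \setminus \mcB_k} \cap F_{\mcB_k}$ of the $k$-th shell (as in Proposition \ref{P:3.5}) together with the elliptic Harnack inequality (Theorem \ref{thm32}) and the uniform capacity lower bound \eqref{e:3.7} to show that a definite fraction of the "oscillation energy" of $h$ in $F_{\mcB_{k-1}} \setminus F_{\mcB_{k+1}}$ is not inherited from the deeper shell $F_{\mcB_{k+1}}$: quantitatively, $\mu^{(F)}_{\<h\>}(F_{\mcB_{k+1}(F)}) \leq (1-\delta)\,\mu^{(F)}_{\<h\>}(F_{\mcB_{k-1}(F)})$ for some $\delta > 0$ depending only on $F$, which iterates to the claimed bound with $c = -\tfrac12\log(1-\delta)$. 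The main obstacle is establishing this one-step strict contraction with a constant uniform over all scales $k$ and over all harmonic $f$: it requires quantifying, via Harnack and the Poincaré inequality of Lemma \ref{lemma39}, that the energy of $h$ in the "collar" $F_{\mcB_{k-1}} \setminus F_{\mcB_{k+1}}$ is comparable to $L_F^{k(d_w-d_f)}$ times the squared oscillation of the boundary averages at scale $k$, and that harmonicity forces those averages to be "intermediate" between coarser and finer scales in a way that loses a definite multiplicative factor each step — this is precisely the content already extracted in the proofs of Theorems \ref{T:A.3} and \ref{T:A.4} (the geometric factor $\theta = L_F^{(d-2)-(d-1)} \vee L_F^{d_f - d_w - d_I} < 1$, which is $<1$ thanks to Lemma \ref{lemma38}), so I expect the proof to reduce to assembling \eqref{e:A.13}, the localized extension bound, and the minimality of harmonic extensions, rather than to any genuinely new estimate.
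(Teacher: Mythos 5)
Your reduction to $l=0$, $Q^*=F_0$ is the fatal step: the statement you propose to prove after rescaling --- exponential decay of $\mu^{(F)}_{\<f\>}(F_{\mcB_n(F)})$ for a function $f$ that is merely $\mcE^{(F)}$-harmonic in $F\setminus\partial_o F$ --- is false, not just unproved. Take boundary data $\phi\in\mathscr{C}_n$ whose discrete energies vanish below level $n$ but with $I_n[\phi]>0$, and let $f=\mathcal{H}\phi$. Then Theorem \ref{T:A.4}(a) and Lemma \ref{lemma44}(a.2) give $\mcE^{(F)}(f)\leq C L_F^{n(d_w-d_f)}I_n[\phi]$, while Theorem \ref{T:A.3}(a) gives $\mu^{(F)}_{\<f\>}(F_{\mcB_{n-1}(F)})\geq c\,\Lambda_n[\phi]\geq c\,L_F^{n(d_w-d_f)}I_n[\phi]$, so the $n$-th shell carries a fixed positive fraction of the total energy and no bound of the form $Ce^{-cn}\mcE^{(F)}(f)$ can hold uniformly. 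The point of Theorem \ref{thmB1} is precisely that $f$ is harmonic in the larger neighborhood $F_{\mathcal{S}_{Q^*}}$, i.e.\ \emph{across} the faces of $Q^*$; this two-sided harmonicity is what prevents the trace on $\partial_o F_{Q^*}$ from oscillating wildly at fine scales, and it is destroyed by your reduction (self-similarity maps $F_{Q^*}$ onto $F$, but there is no ``other side'' of $\partial_o F$). Your probabilistic ``cleaner route'' rests on the same reduction and fails for the same example: the one-step contraction $\mu^{(F)}_{\<h\>}(F_{\mcB_{k+1}})\leq(1-\delta)\mu^{(F)}_{\<h\>}(F_{\mcB_{k-1}})$ cannot hold uniformly in $k$ for reflecting-boundary harmonic extensions.

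There are further gaps even within your first route. The localized bound $\mu^{(F)}_{\<f\>}(F_{\mcB_k})\leq C\Lambda_k[f|_{\partial_o F}]$ does not follow from minimality plus \eqref{e:A.21}--\eqref{e:A.24}: minimality compares global energies, not energy measures on subsets, and your parenthetical justification (``$f$ and its harmonic extension agree near the boundary'') is vacuous in your reduced setting. Likewise the reverse inequality $\Lambda_n[f|_{\partial_o F}]\leq CL_F^{n(d_w-d_f)}I_n[f|_{\partial_o F}]+(\text{lower order})$ is exactly what Lemma \ref{lemma44}(a.2) does \emph{not} give for traces of harmonic functions (it only covers the interpolant space $\mathscr{C}_n$), and even granted it, the iteration ``hence $\Lambda_n\leq\theta^n\Lambda_1$'' does not follow. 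The paper's actual proof works face by face with the two-sided collars $F_{\sG_k}$ around a face of $Q^*$, proves localized restriction and extension claims there (using harmonicity of $f$ inside the collar so that minimality applies against an extension of the collar's own boundary trace), and then handles the genuinely new difficulty --- the cross-face discrete energy $I^*_k$ --- by a dichotomy together with a new slice-comparison estimate (Claim 3), whose smallness factor $L_F^{(d_f-d_w-d_I)j}$ comes from Lemma \ref{lemma38}. So contrary to your closing expectation, the theorem does require an estimate beyond assembling \eqref{e:A.13}, the extension bound, and minimality, and your outline is missing exactly that ingredient.
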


\begin{proof}
	Without loss of generality, we consider neighborhoods of one face, say,  $\partial_{1,0}F_{Q^*}=\Psi_{Q^*}(\partial_{1,0}F)$. For $k\geq l$, let 
	\begin{align*}
	\sG_k:=\{Q\in \mathcal{Q}_k(F):Q\cap \partial_{1,0}F_{Q^*}\neq \emptyset\}.
	\end{align*}
    So $F_{\sG_k}$ is a neighborhood of $\partial_{1,0}F_{Q^*}$ in $(F,\rho )$. Let $D_k$ be a closed $L_F^{-k}$-neighborhood of $\partial_{1,0}  F_{0,Q^*}$ with respect to the $L^\infty$-metric; that is, if $\partial_{1,0} F_{0,Q^*}=\{s_1\}\times [s_2,s_2+L_F^{-l}]\times\cdots\times [s_k,s_k+L_F^{-l}]$, then 
     $$
     D_k=[s_1-L_F^{-k},s_1+L_F^{-k}]\times [s_2-L_F^{-k},s_2+L_F^{-l}+L_F^{-k}]\times\cdots\times [s_k-L_F^{-k},s_k+L_F^{-l}+L_F^{-k}].
     $$
     Note that  $\sG_k=\mathcal{Q}_k(D_k\cap F)$. 
    
    \medskip
    
    For this proof only,   we define for $n\geq k$,
    \[
    \eth_nF_{\sG_k}:=\{\partial_{i,s}F_Q:Q\in\mcQ_n(F_{\sG_k}),\ i\in \{1,2,\cdots,d\},\ s\in\{0,1\},\ \partial_{i,s}F_Q\subset \partial D_k\}.
    \]
    Observe that the union of the subfaces in  $\eth_nF_{\sG_k}$ contains the topological boundary of $F_{\sG_k}$. \
    
    \medskip

	\noindent\textbf{(Discrete energies)}. 
	
	(a)   Similar to the definition of $I_k$, for $n>k\geq l$, we define
	\[
	I_n(f,\sG_k)=\sum_{A,A'\in \eth_nF_{\sG_k}:A\sim A'}([f]_{\nu|_A}-[f]_{\nu|_{A'}})^2,
	\]
	where $A\sim A'$ if and only if $A\cap A'\neq\emptyset$ or there is $B\in \eth_{n-1}F_{\sG_k} $ such that $A,A'\subset  B$. 
	
	(b). For $n=k\geq l$, we define
	\[
	I_k(f,\sG_k)=\sum_{A,A'\in \eth_kF_{\sG_k}:A\sim A'}([f]_{\nu|_A}-[f]_{\nu|_{A'}})^2,
	\]
	where $A\sim A'$ in $\eth_kF_{\sG_k}$ if and only if $\nu(Q_A\cap Q_{A'})>0$, where  $Q_A,Q_{A'}\in \sG_k$ so that $A\subset Q_A,A'\subset Q_{A'}$. \medskip

	We have two comments about   (b) above. 
	\begin{enumerate}
		\item If $Q_A\cap Q_{A'}\neq\emptyset$, then by (non-diagonality) condition, there is a sequences of cells $Q_A=Q_0$, $Q_1$, $Q_2$, $\cdots$, $Q_{s-1}$, $Q_{A'}=Q_{s}$ 
		in $ \sG_k$ with $s<2^d$ such that $\nu(Q_i\cap Q_{i-1})>0$.
	 		
		\item A special case is when $Q_A\cap Q_{A'}\subset \partial_{1,0}F_{Q^*}$. In this case,    $A,A'$ are two sub-faces on the 
		opposite sides of $\partial_{1,0}F_{Q^*}$, and  we denote it as $A\sim^*A$. Let
		\[
		I^*_k(f,\sG_k)=\sum_{A,A'\in \eth_kF_{\sG_k}:A\sim^* A'}([f]_{\nu|_A}-[f]_{\nu|_{A'}})^2,
		\]
		and  $I^{**}_k(f,\sG_k)=I_k(f,\sG_k)-I_k^*(f,\sG_k)$.
	\end{enumerate}
		
		\medskip
		
	By the same arguments as that for Theorems \ref{T:A.3} and \ref{T:A.4}, we have the following Claims 1 and 2, respectively. 
	 	
	\medskip
	
	\noindent\textbf{Claim 1}. There is a constant $C_1\in (0,\infty)$ depending only on $F$
	so that for each $n\geq k\geq l$, 
	\[
	\mu_{\< f\>}^{(F)}(F_{\sG_k}\setminus F_{\mathcal{G}_{k+1}})
	\geq    C_1\Big(L_F^{(d_w-d_f)k}I^{**}_k(f,\sG_k)+
	\sum_{n=k+1}^\infty 
	L_F^{(d_w-d_f)n}I_n(f,\sG_k)\Big) .
	\]

	\noindent\textbf{Claim 2}. For some $C_2\in (0,\infty)$ depending only on $F$,  
	 \[\mu^{(F)}_{\< f \>}(F_{\sG_k})\leq C_2\sum_{n=k}^\infty L_F^{(d_w-d_f)n}I_n(f,\sG_k).\]

	\medskip

	 For each $k\geq 0$, let 
	\begin{align*}
	{J_k}:=\bigcup_{Q\in \mathcal{Q}_k(F): Q\cap \partial_{1,0}F\neq \emptyset}F_Q=F\cap ([0,L_F^{-k}]\times [0,1]^{d-1}).
	\end{align*} 
    
    \noindent\textbf{Claim 3}.  We claim the following  holds.
    \begin{enumerate}
    \item[(i)] There is a positive finite constant $C_3$ depending only on $F$ such that 
	\[
	\Big|[g]_{\nu|_{\partial_{1,0}F}}-[g]_{\nu|_{F \cap \{x_1=L_F^{-k}\}}}\Big| ^2\leq C_3L_F^{k(d_f-d_w-d_I)}\mu^{(F)}_{\<g\>}({J_k})\quad\hbox{ for every }g\in C(F)\cap \mcF^{(F)}.
	\]
	
	\item[(ii)] There is a positive finite constant $C_{4,k}$ depending on $F$ and $k$ such that 
	\[
	\Big| [g]_{\nu|_{\partial_{1,1}F}}-[g]_{\nu|_{F\cap \{x_1=L_F^{-k} \}}}
	 \Big|^2\leq C_{4,k}\mu^{(F)}_{\<g\>}(F\setminus {J_{k+1}})\quad\hbox{ for every }g\in C(F)\cap \mcF^{(F)},
	\]
	where we used $F\setminus {J_{k+1}}$ instead of $F\setminus {J_k}$ because we want  $F\cap\{x_1=L_F^{-k}\}$ and $\partial_{1,1}F$ 
	 to be in the same connected component of $F\setminus {J_{k+1}}$.
	\end{enumerate}

	\medskip
	
     We first show that there is a positive finite $C_3'$ depending only on $F$ so that 
	\begin{equation}\label{eqnB1}
	\big|[g]_{\mu|_F}-[g]_{\mu|_{J_1}}\big|^2\leq C_3'\mcE^{(F)}(g).
	\end{equation}
	Let $j$ be the smallest integer so that $\sqrt{d}L_F^{-j}<c_1$, where $c_1$ is the constant of the Poincar\'e inequalities in Lemma \ref{lemma39}. 
	 For $Q,Q'\in \mcQ_j(F)$  with $Q\cap Q'\neq\emptyset$,  take some $x_0\in F_Q\cap F_{Q'}$. Note that $B(x_0, c_1) \supset F_Q \cup F_{Q'}$. By Lemma \ref{lemma39}(a) with $r=1$, 
  
	 	\begin{align*}
	\big|[g]_{\mu|_{F_Q}}-[g]_{\mu|_{F_{Q'}}}\big|&\leq \big|[g]_{\mu|_{F_Q}}-[g]_{\mu|_{B_F(x_0,c_1)}}\big|+\big|[g]_{\mu|_{F_Q'}}-[g]_{\mu|_{B_F(x_0,c_1)}}\big|\\
	\leq&\sqrt{\fint_{F_{Q}}\big(g(y)-[g]_{\mu|_{B_F(x_0,c_1)}}\big)^2\mu(dy)}+\sqrt{\fint_{F_{Q'}}\big(g(y)-[g]_{\mu|_{B_F(x_0,c_1)}}\big)^2\mu(dy)}\\
	\leq&2\sqrt{\frac{\mu\big(B_F(x_0,c_1)\big)}{L_F^{-jd_f}}\fint_{B_F(x_0,c_1)}\big(g(y)-[g]_{\mu|_{B_F(x_0,c_1)}}\big)^2\mu(dy)}\\
	\leq&C_4'\sqrt{\mcE^{(F)}(g)},
	\end{align*}
  where  $C_4'>0 $  is a constant depending only on $F$. 
     Since   $\#\mcQ_j(F)=m_F^{j}$, and $F$ is connected, we have for $k\geq 1$, 
   $$
   \big|[g]_{\mu|_{F_Q}}-[g]_{\mu|_{F_{Q'}}}\big|\leq  (m_F^j-1)C_4'\sqrt{\mcE^{(F)}(g)}  \quad \hbox{for any }  Q,Q'\in \mcQ_j(F).
   $$
   Since $\min_{Q\in \mcQ_j(F)}[g]_{\mu|_{F_Q}}\leq [g]_{\mu|_F}\leq \max_{Q\in \mcQ_j(F)}[g]_{\mu|_{F_Q}}$ and $\min_{Q\in \mcQ_j(J_1)}[g]_{\mu|_{F_Q}}\leq [g]_{\mu|_{J_1}}\leq \max_{Q\in \mcQ_j(J_1)}[g]_{\mu|_{F_Q}}$, we conclude that \eqref{eqnB1} holds with $ C_3 =( (m_F^j-1)C_4')^2$, which depends only on $F$. 
    	
   Applying \eqref{eqnB1} on $F_Q$ and using the Cauchy-Schwarz inequality, we have
	\begin{align*}
	\big|[g]_{\mu|_{{J_k}}}-[g]_{\mu|_{{J_{k+1}}}}\big|^2
	&=\Big|\frac{1}{\#\mcQ_k(J_k)}\sum_{Q\in \mcQ_k(J_k)}[g]_{\mu|_{F_Q}}-\frac{1}{\#\mcQ_k(J_k)}\sum_{Q\in \mcQ_k(J_k)}[g]_{\mu|_{\Psi_Q(J_1)}}\Big|^2\\
	&=\Big|\frac{1}{\#\mcQ_k(J_k)}\sum_{Q\in \mcQ_k(J_k)}\big([g\circ \Psi_Q]_{\mu|_F}-[g\circ \Psi_Q]_{\mu|_{J_1}}\big)\Big|^2\\
	&\leq \frac{1}{\#\mcQ_k(J_k)}\sum_{Q\in \mcQ_k(J_k)}\big([g\circ \Psi_Q]_{\mu|_F}-[g\circ \Psi_Q]_{\mu|_{J_1}}\big)^2\\
	&\leq L_F^{-kd_I}\  \sum_{Q\in \mcQ_k(J_k)}C_3'\ \mcE^{(F)}(g\circ\Psi_Q)\\
	&= L_F^{-kd_I}\ L_F^{k(d_f-d_w)}\  C_3'\ \sum_{Q\in \mcQ_k(J_k)} \mcE^{(F_Q)}(g)\\
	&\leq L_F^{k(d_f-d_w-d_I)}C_3'\ \mcE^{(F)} (g),
	\end{align*}
    where we used the self-similarity  \eqref{e:3.3} of $(\mcE^{(F)},\mcF^{(F)})$  and Lemma \ref{lemma31} in the last two lines. 
     As $d_f-d_w-d_I<0$,  we have for every $i\geq 1$, 
	$$
	\big|  [g]_{\mu|_F} -[g]_{\mu|_{{J_{i}}}}\big|\leq \sum_{k=1}^{i-1} \big|[g]_{\mu|_{{J_k}}}-[g]_{\mu|_{{J_{k+1}}}}\big|
	\leq \frac{\sqrt{C_3' \, \mcE^{(F)} (g)  }}{1-L_F^{(d_f- d_w-d_I)/2}}  .
	$$
	Since $[g]_{\nu|_{\partial_{1,0}F}}=\lim\limits_{i\to\infty}[g]_{\mu|_{{J_i}}}$, it follows from the above  that there is a constant $C_5'>0$ depending only on $F$ so that 
	\[
	\big|[g]_{\nu|_{\partial_{1,0}F}}-[g]_{\mu|_F}\big|^2\leq C_5'\mcE^{(F)}(g),
	\]
	By symmetry, we also have $\big|[g]_{\nu|_{\partial_{i,s}F}}-[g]_{\mu|_F}\big|^2\leq C_5'\mcE^{(F)}(g)$ for  $i\in \{1,2,\cdots,d\}$ and $s\in \{0,1\}$.
	Consequently,  
	\begin{equation}\label{e:B.1a}
	\big|[g]_{\nu|_{\partial_{i,s}F}}-[g]_{\nu|_{\partial_{i',s'}F}}\big|^2\leq 4C_5'\mcE^{(F)}(g)
	\quad \hbox{ for }i,i'\in \{1,2,\cdots,d\}
	\hbox{ and } s,s'\in\{0,1\}.
	\end{equation}
	Hence for each $Q\in \mathcal{Q}_k({J_k})$,
	$$
	\big|[g]_{\nu|_{\partial_{1,0}F_Q}}-[g]_{\nu|_{\partial_{1,1}F_Q}}\big|^2\leq 4C_5'\mcE^{(F_Q)}(g\circ \Psi_Q)
	 	\leq   4C_5' L_F^{k(d_f-d_w)} \mcE^{(F_Q)}( g).  
	$$
	Thus   
	\begin{eqnarray*}
	&& \Big|  [g]_{\nu|_{\partial_{1,0}F}}-[g]_{\nu|_{F   \cap \{x_1=L_F^{-k} \}}   }  \Big|^2 \\
	&\leq & \Big(  \sum_{Q\in  \mathcal{Q}_k({J_k})}\frac{\nu (\partial_{1,0}F_Q ) } {\nu (  \partial_{1,0}F )}  \big|[g]_{\nu|_{\partial_{1,0}F_Q}}-[g]_{\nu|_{\partial_{1,1}F_Q}}\big| \Big)^2 \\
	  &\leq & \sum_{Q\in  \mathcal{Q}_k({J_k})}     \frac{\nu (\partial_{1,0}F_Q ) } {\nu (  \partial_{1,0}F )}    \big|[g]_{\nu|_{\partial_{1,0}F_Q}}-[g]_{\nu|_{\partial_{1,1}F_Q}}\big|^2  \\
	&\leq &     \sum_{Q\in  \mathcal{Q}_k({J_k})} L_F^{-kd_I}   4C_5' L_F^{k(d_f-d_w)} \mcE^{(F_Q)}(g)  \\
	&\leq &    4C_5'       L_F^{k(d_f-d_w -d_I  )}\mcE^{(F )}(g) ,
	\end{eqnarray*}
	where in the third inequality we used the fact that $\nu$ is a $d_I$-dimensional Hausdorff measure. 
 	This proves Claim 3(i). 

	 For Claim 3(ii), we fix a pair $\wt Q^*, \wt Q^{**}\in \mcQ_{k+1}(F\setminus J_k)$ such that $F_{\wt Q^*}\cap \{x_1=L_F^{-k}\}\neq \emptyset,\ F_{\wt Q^{**}}\cap \{x_1=1\}\neq\emptyset$. By Lemma \ref{lemmaA2},   $F\cap ([\frac12L_F^{-k},1]\times [0,1]^{d-1})$ is path connected.
	Thus  by  the (non-diagonality) condition of {\it GSC}, there is a sequence
	 ${\wt Q^*} =Q_1,Q_2,\cdots,Q_J={\wt Q^{**}}$ in $\mcQ_{k+1}(F\setminus J_{k+1})$ with  $J\leq \#\mcQ_{k+1}(F\setminus J_{k+1})\leq L_F^{(k+1)d_f}$ and $\nu(F_{Q_j}\cap F_{Q_{j-1}})>0$ for each $j\in \{2,3,\cdots,J\}$ (i.e. each $F_{Q_j} \cap F_{Q_{j+1}}$ is a face of a $(k+1)$-level cell  in $ F$).
	 Applying \eqref{e:B.1a}  on each $Q_j$, we have by the Cauchy-Schwarz inequality, 
	\begin{eqnarray}
	&&\big|[g]_{\nu|_{F_{\wt Q^*}\cap \{x_1=L_F^{-k}\}}}-[g]_{\nu|_{F_{\wt Q^{**}}\cap \{x_1=1\}}}\big|  \nonumber \\
 	&\leq &\big|[g]_{\nu|_{F_{\wt Q^*}\cap\{x_1=L_F^{-k}\}}}-[g]_{\nu|_{F_{Q_1}\cap F_{Q_2}}}\big|+\sum_{j=2}^{J-1}\big|[g]_{\nu|_{F_{Q_{j-1}}\cap F_{Q_j}}}-[g]_{\nu|_{F_{Q_j}\cap F_{Q_{j+1}}}}\big|   \nonumber  \\
	&&+\big|[g]_{\nu|_{F_{Q_{J-1}}\cap F_{Q_{J}}}}-[g]_{\nu|_{F_{Q_J}\cap \{x_1=1\}}}\big|   \nonumber  \\
	&\leq &2\sqrt{C_5'}\sum_{j=1}^J\sqrt{\mcE^{(F)}(g\circ \Psi_{Q_j})}   \nonumber  \\
	&\leq &2\sqrt{C_5'}\sqrt{J}\sqrt{\sum_{j=1}^J\mcE^{(F)}(g\circ \Psi_{Q_j})}    \nonumber  \\
	&\leq &2\sqrt{C_5'}\ L_F^{(k+1)d_f/2}\ L_F^{(k+1)(d_w-d_f)/2}\ \sqrt{\mu^{(\mcF)}_{\<g\>}(F\setminus J_{k+1})}, 
	\label{e:B.3a}
	\end{eqnarray}
	     where  in the last inequality we used  \eqref{e:3.3} and Lemma \ref{coroA5}. 
	 Since  $\Big|[g]_{\nu|_{\partial_{1,1}F}}-[g]_{\nu|_{F\cap\{x_1=L_F^{-k}\}}}\Big|\leq \max\Big\{|[g]_{\nu|_{F_{\wt Q^*}\cap \{x_1=L_F^{-k}\}}}-[g]_{\nu|_{F_{\wt Q^{**}}\cap \{x_1=1\}}}|:\ \wt Q^*, \wt Q^{**}\in \mcQ_{k+1}(F\setminus J_k),\ F_{\wt Q^*}\cap \{x_1=L_F^{-k}\}\neq \emptyset, F_{\wt Q^{**}}\cap \{x_1=1\}\neq\emptyset\Big\}$,   Claim 3(ii) follows from    estimate \eqref{e:B.3a}.	
	 
	\medskip 
	
    For each $j\geq 1$ and each pair $A\sim^*A'$ in $\eth_kF_{\sG_k}$, we define $\operatorname{Pr}_j(A)$ and $\operatorname{Pr}_j(A')$ to be the two parallel `faces' between $A$ and $A'$  that are isometric to $A$ and $A'$: $\operatorname{Pr}_j(A)$ is on the $A$ side,  $\operatorname{Pr}_j(A')$ is on the $A'$ side, and the distance between $\operatorname{Pr}_j(A)$ and $\operatorname{Pr}_j(A')$ is $2L_F^{-k-j}$. 
    More precisely,  suppose without loss of generality that $A=\partial_{1,1}Q_A$ and $A'=\partial_{1,0}Q_{A'}$,
    then 
    \[
	\begin{cases}
		\operatorname{Pr}_j(A):=\bigcup \big\{\partial_{1,1}F_{\wt{Q}}:\wt{Q}\in \mcQ_{k+j}(F_{Q_A})  \ \hbox{ with }  \ 
		  \wt Q \cap\partial_{1, 0}F_{Q^*}\not= \emptyset \big\},\\
		\operatorname{Pr}_j(A'):=\bigcup \big\{ \partial_{1,0}F_{\wt{Q}}:\wt{Q}\in \mcQ_{k+j}(F_{Q_{A'}})   \  
		\hbox{ with }  \   \wt Q \cap\partial_{1, 0}F_{Q^*}\not= \emptyset \big\}.
	\end{cases}
	\]
     	
	\medskip
	
  Let $C_3>0$ and $C_{4,j}>0 $, $j\geq 1$,  be the constants in Claim 3(i) and (ii),  
  respectively.   For any $k\geq l$ and $j\geq1$,  
  by Claim 3(ii), 
	 	\begin{eqnarray}\label{e:B.1} 
		 &&L_F^{(d_w-d_f)k}\sum_{A,A'\in \eth_kF_{\sG_k}\atop A'\sim^* A}\Big(([f]_{\nu|_A}-[f]_{\nu|_{\operatorname{Pr}_j(A)}})^2+([f]_{\nu|_{\operatorname{Pr}_j(A')}}-[f]_{\nu|_{A'}})^2\Big)  \nonumber \\
	&\leq& \sum_{A,A'\in \eth_kF_{\sG_k}\atop A'\sim^* A}C_{4,j}\ \Big(\mu^{(F)}_{\<f\>}(F_{Q_A}\setminus F_{\sG_{k+j+1}}) +\mu^{(F)}_{\<f\>}(F_{Q_{A'}}\setminus F_{\sG_{k+j+1}})\Big)   \nonumber \\
		&\leq&2C_{4,j}\,\mu^{(F)}_{\< f \>}(F_{\sG_k}\setminus F_{\mathcal{G}_{k+j+1}}),
 	  \end{eqnarray}
	where the first inequality is due to the fact that  $F_{Q_A}\setminus F_{\sG_{k+j+1}}$ is a scaled and rotated version of $F\setminus J_{j+1}$ and  the self-similarity property \eqref{e:3.3}, while the last inequality is due to  Corollary \ref{coroA5}.
 	For any $k\geq l$ and $j\geq1$, we also have
	\begin{align}
		&L_F^{(d_w-d_f)k}\,\sum_{A,A'\in \eth_kF_{\sG_k}\atop A'\sim^* A}([f]_{\nu|_{\operatorname{Pr}_j(A)}}-[f]_{\nu|_{\operatorname{Pr}_j(A')}})^2\nonumber\\
		\leq &L_F^{(d_w-d_f)k}\,\sum_{A,A'\in \eth_kF_{\sG_k}\atop A'\sim^* A}2\Big(([f]_{\nu|_{\operatorname{Pr}_j(A)}}-[f]_{\nu|_{F_{Q_A}\cap F_{Q_{A'}}}})^2+([f]_{\nu|_{F_{Q_A}\cap F_{Q_{A'}}}}-[f]_{\nu|_{\operatorname{Pr}_j(A')}})^2\Big)\nonumber\\
		\leq &2\sum_{A,A'\in \eth_kF_{\sG_k}\atop A'\sim^* A} C_3 \,L_F^{(d_f-d_w-d_I)j}  \ \Big(\mu^{(F)}_{\<f\>}(F_{Q_A}\cap F_{\sG_{k+j}}) +\mu^{(F)}_{\<f\>}(F_{Q_{A'}}\cap F_{\sG_{k+j}})\Big)\nonumber\\
		\leq &4C_3\,L_F^{(d_f-d_w-d_I)j}\,\mu^{(F)}_{\< f \>}(F_{\mathcal{G}_{k+j}}) ,        \label{e:B.2}
	 \end{align}
    where in the second inequality we used the fact that $F_{Q_A}\cap F_{\sG_{k+j}}$ is a scaled and rotated version of $J_{j}$,  the self-similarity \eqref{e:3.3}  and Claim 3(i), while in the last inequality we used Corollary \ref{coroA5}.

\medskip

Recall that 
	$$
	I_k^*(f,\sG_k)=\sum_{\sum_{A,A'\in \eth_kF_{\sG_k}\atop A'\sim^* A}}([f]_{\nu|_A}-[f]_{\nu|_{A'}})^2
	\quad \hbox{and} \quad 
	I_k^{**}(f,\sG_k)=I_k(f,\sG_k)-I_k^*(f,\sG_k).
	$$
 When  $L_F^{(d_w-d_f)k}I^{*}_k(f,\sG_k)\leq L_F^{(d_w-d_f)k}I^{**}_k(f,\sG_k)+\sum\limits_{n=k+1}^\infty L_F^{(d_w-d_f)n}I_n(f,\sG_k)$, 
   \begin{eqnarray}
 	&& L_F^{(d_w-d_f)k}I^{**}_k(f,\sG_k)+\sum\limits_{n=k+1}^\infty L_F^{(d_w-d_f)n}I_n(f,\sG_k). \nonumber \\
 	&\geq &\frac12 L_F^{(d_w-d_f)k} I^{*}_k(f,\sG_k)  
	+\frac12\Big(L_F^{(d_w-d_f)k}I^{**}_k(f,\sG_k)+\sum\limits_{n=k+1}^\infty L_F^{(d_w-d_f)n}I_n(f,\sG_k)\Big) \nonumber \\
 	&=&\frac12\sum\limits_{n=k}^\infty L_F^{(d_w-d_f)n}I_n(f,\sG_k). \label{e:B.6}
 \end{eqnarray}
 By taking $j=1$ in Claims 1 and 2, we have from \eqref{e:B.6} that there is a positive constant $C_5\in (0, 1)$ depending only on $F$ such that 
	\begin{equation} \label{e:B.3}
	\mu^{(F)}_{\< f \>}(F_{\sG_k}\setminus F_{\mathcal{G}_{k+j}})\geq C_5\mu^{(F)}_{\< f \>}(F_{\sG_k})
	\quad  \hbox{for }  k\geq l.
	\end{equation}
Consequently,  
	\begin{equation} \label{e:B.4}
\mu^{(F)}_{\< f \>}( F_{\mathcal{G}_{k+j}})\leq (1-C_5)\mu^{(F)}_{\< f \>}(F_{\sG_k})
	\quad  \hbox{for }  k\geq l.
		\end{equation}
When $L_F^{(d_w-d_f)k}I^{*}_k(f,\sG_k)>L_F^{(d_w-d_f)k}I^{**}_k(f,\sG_k)+\sum\limits_{n=k+1}^\infty L_F^{(d_w-d_f)n}I_n(f,\sG_k)$,  
\begin{eqnarray}
		&& L_F^{(d_w-d_f)k}I^{*}_k(f,\sG_k) \nonumber \\
		&> &\frac12 L_F^{(d_w-d_f)k}I^{*}_k(f,\sG_k)+\frac12\Big(L_F^{(d_w-d_f)k}I^{**}_k(f,\sG_k)+\sum\limits_{n=k+1}^\infty L_F^{(d_w-d_f)n}I_n(f,\sG_k)\Big)
		\nonumber \\
		&=&\frac12\sum\limits_{n=k}^\infty L_F^{(d_w-d_f)n}I_n(f,\sG_k). \label{e:B.9}
	\end{eqnarray}
It follows from \eqref{e:B.2} and Claim 2 that 
	\begin{eqnarray*}
		&& L_F^{(d_w-d_f)k}\sum_{\sum_{A,A'\in \eth_kF_{\sG_k}: A'\sim^* A}}([f]_{\nu|_{\operatorname{Pr}_j(A)}}-[f]_{\nu|_{\operatorname{Pr}_j(A')}})^2 \\
		&\leq &4C_3\,L_F^{(d_f-d_w-d_I)j}\mu^{(F)}_{\< f \>}(F_{\mathcal{G}_{k+j}})\\
		&\leq &4C_3\,L_F^{(d_f-d_w-d_I)j}\mu^{(F)}_{\< f \>}(F_{\mathcal{G}_{  k }})\\
		&\leq &8C_3 C_2\,L_F^{(d_f-d_w-d_I)j}\,L_F^{(d_w-d_f)k}I^{*}_k(f,\sG_k) .
	\end{eqnarray*}
	Hence there is $j\geq 2$ depending only on $C_2C_3$  and $F$  so that 
\begin{equation}\label{e:B.8}
	L_F^{(d_w-d_f)k}\sum_{A,A'\in \eth_kF_{\sG_k}: A'\sim^* A}([f]_{\nu|_{\operatorname{Pr}_{  j-1}(A)}}-[f]_{\nu|_{\operatorname{Pr}_{  j-1}(A')}})^2\leq \frac{1}{6}L_F^{(d_w-d_f)k}I^{*}_k(f,\sG_k). 
\end{equation} 
 Since
\begin{eqnarray*}
&& I_k^*(f,\sG_k)\\
&=&  \sum_{A,A'\in \eth_kF_{\sG_k}\atop A'\sim^* A} ([f]_{\nu|_A}-[f]_{\nu|_{\operatorname{Pr}_{j-1}(A)}}+
[f]_{\nu|_{\operatorname{Pr}_{j-1}(A)}} -[f]_{\nu|_{\operatorname{Pr}_{j-1}(A')}} +[f]_{\nu|_{\operatorname{Pr}_{j-1}(A')}}
-[f]_{\nu|_{A'}})^2 \\
&\leq & 3 \ \sum_{A,A'\in \eth_kF_{\sG_k}\atop A'\sim^* A} 
\left(([f]_{\nu|_A}-[f]_{\nu|_{\operatorname{Pr}_{j-1}(A)}})^2+
([f]_{\nu|_{\operatorname{Pr}_{j-1}(A)}} -[f]_{\nu|_{\operatorname{Pr}_{j-1}(A')}} )^2
+([f]_{\nu|_{\operatorname{Pr}_{j-1}(A')}}
-[f]_{\nu|_{A'}})^2 \right),
\end{eqnarray*}
we have by \eqref{e:B.1}, \eqref{e:B.8}, Claim 2 and \eqref{e:B.9} that 
\begin{eqnarray*}
&& 2C_{4,j}\,\mu^{(F)}_{\< f \>}(F_{\sG_k}\setminus F_{\mathcal{G}_{k+j}}) \\
	&\geq & L_F^{(d_w-d_f)k}\sum_{A,A'\in \eth_kF_{\sG_k}\atop A'\sim^* A}\Big(([f]_{\nu|_A}-[f]_{\nu|_{\operatorname{Pr}_{j-1}(A)}})^2+([f]_{\nu|_{\operatorname{Pr}_{j-1}(A')}}-[f]_{\nu|_{A'}})^2\Big) \\
	&\geq & \frac13 (1-\frac{3}{6})L_F^{(d_w-d_f)k}I^{*}_k(f,\sG_k)\\
	&\geq& \frac1{12}\sum\limits_{n=k}^\infty L_F^{(d_w-d_f)n}I_n(f,\sG_k)\\
	&\geq &  \frac1{ 12C_2} L_F^{(d_w-d_f)k}  \mu^{(F)}_{\< f \>}(F_{\sG_k}) . 
\end{eqnarray*} 	
By decreasing the value of $C_5\in (0, 1)$ if needed,  \eqref{e:B.3} and hence \eqref{e:B.4}  holds in this case as well.  
The theorem then follows by iterating the estimate \eqref{e:B.4} and taking the union over faces. 
\end{proof}

An analogous result holds on the approximation domain $F_m$ of $F$ as well with the same proof as that for Theorem \ref{thmB1}. We record it below, which will be used in a  forthcoming paper \cite{CC}. 

\begin{theorem}\label{thmB2}
	Suppose that  $\big(\bar{\mcE}^{(F_m)},W^{1,2}(F_m)\big)$ is  a strongly local regular Dirichlet form on $L^2(F_m; \mu_n)$ so that $C_0^{-1}\mcE^{(F_m)}\leq \bar{\mcE}^{(F_m)}\leq C_0\mcE^{(F_m)}$, where  $C_0\in [1,\infty)$ is  a constant  independent of $m$. Then there are positive   constants $C$ and $c$  depending only on $F$ and $C_0$ such that for each $l\geq 1$, $Q^*\in \mathcal{Q}_l(F_m)$, $n\geq 0$ and $f\in W^{1,2}(F_m)\cap C(F_m)$ that is harmonic in the interior of $F_{m,\mathcal{S}^{(m)}_{Q^*}}$, 
	where $\mathcal{S}^{(m)}(Q^*):=\{Q\in Q_l(F_m):Q\cap Q^*\neq\emptyset\}$, 
 	we have 
	\[
	\bar{\mu}^{(F_m)}_{\<f\>}(\Psi_{Q^*} ( F_{m,\mcB_n(F_m)} ))
	\leq Ce^{-cn}\, \bar{\mu}^{(F_m)}_{\<f\>}\big(F_{m,\mathcal{S}^{(m)}_{Q^*}}\big).
	\] 
	Here $\bar{\mu}^{(F_m)}_{\<f\>}$ is  the energy measure of $f$ with respect to the Dirichlet form $\big(\bar{\mcE}^{(F_m)},W^{1,2}(F_m)\big)$.
\end{theorem}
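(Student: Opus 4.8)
The plan is to transcribe the proof of Theorem~\ref{thmB1} essentially line by line, making the substitutions appropriate to the approximation domain and to a comparable form. Throughout, one replaces $F$ by $F_m$, the measure $\mu$ by $\mu_m$, the boundary measure $\nu$ by $\nu_m$, the weight $r^{d_f-d_w}$ by $\varphi_m(r)$ of \eqref{eqn35}, the cut-off functions $w_Q$ of Lemma~\ref{lemma37}(a) by the $w_Q^{(m)}$ of Lemma~\ref{lemma37}(b), the Poincar\'e inequality Lemma~\ref{lemma39}(a) by Lemma~\ref{lemma39}(b), and the resistance bound for $F$ by Lemma~\ref{lemma36}. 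Since $C_0^{-1}\mcE^{(F_m)}\le\bar{\mcE}^{(F_m)}\le C_0\mcE^{(F_m)}$ as quadratic forms, the energy measures obey $C_0^{-1}\mu^{(F_m)}_{\<f\>}\le\bar{\mu}^{(F_m)}_{\<f\>}\le C_0\mu^{(F_m)}_{\<f\>}$ as measures, so every estimate established for $\mcE^{(F_m)}$ transfers to $\bar{\mcE}^{(F_m)}$ at the cost of a fixed power of $C_0$; the role of Corollary~\ref{coroA5} is taken over trivially by the fact that $\bar{\mu}^{(F_m)}_{\<f\>}$ is absolutely continuous with respect to $\mu_m$ and hence charges no $(d-1)$-dimensional face, and the regularity of boundary points (used to keep harmonic extensions continuous) is automatic because $F_m$ is a Lipschitz domain.

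By symmetry it suffices to estimate $\bar{\mu}^{(F_m)}_{\<f\>}$ of the $L_F^{-n}$-neighborhood of one face of $\partial_o F_{m,Q^*}$, say $\partial_{1,0}F_{m,Q^*}=\Psi_{Q^*}(\partial_{1,0}F_{(m-l)\vee 0})$, and then sum over the $2d$ faces of $Q^*$. For $k\ge l$ let $D_k^{(m)}$ be the closed $L_F^{-k}$-neighborhood of $\partial_{1,0}F_{0,Q^*}$ in the $\ell^\infty$-metric, put $\sG_k:=\mcQ_k(D_k^{(m)}\cap F_m)$, and let $\eth_nF_{m,\sG_k}$ denote the level-$n$ sub-faces of the topological boundary of $F_{m,\sG_k}$. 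Define the discrete energies $I_n(f,\sG_k)$ through $\nu_m$-averages exactly as in the proof of Theorem~\ref{thmB1}, isolating the ``through-face'' relation $\sim^*$ across $\partial_{1,0}F_{m,Q^*}$ into $I_k^*(f,\sG_k)$ and setting $I_k^{**}:=I_k-I_k^*$. Then the trace inequality $\bar{\mu}^{(F_m)}_{\<f\>}(F_{m,\sG_k}\setminus F_{m,\sG_{k+1}})\ge c\bigl(\varphi_m(L_F^{-k})I_k^{**}(f,\sG_k)+\sum_{n>k}\varphi_m(L_F^{-n})I_n(f,\sG_k)\bigr)$ and the extension inequality $\bar{\mu}^{(F_m)}_{\<f\>}(F_{m,\sG_k})\le C\sum_{n\ge k}\varphi_m(L_F^{-n})I_n(f,\sG_k)$ are proved by the telescoping-of-averages and chaining-of-Poincar\'e-balls arguments used for Theorems~\ref{T:A.3}(b) and~\ref{T:A.4}(b); the geometric convergence of these series holds, uniformly in $m$, because $\theta:=L_F^{-1}\vee L_F^{d_f-d_w-d_I}<1$ (Lemma~\ref{lemma38}) together with the bound $\varphi_m(L_F^{-k})/\bigl(\varphi_m(L_F^{-k-i})\,\#\eth_{k+i}F_m(A)\bigr)\le\theta^i$, exactly as in the proof of Theorem~\ref{T:A.3}(b).

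The delicate step is the analogue of Claim~3 of Theorem~\ref{thmB1}: a comparison of the $\nu_m$-average of $f$ on a sub-face of $\partial_o F_{m,Q^*}$ with its $\nu_m$-average on a nearby parallel slab face, carrying an extra factor that decays geometrically in the scale. Inside $F_{m,Q^*}$ this reduces to the same statement for $F_{(m-l)\vee 0}$; on scales below $L_F^{-(m-l)}$ the face is a genuine piece of hyperplane and the relevant estimate is the ordinary Euclidean Poincar\'e/trace inequality (with $\varphi_m$-weight $L_F^{(d_w-d_f+d-2)m}r^{d-2}$), while on scales above $L_F^{-(m-l)}$ it is precisely the fractal estimate already proved inside Theorem~\ref{thmB1} (with weight $r^{d_f-d_w}$), and the two regimes glue because $\varphi_m$ crosses over exactly where the fractal structure of the face stops. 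Granting this, one introduces the parallel projected faces $\operatorname{Pr}_j(A)$ on the two sides of $\partial_{1,0}F_{m,Q^*}$ as in Theorem~\ref{thmB1}, splits into the two cases according to whether $\varphi_m(L_F^{-k})I_k^*(f,\sG_k)$ dominates or is dominated by $\varphi_m(L_F^{-k})I_k^{**}(f,\sG_k)+\sum_{n>k}\varphi_m(L_F^{-n})I_n(f,\sG_k)$, and in either case—using the trace and extension inequalities above—obtains a contraction $\bar{\mu}^{(F_m)}_{\<f\>}(F_{m,\sG_{k+j}})\le(1-C_5)\bar{\mu}^{(F_m)}_{\<f\>}(F_{m,\sG_k})$ for a fixed $j\ge 2$ and a fixed $C_5\in(0,1)$ depending only on $F$ and $C_0$. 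Iterating this contraction starting from $k=l$ (where $F_{m,\sG_l}\subset F_{m,\mathcal{S}^{(m)}_{Q^*}}$) and summing over the faces of $Q^*$ yields the claimed exponential bound, with $C$ and $c$ depending only on $F$ and $C_0$.

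The main obstacle I anticipate is purely the bookkeeping of the piecewise weight $\varphi_m$ through the self-similar rescalings in the trace, extension, and Poincar\'e estimates: unlike the single power $r^{d_f-d_w}$ governing $F$, the crossover scale of $\varphi_m$ shifts under $\Psi_Q$, so one must check that every telescoping sum still converges with a geometric rate that is uniform in $m$, $l$, and $k$. Once this uniformity is verified, the remainder of the argument is a routine transcription of the proof of Theorem~\ref{thmB1}.
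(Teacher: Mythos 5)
Your proposal is correct and matches the paper's treatment: the paper gives no separate argument for Theorem \ref{thmB2}, stating only that it follows ``with the same proof as that for Theorem \ref{thmB1},'' which is exactly the transcription you carry out (with $\mu_m$, $\nu_m$, $\varphi_m$, Lemma \ref{lemma37}(b), Lemma \ref{lemma39}(b) in place of their $F$-counterparts, and the $C_0$-comparability of $\bar{\mu}^{(F_m)}_{\<f\>}$ with $\mu^{(F_m)}_{\<f\>}$ absorbing the change of form). Your attention to the uniformity in $m,l,k$ of the geometric rate coming from the piecewise weight $\varphi_m$ is the right point to check, and it goes through as you describe.
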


\vskip 0.2truein

 \hskip 0.2truein
 

\begin{thebibliography}{10}

\bibitem{AB}
S.~Andres and M.~T. Barlow, \emph{Energy inequalities for cutoff functions and
  some applications}, J. Reine Angew. Math. \textbf{699} (2015), 183--215.

\bibitem{Ba2013}
M.~T. Barlow, \emph{Analysis on the {S}ierpinski carpet}, Analysis and geometry
  of metric measure spaces, CRM Proc. Lecture Notes, vol.~56, Amer. Math. Soc.,
  Providence, RI, 2013, pp.~27--53.

\bibitem{BB1}
M.~T. Barlow and R.~F. Bass, \emph{The construction of {B}rownian motion on the
  {S}ierpi\'{n}ski carpet}, Ann. Inst. H. Poincar\'{e} Probab. Statist.
  \textbf{25} (1989), no.~3, 225--257.

\bibitem{BB2}
\bysame, \emph{Local times for {B}rownian motion on the {S}ierpi\'{n}ski
  carpet}, Probab. Theory Related Fields \textbf{85} (1990), no.~1, 91--104.

\bibitem{BB3}
\bysame, \emph{On the resistance of the {S}ierpi\'{n}ski carpet}, Proc. Roy.
  Soc. London Ser. A \textbf{431} (1990), no.~1882, 345--360.

\bibitem{BB4}
\bysame, \emph{Transition densities for {B}rownian motion on the
  {S}ierpi\'{n}ski carpet}, Probab. Theory Related Fields \textbf{91} (1992),
  no.~3-4, 307--330.

\bibitem{BB5}
\bysame, \emph{Brownian motion and harmonic analysis on {S}ierpinski carpets},
  Canad. J. Math. \textbf{51} (1999), no.~4, 673--744.

\bibitem{BB6}
M.~T. Barlow and R.~F. Bass, \emph{Random walks on graphical {S}ierpinski
  carpets}, Random walks and discrete potential theory ({C}ortona, 1997),
  Sympos. Math., XXXIX, Cambridge Univ. Press, Cambridge, 1999, pp.~26--55.

\bibitem{BBK06}
M.~T. Barlow, R.~F. Bass, and T.~Kumagai, \emph{Stability of parabolic
  {H}arnack inequalities on metric measure spaces}, J. Math. Soc. Japan
  \textbf{58} (2006), no.~2, 485--519.

\bibitem{BBKT}
M.~T. Barlow, R.~F. Bass, T.~Kumagai, and A.~Teplyaev, \emph{Uniqueness of
  {B}rownian motion on {S}ierpi\'{n}ski carpets}, J. Eur. Math. Soc. (JEMS)
  \textbf{12} (2010), no.~3, 655--701.

\bibitem{BCM}
M.~T. Barlow, Z.-Q. Chen and M.~Murugan, \emph{Stability of EHI and regularity
  of MMD spaces}, arXiv:2008.05152.

\bibitem{BM}
M.~T. Barlow and M.~Murugan, \emph{Stability of the elliptic {H}arnack
  inequality}, Ann. Math. (2) \textbf{187} (2018), no.~3, 777--823.

\bibitem{Cao}
S.~Cao, \emph{Convergence of energy forms on {S}ierpi\'{n}ski gaskets with added rotated triangle}, to appear in Potential Anal.

 \bibitem{CQ1}
S.~Cao and H.~Qiu, \emph{Dirichlet forms on unconstrained {S}ierpinski carpets}, arXiv:2104.01529.

\bibitem{CQ2}
\bysame, \emph{A {S}ierpi\'{n}ski  carpet like fractal without standard self-similar
  energy}, Proc. Amer. Math. Soc. \textbf{151} (2023), 1087-1102. 

\bibitem{CC} 
S.~Cao and Z.-Q. Chen, 
\emph{Stochastic homogenization on generalized unbounded {S}ierpi\'{n}ski carpets}.
In preparation. 


\bibitem{chen}
Z.-Q. Chen, \emph{On notions of harmonicity}, Proc. Amer. Math. Soc.
  \textbf{137} (2009), no.~10, 3497--3510.

\bibitem{CF}
Z.-Q. Chen and M.~Fukushima, \emph{Symmetric {M}arkov {P}rocesses, {T}ime
  {C}hange, and {B}oundary {T}heory}, London Mathematical Society Monographs
  Series, vol.~35, Princeton University Press, Princeton, NJ, 2012.


\bibitem{CKK}
Z.-Q. Chen, P.~Kim and T.~Kumagai, 
\emph{ Discrete approximation of symmetric jump processes on metric measure spaces.} Probab. Theory Relat. Fields  {\bf 155} (2013), 703--749.


 
\bibitem{FOT}
M.~Fukushima, Y.~Oshima, and M.~Takeda, \emph{Dirichlet {F}orms and {S}ymmetric
  {M}arkov {P}rocesses}, extended ed., De Gruyter Studies in Mathematics,
  vol.~19, Walter de Gruyter \& Co., Berlin, 2011.

\bibitem{GHL}
A.~Grigor'yan, J.~Hu, and K.~Lau, \emph{Generalized capacity, {H}arnack
  inequality and heat kernels of {D}irichlet forms on metric measure spaces},
  J. Math. Soc. Japan \textbf{67} (2015), no.~4, 1485--1549.

\bibitem{GY}
A. Grigor'yan and M. Yang,  
 \emph{Local and non-local Dirichlet forms on the {S}ierpi\'{n}ski carpet},
  Trans. Amer. Math. Soc. 372 (2019), no. 6, 3985--4030. 

\bibitem{HK}
M.~Hino and T.~Kumagai, \emph{A trace theorem for {D}irichlet forms on
  fractals}, J. Funct. Anal. \textbf{238} (2006), no.~2, 578--611.

\bibitem{Jonsson}
A.~Jonsson, \emph{A trace theorem for the {D}irichlet form on the {S}ierpinski
  gasket}, Math. Z. \textbf{250} (2005), no.~3, 599--609.

\bibitem{K}
P.~Kim, \emph{Weak convergence of censored and reflected stable processes.}
Stochastic Process Appl.  {\bf 116} (2006), 1792--1814. 

\bibitem{KZ}
S.~Kusuoka and X.~Zhou, \emph{Dirichlet forms on fractals: {P}oincar\'{e}
  constant and resistance}, Probab. Theory Related Fields \textbf{93} (1992),
  no.~2, 169--196.

\bibitem{KS}
K.~Kuwae and T.~Shioya, \emph{Convergence of spectral structures: a functional
  analytic theory and its applications to spectral geometry}, Comm. Anal. Geom.
  \textbf{11} (2003), no.~4, 599--673.

\bibitem{Mc}
I.~McGillivray, \emph{Resistance in higher-dimensional {S}ierpi\'{n}ski
  carpets}, Potential Anal. \textbf{16} (2002), no.~3, 289--303.

\bibitem{Mosco}
U.~Mosco, \emph{Composite media and asymptotic {D}irichlet forms}, J. Funct.
  Anal. \textbf{123} (1994), no.~2, 368--421.

\end{thebibliography}
\end{document}